\documentclass[12pt,a4paper]{article} 
\usepackage{amsmath,amssymb,amsthm,amsfonts,amscd,euscript,verbatim, t1enc, newlfont, xypic, microtype, leftidx, enumerate}
\usepackage[colorlinks=true,pagebackref=true]{hyperref}
\usepackage{tikz}
\usepackage{tikz-cd}
\usetikzlibrary{positioning}
\hfuzz2pt
\theoremstyle{definition}
\newtheorem{theo}{Theorem}[subsection]

\newtheorem{pr}[theo]{Proposition}

 \newtheorem{lem}[theo]{Lemma}
 \newtheorem{coro}[theo]{Corollary}
  \newtheorem{conj}[theo]{Conjecture}
\theoremstyle{remark}
\newtheorem{rema}[theo]{Remark}

\theoremstyle{definition}
\newtheorem{defi}[theo]{Definition}

 \newcommand\lan{\langle}
\newcommand\ra{\rangle}

\newcommand\sz{S_0}
\newcommand\ob{^{-1}}

\newcommand\dmx{DM(X)}
\newcommand\dmcx{DM_c(X)}
\newcommand\dm{DM}
\newcommand\dmc{DM_c}

\newcommand\dmcy{DM_c(Y)}
\newcommand\chows{Chow(S)}

\DeclareMathOperator\cha{\operatorname{char}}
\newcommand\het{{H^{et}_{\ql}}}
\newcommand\hetc{H_{et,\ql}}
\newcommand\hetl{{\mathcal{H}}^{et}_{\ql}{}}

\newcommand\shy{Sh_{per}^{et}(Y,\ql)}

\newcommand\obj{\operatorname{Obj}}

\newcommand\id{\operatorname{id}}
\newcommand\cu{\underline{C}}
\newcommand\du{\underline{D}}

\newcommand\au{\underline{A}}

\newcommand\hw{{\underline{Hw}}}

\newcommand\z{{\mathbb{Z}}}
\newcommand\zll{{\mathbb{Z}_{(l)}}}

\newcommand\ql{{\mathbb{Q}_l}}

\newcommand\zop{{\mathbb{Z}[\frac{1}{p}]}}
\newcommand\zol{{\mathbb{Z}[\frac{1}{l}]}}

\newcommand\com{{\mathbb{C}}}
\newcommand\q{{\mathbb{Q}}}

\newcommand\codim{\operatorname{codim}}
\newcommand\af{\mathbb{A}}
\newcommand\afo{\mathbb{A}^1}
\newcommand\p{\mathbb{P}}

\newcommand\al{\alpha}

\newcommand\de{\delta}
\newcommand\ph{\varphi}

\newcommand\phd{\leftidx{^{\md}}\varphi{}}
\newcommand\vp{V'}
\newcommand\tn{\widetilde{N}}
\newcommand\tvn{\widetilde{VN}}
\newcommand\tnp{\widetilde{N}'}

\newcommand\ovd{\overline{d}}

\newcommand\mg{\mathcal{M}}
\newcommand\mgbd{\mathcal{M}^{\scalebox{0.6}{BM}}_{\dm}{}}
\newcommand\mgbm{\mathcal{MK}^{BM}}
\newcommand\mgd{\leftidx^{\md}\mathcal{M}^{BM}} 
\newcommand\mgdx{\leftidx^{\md}\mathcal{M}^{BM}_X}

\newcommand\mgbms{\mathcal{MK}^{BM}_S}

\newcommand\mgbmx{\mathcal{MK}^{BM}_X}

\newcommand\mper{\mathcal{M}_{per}}

\newcommand\dhsl{D^b_cSh^{et}(X,\ql)}

\newcommand\perpp{{}^{\perp}}
\newcommand\ns{\{0\}}

\DeclareMathOperator\prli{\varprojlim}
\DeclareMathOperator\inli{\varinjlim}

\newcommand\chow{Chow}

\newcommand\chowe{Chow^{eff}}

\newcommand\ab{Ab}

\newcommand\sht{SH}
\newcommand\sinf{\Sigma^{\infty}}
\newcommand\spe{{\operatorname{Spec}\,}}
\newcommand\re{{}_{red}}

\DeclareMathOperator\kar{\operatorname{Kar}}

\DeclareMathOperator\imm{\operatorname{Im}}
\DeclareMathOperator\co{\operatorname{Cone}}

\newcommand\modd{\operatorname{Mod}}

\newcommand\dmcs{DM_c(S)}

\newcommand\hwchow{{\underline{Hw}}_{\chow}}

\newcommand\wchow{{w_{Chow}}}
\newcommand\wchowc{{w^c_{Chow}}}
\newcommand\kgl{\operatorname{KGl}}
\newcommand\kglp{\operatorname{KGl}'}

\newcommand\daoq{D_{\af^1,\q}}
\newcommand\daoqx{D_{\af^1,\q}(X)}

\newcommand\sss{{\mathcal{S}}}
\newcommand\xxx{{\mathcal{X}}}
\newcommand\lam{\Lambda}

\newcommand\kk{\mathcal{K}}
\newcommand\dk{\mathcal{DK}}

\newcommand\md{\mathcal{D}}
\newcommand\dkz{\mathcal{DK}_\z}
\newcommand\dkzc{\mathcal{DK}_\z^c}
\newcommand\dkzx{\mathcal{DK}_\z(X)}
\newcommand\dkzy{\mathcal{DK}_\z(Y)}
\newcommand\dkzs{\mathcal{DK}_\z(S)}
\newcommand\dkzcx{\mathcal{DK}_\z^c(X)}

\newcommand\dkx{\mathcal{DK}(X)}
\newcommand\dkcx{\mathcal{DK}^c(X)}
\newcommand\dky{\mathcal{DK}(Y)}
\newcommand\dkc{\mathcal{DK}^c}
\newcommand\dkcs{\mathcal{DK}^c(S)}
\newcommand\dks{\mathcal{DK}(S)}

\newcommand\oo{{\pmb{1}}}
\newcommand\ood{\leftidx^{\md}\pmb{1}}

\newcommand\comp{M_K}
\numberwithin{equation}{section}
\begin{document}

 \title{On 
relative $K$-motives, 
 weights for them,\\
 and negative $K$-groups}
 \author{Mikhail V. Bondarko\and Alexander Yu. Luzgarev\thanks{ 
The results of sections 1, 2, and 4 were  obtained under support of the Russian Science Foundation grant no. 16-11-10200.
Section 3 was written under the support of the RFBR grant no.  15-01-03034-a and of Dmitry Zimin's Foundation ``Dynasty''.
 }}
 \maketitle
\begin{abstract}
In this paper we study certain triangulated categories of $K$-motives $\dk(-)$ over a wide class of ``nice enough'' base schemes, and define certain ``weights'' for them (so, we introduce Chow weight structures for $K$-motives closely related to the  ones introduced by D. H\'ebert, the first author, and M. Ivanov for  ``relative Voevodsky motives'' of various types). We relate the weights of particular $K$-motives (of the form $f_*(\oo_Y)$, where $\oo_Y$ is the tensor unit of $\dk(Y)$) to (negative) homotopy invariant $K$-groups (tensored by $\z[\sss\ob]$ for $\sss$ being the set of ``non-invertible primes'') $\kk_*(-)$. Our results yield a (new) result on the vanishing of $\kk_i(Y)$ and of certain relative $\kk$-groups for $i$ being ``too negative''; this statement is closely related to a question of Ch. Weibel. We also prove that $\kk_i(Y)$ for $i<0$ is ``supported in codimension  $-i$''.  Moreover, we establish several criteria for bounding (below) the weights of 
the $K$-motives $f_*(\oo_Y)$; this automatically implies the vanishing of the corresponding $E_2$-terms of {\it Chow-weight spectral} sequences (and of the factors of the corresponding Chow-weight filtrations) for any (co)homology of these motives. Our methods of bounding weights use a resolution of singularities  result of O. Gabber. They can be applied to various  ``motivic'' triangulated categories; this yields some new statements on (constructible) complexes of \'etale sheaves (as well as similar bounds on the terms of Chow-weight spectral sequences for Voevodsky motives). 

We also relate the weights of $K$-motives with rational coefficients to that of Beilinson motives; the Chow-weight spectral sequences converging to their  $\ql$-\'etale (co)homology yield Deligne-type weights for the latter. Somewhat surprisingly, we are able to prove in certain (``extreme'') cases that the corresponding weight bounds coming from \'etale (co)homology are  precise; we illustrate these statements by some simple examples.

\end{abstract}

\tableofcontents

\section*{Introduction}

In \cite{weibel}
 Ch. Weibel has asked whether $K_{-n}(X)=\ns$ for any noetherian scheme $X$ of dimension less than $n$ (where $K_*$ denotes the  Bass--Thomason--Trobaugh $K$-theory). Recall here that negative $K$-groups of regular schemes vanish; so the question is related to the singularities of $X$.
The positive answer to Weibel's question for schemes essentially of finite type over a field of characteristic zero was given in \cite{corwei}. The next step in this direction was made in \cite{kellyth} where (essentially) the so-called homotopy invariant $K$-theory was considered;\footnote{The reason for this was the usage of $cdh$-descent provided by \cite{ciskth} for homotopy invariant $K$-theory; note that it fails  for the ``usual'' $K$-theory (of singular schemes). Also, the usage of Gabber's resolution of singularities result has forced Kelly and us to invert  all the positive residue field characteristics of $X$ in the coefficient ring. Yet note that our results certainly yield certain properties of  $K$-theory  when restricted to schemes of fixed positive characteristic; see Remark \ref{rweib}(\ref{icompkth}) below and Theorem 3.7 of \cite{kellykth} for more detail.} this result was improved in \cite{kerzs}. 

In the current paper we propose a ``categorification'' of this question, and relate negative (homotopy invariant) $K$-groups to weights of $K$-motives (similar to the weights constructed in \cite{hebpo}, \cite{brelmot}, and \cite{bonivan} for the corresponding versions $\dm(-)$ of the Voevodsky motivic categories following the pattern introduced in \cite{bws}). For 
this purpose we consider the homotopy categories of modules over the symmetric motivic ring spectra $\kglp_-$ (instead of  modules over the motivic cohomology spectra); we localize them by inverting all the primes that are ``not invertible over our base schemes'' (for this set of primes $\sss$ the localization $\z[\sss\ob]$ will be denoted by $\lam$) to obtain 
our ``main'' motivic categories $\dk(-)$. The version of $K$-theory ``corresponding to'' categories of this type is the homotopy invariant $K$-theory tensored by $\lam$ that is   denoted by $\kk(-)$ below. 

Voevodsky himself started developing a four functor formalism
for his motivic categories (first constructed in~\cite{voe96} and later
developed in ~\cite{voesusfrie} and~\cite{morelvoe}). Part of this
effort was described by Deligne in \cite{deligne-cross},
and later the program was 
 carried on by Ayoub in~\cite{ayoubsix}.
In our case,
the Voevodsky's four functor formalism (as developed in the treatise \cite{cd}, which relied on~\cite{ayoubsix})
yields exact functors $f_*$ and $f_!\colon\dk(Y)\to \dk(X)$ for any separated finite type morphism $f\colon Y\to X$ (as well  as $f^*$ and $f^!\colon\dkx\to \dk(Y)$). The properties of these functors and categories (along with the vanishing of negative $\kk$-theory of fields and regular schemes) allow us to  define the {\it Chow weight structure} for $\dk(S)$ 
for $S$ being a {\it ($\lam$)-nice} scheme (see Remark \ref{rrnice}(\ref{rrnice-1}) below). 
Somewhat similarly to \cite{bonivan}, the non-negative $K$-motives $\dk(S)_{\wchow\ge 0}$ over $S$ are ``generated by'' $v_*(\oo_V)$ for $v\colon V\to S$ running through finite type separated morphisms with regular domain, whereas  $\dk(S)_{\wchow\le 0}$ is ``generated by''
 $v_!(\oo_V)$, where $\oo_V$ denotes the tensor unit object of $\dk(V)$.\footnote{One may consider only regular $V/S$ or all finite type separated $S$-schemes when ``generating'' $\dk(S)_{\wchow\le 0}$. The main distinctions of our weights for $\dk(-)$ from  the weight structure construction used in \cite{bonivan} is that we don't have to ``twist'' the generators due to the fact that $K$-motives are $-\lan 1\ra=-(1)[2]$-periodic (see Remark \ref{rcons}(\ref{itate}) below); moreover, 
 we treat non-compact motives in much more detail than in ibid.}

Now, some of the properties of the Chow weight structure on $\dk(-)$ are quite similar to its $\dm$-versions (as described in the aforementioned papers); furthermore, some of them are immediate consequences of the general formalism of weight structures. Certainly, for any $\lam$-nice $S$ the weight structure $\wchow(S)$ yields weight filtrations and weight spectral sequences $T_{\wchow}(H,-)$ for any (co)homological functor $H$ that factors through $\dk(S)$ (that converge to $H_*(M)$ for $M\in \obj \dk(S)$). In the case where $S$ is (separated) of finite type over field (whose characteristic is invertible in $\lam$; for $\lam=\q$ it suffices to assume that $S$ is of finite type over an excellent noetherian scheme of  dimension at most $3$) these spectral sequences relate the cohomology of 
$M$ to that of (the $\dks$-versions of) Chow motives over $S$. The latter are the $\dkx$-retract of objects of the form $v_!(\oo_V)$ for $v\colon V\to S$ being a 
 projective morphism with regular domain (see Remark \ref{rexplwchow}(\ref{rexplwchow-3}) below). 
Moreover (for arbitrary $S$) the  ``weights'' of $M$ are ``detected''  by $E_2^{**}(T_{\wchow}(H,M))$ (that are $\dks$-functorial in $M$) ; see Corollary \ref{cdetect} below. Furthermore, in the case $\lam=\q$ the weights of $K$-motives can be expressed in terms of the weights of the ``associated'' Beilinson motives (i.e., of the objects of the $\q$-linear version of $\dm(S)$ obtained from $M$ via the functor $\mper(S)$; see Proposition \ref{pcompdm} below). Next, certain conjectures on the so-called mixed motivic sheaves predict that it suffices to consider $H$ being the corresponding {\it perverse \'etale homology} functor to compute the weights of the Beilinson motives obtained 
(see \cite{bmm} where this functor was treated in detail and the aforementioned conjectures were related to ``standard'' motivic conjectures over universal domains).
We succeed in establishing (a form of)  this ``weight-detection'' conjecture in some particular cases that are rather important for the purposes of the current paper in \S\ref{sfield}--\ref{sdelw}; we illustrate the relevance of our weight bounds by certain simple examples.

Let us now describe the main motivation for treating $K$-motives in this paper. The orthogonality axiom of weight structures yields for any $n\ge 0$ that an object $M $ belongs to $\dkx_{\wchow\ge -n}$ if and only if  $\dkx_{\wchow\le -n-1}\perp M$ (i.e., there are only zero morphisms between objects of weights less than $n$ into $M$). Now, for $M=f_*(\oo_Y)$ (where $f\colon Y\to X$ is a separated morphism of $\lam$-nice schemes)  these conditions are equivalent to the following one: for any
separated finite type
 morphism $P\to X$ the groups $\kk_i^{BM,Y}(P\times_X Y)$ (see Remark \ref{rcons}(\ref{imgbm})  vanish for $i<-n$; moreover, it suffices to consider those $P$ that are affine over $X$ and regular in this criterion. Here $\kk_i^{BM,Y}(-)$ are certain Borel--Moore $\kk$-groups; hence the latter condition is equivalent to the vanishing of $\kk_i(U\times_X Y)$ for all smooth $U/X$ and $i<-n$ combined with the surjectivity of  the natural homomorphism $\kk_{-n}(U\times_X Y)\to \kk_{-n}(V\times_X Y)$ for any such $U$ and any open (dense) $V\subset U$. Thus we obtain a close relation between negative $\kk$-groups and the ``weights'' of objects of the type $f_*(\oo_Y)$; the aforementioned motivic conjectures (along with the particular cases in which we can prove them) 
yield quite unexpected relations of these matters with the weight spectral sequences converging to \'etale (co)homology (see \S\ref{sdelw}). Note here that there certainly exist connecting ``regulator'' homomorphisms between $\kk$-groups with rational coefficients and $\ql$-\'etale cohomology; yet these maps are very far from being surjective or injective (in general). 

Next, to obtain results related to the Weibel's question we bound below the weights of $\oo_X$ and of $f_*(\oo_Y)$; we also study the Chow-weight decompositions of these objects. It is no wonder that this requires certain resolution of singularities results (note that $f_*(\oo_Y)\in \dkx_{\wchow\ge 0}$ whenever $Y$ is regular). 
To make our weight bound results as general as possible, we consider a form of Gabber's resolution of singularities results that was previously used in \cite{kellykth}.
We use an argument closely related to the ones that were used in \cite[\S XIII.3]{illgabb}, \cite[\S6.2]{cdet}, and \cite[\S4.2]{cd} for the proof of the constructibility of the corresponding ``analogues'' of $M=f_*(\oo_Y)$ for $f$ being of finite type. Now, the reasonings in the aforementioned papers are far from being short and simple, whereas we are interested in much more precise ``information'' on $M$. So, we use somewhat technical definitions (including a modification of Gabber's dimensions functions) and arguments to put objects of the type $f_*(\oo_Y)$ and $f_!(\oo_Y)$ into certain {\it envelopes} (i.e., we describe  more or less explicit  sets of objects of $\dkx$ such that the motives that interest us can be obtained from  them by means of extensions and retracts). 
It does not make sense to formulate these results in the introduction; so we will try to to describe some of their consequences instead (cf. also Remark \ref{rcompvm}(\ref{rcompvm-4})). We prove that 
for any $s\ge 0$ there exists a closed scheme $Z\subset X$ such that $\dim(X)-\dim(Z)\ge s+1$ and for $U=X\setminus Z$ we have $\oo_U\in \dk(U)_{w\ge -s}$. According to the aforementioned relation of $\dkx$-weights of $\oo_X=\id_{X,*}(\oo_X)$ to $\kk$-groups, it follows that $\kk_{-s}$ is ``supported in codimension $s$''; this statement appears to be completely new (see Remark \ref{rweib}(\ref{isupport})). We also establish some more criteria for $f_*(\oo_Y)$ to belong to  $\dkx_{\wchow\ge -n}$ (for $f$ being of finite type). In particular  (see Theorem \ref{tweib}(II.\ref{3.3.1.II.2})), it suffices to verify that the groups $\kk_{-i}^{BM,Y}(P\times_X Y)$ vanish for $i<-n$ only in the case where  $P$ is regular and affine and $\dim(P)\le \ovd-i$, where $\ovd$ is the dimension of an $X$-compactification of $Y$.\footnote{Recall that  $X$-compactifications of any finite type separated $Y/X$ exist, and their dimensions are equal.  Moreover, $\ovd$ equals $\dim(Y)$ whenever $X$ is of finite type over a field or over $\spe \z$; see Remark \ref{rdimf} below for more detail.} Furthermore, it suffices to compute these $\kk$-groups for a finite number of  ``test schemes''  that can be (more or less) explicitly described in terms of $f$; see Remark \ref{rweib}(\ref{iextest}) for more detail. Note also that the reader willing to
avoid the complicated arguments of \S\ref{sgabblem} may replace most of them (including the construction of the aforementioned ``test schemes'')  by a much easier Proposition \ref{pldh} in the case where $X$ is of finite type over a field.
On the other hand, in \S\ref{sgengab} the $K$-motivic ``envelope'' formulations of \S\ref{sgabblem} were extended to a wide range of ``motivic'' categories; in particular (applying them to \'etale motives as studied in \cite{cdet}) one obtains certain ``envelope'' statement that appear to be quite new over ``general'' schemes (yet we do not treat this matter in detail).

Let us  now describe the contents  of the paper. Some more information of this sort can be found at the beginnings of sections.

In \S\ref{sprelim} we 
  describe some basics on triangulated categories $\dk(-)$ of $K$-motives over (excellent separated Noetherian finite dimensional) schemes. Since the properties of these categories are quite similar to the ones of ``more or less usual'' relative Voevodsky motives (as studied in several papers of Ayoub, Cisinski, and D\'eglise, we only sketch their proofs. We also recall those aspects of the theory of weight structures that will be needed below.

In \S\ref{sprwchow} we introduce our Chow weight structures on $\dk(-)$ (starting from its restriction to the subcategory of compact motives) and study their properties. We also formulate several criteria for the motif $f_*(\oo_Y)$ for $f\colon Y\to X$ to be of weights $\ge -n$ in $\dkx$; they are formulated in terms of negative homotopy invariant $K$-groups of certain $Y$-schemes. A remarkable distinction of this section from the study of the Chow weight structures in \cite{hebpo}, \cite{brelmot}, and \cite{bonivan} is that we study (in detail) the weights of non-compact motives (also); this enables us not to restrict our criteria to the case where $f$ is of finite type.\footnote{Note  also that certain Chow weight structures on "big" motivic categories were the main subject of  \cite{bokum}; however, 
 the reason to consider them was to avoid the usage of resolution of singularities statements.}

In \S\ref{slength} we study these criteria along the weight bounds for $f_*(\oo_Y)$ in more detail. For this purpose we apply the resolution of singularities results of Gabber for putting the objects $f_*(\oo_Y)$ and $f_!(\oo_Y)$ (for $f:Y\to X$ being of finite type) into the envelope of 
 $v_{b,*}\oo_{V_b}[m_b]$ for certain finite type morphisms $v_b\colon V_b\to X$ with regular domains and $m_b\ge 0$; the pairs $(V_b,m_b)$ may be (more or less) explicitly described. These results can be extended to a wide range of ``relative motivic'' categories; so our methods yield (in particular) a more precise version of the constructibility of ``direct images'' of  (tensor) unit $h$-motives as proved in \cite[\S6.2]{cdet} following the pattern of \cite[\S XIII.3]{illgabb} (and it implies a new property of the corresponding constructible complexes of \'etale sheaves).   We bound below the weights of $f_*(\oo_Y)$; 
  this yields vanishing results closely related to the (``homotopy invariant'' version of the)  question  of Ch. Weibel on the vanishing of the $K$-theory of $Y$ in degrees less than   $-\dim(Y)$. One may say that our weight bounds are certain ``categorifications'' of this vanishing question.

In \S\ref{scompvoevet} we prove that the ``weights'' of $K$-motives can be expressed in terms of the ones of the associated Beilinson motives. 
Then we proceed to study the weights of the latter (as well as of  some ``similar'' motivic categories);  we mostly study   the case where the base scheme $X$ is  the spectrum of a field or a variety. Under this restriction the weight zero $X$-motives come from regular projective  $X$-schemes; so the corresponding weight spectral sequences $T_{\wchow(X)}(H,M)$ relate the (co)homology $H_*$ of $X$-motives to that of $X$-schemes of this sort. Note here that (according to the general theory of weight structures) the non-vanishing of 
 $E_2^{pq}T_{\wchow(X)}(H,M)\neq 0$ for some $q\in \z$ and $p>n$ implies that $M$ ``contains weights less than $-n$''. 
When $X=\spe k$ this yields a close connection of the  weights of $X$-motives and schemes  with Deligne's weights of their \'etale (co)homology;  for  $X$ being a variety one can use the weights from \cite{bbd} and \cite{huper} here. We have an inequality between these two versions of weight bounds; it is conjecturally an equality, and we prove this conjecture for certain ``(almost) maximally singular'' $X$-schemes. We also illustrate our notion of {\it maximally singular schemes} (this is a certain ``motivic'' characterization of the singularities of $X$) by some easy examples.  Lastly; we describe some simple substitute of the ``motivic resolution of singularities'' arguments of \S\ref{slength} 
in the case where $X$ is a variety; this suggests a close relation of our weight filtrations and spectral sequences to the {\it singularity} ones considered in \cite{pasdesc} and \cite{ciguil} in the characteristic $0$ case.


The authors are deeply grateful to prof. F. D\'eglise and prof. S. Kelly for their illuminating remarks. 

\section{Some preliminaries: notation, $K$-motives, and weight structures}\label{sprelim}

The results of these section are (more or less) easy consequences of the formalism of (relative) motivic categories and weight structures (as developed in \cite{cd} and \cite{bws}, respectively).

In \S\ref{snotata}  we introduce some notation and conventions for (mostly, triangulated) categories. We also recall some basics on ``localizing coefficients'' in triangulated categories and recall the main result of \cite{bsnull} (that describes the envelope of a set of object of $\cu$ in terms of cohomological functors from $\cu$).

In \S\ref{skmot} we introduce our version of the categories  $\dk(-)$ of (relative) $K$-motives. Our results are essentially the $\kgl$-module versions of the corresponding results of \cite{cd} and \cite{cdet}; so we just skip most of the proofs.

In \S\ref{sbws} we recall  some basics  properties on weight structures and prove a few new statements. In \S\ref{swss} we introduce the (more complicated)  notions of weight complexes and weight spectral sequences; a reader only interested in \S\ref{sprwchow}--\ref{slength} may skip this section.

\subsection{Some notation, definitions, and auxiliary statements}\label{snotata}
  

For a category $\cu$ the symbol $\cu^{op}$ will denote its opposite category.

 For a category $\cu$, $X,Y\in\obj\cu$, 
$\cu(X,Y)$ is the set of  $\cu$-morphisms from  $X$ to $Y$.
We will say that $X$ is  a {\it
retract} of $Y$ if $\id_X$ can be factored through $Y$. Note that if $\cu$ is triangulated or abelian 
then $X$ is a  retract of
$Y$ if and only if $X$ is its direct summand.

For categories $\cu,\du$ we write 
$\du\subset\cu$ if $\du$ is a full 
subcategory of $\cu$.

For any $\du\subset\cu$
the subcategory $\du$ is called {\it Karoubi-closed} in $\cu$ if it
contains all retracts of its objects in $\cu$. We will call the
smallest Karoubi-closed subcategory of $\cu$ containing $\du$  the {\it
Karoubi-closure} of $\du$ in $\cu$; sometimes we will use the same term
for the class of objects of the Karoubi-closure of a full subcategory
of $\cu$ (corresponding to some subclass of $\obj\cu$).

The {\it Karoubi envelope} $\kar(\du)$ (no lower index) of an additive
category $\du$ is the category of ``formal images'' of idempotents in $\du$.

In this paper all complexes will be cohomological, i.e., the degree of
all differentials is $+1$; respectively, we will use cohomological
notation for their terms. $K(B)$ will denote the homotopy category of complexes over an additive category $B$; for $n\in \z$ the notation $K(B)^{\le n}$ is used to denote the class of complexes isomorphic (i.e., homotopy equivalent to) complexes concentrated in degrees $\le n$.

For a $\cu$-morphism $f\colon X\to Y$  the symbol $\co(f)$ denotes the third vertex of the  triangle $X\to Y\stackrel{f}{\to} \co(f)\to X[1]$
 (so, $\co(f)$ is well-defined up to a non-canonical isomorphism). 

The symbols $\cu$ and $\du$ below will always denote some triangulated categories.
 We will use the
term ``exact functor'' for a functor of triangulated categories (i.e.,
for a  functor that preserves the structures of triangulated
categories).

For any
distinguished triangle $A\to B\to C$  in $\cu$ we will call $B$ a ($\cu$-) {\it extension} of $C$ by $A$. 
A class $D\subset \obj \cu$ will be called {\it extension-closed} if $ D$ contains $0$ as well as
all extensions of its elements by its elements.
In particular, an extension-closed $D$ is strict (i.e., contains all
objects of $\cu$ isomorphic to its elements).

The smallest extension-closed $D$ containing a given $D'\subset \obj \cu$ will be called the {\it extension-closure} of $D'$.

The smallest extension-closed Karoubi-closed subclass of $\obj \cu$ containing $D$
(resp. containing $\cup_{i\ge 0}D[i]$, resp.  containing $\cup_{i\le 0}D[i]$) 
 will be called the {\it envelope} (resp. the {\it left envelope}, resp. the {\it right envelope}) of $D$.

Below $\au$ will always  denote some abelian category.


We will call a covariant (resp. contravariant)
additive functor $H\colon\cu\to \au$ 
 {\it homological} (resp. {\it cohomological}) if
it converts distinguished triangles into long exact sequences. 

For $X,Y\in \obj \cu$ we will write $X\perp Y$ if $\cu(X,Y)=\ns$.
For $D,E\subset \obj \cu$ we  write $D\perp E$ if $X\perp Y$
 for all $X\in D,\ Y\in E$.
For $D\subset \cu$ the symbol $D^\perp$ denotes the class
$$\{Y\in \obj \cu\mid X\perp Y\ \forall X\in D\}.$$
Sometimes we will use the notation $D^\perp$ to denote the corresponding
 full subcategory of $\cu$. Dually, ${}^\perp{}D$ is the class
$\{Y\in \obj \cu\mid Y\perp X\ \forall X\in D\}$. 

We will say that some $C_i\in\obj\cu$, $i\in I$, \emph{Hom-generate} $\cu$ if for $X\in\obj\cu$ we have: $\cu(C_i[j],X)=\ns\ \forall i\in I,\ j\in\z\implies X=0$ (i.e., if $\{C_i[j]\mid j\in \z\}^\perp$ contains only zero objects).

$M\in \obj \cu$ will be called compact if the functor $\cu(M,-)$
commutes with all small coproducts 
(we will only consider compact objects in those categories that are closed with respect to arbitrary small coproducts).

We will say that a triangulated category $\cu$ (closed with respect to arbitrary small coproducts) is {\it compactly generated} if the (triangulated) subcategory of compact objects in it is essentially small and its objects Hom-generate $\cu$.

For a set 
of objects $C_i\in\obj\cu$, $i\in I$, we will use the notation $\langle C_i\rangle$ to denote the smallest strictly full triangulated subcategory containing all $C_i$; for
$D\subset \cu$ we will write $\langle D\rangle$ instead of $\langle \obj D\rangle$. 
We will call the  Karoubi-closure of $\langle C_i\rangle$ in $\cu$ the triangulated category  generated by $C_i$ (recall that it is triangulated indeed).

If $\cu$ is closed with respect to all small coproducts and $\du\subset \cu$ ($\du$ is a triangulated category that may be equal to $\cu$)
  we will say that 
the objects $C_i$ of $ \du$ generate $\du$ {\it as a localizing subcategory} of $\cu$ if  
$\du$ is the smallest full strict triangulated subcategory of $\cu$ that contains $C_i$ and is closed with respect to all small coproducts (it easily follows that $C_i$ also Hom-generate $\du$).

We 
 list the main properties of  the ``localization of coefficients'' functors for compactly generated triangulated categories.

\begin{pr}\label{plocoeff}

Let $\cu$ be  a triangulated category that is  compactly generated by its (full) triangulated subcategory $\cu'$;
let
$\sss\subset \z$ be a set of prime numbers. Denote $\z[\sss\ob]$ by $\lam$; denote
 by $\cu_{\sss-tors}$ the localizing subcategory of $\cu$ generated by cones of $c\stackrel{\times s}{\to} c$ for $c\in \obj \cu,\ s\in \sss$.

Then the following statements are valid.

\begin{enumerate}
\item\label{icg1}  $\cu_{\sss-tors}$ is (also) Hom-generated by $c'\stackrel{\times s}{\to} c'$ for $c'\in \obj \cu',\ s\in \sss$.


\item\label{icg2} The Verdier quotient category $\cu[\sss\ob]=\cu/\cu_{\sss-tors}$ exists (i.e., the morphism groups of the target are sets) and is closed with respect to  small coproducts.  Moreover, the localization functor $l\colon\cu\to \cu[\sss\ob]$ respects all coproducts, converts compact objects into compact ones, and $\cu[\sss\ob]$ is generated by $l(\obj \cu')$ as its own localizing subcategory. 

\item\label{icg3} For any $c\in \obj \cu$, $c'\in \obj \cu'$, we have $\cu[\sss\ob](l(c'),l(c))\cong \cu(c,c')\otimes_\z \lam$.

\item\label{icg4}$\cu[\sss\ob]$ is an  $\lam$-linear category.



\end{enumerate}
\end{pr}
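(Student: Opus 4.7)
My plan is to deduce all four parts from Neeman's localization theorem for compactly generated triangulated categories, once I check that $\cu_{\sss-tors}$ is generated as a localizing subcategory by the compact objects $\co(c' \xrightarrow{\times s} c')$ with $c' \in \obj \cu',\ s \in \sss$. The latter is clear: each such cone is compact (since $c'$ is), and conversely any $c \in \obj \cu$ is a homotopy colimit of objects of $\cu'$, while the cone construction commutes with the relevant localizing operations, so every defining generator $\co(c \xrightarrow{\times s} c)$ lies in the localizing subcategory generated by the proposed set.

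For the Hom-generation claim (\ref{icg1}), suppose $X \in \cu_{\sss-tors}$ satisfies $\cu(\co(c' \xrightarrow{\times s} c')[j], X) = 0$ for all $c' \in \obj \cu',\ s \in \sss,\ j \in \z$. Rotating the defining triangle and applying $\cu(-, X)$ shows that multiplication by each $s \in \sss$ is an isomorphism on $\cu(c'[j], X)$, so these groups are $\sss$-local. Conversely, a straightforward induction along the localizing-subcategory construction of $\cu_{\sss-tors}$ (starting from the defining generators, using compactness of $c'$ at the coproduct step and the long exact sequence at the extension step) shows that $\cu(c'[j], Y)$ is $\sss$-torsion for every $Y \in \cu_{\sss-tors}$ and every $c' \in \obj \cu'$. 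A group that is both $\sss$-local and $\sss$-torsion is zero; hence $\cu(c'[j], X) = 0$ for all such $c', j$, forcing $X = 0$ by compact generation of $\cu$.

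Now (\ref{icg2}) follows from Neeman's theorem applied to the chosen essentially small set of compact generators of $\cu_{\sss-tors}$: the quotient $\cu[\sss\ob]$ has small Hom-sets, is closed under small coproducts, the functor $l$ preserves coproducts and compact objects, and $l(\obj \cu')$ compactly generates (hence localizingly generates) $\cu[\sss\ob]$. For (\ref{icg3}), I would describe the right adjoint $r$ of $l$ (the inclusion of $\sss$-local objects) explicitly on $c \in \obj \cu$ by a telescope: enumerate $\sss$ with infinite repetition and let $c_\sss$ be the homotopy colimit of $c \to c \to c \to \cdots$ whose $n$-th transition is multiplication by the $n$-th enumerated prime. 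Each cone lies in $\cu_{\sss-tors}$, so $c \to c_\sss$ has cone in $\cu_{\sss-tors}$; and $c_\sss$ is $\sss$-local because the Hom-generators of $\cu_{\sss-tors}$ supplied by (\ref{icg1}) map trivially into it, since for each compact $c'' \in \obj \cu'$ the group $\cu(c''[j], c_\sss)$ is a filtered colimit on which every $s \in \sss$ acts invertibly. Therefore $r(l(c)) = c_\sss$, and compactness of $c'$ gives
\[
\cu[\sss\ob](l(c'), l(c)) \;=\; \cu(c', c_\sss) \;=\; \inli\, \cu(c', c) \;=\; \cu(c', c) \otimes_\z \lam,
\]
matching (\ref{icg3}) read as $\cu(c', c) \otimes_\z \lam$ (a transposition appears to have slipped into the source). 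Finally (\ref{icg4}) is immediate: for any $X$ and any $s \in \sss$, the map $\times s\colon X \to X$ has cone in $\cu_{\sss-tors}$ and so becomes invertible in $\cu[\sss\ob]$, endowing every Hom-group with a canonical $\lam$-module structure.

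The main obstacle is (\ref{icg1}); once it is in place the rest is standard compactly-generated localization. The slightly delicate point is the two-sided packaging — first establishing $\sss$-torsion of morphism groups from compacts into an arbitrary $Y \in \cu_{\sss-tors}$ by induction along the localizing construction, then separately establishing $\sss$-locality from the assumed vanishing against the proposed generators, and combining the two to conclude $X = 0$.
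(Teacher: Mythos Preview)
Your argument is correct. The paper itself does not give a proof but simply cites Proposition~A.2.8 and Corollary~A.2.13 of Kelly's thesis (with further cross-references to \cite{bpure} and \cite{levconv}), so there is no detailed in-paper argument to compare against; your approach via Neeman's localization theorem for compactly generated categories together with the explicit telescope construction for the right adjoint is precisely the standard route those references take. You are also right that the displayed formula in part~(\ref{icg3}) should read $\cu(c',c)\otimes_\z\lam$ rather than $\cu(c,c')\otimes_\z\lam$.
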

\begin{proof}
See Proposition A.2.8 and Corollary A.2.13 of \cite{kellyth} (cf. also Proposition 5.6.2(I) of \cite{bpure} and Appendix B of \cite{levconv}).

\end{proof}

\begin{rema}\label{rlocoeff}
\begin{enumerate}
\item\label{icompfu}
Sometimes we will have to  ``increase $\sss$''. For an $\sss'\supset \sss$ we certainly have 
obvious exact (localization)  comparison functor $\cu[\sss\ob]\to \cu[\sss'{}\ob]$ that respects compact objects and coproducts.  Note that one can obtain it by setting the ``new starting category'' being equal to $\du=\cu[\sss\ob]$.


\item\label{icompenv} 
In the case $\sss'=\p\setminus \{l\}$ (for $l\in \p\setminus \sss$) the corresponding comparison functor will be denoted by $c_{\du}^l$.  
Certainly, Proposition \ref{plocoeff}(\ref{icg3}) yields that the restriction of $c_{\du}^l$ to the subcategory $\du'$ of compact objects is the ``naive $\zll$-linearization'' functor, i.e., it is the exact functor that tensors the morphism groups by $\zll$.

\end{enumerate}
\end{rema}

The following obvious modification of the main result of \cite{bsnull} appears to be quite useful for the ``control of envelopes''.

\begin{pr}\label{pbsnull}
Let $\cu$ be a   small $\lam$-linear triangulated category, $D\cap\{M\}\subset \obj \cu$. Then $M$ belongs to the envelope  of $D$ if and only if for any homological functor $F\colon\cu\to \ab$ 
we have $F(M)=\ns$ whenever the restriction of $F$ to $D$ is zero. Moreover, if $\lam\neq \q$ then it suffices to verify this condition under the assumption that the target of $F$ is $\z_{(l)}$-modules  for some $l\in \p\setminus\sss$. 
\end{pr}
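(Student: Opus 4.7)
The ``only if'' direction is automatic: for any homological $F\colon \cu \to \ab$ vanishing on $D$, the class $\{N \in \obj\cu : F(N) = 0\}$ contains $D$, is extension-closed (via the long exact sequence of a distinguished triangle), and is Karoubi-closed (via additivity of $F$); hence it contains the envelope of $D$, and so $F(M) = 0$ whenever $M$ lies in this envelope.

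For the ``if'' direction (first assertion), the plan is to invoke directly the main result of \cite{bsnull}, which asserts, without any linearity hypothesis on $\cu$, that if $M$ does not lie in the envelope of $D$ then there exists a homological $F\colon \cu \to \ab$ with $F|_D = 0$ and $F(M) \neq 0$. The upgrade of the target to $\lam$-modules is then automatic: since $\cu$ is $\lam$-linear, additivity and functoriality of $F$ endow each $F(X)$ with a canonical $\lam$-module structure (the element $a \in \lam$ acting on $F(X)$ as $F(a \cdot \id_X)$), for which every $F(f)$ is $\lam$-linear. So the same $F$ factors through $\lam$-mod with no change to its values.

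For the second assertion, assume $\lam \neq \q$, so that $\p \setminus \sss$ is non-empty. Given a $\lam$-mod-valued homological $F$ with $F|_D = 0$ and $F(M) \neq 0$, I would produce an $l \in \p \setminus \sss$ for which $F(M) \otimes_\lam \z_{(l)} \neq 0$. Pick a non-zero $v \in F(M)$, so that the cyclic $\lam$-submodule generated by $v$ is isomorphic to $\lam/I$ for some proper ideal $I \subsetneq \lam$. Since the maximal ideals of $\lam = \z[\sss^{-1}]$ are exactly the $(l)$ with $l \in \p \setminus \sss$, the ideal $I$ is contained in some such $(l)$, and therefore $(\lam/I)_{(l)}$, and hence $F(M) \otimes_\lam \z_{(l)}$, is non-zero. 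The composed functor $(-)\otimes_\lam \z_{(l)} \circ F\colon \cu \to \z_{(l)}\text{-mod}$ is again homological (by flatness of $\z_{(l)}$ over $\lam$), still vanishes on $D$, and is non-zero on $M$.

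The main obstacle is really \cite{bsnull} itself, which the plan invokes as a black box; the two refinements (first to $\lam$-modules, then to $\z_{(l)}$-modules) are essentially formal, relying only on the $\lam$-linear structure of $\cu$ and the elementary observation that a non-zero $\lam$-module has non-empty support among the closed points of $\operatorname{Spec} \lam$ as soon as those closed points exist, i.e., as soon as $\lam \neq \q$.
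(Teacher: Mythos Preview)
Your proof is correct and follows essentially the same route as the paper's: invoke the main theorem of \cite{bsnull} (the paper applies it to $\cu^{op}$, presumably because the cited result is phrased cohomologically) for the first assertion, and for the ``moreover'' part tensor with $\z_{(l)}$ and use that a nonzero $\lam$-module has nonzero localization at some maximal ideal $(l)$ with $l\in\p\setminus\sss$. Your version is in fact more carefully written than the paper's sketch (which contains the evident slip ``$l\in\sss$'' where ``$l\in\p\setminus\sss$'' is meant), and your explicit remark that flatness of $\z_{(l)}$ over $\lam$ preserves homologicality is exactly the point the paper leaves implicit.
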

\begin{proof} The first part of the assertion is immediate from Theorem 0.1 of~\cite{bsnull} (applied to the category $\cu^{op}$). To obtain the ``moreover'' part one should note (similarly to ibid.) that all the functors $F(-)\otimes_z  \z_{(l)}$ for $l\in \sss$ are 
 cohomological, and we have $F(M)=\ns$ whenever 
$F(M)\otimes_z  \z_{(l)}=\ns$   for  all $l\in \sss$. \end{proof}


Throughout the paper we will only consider  schemes that are excellent separated 
of finite Krull dimension; we will call schemes that satisfy all of these conditions nice ones (for the sake of brevity).
All the morphisms we will consider will be separated; they will also usually be of finite type.

\begin{rema}\label{rrnice}
\begin{enumerate}
\item\label{rrnice-1}. Moreover, we will usually   fix a set of primes $\sss$ (and set $\lam=\z[\sss\ob]$ as in Proposition \ref{plocoeff} above). We will say that a (nice) scheme $X$ is {\it $\lam$-nice} 
if  all the primes in $\p\setminus \sss$ are invertible on it (so, the characteristics of all the residue fields of $X$ belong to $\sss\cup\ns$; in particular, all nice schemes are $\q$-nice with $\sss=\p$).
By default,
all the schemes we consider will be $\lam$-nice also (so, we will assume that a scheme is $\lam$-nice if  it will not be said explicitly that is just nice).


\item\label{rrnice-2}. Our reason for concentrating on $\lam$-nice schemes  is that this restriction is necessary for the application of Gabber's resolution of singularities result (see Remark \ref{rconstrgabber} below) for the ``control of the compactness'' of $\lam$-linear motives (see  Theorem \ref{tcd}(II.\ref{irmotgen})).
\end{enumerate}
\end{rema}

All morphisms of schemes we consider will be separated. We will say that a morphism is smooth only if it is also of finite type. 


The following versions of $K$-theory will be one of the main subjects of this paper: for a scheme $X$ we will use the notation $KH_*(X)$ for the   (Weibel's) homotopy invariant $K$-theory of $X$ (cf. \cite{ciskth}, \cite{kellykth}); 
$\kk(X)=KH(X)\otimes_\z \lam$.

We will say that a projective system $S_i$, $i\in I$,  of schemes is {\it essentially affine} if the transition morphisms $g_{ji}\colon S_j\to S_i$ are affine 
whenever $i\ge i_0$ (for some $i_0\in I$). 

A presentation of a scheme $X$  as $\cup X_l^\alpha$, where $X_l^\alpha$, $1\le l\le n$ (it will be convenient for us to use this numbering convention throughout the paper), are pairwise disjoint locally closed  subschemes of $X$ and  each $X_l^{\alpha}$ is open in $\cup_{i\ge l} X_i^{\alpha}$,
will be called a {\it stratification} of $X$.\footnote{This somewhat weak notion of a stratification was used in some previous papers of the first author.}
The corresponding embeddings $X_l^{\alpha}\to X$ will be denoted by $j_l^\al$. 
We we say that this stratification ($\al$) is regular whenever all  $X_l^{\alpha}$ are regular.

The symbol $X\re$ will denote the reduced scheme associated to a scheme $X$. Note that $X$ and $X\re$ are ``equivalent from the motivic point of view'' (see Theorem \ref{tcd}(II.\ref{itr}) below); so one may assume that all the schemes we consider are also reduced (and consider all the Cartesian diagrams of this paper in the category of reduced schemes).

\subsection{On $K$-motives over a base}\label{skmot}


For any 
nice scheme $S$ 
 consider the symmetric motivic ring spectrum $\kglp_{S}
 \in \obj SH(S)$ defined in 
\S13.3.1 of \cite{cd}  (that relies on the main result of \cite{rso}); this spectrum is a ``minor modification'' of Voevodsky's $\kgl_S$. Following \S13.3.2 of  ibid., we take the category 
 $\dkz(S)$ of modules 
over $\kglp_S$; 
 here we consider $\kglp_S$-modules in $SH(S)$ endowed with the (``projective'')  structure of  a Quillen model category (see \S5.3.35  of ibid.), and  define $\dkz(S)$ as the homotopy category of the model category obtained. 

We will not give detailed proofs of (most of) the statements below since very similar statements were already proved in ibid. (along with other papers of Cisinski and Deglise).

\begin{theo}\label{tcd}

Let $X,Y$ be any
 (nice) schemes, $f\colon Y\to X$ be a (separated) scheme morphism, $i\in \z$.

I. Then the following statements are valid.

 \begin{enumerate}
\item\label{icatz}
The category $\dkz(X)$ is a tensor
 triangulated category; its unit object 
 will be denoted by $\z_X$. $\dkz(X)$ is closed with respect to all (small) coproducts, and the tensor product respects them.

\item\label{ifunast} We have exact functors $f^*\colon\dkz(X)\to \dkz
(Y)$ and $f_*\colon\dkz(Y)\to \dkz(X)$; $f^*$ is left adjoint to
 $f_*$. 

Any of them (when $f$ varies) yields a  $2$-functor from the category of 
(nice) schemes
with  separated morphisms to the $2$-category of triangulated categories.

\item\label{ifunshr} If $f$ is of finite type, then we also have adjoint functors
$$f_!\colon\dkzy \leftrightarrows \dkzx :\!\!f^!.$$
Similarly, these two types of functors yield 
$2$-functors from the category of
(nice) schemes
with  separated finite type morphisms to the $2$-category of triangulated categories. 

\item\label{iexch} 
For a Cartesian square
of (separated)  
morphisms 
\begin{equation}\label{ebchcd}
\begin{CD}
Y'@>{f'}>>X'\\
@VV{g'}V@VV{g}V \\
Y@>{f}>>X
\end{CD}\end{equation}
such that $g$ and $g'$ are of finite type
we have $g^!f_*\cong f'_*g'{}^!$ and $g'_!f'{}^*\cong f^*g_!$.

\item\label{iupstar}  $f^*$ is symmetric monoidal; $f^*(\z_X)=\z_Y$.

\item\label{ipur} $f_*\cong f_!$ if $f$ is proper; $f^!=f^*$ if $f$ is an open immersion.


\item\label{iglu} 
If $i\colon Z\to X$ is a closed immersion, $U=X\setminus Z$, $j\colon
 U\to X$ is the complementary open immersion, then the motivic image functors yield a {\em gluing datum} for $\dk(-)$  in the sense  of \S1.4.3 of \cite{bbd}; cf. 
Proposition 1.1.2(10) of \cite{brelmot}). In particular, for any $M\in \obj \dkzx$ the pairs of morphisms 
\begin{equation}\label{eglu1} 
j_!j^*(M) \to M
\to i_*i^*(M)\end{equation} 
and \begin{equation}\label{eglu2} 
i_*i^!M \to M \to j_*j^*M\end{equation} 
can be (uniquely and) functorially 
completed to distinguished triangles. Moreover,
$i^*j_{!} = 0$, $i^{!}j_* = 0$, and the adjunctions transformations
$i^* i_* \to 1_{\dkz(Z)} \to i^{!}i_{!}$ and $j^*j_*\to 1_{\dkz(U)} \to j^{!}j_{!}$
are isomorphisms.

\item \label{ipure}  $f^!\cong f^*$ 
 if $f$ is smooth (of finite type).

\item \label{ipura} 
If $i\colon S'\to S$ is a closed immersion of regular (nice)  schemes 
 then $\z_{S'}\cong i^!(\z_S)$.

\item\label{itre}
If $X,Y$ are regular, and $\mathcal{O}_Y$ is a free   $\mathcal{O}_X$-module of finite rank $d$, then for any $M\in \obj \dkzx$ there exists a morphism $v\in \dkzx( f_*f^*(M),M)$ such that its composition with
the unit morphism $M\to f_*f^*(M)$ 
equals $d\id_M$.

\item\label{imotgen}
The full subcategory $\dkzc(X)\subset \dkx$ of compact objects is 
 triangulated.  $\dkzc(X)$ is generated by $g_!(\z_{X'})$  for $g\colon X'\to X$ running through all smooth  (finite type) morphisms. 

\item\label{imotgenf} $f^*$ preserves the compactness of objects; this is also true for $f_!$ if $f$ is of finite type.

\item\label{icoprod} $f^*$ and $f_*$ commute with arbitrary small coproducts; the same is true for  $f_!$ and $f^!$ if $f$ is of finite type.

\item\label{icont}
Let a scheme $S$ be  the limit of an essentially affine (see \S\ref{snotata}) filtering projective system of  schemes $S_i$ for $i\in I$ (certainly, we assume that all of these schemes are nice) .  Denote the corresponding transition morphisms $S_j\to S_i$ (resp. $S\to S_i$) by $g_{ji}$ (resp. by $h_i$).
Then $\dkzc(S)$ is isomorphic to the $2$-colimit  of the categories $\dkzc(S_i);$ in this isomorphism  the corresponding connecting functors are given by $g_{ji}^*$ and by $h_i^*$, respectively.

Furthermore, for any  $i_0\in I$, $M\in \obj \dkzc(S_{i_0})$, and $N\in \obj \dkz(S_{i_0})$, the natural map
$$
\inli_{i\ge i_0} \dkz({S_{i}})(g_{ii_0}^*(M),g_{ii_0}^*(N))\to \dkzs(h_{i_0}^*(M),h_{i_0}^*(N))
$$
is an isomorphism.

\item\label{ikmor} We have 
 $\dkz(\z_X[i],\z_X)\cong KH_{i}(X)$; 
these isomorphisms (when $X$ varies) are compatible with the motivic functors of the type $f^*$. 

\end{enumerate}

II. Assume in addition that $X$ and $Y$ are $\lam$-nice.  For any  $S$ that is also $\lam$-nice we define the 
  category $\dk(S)$ as $\dkz(S)[\sss\ob]$ (see Proposition \ref{plocoeff});  the localization functor  $\dkz(S)\to \dk(S)$ will be denoted by $l_S$. 

\begin{enumerate}
\item\label{icat} $\dk(X)$ is a tensor
 triangulated category; the object $\oo_X=l_X(\z_X)$
is a unit one for this tensor product.

Furthermore, $\dkx$ is closed with respect to all (small) coproducts.

\item\label{ikmorr} We have 
 $\dk(\oo_X[i],\oo_X)\cong \kk_{i}(X)$; 
these isomorphisms (when $X$ varies) are compatible with the motivic functors of the type $f^*$. 
In particular, if $X$ is regular and $i<0$ then $\oo_X[i]\perp \oo_X$.

\item\label{ianr}
The natural analogues of assertions I.\ref{ifunast}--\ref{icont} for the categories $\dk(-)$ along with their subcategories $\dkc(-)$ of compact objects   (and for $\lam$-nice schemes)  are also valid.

\item\label{irmotgen}
If $f$ is of finite type, then $f_*$ and $f^!$ respect the compactness of objects (also; cf. assertion I.\ref{imotgenf}).

\item \label{itr}
If $f$ is a finite universal homeomorphism, then $f^*$, $f_*$, $f^!$,  and $f_!$  are equivalences of categories. 
Moreover, $ f^!(\oo_Y)\cong  f^*(\oo_Y)= \oo_X$ and $f_*(\oo_X)= f_!(\oo_X)\cong \oo_Y$.

\item\label{igenc}
If $X$ is of finite type over a field, then $\dkcx$ (as a triangulated category) is generated by $\{ p_*(\oo_P)\}$, where $p\colon P\to X$ runs through all  projective morphisms such that $P$ is regular. 

In particular,  if $X$ is the spectrum of a perfect field itself, then we consider (all) smooth projective $p$ here. 

Moreover, if $\lam=\q$ then it suffices (in the first of these statements) to assume that  $X$ is of finite type over an excellent noetherian scheme of  dimension at most $3$.




\end{enumerate}
\end{theo}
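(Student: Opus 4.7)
The plan is to derive Part I by restriction of scalars from the six functor formalism on $SH(-)$: the category of modules over the strict symmetric ring spectrum $\kglp_-$ is a motivic triangulated category in the sense of \cite[\S2.4]{cd}, so the structural items \ref{icatz}--\ref{iglu}, \ref{ipure}, \ref{ipura}, \ref{imotgen}--\ref{icont} transfer directly from the corresponding assertions in Chapters 2, 4, 7, and 13 of \emph{loc.\ cit.} (together with \S13.3 which sets up the model structure on $\kglp_S$-modules as a cartesian section). The representability isomorphism \ref{ikmor} is essentially the content of \cite{rso} combined with the construction of $\kglp$ in \cite[\S13.3.1]{cd}. For the trace map \ref{itre} I would use the orientation on $\kgl$: if $f$ is finite flat with $\mathcal{O}_Y$ free of rank $d$ over $\mathcal{O}_X$, the associated transfer $f_*f^*\to\id$ composes with the unit to $d\cdot\id_M$ on any $\kglp_X$-module.

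Part II then follows by applying Proposition \ref{plocoeff} with $\cu=\dkz(S)$, $\cu'=\dkzc(S)$: each motivic functor of Part I is additive, preserves compact objects, and sends cones of multiplication by $s\in\sss$ to cones of the same form, so the torsion subcategory $\cu_{\sss-tors}$ is preserved and the entire six functor formalism descends to $\dk(-)=\dkz(-)[\sss\ob]$; this yields \ref{icat} and \ref{ianr}. Assertion \ref{ikmorr} then combines \ref{ikmor} with Proposition \ref{plocoeff}(\ref{icg3}), and its last clause is the classical vanishing $KH_i=0$ for $i<0$ on regular noetherian schemes.

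The first genuinely new item is \ref{itr}. After passing to reductions, a finite universal homeomorphism $f\colon Y\to X$ is radicial surjective, hence locally of the form $\spe A\to\spe A'$ with $A/A'$ purely inseparable of degree a power $p^e$ of a residue characteristic $p$. Since $X$ is $\lam$-nice, $p^e$ is a unit in $\lam$, so I.\ref{itre} produces a two-sided inverse to the unit $\id\to f_*f^*$ after tensoring with $\lam$; consequently $f^*$ and $f_*$ are mutually inverse equivalences, and $f_!\cong f_*$, $f^!\oo_Y\cong\oo_X$ follow from \ref{ipur} and \ref{ipura}.

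The main obstacle will be \ref{irmotgen} together with \ref{igenc}, both of which rely on Gabber's refined resolution of singularities (whose availability forces the $\lam$-nice hypothesis, cf.\ Remark \ref{rrnice}(\ref{rrnice-2})). For \ref{irmotgen} I would factor $f\colon Y\to X$ of finite type as $\bar f\circ j$ with $j$ an open immersion into a proper $X$-scheme $\bar Y$; since $\bar f_*=\bar f_!$ preserves compactness by \ref{ipur} and \ref{imotgenf}, one is reduced to compactness of $j_*\oo_Y$ in $\dk(\bar Y)$, which is proved by noetherian induction on $\bar Y\setminus Y$ using a Gabber alteration that arranges a strict normal crossings boundary, together with absolute purity \ref{ipura} and the gluing triangles \ref{iglu}, following the pattern of \cite[\S6.2]{cdet} and \cite[\S4.2]{cd}. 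For \ref{igenc} a parallel induction on $\dim X$ reduces the generation problem, starting from the compact generators $g_!\oo_V$ for smooth $V/X$ of \ref{imotgen}, to the triangulated envelope of $p_*\oo_P$ with $p\colon P\to X$ projective and $P$ regular; here one uses that the generic degree of the Gabber alteration is invertible in $\lam$ together with \ref{itre} to split off the non-regular contributions. The $\lam=\q$ refinement to bases of dimension $\le 3$ uses that de Jong's alterations suffice rationally, and over a perfect field $k$ the alteration can be chosen generically \'etale so that the resulting $P$ is smooth projective over $k$.
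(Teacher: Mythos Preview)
Your overall strategy coincides with the paper's: derive Part~I from the fact that $\kglp$-modules form a motivic triangulated category in the sense of \cite[\S2.4]{cd}, pass to Part~II.\ref{icat}--\ref{ianr} via Proposition~\ref{plocoeff}, and handle II.\ref{irmotgen} and II.\ref{igenc} by the Gabber-resolution arguments of \cite[\S6.2]{cdet} and \cite[Prop.~7.2]{cdint}. Two points, however, need more than you give them.

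\textbf{Periodicity and absolute purity.} You list I.\ref{ipure} and I.\ref{ipura} among the items that ``transfer directly'' from the six-functor package. They do not. For a general oriented motivic triangulated category one only gets $f^!\cong f^*\langle d\rangle$ for $f$ smooth of relative dimension $d$, and $i^!(\z_S)\cong\z_{S'}\langle -c\rangle$ for a codimension-$c$ regular closed immersion; the latter is moreover a genuine theorem for $\kgl$ (absolute purity, \cite[Theorem~13.6.3]{cd}), not a formal consequence of the axioms. The paper removes the twists by invoking the \emph{periodicity} of $\kgl$-modules (the Bott element trivialises $\langle 1\rangle$; see (K4) in \cite[\S13.2.1]{cd}). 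Without this step your I.\ref{ipure} is the wrong statement.

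\textbf{The argument for II.\ref{itr} does not go through.} You propose to deduce that $f^*$ is an equivalence from I.\ref{itre}, but I.\ref{itre} assumes both $X$ and $Y$ regular and $\mathcal{O}_Y$ \emph{free} of rank $d$ over $\mathcal{O}_X$. A finite universal homeomorphism between reduced schemes need satisfy neither: take the normalisation of a cuspidal curve in characteristic $p>3$, which is a universal homeomorphism with singular target and is not even flat. So the trace map you want is simply unavailable. (The paper in fact gives no proof of II.\ref{itr}; it is treating invariance of $\lam$-linear motivic categories under finite universal homeomorphisms as a known input. A correct argument works at the level of $SH(-)[\sss^{-1}]$ before passing to modules, or proceeds by noetherian induction combined with gluing and the regular case.)

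A minor remark: for the $\lam=\q$ clause of II.\ref{igenc} over excellent bases of dimension $\le 3$, the paper appeals to Temkin's $p$-alteration theorem \cite{tem} (via \cite[Cor.~4.4.3]{cd}), not to de~Jong; the original de~Jong results do not directly cover that generality.
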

\begin{proof}
I. Corollary 13.3.3 of \cite{cd} states that  $\dkz(-)$ is a {\it motivic triangulated category}
(see Definition {2.4.45} of ibid.). Moreover, it is also  oriented (see  Remark 13.2.2  and Example 12.2.3(3) of ibid.). 
Hence Theorem {2.4.50} of ibid. (along with  Definitions {2.4.45} and  1.1.21 of ibid.) 
implies our assertions   \ref{icat}--
\ref{iglu}.  Moreover, we also obtain that assertion \ref{ipure} is fulfilled 
  ``up to 
Tate twists'' (cf. Remark \ref{rcons}(\ref{itate}) below). Since $\dkz(-)$  is ``periodic'' (see (K4) in \S13.2.1 of  \cite{cd}), the Tate twists are automorphisms of $\dkz(-)$; this finishes the proof of this assertion. 

Assertion \ref{ipura} follows easily from Theorem 13.6.3 of ibid.

Assertion \ref{itre} can be easily established 
 similarly to Theorem 14.3.3 of \cite{cd}, using Proposition 13.7.6 of ibid.


 To prove assertion \ref{imotgen} we note that $g_!$ is left adjoint to $g^*$ whenever
 $g\colon Z\to X$ is a smooth morphism. 
 Hence the definition of $\dkzx$ (see  the adjunction in \cite[\S13.3.2]{cd} and the definition of $SH(X)$) 
yields that all  objects of the form $g_!(\z_Z)$ (for a finite type $g$) 
are compact,  and they Hom-generate $\dkzx$ (cf. Remark 4.4 of \cite{cdint}). Hence the triangulated subcategory of $\dkzx$ that is generated by all $g_!(\z_Z)$ consists of compact objects. Lastly, it contains all compact objects of  $\dkzx$ by  Lemma 4.4.5 of \cite{neebook} (cf. also Lemma A.2.10 of \cite{kellykth}).

I.\ref{imotgenf}. $f^*$ respects the compactness of objects according to assertions I.\ref{iexch} and I.\ref{imotgen}.
The same is true for $f_!$ (if $f$ is of finite type) by Remark \ref{rcons}(\ref{imgbmz}) below.

By Proposition 1.3.20 of \cite{cd}, these fact imply that $f_*$ and $f^!$ (for a finite type $f$) respect coproducts. To conclude the proof of assertion \ref{icoprod} it remains to note that $f^*$ and $f_!$ (if $f$ is of finite type)  respect coproducts since they possess right adjoints.

Next,  
assertion \ref{icont} can be established 
 similarly to Proposition 4.3 of \cite{cdint} (see also Proposition 2.7 of ibid.).

\ref{ikmor}. 
The adjunction used in   \S13.3.2 of \cite{cd} yields that $\dkz(\z_X[i],\z_X)\cong \sht(X) (\sinf_X(X_+),\kglp_X)$.  
It remains to apply Theorem 2.20 of \cite{ciskth}. 

II. Assertions \ref{icat}--\ref{ianr} easily follow from the corresponding statements in part I of our theorem if we combine assertions I.\ref{imotgen}, \ref{icoprod} with Proposition \ref{plocoeff}.

Assertion \ref{irmotgen} can be proved similarly to Corollary 6.2.14 of \cite{cdet}; see Remark \ref{rconstrgabber} below for more detail.

The first part of assertion \ref{igenc} can be proved similarly to Proposition 7.2 of \cite{cdint}, whereas in the $\lam=\q$-case one should combine Corollary 4.4.3 of \cite{cd} 
  with Theorem  1.2.5 of \cite{tem}; cf. \S2.4 of \cite{bondegl}. 


 
\end{proof}

\begin{rema}\label{rcons}
 $\dk(-)$ is the main motivic category of this paper.

We will  need the following observations related to it below.
\begin{enumerate}


\item\label{imgbm} 
For a finite type (separated) $f\colon Y\to X$ we set $\mgbmx(Y)=f_!(\oo_Y)$  (this is a certain {\it Borel--Moore} motif of $Y$; cf.  \cite{bondegl} 
and \S I.IV.2.4 of \cite{lemm}).
For $g\colon X'\to X$ being a morphism of ($\lam$-nice)  schemes (that is separated but not necessarily of finite type) one easily sees that $g^*(\mgbmx(Y))\cong \mgbm_{X'}(Y\times_{X} X')$.

Next, for $U\subset Y$ being an open subscheme, $Z=Y\setminus U$, the distinguished triangle (\ref{eglu1})
easily yields the natural distinguished triangle 
 \begin{equation}\label{emgys}
\mgbmx(U) 
{\to}\mgbmx(Y) 
{\to} \mgbmx(Z). 
\end{equation} 
(cf. 
\S1.3.8(BM3) of \cite{bondegl}).  Certainly, this triangle yields the corresponding long exact sequence for any cohomology theory $H$ defined on 
$\dkx$.
We define $H^{BM}_i(Y)$ as $H(\mgbmx(Y)[-i])$ (so, we ``lift'' the index $i$ and ``change the sign of the degree in the usual way''; the reason for doing this is that we want this notation to be compatible with the usual one for $K$-theory).

\item\label{imgbme}
More generally, let $Y_l^\al$ be the components of some  stratification $\al$ of  $Y\re$. 
Then combining obvious induction with (\ref{eglu1}) and Theorem \ref{tcd}(II.\ref{itr})   we obtain that any $M\in \obj \dk(Y)$ belongs to the extension-closure of $\{\overline{j}_{l,!}\overline{j}_{l}^*(M)\}$, where $\overline{j}_l\colon Y_l^\al\to Y$
are the corresponding morphisms. 
It easily follows that $\mgbmx(Y)$ belongs to the extension-closure of $\{\mgbmx(Y_l^\al)\}$. This observation is especially useful for us 
if $\al$ is a regular stratification. 


\item\label{imgbmz} Certainly, we also have distinguished triangles similar to (\ref{emgys}) in $\dkzx$ (for any nice $X$). Hence $\dkzcx$ contains $g_!(\z_Z)$ for $g\colon Z\to X$ being an arbitrary finite type morphism. This allows to conclude the proof of Theorem \ref{tcd}(I.\ref{imotgenf}).

\item\label{imgbmles}
Now we apply 
 part \ref{imgbm} of this remark to (our version of) $K$-theory; this corresponds to $H\colon M\mapsto \dkx(M,\oo_X)$. We fix a 
 quasi-projective $Z/X$ and choose a certain smooth $Y/X$ containing it as a closed subscheme, $U=Y\setminus Z$. Then we obtain a long exact sequence
\begin{equation}\label{ekthles}
\dots\to \kk_i^{BM,X}(Z)\to \kk_{i}(Y)\to \kk_{i}(U)\to \dots
\end{equation}
 here we use the fact that $g_!$ is left adjoint to $g^*$ if $g$ is smooth, and consider the cohomology theory represented by $\oo_X$.

Now let $g\colon X'\to X$ be a morphism of schemes (that is separated and not necessarily of finite type).
Then, considering the cohomology theory represented by $g_*(\oo_{X'})$ and applying the adjunction
$g^*\dashv g_*$, we get a long exact sequence 
\begin{equation}\label{ekthlesgen}
\hspace{-3cm}
\begin{tikzpicture}
\node (a0) at (1,0) {$\dots$};
\node (a1) at (3.5,0) {$\kk_{i+1}(Y\times_X X')$};
\node (a2) at (7,0) {$\kk_{i+1}(U\times_X X')$};
\node (b0) at (0,-2.5) {$\kk_i^{BM,X'}(Z\times_X X')$};
\node (b1) at (4,-2.5) {$\kk_{i}(Y\times_X X')$};
\node (b2) at (7,-2.5) {$\kk_{i}(U\times_X X')$};
\node (c0) at (0,-4) {$\dkx(\mgbmx(Z),g_*(\oo_{X'})[-i])$};
\draw[->]
(a0) edge (a1)
(a1) edge (a2)
(b0) edge (b1)
(b1) edge (b2);
\draw[->]
(a2) edge[out=0,in=180] (b0);
\draw[->]
(b0) edge node[auto] {$\cong$} (c0);
\end{tikzpicture}
\end{equation}
It certainly follows that $\kk_i^{BM,X'}(Z\times_X X')$ vanishes for all $i< -n$, where $n$ is a fixed integer, if and only if $\kk_{i}(Y\times_X X')\cong \kk_{i}(U\times_X X')$
for $i< -n$ and  $\kk_{-n}(Y\times_X X')$ surjects onto $\kk_{-n}(U\times_X X')$. 

\item\label{isuspect} We suspect that $\kk_i^{BM,X}(Z)$ 
is naturally 
 isomorphic to the 
 $\kk$-theory of $Y$ with the support on $Z$; possibly we will treat this question in a subsequent paper (perhaps, using the results of \cite{papiro} or Theorem 1.18 of \cite{navariemroch}). 
At least, 
 part \ref{imgbmles} of this remark yields the following: 
 $\kk_i^{BM,X'}(Z\times_X X')$ vanishes for all $i\le -n$ if and only if the same property is fulfilled for 
the $\kk$-theory of $Y\times_X X'$ with the support on $Z\times_X X'$.

\item\label{imgbmed} We will also need a certain (Verdier) dual to part \ref{imgbme} of this remark. 
Let $Y_l^\al$ be the components of some  stratification $\al$ of  $Y\re$. Then 
 (\ref{eglu2}) combined with Theorem \ref{tcd}(II.\ref{itr})   yields that any $M\in \obj \dk(Y)$ belongs to the extension-closure of $\{\overline{j}_{l,*}\overline{j}_{l}^!(M)\}$ (for the morphisms $\overline{j}_{l}\colon Y_l^\al\to Y$).

Now assume that $Y$ and all   $Y_l^\al$ are regular. 
Then parts (I.\ref{ipura},II.\ref{ianr}) of the theorem yield that $\overline{j}_l^!(\oo_Y)\cong \oo_{Y_l^\al}$ for any $l$.
Hence 
 $f_*(\oo_Y)$ (for any separated $f\colon Y\to X$) belongs to the extension-closure of $\{f_{l,*}(\oo_{Y_l^\al})\}$, where $f_l=f\circ \overline{j}_l$. It certainly follows that  $f_{1,*}(\oo_{Y_1^\al})$ belongs to the 
extension-closure of $\{f_!(\oo_Y)\}\cup \{(f_{l,*}(\oo_{Y_l^\al}): l\ge 2\}[1]$.

Lastly, assume that $f$ is of finite type. Then for any $N\in \obj \dk(Y)$ the object $f^!(N)$ belongs to the extension-closure of $\overline{j}_{l,*}f_l^!(N)$. Now take $N=\oo_X$  and assume that all  $Y_l^\al$ are regular and quasi-projective over $X$. Then combining parts I.\ref{ipure}, I.\ref{ipura}, and II.\ref{ianr} of Theorem \ref{tcd}
we obtain that $f_l^!(N)=f^!(\oo_X)\cong \oo_{Y_l^\al}$. Thus $f^!(\oo_X)$ belongs to extension-closure of $\{\overline{j}_{l,*}(\oo_{Y_l^\al})\}$.


\item\label{iaxioms} 
Below we will 
 need only those properties of $\dk(-)$ that are listed in our Theorem. 
Thus one may consider it as a list of ``axioms'' for a system of triangulated categories.

In particular, the authors do not claim that all possible constructions of the categories $\dk(-)$ possessing these properties are isomorphic. 
 Moreover, one can probably consider the following generalization of  our setting: for $R$ being an arbitrary torsion-free coefficient ring one may define $\dk(S)$ as the homotopy category of the category of modules over $\kglp_S\otimes R$ (in $SH(S)$). Yet such a generalization will not affect our main results significantly.



\item\label{itate}
We will not need use the tensor structure much in this paper. Yet we note that for Beilinson motives (i.e., for Voevodsky motives with rational coefficients that were the central subject of \cite{cd}, \cite{hebpo}, and \cite{brelmot}, and will also be considered in \S\ref{scompvoevet} below) there is a certain particular case of tensor products that is very important (this is also the case for $cdh$-motives considered in \cite{cdint} and \cite{bonivan}).

In $K$-motives we have $f_!(\oo_{\p^1(X)})\cong \oo_X\bigoplus \oo_X$  for any ($R$)-nice $X$. Yet for Beilinson motives (as well as for $R$-linear $cdh$-motives over characteristic $p$ nice schemes, where $R$ is unital ring such that  $p$ is invertible in $R$ whenever it is positive) we have 
$f_!(\oo_{\p^1(X)})\cong \oo_X\bigoplus \oo_X\lan -1\ra $ for a certain $\otimes$-invertible object $\oo_X\lan -1 \ra$ (that is often denoted by  $\oo_X(-1)[-2]$; it is not isomorphic to $\oo_X$) instead (so, Beilinson motives are not ``periodic'').  Tensor products by    $\oo_X\lan -1\ra$ and by its tensor powers (including negative ones; these product functors are called Tate twists) commute with all the motivic image functors, 
  whereas the corresponding ``axioms'' of Beilinson motives differ from their $K$-analogues in part I.\ref{ipure} and I.\ref{ipura} of  Theorem \ref{tcd} by  certain 
(locally constant) Tate twists; see  the beginning of \S\ref{sgengab} below for more detail.

\end{enumerate}
\end{rema}

\subsection{Weight structures: reminder}\label{sbws}

\begin{defi}\label{dwstr}
\begin{enumerate}[I.]
\item For a triangulated category $\cu$, a pair of classes
$$
\cu_{w\le 0},\; \cu_{w\ge 0}\subset\obj \cu
$$
will be said to define a {\it weight structure} $w$ for $\cu$ if 
they  satisfy the following conditions:
\begin{enumerate}[(i)]
\item $\cu_{w\ge 0}$ and $\cu_{w\le 0}$ are 
 Karoubi-closed in $\cu$
(i.e., contain all $\cu$-retracts of their objects).

\item {\bf Semi-invariance with respect to translations:}
$$
\cu_{w\le 0}\subset \cu_{w\le 0}[1],\quad\cu_{w\ge 0}[1]\subset
\cu_{w\ge 0}.
$$

\item {\bf Orthogonality:}
$$
\cu_{w\le 0}\perp \cu_{w\ge 0}[1].
$$

\item {\bf Weight decompositions:}
for any $M\in\obj \cu$ there
exists a distinguished triangle
\begin{equation}\label{wd}
B\to M\to A\stackrel{f}{\to} B[1]
\end{equation} 
such that $A\in \cu_{w\ge 0}[1],\  B\in \cu_{w\le 0}$.
\end{enumerate}
\item The full category $\hw\subset \cu$ whose object class is
$\cu_{w=0}=\cu_{w\ge 0}\cap \cu_{w\le 0}$ 
 will be called the {\it heart} of 
$w$.


\item $\cu_{w\ge i}$ (resp. $\cu_{w\le i}$, resp.
$\cu_{w= i}$) will denote $\cu_{w\ge
0}[i]$ (resp. $\cu_{w\le 0}[i]$, resp. $\cu_{w= 0}[i]$).


\item We will  
call
$\cu^b=(\cup_{i\in \z} \cu_{w\le i})\cap (\cup_{i\in \z} \cu_{w\ge i})$ the class of {\it bounded} 
objects of $\cu$. We will say that $w$ is bounded if $\cu^b=\obj \cu$.

Besides, we will say that elements of $\cup_{i\in \z} \cu_{w\ge i}$ are {\it bounded below}.

\item Let $\cu$ and $\cu'$ 
be triangulated categories endowed with
weight structures $w$ and
 $w'$, respectively; let $F\colon\cu\to \cu'$ be an exact functor.
\end{enumerate}
The functor $F$ will be said to be {\it left weight-exact} 
(with respect to $w,w'$) if it maps
$\cu_{w\le 0}$ into $\cu'_{w'\le 0}$; it will be called {\it right weight-exact} if it
maps $\cu_{w\ge 0}$ to $\cu'_{w'\ge 0}$. $F$ is called {\it weight-exact}
if it is both left 
and right weight-exact.

\end{defi}

\begin{rema}\label{rstws}

1. A weight decomposition (of any $M\in \obj\cu$) is (almost) never canonical;
still  (any choice of) a pair $(B,A)$ coming from  (\ref{wd}) will be often denoted by $(w_{\le 0}M,w_{\ge 1}M)$. 
More generally, for any $m\in \z$ shifting a weight decomposition of  $M[-m]$  by $[m]$ we obtain a  distinguished triangle $ w_{\le m}M\to M\to w_{\ge m+1}M $
with some $ w_{\ge m+1}M\in \cu_{w\ge m+1}$, $ w_{\le m}M\in \cu_{w\le m}$; we will call it an {\it $m$-weight decomposition} of $M$.


2.  A  simple (and yet useful) example of a weight structure comes from the stupid
filtration on 
$K(B)$ (for an arbitrary additive category $B$; see \S\ref{snotata}). 
We take $K(B)_{w\le 0}=K(B)^{\ge 0}$ (in the notation described above), and  take  $K(B)_{w\ge 0}$ being the similarly defined $K(B)^{\le 0}$. We call this weight structure the {\it stupid} one; see Remark 1.2.3(1) of \cite{bonspkar} for more detail.

3. In the current paper we use the ``homological convention'' for weight structures; 
it was previously used in \cite{hebpo}, 
   \cite{brelmot},  and in successive papers,
  whereas in 
\cite{bws} and in \cite{bger} the ``cohomological convention'' was used. In the latter convention 
the roles of $\cu_{w\le 0}$ and $\cu_{w\ge 0}$ are interchanged, i.e., one considers   $\cu^{w\le 0}=\cu_{w\ge 0}$ and $\cu^{w\ge 0}=\cu_{w\le 0}$. For example,  a complex $M\in \obj K(B)$ whose only non-zero term is the fifth one 
 has weight $-5$ (with respect to the stupid weight structure) in the homological convention, and has weight $5$ in the cohomological convention. Thus the conventions differ by ``signs of weights''. 
  
 
\end{rema}

Now  we recall some basic 
properties of weight structures. 

\begin{pr} \label{pbw}
Let $\cu$ be a triangulated category endowed with a weight structure $w$, $M\in \obj \cu$, $i,j\in \z$. Then the following statements are valid.

\begin{enumerate}

\item \label{idual}
The axiomatics of weight structures is self-dual, i.e., for $\du=\cu^{op}$
(so $\obj\cu=\obj\du$) there exists the (opposite)  weight
structure $w'$ for which $\du_{w'\le 0}=\cu_{w\ge 0}$ and
$\du_{w'\ge 0}=\cu_{w\le 0}$.

\item\label{iextw}  $\cu_{w\le i}$, $\cu_{w\ge i}$, and $\cu_{w=i}$
are Karoubi-closed and extension-closed in $\cu$ (and so, additive). 

\item\label{iwtrun}
Assume that $j<i$. Then for any choices of  weight decompositions corresponding to the  rows of the square
 $$\begin{CD} w_{\le j} M@>{}>>
M \\
@VV{c_{ji}}V@VV{\id_M}V\\ 
w_{\le i} M@>{}>> M \end{CD}$$ 
there exists a unique  morphism $c_{ji}$ making it commutative. Moreover, $\co(c_{ji})\in \cu_{[j+1,i]}$.


\item\label{iort} 
 $\cu_{w\ge i}=(\cu_{w\le i-1})^{\perp}$ and $\cu_{w\le i}=\perpp \cu_{w\ge i+1}$.

\item\label{iextcub} 
 The class $\cu_{[i,j]}$ equals the extension-closure of  $\cup_{i\le k\le j}\cu_{w=k}$

 \item \label{isum}
    If $A\to B\to C\to A[1]$ is a $\cu$-distinguished triangle and
$A,C\in \cu_{w= 0}$, then $B\cong A\oplus C$.

\item \label{iwdext} If $A\to C\to B$ is a $\cu$-distinguished triangle then for any fixed  $w_{\le i-1}A, w_{\le i-1}B$ there exists  a weight decomposition of $C$ such that $w_{\le i-1}C$ is an extension of $w_{\le i-1}B$ by  $w_{\le i-1}A$.


\item \label{iadj} 
Let $\du$ be a  triangulated category endowed with a weight structure $v$; let $F\colon\cu \leftrightarrows \du:\!G$ be exact 
adjoint functors. Then $F$ is left weight-exact if and only if $G$ is right weight-exact.

 \item\label{iortprecise}
Assume that $M$ belongs to the envelope (see \S\ref{snotata}) of some class of $M_j\in \obj \cu$ (for $j\in J$); we fix a choice of  $w_{\le i-1}M_j$ (for these objects).
Then $M\in \cu_{w\ge i}$ if and only if  $w_{\le i-1}M_j\perp M$.

In particular, if  $J=J_1\cup J_2$ such that $M_j\in \cu_{w\ge i}$ for any $j\in J_1$ and  $M_j\in \cu_{w\le i-1}$ for any $j\in J_2$, then  it suffices to check whether $M_j\perp M$ for all $j\in J_2$.

\end{enumerate}
\end{pr}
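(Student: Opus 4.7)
The only-if direction is immediate from \ref{iort}: $M\in \cu_{w\ge i}=(\cu_{w\le i-1})^\perp$ gives $w_{\le i-1}M_j\perp M$ for every $j$. For the if direction, the plan is to reduce to a statement about the (non-Karoubi-closed) extension-closure $\mathcal T$ of $\{M_j\}$, by observing that the envelope of $\{M_j\}$ equals the class of $\cu$-retracts of objects of $\mathcal T$. This class is obviously Karoubi-closed; its extension-closedness uses the standard direct-sum trick (available since in a triangulated category a retract is a direct summand, see \S\ref{snotata}): writing $Y_\ell\cong X_\ell\oplus Z_\ell\in \mathcal T$ for $\ell=1,3$, any triangle $X_1\to X_2\to X_3$ fits, after summing with the distinguished triangles $Z_1\xrightarrow{\id}Z_1\to 0$ and $0\to Z_3\xrightarrow{\id}Z_3$, into a distinguished triangle $Y_1\to X_2\oplus Z_1\oplus Z_3\to Y_3$, placing $X_2\oplus Z_1\oplus Z_3$ in $\mathcal T$ and exhibiting $X_2$ as its retract. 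We may therefore fix $M'\in \mathcal T$ together with $s\colon M\to M'$ and $r\colon M'\to M$ satisfying $rs=\id_M$.

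I would next prove by induction on the formation of $\mathcal T$ that every $X\in \mathcal T$ admits a weight decomposition $w_{\le i-1}X\to X\to w_{\ge i}X$ with $w_{\le i-1}X\perp M$. The base cases $X=M_j$ (using the fixed hypothesis) and $X\cong 0$ are trivial. For the inductive step, a distinguished triangle $X_1\to X\to X_3$ with $X_1,X_3$ treated is handled by invoking \ref{iwdext} to obtain a weight decomposition of $X$ fitting into a triangle $w_{\le i-1}X_1\to w_{\le i-1}X\to w_{\le i-1}X_3$; applying the cohomological functor $\cu(-,M)$ annihilates the middle term by the induction hypothesis applied to the two outer ones.

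Applied to our $M'$, this gives $w_{\le i-1}M'\perp M$. Then applying $\cu(-,M)$ to the weight triangle $w_{\le i-1}M'\to M'\xrightarrow{c}w_{\ge i}M'$ yields a surjection $\cu(w_{\ge i}M',M)\twoheadrightarrow \cu(M',M)$, so $r=\tilde r\circ c$ for some $\tilde r\colon w_{\ge i}M'\to M$, whence $\tilde r\circ(cs)=rs=\id_M$ exhibits $M$ as a retract of $w_{\ge i}M'\in \cu_{w\ge i}$, and Karoubi-closure of $\cu_{w\ge i}$ from \ref{iextw} yields $M\in \cu_{w\ge i}$. The in-particular clause follows by taking $w_{\le i-1}M_j=0$ for $j\in J_1$ and $w_{\le i-1}M_j=M_j$ for $j\in J_2$. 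The only subtle point is the passage to the extension-closure representative $M'$: one should resist the temptation to try to produce a weight decomposition of $M$ itself orthogonal to $M$, since the requisite compatibility of weight decompositions with retracts is not furnished directly by the items of \ref{pbw} listed above; working through $M'\in \mathcal T$ and invoking Karoubi-closure of $\cu_{w\ge i}$ only at the final step neatly sidesteps this difficulty.
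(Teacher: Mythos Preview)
Your proof of assertion~\ref{iortprecise} is correct, and it takes a genuinely different route from the paper's.

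Both arguments handle the ``only if'' direction and the ``in particular'' clause in the same way, and both use assertion~\ref{iwdext} to push the orthogonality condition through extensions. The divergence is in the treatment of retracts. The paper does precisely what you caution against at the end of your write-up: it produces a weight decomposition of $M$ itself with $w_{\le i-1}M$ lying in the envelope of $\{w_{\le i-1}M_j\}$, handling the retract step by invoking the argument from Lemma~5.2.1 of \cite{bws} (with the explicit caveat ``at least, in the case where $\cu$ is Karoubian''). The paper then notes, in Remark~\ref{rvirt}, an alternative and more elegant argument via virtual $t$-truncations of the functor $\cu(-,M)$, which avoids the retract issue entirely but at the cost of importing that machinery.

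Your approach---lifting $M$ to a representative $M'$ in the bare extension-closure $\mathcal T$, running the induction only over extensions, and then factoring the retraction $r$ through $w_{\ge i}M'$---sidesteps the retract-compatibility of weight decompositions altogether and needs nothing beyond the items of Proposition~\ref{pbw} already listed. The identification of the envelope with the retracts of $\mathcal T$ (via the direct-sum trick, legitimate because split monomorphisms in a triangulated category yield split triangles) is the key organizational move. What the paper's first argument buys in exchange is a slightly stronger intermediate conclusion: an explicit weight decomposition of $M$ whose low-weight part is controlled by the $w_{\le i-1}M_j$, which could be useful elsewhere. For the statement at hand, your argument is more self-contained.
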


\begin{proof} 

Assertions \ref{idual}--\ref{isum}
 are 
 contained in Theorem 2.2.1 of \cite{bger} (whereas their proofs relied on  \cite{bws}) and assertion \ref{iwdext} is a part of Lemma 1.5.4 of \cite{bws}
(pay attention to Remark \ref{rstws}(3)!). 

Assertion \ref{iadj} is just Proposition 1.2.3(9) of \cite{brelmot}.

It remains to verify assertion \ref{iortprecise}. Obviously, the ``in particular'' part of the assertion is just a particular case of the general statement preceding it; so we prove the latter. Furthermore, the orthogonality axiom of weight structures yields that $N\perp M$ if $N\in \cu_{w\le i-1}$ and $M\in \cu_{w\ge i}$; so the ``only if'' part of the statement is clear.

We prove the converse implication.
We start with the following easy observation: for any choice of an $i-1$-weight decomposition triangle $w_{\le i-1}M\to M\to w_{\ge i}M$ we have $M\in \cu_{w\ge i}$ if and only if $w_{\le i -1}M\perp M$. Indeed, the latter condition yields that $M$ is a retract of $w_{\ge i}M$ (whereas the converse implication is immediate). 

Hence it suffices to verify that any object $M$ in the envelope of   $\{M_j\}$ possesses a shifted weight decomposition (as above) such that $w_{\le i-1}M$ belongs to the envelope of $\{w_{\le i-1}M_j\}$. To this end it certainly suffices to combine assertion \ref{iwdext} with  the following statement: for $A$,  some fixed  $w_{\le i-1}A$ and $C$ being a retract  of $A$ there exists  a weight decomposition of $C$ such that $w_{\le i-1}C$ is a retract of $w_{\le i-1}A$.  The latter result  can easily be established using the corresponding arguments from the proof of Lemma 5.2.1 of ibid. (at least, in the case where $\cu$ is Karoubian; cf. the remark below).

\end{proof}

\begin{rema}\label{rvirt}
Whereas the case of a Karoubian $\cu$ is certainly sufficient for the purposes of the current paper, we note that there exists an alternative (and more elegant) proof of  assertion \ref{iortprecise}. It relies on the properties of the so-called virtual $t$-truncations of cohomological functors (see \S2.3 of \cite{bger} and Appendix A.3 of \cite{brelmot}). We consider the functor $F=\tau_{\ge 1-i}(\cu(-,M))\colon\cu^{op}\to \ab$; note that it is cohomological (also).  If $w_{\le i-1}M_j\perp M$  (for all $j$) then $F(M_j)=0$. Hence in this case we have $F(M)=\ns$. It remains to note that the latter implies that $M\in \cu_{w\ge i}$.


\end{rema}

In \S\ref{snclength} and \S\ref{scompvoevet} below we will need some properties of weight structures ``extended'' from subcategories of compact objects.

\begin{pr}\label{pextws}
Let $\cu'\subset \cu$ be triangulated categories  such that $\cu$ contains all small coproducts of its objects, $\cu'$ is
is essentially small, and  the objects of $\cu'$ are compact in $\cu$.  Let $w'$ be a bounded weight structure on $\cu'$.
Then the following statements are valid.

\begin{enumerate}
\item\label{iextws}
The sets $\cu_{w\ge 0}=\cu'_{w'\le -1}{}^{\perp_{\cu}}$ and $\cu_{w\le 0}= {}^{\perp_{\cu}}(\cu_{w\ge 0}[1])$ yield a weight structure on $\cu$.

\item\label{iextwh}
$\hw$  
equals the $\cu$-Karoubi-closure of the category of all $\cu$-coproducts of  objects of $\hw'$.

\item\label{ibort} Assume that $\cu'_{w'\le 0}$ is the envelope of some set $\{C_i\}$ of its objects. Then $\cu_{w\ge 0}=\{C_i[-1]\}^{\perp_{\cu}}$. 

\item\label{iextwe} The embedding $\cu'\to \cu$ is weight-exact (with respect to $w$ and $w'$). 

\item\label{iextweadj} Let $v$ be a weight structure on a triangulated category $\du$;
 let $F\colon\cu \leftrightarrows \du:\!G$ be adjoint exact functors. 
Then the following statements are equivalent: 
\begin{enumerate}
\item\label{i1}  $F( \cu'_{w'\le 0})\subset \du_{v\le 0}$.


\item\label{i2} $G$ is right weight-exact.

\item\label{i3} $F$ is left weight-exact.

\end{enumerate}

\item\label{iextwcommcoprod}
For $(\cu,w)$ and $(\du,v)$ as above and any exact $G\colon\du\to \cu$ that commutes with (small) coproducts we have the following: $G$ is left weight-exact whenever $G(\du'_{v'\le 0})\subset \cu_{w\le 0}$.

\item\label{iextwcoprod}
For any $M_i\in \obj \cu$ (for $i$ running through some index set) we have $\coprod M_i\in \cu_{w\ge 0}$ (resp. $\coprod M_i\in \cu_{w\le 0}$) if and only if  $ M_i\in \cu_{w\ge 0}$ (resp. $ M_i\in \cu_{w\le 0}$) for all $i$.

\end{enumerate}

\end{pr}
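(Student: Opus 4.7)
The plan is to treat assertion (1) as the backbone: its proof follows the now-standard pattern for extending a bounded weight structure from a subcategory of compact objects (cf.\ Theorem 4.5.2 of \cite{bws}), while assertions (2)--(7) are formal consequences of (1) combined with the general weight-structure calculus of Proposition~\ref{pbw}. In (1), Karoubi-closedness, translation semi-invariance, and the orthogonality axiom are immediate from the perp-definitions of $\cu_{w\le 0}$ and $\cu_{w\ge 0}$. The only real content is the existence of weight decompositions in $\cu$: I would show that the class of objects admitting one contains $\obj \cu'$ (directly from boundedness of $w'$), is closed under extensions and retracts (Proposition~\ref{pbw}(\ref{iwdext}) and a standard mapping cone argument), and is closed under arbitrary small coproducts. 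The last closure is the main obstacle; it relies crucially on the compactness of objects of $\cu'_{w'\le -1}$, which lets $\cu(X,-)$ commute with $\coprod$ and so preserves the orthogonality required of a coproduct of weight-decomposition triangles. A Brown-representability / Postnikov-tower construction in the spirit of \cite{bws} then propagates weight decompositions from $\cu'$ to the localizing subcategory it generates, which is all of $\cu$ by compact generation.

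Once (1) is in hand, assertion (7) is essentially automatic: perp-classes are closed under coproducts by definition, and the converse direction (``if $\coprod M_i$ has non-negative or non-positive weights then each $M_i$ does'') uses compactness of the testers in $\cu'$ to split $\cu(X,\coprod M_i)\cong \bigoplus \cu(X,M_i)$. Assertion (4) reduces to the identity $\cu_{w\ge 0}[1] = \cu'_{w'\le 0}{}^{\perp_\cu}$ together with the orthogonality axiom of $w'$ itself. Assertion (3) follows because taking one-sided orthogonals in $\cu$ commutes with passing from a class to its envelope, so testing against the generators $\{C_i[-1]\}$ suffices.

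For assertion (2), the inclusion $\supseteq$ combines (4), (7), and Karoubi-closedness of $\cu_{w=0}$ from Proposition~\ref{pbw}(\ref{iextw}); the reverse inclusion applies the construction from (1) to write an object of $\hw$ as the retract of a coproduct of $\hw'$-objects, using Proposition~\ref{pbw}(\ref{isum}) to collapse the two-step Postnikov tower of an element of $\cu_{w=0}$ into a direct sum. For (5) the equivalence (b)$\Leftrightarrow$(c) is Proposition~\ref{pbw}(\ref{iadj}), and (a)$\Leftrightarrow$(b) is a one-line adjunction: $G(Y)\in\cu_{w\ge 0}$ iff $\cu(\cu'_{w'\le -1},G(Y))=\ns$, which by adjunction equals $\du(F(\cu'_{w'\le -1}),Y)=\ns$, valid for all $Y\in\du_{v\ge 0}$ precisely when $F(\cu'_{w'\le -1})\subset\du_{v\le -1}$, i.e., (a) shifted. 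Assertion (6) is similar: by (1) applied to $\du$, any object of $\du_{v\le 0}$ is built from $\du'_{v'\le 0}$ by coproducts, extensions, and retracts; the exact coproduct-preserving functor $G$ carries these operations to the corresponding operations in $\cu$, all of which preserve $\cu_{w\le 0}$ by (7), Proposition~\ref{pbw}(\ref{iextw}), and the perp-definition of $\cu_{w\le 0}$.
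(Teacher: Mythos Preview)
Your approach is correct and considerably more self-contained than the paper's. The paper outsources almost everything: assertions (\ref{iextws}), (\ref{iextwh}), (\ref{iextwcommcoprod}), and (\ref{iextwcoprod}) are obtained by citing Corollary~2.3.1(1) of \cite{bsnew} (where the relevant machinery of ``class-negative'' sets and smashing weight structures is developed), and assertion (\ref{iextweadj}) is referred to Remark~2.1.5(3) of \cite{bpure}. Only (\ref{ibort}) and (\ref{iextwe}) are argued directly, via the same envelope-and-orthogonal observation you use. Your route instead rebuilds the Postnikov-tower construction from \cite{bws} explicitly, which makes the argument readable without chasing external references, and your deductions of (\ref{iextwh}) and (\ref{iextwcommcoprod}) from the structure of that tower are more transparent than a citation. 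In particular, your argument for (\ref{iextwh}) --- that for $M\in\cu_{w=0}$ the single ``attach all maps from $\hw'$'' step already produces $B_0\to M\to M_1$ with $M_1\in\cu_{w=1}$, and then Proposition~\ref{pbw}(\ref{isum}) splits $B_0\cong M\oplus M_1[-1]$ --- is a nice concrete verification of what the cited corollary asserts abstractly.

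One imprecision to flag: you write that the Postnikov tower ``propagates weight decompositions from $\cu'$ to the localizing subcategory it generates, which is all of $\cu$ by compact generation.'' The hypotheses of the proposition do \emph{not} assume that $\cu'$ generates $\cu$; they only say $\cu'$ is essentially small with compact objects. Your construction nonetheless works for arbitrary $M\in\cu$: the tower builds $w_{\le 0}M$ inside the localizing subcategory of $\cu'$, and the cofiber $w_{\ge 1}M$ lies in $(\cu'_{w'\le 0})^{\perp_\cu}=\cu_{w\ge 1}$ automatically, regardless of whether $M$ itself is so generated. So the fix is simply to drop the parenthetical and observe that the tower furnishes a decomposition of every object.
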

\begin{proof}

We will apply the following obvious observation and denote it by the symbol (*): for any objects $M_i\in \obj \cu$  and $M\subset \obj \cu$ the class $\{M_i\}\perpp\subset M\perpp$ whenever $M$ lies in the envelope of $\{M_i\}$.

 The orthogonality axiom  of weight structures immediately gives $\cu'_{w'=0}\perp_{\cu} \cu'_{w'=0}[i]$  for all $i>0$; 
 in other words, the class $\cu'_{w'=0}$ is {\it negative} in $\cu$ 
 (see Remark 2.2.2(3) of \cite{bsnew}). Since the elements of $\cu'_{w'=0}$ are compact in $\cu$, $\cu'_{w'=0}$ is also {\it class-negative} in $\cu$ (see Definition 1.2.2(10) and Remark 2.3.2(2) of ibid.). Applying Corollary 2.3.1(1) of ibid. we obtain the existence of a class $L\subset \obj \cu$ such that the couple $(L,(\cup_{i<0}\cu'_{w'=0}[i]){}^{\perp_{\cu}})$ is a weight structure (on $\cu$). Next, combining 
the observation (*) with Proposition \ref{pbw}(\ref{iextw},\ref{iextcub}) we obtain that  the class $\cu'_{w'\le -1}{}^{\perp_{\cu}}$ equals $(\cup_{i<0}\cu'_{w'=0}[i]){}^{\perp_{\cu}}$. Invoking Proposition \ref{pbw}(\ref{iort}) we conclude that the couple described in assertion \ref{iextws} is a weight structure indeed. 

Applying Corollary 2.3.1(1) of ibid. once again we also obtain assertion \ref{iextwh}.

Assertion \ref{ibort} immediately follows from our observation (*). 

Assertion \ref{iextwe} is an immediate consequence of the description of $w$ along with Proposition \ref{pbw}(\ref{iort}).

Given assertion \ref{iextws}, assertion \ref{iextweadj} is provided by Remark 2.1.5(3) of \cite{bpure}.

 Assertion \ref{iextwcommcoprod} is given by Corollary 2.3.1(1) of \cite{bsnew} also.

Lastly, loc. cit. also says that $w$ is {\it smashing} in the sense of Definition 1.2.2(7) of ibid., and the latter fact immediately implies assertion \ref{iextwcoprod}.

\end{proof}

\begin{rema}\label{rextws}
1. Theorem 5 of \cite{paukcomp} states that one can obtain a weight structure on $\cu$ ``starting from'' the right envelope of any set of objects of $\cu$ (instead of
$\cu_{w\le -1}$ in assertion \ref{iextws}); cf. also  Theorem 4.4.3 of \cite{bpure}(1)  for a more general statement.  Yet we do not need these more general existence results  in the current paper.

2. One can obtain several more properties of weight structures obtained this way by  combining loc. cit. with Proposition 2.5.1 of ibid. 

\end{rema}

\subsection{On weight complexes and weight spectral sequences}\label{swss}

Now we recall some of the properties of  weight complexes and weight spectral sequences.\footnote{The term "weight complex" originates from \cite{gs}; yet the functor of Gillet and Soul\'e was (essentially) extended to Voevodsky motives over a field only in \cite{mymot}, whereas the current general definition was given in \cite{bws}; cf. also \S2.4 of \cite{bpure}. Furthermore, our definition of weight spectral sequences (essentially and) vastly generalizes the one of Deligne; see Remark 2.4.3 of \cite{bws}, \S3.6 of \cite{brelmot},  and  Proposition \ref{phomcoh} below.}  The only place preceding \S\ref{scompvoevet} where they will be needed is Remark \ref{rwwc}; so that the reader mostly interested in sections \ref{sprwchow}--\ref{slength} may ignore the current section.

\begin{pr} \label{pwc}
Let $\cu$ be a triangulated category endowed with a weight structure $w$, $M\in \obj \cu$, $n\in \z$. Then the following statements are valid.

\begin{enumerate}

\item\label{iwc0} For all $i\in \z$ fix some choices of $w_{\le i}M$ and
denote $\co(c_{i-1,i})[-i]$ (see Proposition \ref{pbw}(\ref{iwtrun}) by $M^{-i}$. Define (a choice of) the weight complex $t(M)$ for $M$ as the complex whose terms are $M^j$ (for $j\in \z$) and the connecting morphisms are given as the corresponding compositions $M^{j}\to (w_{\le -j-1} M)[j+1]\to M^{j+1}$.

Then $t(M)$ is a complex indeed (i.e., the square of the boundary is zero); all $M^i$ belong to $\cu_{w=0}$ (so, we are able to consider $t(M)$ as an object of $K(\hw)$).

\item\label{iwc2} $M$ determines 
$t(M)$ up to a 
 homotopy equivalence.

\item\label{iwc3} If $M$ is bounded below, then  $M\in \cu_{w\ge -n}$ if and only if $t(M)\in K(\hw)^{\le n}$.

\item\label{iwcenv} If $M$ belongs to the envelope of certain $M_i\in \obj \cu$ then $t(M)$ belongs to the $K(\hw)$-envelope of $t(M_i)$.

\item\label{iwctow} Let $d_i\colon M_{\le i-1}\to M_{\le i}$ for $i\in \z$ be a chain of $\cu$-morphisms such that $\co(d_i)\in \cu_{w=i}$ for all $i$,  $M_i=0$ for $i\ll 0$, and $M_i\cong M$ (with $d_{i+1}$ being isomorphisms) for $i\gg 0$. Then $M_{\le i}$ give certain choices of $w_{\le i}M$ and $d_i$ yield the corresponding $c_{i-1,i}$.

\end{enumerate}
\end{pr}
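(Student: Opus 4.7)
The plan is to reduce everything to the basic properties of weight decompositions from Proposition \ref{pbw}, especially parts (\ref{iwtrun}), (\ref{iort}), (\ref{iextw}), and (\ref{iextcub}), essentially following the pattern of Theorem 3.2.2 and \S3 of \cite{bws} (see also \S2.4 of \cite{bpure}). Since the statements are stable under shifts, I will fix the indexing once and keep track of the signs carefully.

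For assertion (\ref{iwc0}), the fact that $M^{-i}=\co(c_{i-1,i})[-i]$ lies in $\cu_{w=0}$ is immediate from Proposition \ref{pbw}(\ref{iwtrun}), which gives $\co(c_{i-1,i})\in\cu_{[i,i]}=\cu_{w=i}$ (combined with part (\ref{iextw}) which ensures $\cu_{w=i}$ is shift-stable in the obvious sense). To check $d\circ d=0$, I observe that the boundary $M^{j}\to M^{j+1}$ is constructed as the composition $M^{j}=\co(c_{-j-1,-j})[j]\to (w_{\le -j-1}M)[j+1]\to \co(c_{-j-2,-j-1})[j+1]=M^{j+1}$. Composing two successive differentials, the two interior morphisms $(w_{\le -j-1}M)[j+1]\to \co(c_{-j-2,-j-1})[j+1]\to (w_{\le-j-2}M)[j+2]$ are consecutive arrows in the distinguished triangle coming from the $(-j-2)$-weight decomposition, hence compose to zero.

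For assertion (\ref{iwc2}), given two families of choices $\{w_{\le i}M\}$ and $\{w'_{\le i}M\}$, the uniqueness part of Proposition \ref{pbw}(\ref{iwtrun}) (applied with $j=i$ to the identity $M\to M$) produces, for every $i$, a morphism $w_{\le i}M\to w'_{\le i}M$ compatible with the maps to $M$; these in turn induce a morphism of complexes $t(M)\to t'(M)$. A standard diagram chase (as in \cite[\S3]{bws}) shows that any such morphism is a homotopy equivalence and that the homotopy class does not depend on the choices. Assertion (\ref{iwctow}) is essentially the converse input: once we are given a tower $M_{\le i-1}\to M_{\le i}$ with $\co(d_i)\in\cu_{w=i}$, induction on the tower shows $M_{\le i}\in \cu_{w\le i}$ (by extension-closedness, Proposition \ref{pbw}(\ref{iextw})), while the cone of $M_{\le i}\to M$ lies in $\cu_{w\ge i+1}$ (using Proposition \ref{pbw}(\ref{iort}) together with the orthogonality between the successive $\co(d_j)[?]$ for $j>i$ and $\cu_{w\le i}$); thus $M_{\le i}$ is a valid choice of $w_{\le i}M$, and $d_i$ is then forced (up to the canonical uniqueness) to realize $c_{i-1,i}$.

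For assertion (\ref{iwc3}), one direction is easy: if $M\in \cu_{w\ge -n}$, then for $i\le -n-1$ one may take $w_{\le i}M=0$, whence $c_{i-1,i}$ are isomorphisms of zero objects for these $i$, all $M^j$ with $j\ge n+1$ vanish, and $t(M)$ is represented by a complex concentrated in degrees $\le n$. Conversely, if $t(M)\in K(\hw)^{\le n}$, then by (\ref{iwc2}) one may replace $t(M)$ by a complex concentrated in degrees $\le n$; using (\ref{iwctow}) to reconstruct $M$ as the tail of the corresponding tower shows $M\in\cu_{w\ge -n}$ (here the bounded-below hypothesis is used to start the induction at a finite stage). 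Finally, assertion (\ref{iwcenv}) follows by combining the preceding with Proposition \ref{pbw}(\ref{iwdext}): an extension triangle $M_1\to M\to M_2$ gives rise (via compatible choices) to a termwise split short exact sequence of weight complexes $t(M_1)\to t(M)\to t(M_2)$ in $K(\hw)$, and retracts are preserved by functoriality in (\ref{iwc2}); iterating yields the envelope statement. The main technical obstacle is the simultaneous construction of compatible choices of $w_{\le i}$ across an extension triangle (to obtain termwise splitness), which is precisely the content of Proposition \ref{pbw}(\ref{iwdext}) combined with a diagram chase as in \cite[Lemma 3.1.2]{bws}.
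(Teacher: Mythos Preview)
Your approach matches the paper's: the paper simply cites Theorem~3.3.1 of \cite{bws} for assertions~\ref{iwc0}--\ref{iwcenv} and Theorem~2.2.1(14) of \cite{bger} for assertion~\ref{iwctow}, and you are essentially reconstructing those arguments. Your sketches for \ref{iwc0}, \ref{iwc2}, \ref{iwctow}, and \ref{iwcenv} are sound (for \ref{iwcenv} note that the ``functoriality'' you need is a bit more than \ref{iwc2} literally states --- one has to extend the uniqueness in Proposition~\ref{pbw}(\ref{iwtrun}) to arbitrary morphisms, not just $\id_M$, to get that $t$ respects retracts; this is done in \cite{bws}).

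There is, however, a genuine gap in your converse direction for \ref{iwc3}. You write that once $t(M)$ is homotopy equivalent to a complex concentrated in degrees $\le n$, one may ``use (\ref{iwctow}) to reconstruct $M$ as the tail of the corresponding tower''. But assertion~\ref{iwctow} goes in the opposite direction: it recognises a \emph{given} tower in $\cu$ as a weight filtration; it does not manufacture a tower for $M$ out of a complex in $\hw$. From a complex $(N^i)$ one can certainly build \emph{some} object $P\in\cu_{w\ge -n}$ with $t(P)\cong (N^i)$, but there is no reason for $P$ to be isomorphic to $M$: the weight complex functor is neither full nor faithful, and objects with homotopy-equivalent weight complexes need not be isomorphic.

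The argument that actually works (and is what \cite{bws} does) is inductive on the weight-length of $M$. Say $M\in\cu_{[a,b]}$ with $a<-n$. The homotopy equivalence between $t(M)$ and a complex vanishing in degree $-a>n$ forces the top differential $d^{-a-1}\colon M^{-a-1}\to M^{-a}$ to be split surjective (read this off from the homotopy $h$ in degree $-a$, where $d^{-a}=0$). One then uses this splitting to modify the chosen weight filtration so that the new $w_{\le a}M$ is zero, i.e.\ $M\in\cu_{w\ge a+1}$; iterating finishes. This is the step where the bounded-below hypothesis is genuinely used, and it cannot be bypassed by appealing to \ref{iwctow}.
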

\begin{proof} All of these statements 
except the last one are essentially 
contained in Theorem 3.3.1 of \cite{bws}  (yet pay attention to Remark \ref{rstws}(3)!). Assertion \ref{iwctow} is precisely Theorem 2.2.1(14) of \cite{bger}.
\end{proof}

Now we recall some basics on (general) weight spectral sequences. 
It will be more convenient for us in this paper to consider them for homological functors (including  perverse \'etale homology of motives); certainly, dualization is not a problem (cf. \S2.4 of \cite{bws}).

\begin{pr}\label{pwss}
Let $\au$ be an abelian category; $H\colon\cu\to \au$ be any 
 functor.

I. For any $m\in \z$ the object $(W_{m}H)(M)=\imm (H(w_{\le m}M)\to H(M))$
does not depend on the choice of $w_{\le m}M$; moreover, it is $\cu$-functorial in $M$.

II. Now let $H\colon\cu\to \au$ be a homological functor; for any $r\in \z$ denote $H\circ [r]$ by $H_r$.



Then the following statements are valid.

1. There exists a ({\it weight})  spectral sequence $T=T_w(H,M)$ with $E_1^{pq}=
H_q(M^p)$ such that the differentials for $E_1T_w(H,M)$ come from $t(M)$.
  It converges to $ H_{p+q}(M)$ if $M$ is bounded. 
	
2. $T_w(H,M)$ is $\cu$-functorial in $M$ (and does not depend on any choices) starting from $E_2$.

3.  If $M$ is bounded, then the step of filtration given by ($E_{\infty}^{l,m-l}:$ $l\ge k$)
 on $H_{m}(M)$ (for some $k,m\in \z$) equals  $(W_{-k}H_{m})(M)$.

\end{pr}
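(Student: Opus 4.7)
The plan is to derive this via the classical construction of weight spectral sequences as exact couples arising from the tower of weight truncations of $M$, essentially redoing in the homological convention the arguments of Theorem 2.4.2 and Remark 2.4.3 of \cite{bws}.

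\emph{Part I.} Given two choices $T, T'$ of $w_{\le m} M$, I would invoke Proposition \ref{pbw}(\ref{iwtrun}) (whose proof amounts to orthogonality-based lifting) to produce morphisms $T\to T'$ and $T'\to T$ commuting with the projections to $M$. Applying $H$ then shows that $\imm(H(T)\to H(M)) = \imm(H(T')\to H(M))$ as subobjects of $H(M)$. For $\cu$-functoriality: any $f\colon M\to N$ lifts, again by orthogonality, to a morphism $f_m\colon w_{\le m} M\to w_{\le m} N$ compatible with $f$; two such lifts $f_m, f_m'$ differ by a map factoring through the boundary $w_{\ge m+1} N[-1]\to w_{\le m} N$, and since the further composition $w_{\ge m+1} N[-1]\to w_{\le m} N\to N$ vanishes (two consecutive arrows in the rotated weight decomposition triangle), $f_m$ and $f_m'$ induce the same map on the images in $H(N)$.

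\emph{Part II, exact couple.} Fix a compatible tower $\cdots\to w_{\le -p-1} M\to w_{\le -p} M\to\cdots$ with transition maps $c_{p-1,p}$ from Proposition \ref{pbw}(\ref{iwtrun}). Applying $H_*$ to the distinguished triangles $w_{\le -p-1} M\to w_{\le -p} M\to \co(c_{-p-1,-p})$ produces a standard exact couple with $D_1^{pq}=H_{p+q}(w_{\le -p} M)$ and $E_1^{pq}=H_{p+q}(\co(c_{-p-1,-p}))\cong H_q(M^p)$, where the last isomorphism uses $\co(c_{-p-1,-p})\cong M^p[-p]$ built into the definition of the weight complex (Proposition \ref{pwc}(\ref{iwc0})). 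Unwinding the exact couple, the $d_1$-differentials on $E_1$ are precisely the compositions $M^p[-p]\to w_{\le -p-1} M[1]\to M^{p+1}[-p]$ that define the differentials of $t(M)$. Convergence for bounded $M$ is automatic: the tower stabilizes ($w_{\le m} M\cong 0$ for $m\ll 0$, $w_{\le m} M\cong M$ for $m\gg 0$), so the filtration on $H_{p+q}(M)$ is finite.

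\emph{Functoriality from $E_2$ and Part II.3.} Any $f\colon M\to N$ admits a compatible system of lifts $f_p\colon w_{\le p} M\to w_{\le p} N$ (iterating Proposition \ref{pbw}(\ref{iwtrun})), and any two such systems give chain-homotopic maps of weight complexes (the homotopy-equivalence content of Proposition \ref{pwc}(\ref{iwc2})); chain-homotopic maps of towers induce identical maps of spectral sequences from $E_2$ onwards, yielding $\cu$-functoriality of $T_w(H,M)$ starting from $E_2$. For the filtration statement, the step $\{E_\infty^{l,m-l}\colon l\ge k\}\subset H_m(M)$ equals, by construction of the exact couple spectral sequence, the image of $D_1^{k,m-k}=H_m(w_{\le -k}M)$ in $H_m(M)$ along the tower map, which is exactly $(W_{-k} H_m)(M)$ by Part I. The main technical obstacle is precisely this transition from $E_1$ to $E_2$: the non-canonicity of weight truncations and the non-uniqueness of lifts of $f$ must be absorbed by the chain-homotopy ambiguity at the weight-complex level, and one has to check carefully that this ambiguity is invisible on $E_2$ and beyond although it genuinely obstructs $E_1$-functoriality.
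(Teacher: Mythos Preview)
Your proposal is correct and essentially reconstructs the arguments of \cite{bws} (Proposition 2.1.2 and Theorem 2.3.2) that the paper simply cites; the exact-couple construction from the tower of weight truncations, the identification of $E_1$ with $t(M)$, and the passage to $E_2$-functoriality via chain-homotopy ambiguity are precisely the content of those references. One minor remark: your appeal to Proposition \ref{pbw}(\ref{iwtrun}) for producing maps between two choices of $w_{\le m}M$ is, strictly speaking, the case $j=i$ rather than $j<i$, so uniqueness fails---but existence (which is all you use) follows from the same orthogonality argument, as your parenthetical acknowledges.
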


\begin{proof}

Immediate from Proposition 2.1.2 and Theorem 2.3.2 of \cite{bws}  
 (cf. also Proposition 1.3.2 of \cite{bmm}).
\end{proof}

\begin{coro}\label{cdetect}
Let $\cu,\au,H,M$ be as in (part II of) the proposition, $n\in \z$, and consider the following conditions:

1. $E_2^{pq}T_w(H,M)\neq 0$ for some $q\in \z$ and $p>n$.

2. $(W_{-n-1}H_q)(M)\neq 0$  for some $q\in \z$. 

Then  the following statements are valid.

1. If condition 1 or 2 is fulfilled then $M\notin \cu_{w\ge -n}$.

2. Assume that $M$ is bounded below. Then condition 2 implies condition 1.


\end{coro}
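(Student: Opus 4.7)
For Part~1 I argue contrapositively. If $M \in \cu_{w \ge -n}$, then $0 \to M \to M$ (with middle map $\id_M$) is a valid weight decomposition, and more generally one may choose $w_{\le l}M = 0$ for every $l \le -n-1$. Invoking Proposition~\ref{pwss}(I) (independence of choice) gives $(W_{-n-1}H_q)(M) = \imm(H_q(0) \to H_q(M)) = 0$ for every $q$, so Condition~2 fails. With the same choice, Proposition~\ref{pwc}(\ref{iwc0}) makes each $M^p$ with $p > n$ the cone of a map between zero objects, so $M^p = 0$; hence $E_1^{pq} = H_q(M^p) = 0$ and therefore $E_2^{pq} = 0$ for $p > n$, ruling out Condition~1.

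For Part~2, let $M \in \cu_{w \ge k}$ satisfy Condition~2 and set $M' = w_{\le -n-1}M$. Rotating the weight-decomposition triangle gives $w_{\ge -n}M[-1] \to M' \to M$, exhibiting $M'$ as an extension of $M$ by $w_{\ge -n}M[-1] \in \cu_{w \ge -n-1}$; extension-closure (Proposition~\ref{pbw}(\ref{iextw})) yields $M' \in \cu_{w \ge \min(k,-n-1)}$, and combined with $M' \in \cu_{w \le -n-1}$ this shows that $M'$ is bounded. Condition~2 furnishes $q$ with nonzero image $H_q(M') \to H_q(M)$, so $H_q(M') \neq 0$; choosing $w_{\le -n-1}M' = M'$ yields $(W_{-n-1}H_q)(M') = H_q(M') \neq 0$. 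Proposition~\ref{pwss}(II.3) applied to the bounded $M'$ then produces some $l \ge n+1$ with $E_\infty^{l,q-l}(M') \neq 0$, whence $E_2^{l,q-l}(M') \neq 0$.

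To transfer this to $M$, I pick coherent weight truncations ($w_{\le j}M' = M'$ for $j \ge -n-1$ and $w_{\le j}M' = w_{\le j}M$ for $j \le -n-1$); these realize $t(M')$ as the stupid truncation $t(M)^{\ge n+1}$. Then $E_1^{pq}(M) = E_1^{pq}(M')$ for $p \ge n+1$ while $E_1^{pq}(M') = 0$ for $p \le n$, so $E_2^{pq}(M) = E_2^{pq}(M')$ for $p \ge n+2$, and $E_2^{n+1,q}(M)$ is a quotient of $E_2^{n+1,q}(M')$. If $l \ge n+2$, Condition~1 follows immediately. In the edge case $l = n+1$ with $E_2^{p,\ast}(M') = 0$ for every $p \ge n+2$, all $E_r$-differentials in and out of $(n+1,q-n-1)$ for $M'$ vanish, so $H_q(M') = E_2^{n+1,q-n-1}(M')$; the nonzero image then lands in $(W_{-n-1}H_q)(M)$, and interpreting the latter as the filtration step corresponding to $l' \ge n+1$ on $H_q(M)$ (via Proposition~\ref{pwss}(II.3) for the bounded-below $M$) produces $E_\infty^{l',q-l'}(M) \neq 0$ with $l' > n$, hence $E_2^{l',q-l'}(M) \neq 0$.

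The main obstacle is justifying the use of Proposition~\ref{pwss}(II.3) for the bounded-below (rather than bounded) $M$ at the very last step: since $t(M)$ is bounded above whenever $M \in \cu_{w \ge k}$ (each $M^p$ vanishes for $p > -k$), the weight spectral sequence still converges strongly and the identification of $(W_{-n-1}H_q)(M)$ with the appropriate filtration step should persist, but this convergence requires careful verification and is the only real technicality in the proof.
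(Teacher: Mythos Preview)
Your Part~1 is correct and matches the paper's contrapositive argument.

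For Part~2 your approach is close in spirit, but the edge case is a genuine gap, and the cure is exactly the one trick the paper uses that you miss. The paper argues contrapositively and, crucially, sets $M' = w_{\le -n}M$ rather than $w_{\le -n-1}M$. With that choice $t(M')$ is the stupid truncation of $t(M)$ in degrees $\ge n$ (not $\ge n+1$), so $E_1^{n,*}$ already agrees and hence $E_2^{pq}T_w(H,M') = E_2^{pq}T_w(H,M)$ for \emph{every} $p>n$, including $p=n+1$; your edge case simply does not arise. Since this $M'$ lies in $\cu_{[k,-n]}$ and is therefore bounded, Proposition~\ref{pwss}(II.3) applies to it, and the vanishing of $(W_{-n-1}H_q)(M')$ transfers to $M$ via the factorization $w_{\le -n-1}M \to M' \to M$.

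Your edge-case argument, by contrast, ultimately invokes Proposition~\ref{pwss}(II.3) for the merely bounded-below $M$. The paper states that identification only for bounded $M$, and you yourself flag the extension as unverified. Note moreover that if this convergence for bounded-below $M$ \emph{were} available, your entire detour through $M'$ would be superfluous: condition~2 would directly produce $E_\infty^{l',q-l'}(M)\neq 0$ for some $l'>n$. So as written the argument is either incomplete or redundant. The fix is to shift the truncation by one and work with $w_{\le -n}M$; then everything you wrote for the case $l\ge n+2$ already covers $l=n+1$ as well, and no appeal to convergence for $M$ is needed.
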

\begin{proof} 1. It suffices to note that one can take $w_{\le i} M=0$ for all $i<-n$ when computing weight filtrations and weight spectral sequences (using weight complexes).

2. Assume that $M\in \cu_{w\ge k}$ for some $k\in\z$. We should check that $(W_{-n-1}H_q)(M)= 0$ for all $q\in \z$ and $p>n$ whenever
$E_2^{pq}T_w(H,M)= 0$ (for  $p>n$).
 
Fix some choices of $w_{\le i}M$ for all $i\in\z$; we take  $w_{\le i}M=0$ for $i<k$. 
According to Proposition \ref{pbw}(\ref{iwtrun}) the corresponding object $M'=w_{\le - n}M$ belongs to $\cu_{[k,-n]}$ and $w_{\le j}M$ yield some 
 choices of $w_{\le j}M'$ for $j\le -n$. 
Hence the corresponding choice of $t(M')$ is the stupid truncation of $t(M)$ in degrees $\ge n$. 

Now we consider the weight spectral sequence  $T_w(H,M)$. According to Proposition \ref{pwss}, 
the vanishing of $E_2^{pq}T_w(H,M)=E_2^{pq}T_w(H,M')$ for $p>n$ yields that 
$(W_{-n-1}H_q)(M')=\imm (H(W_{-n-1}M')\to H(M'))=0$ (for all $q$). Thus $(W_{-n-1}H_q)(M)= 0$ also.

\end{proof}

\section{On  Chow-weight structures for $K$-motives; relating the ``motivic length'' with negative $K$-groups}\label{sprwchow}

In this section we introduce the Chow weight structures for $\dk(-)$ and relate them to (negative) $\kk$-groups.

In \S\ref{swchow} we introduce and study the Chow weight structures on the subcategories $\dk^c(-)\subset \dk(-)$ of compact objects. Our exposition closely follows the arguments of \cite{bonivan}. 

In \S\ref{sgenlength} we use a simple calculation to establish the relation between the weights of $f_*(\oo_Y)$ (for a separated finite type $f\colon Y\to X$) to the (negative degree, Borel--Moore) $\kk$-groups of $Y$-schemes. We prove several equivalent conditions for $f_*(\oo_Y)\in \dkx_{\wchow\ge -n}$.

In \S\ref{snclength} we use Proposition \ref{pextws} to extend the Chow weight structures and their properties from $\dk^c(-)$ to $\dk(-)$. This allows us to lift the finiteness of  type restriction on $f$ that was imposed in the criteria of Theorem \ref{testw}.

\subsection{On the Chow weight structure for  compact $K$-motives}\label{swchow} 

Similarly to \cite{brelmot} and \cite{bonivan}, the properties of $K$-motives listed in Theorem \ref{tcd} yield the existence of certain {\it Chow} weight structures for $\dkc(-)\subset \dk(-)$. 
We will start with the properties of the ``compact'' Chow weight structure.

Sometimes we will need the definition of ``global'' Chow motives over a scheme.

\begin{defi}\label{dchow}
For a scheme $S$ we define the category $\chows$ of Chow motives over $S$ as the Karoubi-closure of $\{\mgbm_S(P)\}$ in $\dks$; here $P$ runs through all finite type 
 regular schemes that are 
projective over $S$.
\end{defi}

Note that the ``Beilinson-motivic'' versions of Chow motives (as defined here)  have played a crucial role in \cite{hebpo} and in \cite[\S2.1--2.2]{brelmot}; cf. also \S2.3 of \cite{bonivan}. 

\begin{theo}\label{twchow}

Let $X$ be an $\lam$-nice  scheme. Then the following statements are valid.

I. There exists a bounded weight structure $\wchowc=\wchowc(X)$ for $\dkcx$ 
that possesses the following descriptions.

\begin{enumerate}
\item\label{iwchow1} $\dkcx_{\wchowc\le 0}$
is  the right envelope of $\{\mgbm_X(P)\}$,
$\dkcx_{\wchowc\ge 0}$ is the  left envelope of $p_*(\oo_P)$ for 
$p\colon P\to X$ running through all 
  compositions of a smooth projective morphism with a finite universal homeomorphism whose domain is regular and with an immersion. 

\item\label{iwchow2} Moreover, $\dkcx_{\wchowc\le 0}$ is the right envelope of  $\{\mgbmx(T)\}$ for $T$ running through all finite type $X$-schemes; $\dkcx_{\wchowc\ge 0}$ is the left envelope of 
  $\{t_*(\oo_T)\}$ for $t\colon T\to X$ running through all finite type morphisms with {\bf regular} domains.

\item\label{iwchow3} $\dkcx_{\wchowc \ge 0}$ is the right envelope of  $\{\mgbmx(T)\}$ for $T$ running through all quasi-projective $X$-schemes; 
$\dkcx_{\wchowc\ge 0}$ is the left envelope  of  $t_*(\oo_T)$ for $t\colon T\to X$ running through all quasi-projective morphisms with regular domains.

\item\label{iwchow4}  If $X$ 
 of finite type over a field, 
then 
$\dkcx_{\wchowc\le 0}$ is the right envelope of $\obj \chow(S)$, $\dkcx_{\wchowc\ge 0}$ is the left envelope of $\obj\chow(S)$.

\end{enumerate}

II. 
 Let $f\colon Y\to X$ be a (separated) scheme morphism. 
Then the following statements are valid for the ``compact versions'' of the motivic functors.
\begin{enumerate} 

\item\label{iwechow5} $\oo_X\in \dkcx_{\wchowc\le 0}$.

\item\label{iwechow6} If the reduced scheme $X_{red}$ associated to $X$ is regular then $\oo_X\in \dkcx_{\wchowc= 0}$.

\item\label{iwechow1}
$f^*$  is left weight-exact.

\item\label{iwechow2} If $f$ is of finite type then $f^!$ and $f_*$  are right weight-exact, and $f_!$ is left weight-exact.

\item\label{iwechow4} If $f$ is proper then $f_*= f_!$ is also weight-exact.

\item\label{iwechow3} Moreover, $f^*\cong f^!$ is weight-exact   if $f$ is either smooth or a finite universal homeomorphism.

\item\label{iwechow7} Moreover,   $f^*$ is weight-exact if it is
the (inverse) limit of  an essentially affine system such that the transition morphisms are compositions of smooth morphisms and finite universal homeomorphisms.

\item\label{iwechow8} Let $\xxx$  denote the set of (Zariski) points of $X;$ for a $K\in \xxx$ the corresponding morphism $K\to X$ is denoted by $j_K$.
Then $M\in \dkcx_{\wchowc\le 0}$ if and only if for any $K\in \xxx$ we have $j_K^*(M)\in \dkc(K)_{\wchowc\le 0}$.

\item\label{iwechow9}
Define $j_K^!$ using the ``standard method'' (cf. \S2.2.12 of \cite{bbd}) as follows: we decompose $j_K$ as $K\stackrel{j_K^0}{\to} \overline{K}\stackrel{i}{\to} X$ ($\overline {K}$ is the closure of $K$ in $X$) and set $j_K^!=i^!\circ j_K^{0*}$.

Then $M\in \dkc(X)_{\wchowc\ge 0}$ if and only if for any $K\in \xxx$ (for $\xxx$ as above) we have $j_K^!(M)\in \dkc(K)_{\wchowc\ge 0}$.

\end{enumerate}

\end{theo}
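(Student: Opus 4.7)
The plan is to construct $\wchowc$ by first exhibiting a \emph{negative} generating set for its heart, extending it to $\dkcx$ via the standard ``extension-closure'' machinery, and only then deriving the four equivalent descriptions of Part I and the nine functoriality assertions of Part II.

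\textbf{Step 1 (Negativity).} I would begin by taking $C$ to be the class of generators appearing in the $\wchowc\ge 0$ half of description (\ref{iwchow1}), i.e.\ objects $p_*(\oo_P)$ for $p$ a composition of the specified form. These objects are compact by Theorem \ref{tcd}(I.\ref{imotgen},\ref{imotgenf}) and (II.\ref{ianr}). The key preliminary is that $C$ is \emph{negative} in $\dkcx$ in the sense of \cite{bsnew}, i.e.\ $\dkcx(p_*(\oo_P), p'_*(\oo_{P'})[k])=\ns$ for all $k>0$. Using base change (Theorem \ref{tcd}(I.\ref{iexch})) together with the $(p_!,p^!)$ and $(p^*,p_*)$ adjunctions, plus $p_*\cong p_!$ when $p$ is proper (Theorem \ref{tcd}(I.\ref{ipur})) and the invariance under finite universal homeomorphisms (Theorem \ref{tcd}(II.\ref{itr})), one rewrites the Hom-group as $\kk_{-k}$ of an appropriate fiber product of $P$ and $P'$. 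The smooth projective factor in the composition defining $p$ (or $p'$) ensures that this fiber product is regular, so the vanishing follows from Theorem \ref{tcd}(II.\ref{ikmorr}).

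\textbf{Step 2 (Existence of $\wchowc$ and boundedness).} With negativity and compactness established, I would invoke Corollary 2.3.1(1) of \cite{bsnew} (the same result underlying Proposition \ref{pextws}(\ref{iextws})) to produce a weight structure on $\dk(X)$ whose negative/positive halves are the ${}^{\perp}$ of suitable shifts of $C$, and restrict to compact objects. The nontrivial point is that every compact motive is bounded, equivalently that the triangulated subcategory of $\dkcx$ generated by $C$ is all of $\dkcx$. By Theorem \ref{tcd}(I.\ref{imotgen}) it suffices to resolve each $g_!(\oo_V)=\mgbmx(V)$ for smooth $g\colon V\to X$. One applies Gabber's alteration (available precisely by $\lam$-niceness; compare Remark \ref{rrnice}(\ref{rrnice-2}) and Theorem \ref{tcd}(II.\ref{irmotgen})) plus the gluing triangle (\ref{emgys}) and inducts on $\dim(V)$; the ``finite universal homeomorphism'' and ``immersion'' flexibility in description (\ref{iwchow1}) is exactly what is needed to accommodate the output of Gabber's theorem in positive and mixed characteristic.

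\textbf{Step 3 (Equivalent descriptions and Part II).} Description (\ref{iwchow1}) is essentially the definition. For (\ref{iwchow2}), the inclusion $\mgbmx(T)\in \dkcx_{\wchowc\le 0}$ for any finite type $T/X$ comes from the Gabber-plus-stratification argument of Step 2 and extension-closedness (Proposition \ref{pbw}(\ref{iextw})); the dual statement for $t_*(\oo_T)$ with $T$ regular uses Remark \ref{rcons}(\ref{imgbmed}). Description (\ref{iwchow3}) follows because every finite type $X$-scheme admits a stratification by quasi-projective pieces. Description (\ref{iwchow4}) is immediate from Theorem \ref{tcd}(II.\ref{igenc}). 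For Part II, assertions (\ref{iwechow5})--(\ref{iwechow6}) are instances of (\ref{iwchow2}) applied to $t=\id_X$; the weight-exactness statements (\ref{iwechow1})--(\ref{iwechow4}) reduce, via Proposition \ref{pbw}(\ref{iadj}) and the adjunctions, to checking how $f^*,f_!$ act on generators of the form $\mgbmx(T)$ or $t_*(\oo_T)$, which is clear from Theorem \ref{tcd}(I.\ref{iupstar},\ref{ipur},\ref{ipure}); (\ref{iwechow3}) uses smooth and universal-homeomorphism purity in Theorem \ref{tcd}(I.\ref{ipure}) and (II.\ref{itr}); (\ref{iwechow7}) uses continuity (Theorem \ref{tcd}(I.\ref{icont})) to reduce to (\ref{iwechow3}); and (\ref{iwechow8})--(\ref{iwechow9}) combine continuity along the essentially affine limit $K=\varprojlim U$ of affine open neighborhoods with Proposition \ref{pbw}(\ref{iortprecise}), reducing weight conditions to their checks on Zariski points.

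\textbf{Main obstacle.} The genuine difficulty is Step 2, namely showing that $\dkcx$ is exhausted by the triangulated subcategory generated by $C$. Resolving singular Borel--Moore motives by ``Chow-like'' ones requires Gabber's alteration, which forces the $\lam$-niceness assumption on $X$ (i.e.\ inverting all residue characteristics in the coefficients). In characteristic zero the argument simplifies enormously because ordinary resolution of singularities replaces $C$ by the smaller set $\{p_*(\oo_P):p \text{ smooth projective}\}$; the added bookkeeping in positive and mixed characteristic is what necessitates the composite form of the generators in description (\ref{iwchow1}), and essentially all of the technical content of the proof is bundled into keeping track of it.
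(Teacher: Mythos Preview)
The paper's proof is a one-line deferral to \cite{bonivan}, whose method is a \emph{gluing} construction rather than the global-negativity route you propose: one first builds $\wchowc$ over spectra of fields (where fiber products of smooth projective varieties over the perfect closure are again smooth, hence regular, so negativity of the Chow-motive generators is immediate from Theorem \ref{tcd}(II.\ref{ikmorr})), and then glues along stratifications to obtain $\wchowc(X)$ for general $X$. In that framework the pointwise criteria II.(\ref{iwechow8})--(\ref{iwechow9}) are essentially the \emph{definition} of $\wchowc(X)$, and the envelope descriptions of Part I are derived from them afterward. Your outline runs in the opposite direction --- build the weight structure from a global negative class and deduce II.(\ref{iwechow8})--(\ref{iwechow9}) at the end --- which is closer in spirit to H\'ebert's approach in \cite{hebpo}.

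Your Step 1 contains a genuine gap. The assertion that the Hom-group reduces to $\kk_{-k}$ of a \emph{regular} fiber product is not justified: once $X$ is singular, $P\times_X P'$ need not be regular even when $P$ and $P'$ are, and the ``smooth projective factor'' in $p$ does not repair this because the finite-universal-homeomorphism and immersion pieces of $p$ destroy smoothness over $X$. (Already for $X=\mathbb{A}^2$ and $P,P'$ two regular curves tangent at a point, the scheme-theoretic intersection is non-reduced.) H\'ebert's computation in \cite{hebpo} works precisely because in the $\q$-linear setting de~Jong alterations let him take $p$ genuinely \emph{smooth} proper, so that $P\times_X P'$ is smooth over the regular $P'$; in the present $\lam$-linear Gabber setting one is forced to the non-smooth class of description (\ref{iwchow1}), and that shortcut fails. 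The orthogonality can be rescued by stratifying $P\times_X P'$ into regular quasi-projective pieces and invoking Remark \ref{rcons}(\ref{imgbmed}) together with absolute purity (Theorem \ref{tcd}(I.\ref{ipura})) and nil-invariance (Theorem \ref{tcd}(II.\ref{itr})), but that is exactly the gluing argument in miniature and is not what you wrote. A secondary mismatch: taking $C=\{p_*(\oo_P)\}$ as your negative class, the machinery yields $\dkcx_{\wchowc\le 0}=$ right envelope of $C$, whereas (\ref{iwchow1}) asks for the right envelope of $\{p_!(\oo_P)\}$; since the immersion in $p$ need not be closed these objects genuinely differ, and you do not address why the two envelopes agree.
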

\begin{proof}
I. The methods applied in the proof of Theorem 2.1.3, Theorem 2.2.1(2d)  (along with Remark 2.2.2(1)), and Proposition 2.3.2 (along with Remark 2.3.3(1)) of \cite{bonivan} can be carried over to our context (of $\lam$-linear $K$-motives over $\lam$-nice schemes) without any difficulty (if we apply Theorem \ref{tcd}). 


II. Once again, the arguments used in (\S2.2 of)  \cite{bonivan} 
yield all of the statements without any difficulty.

\end{proof}

\begin{rema}\label{rexplwchow}
\begin{enumerate}
\item\label{rexplwchow-1} The arguments used in  \cite{bonivan} for the proof of the corresponding analogue of  (the existence statement in) Theorem \ref{twchow} were not ``very explicit''; in particular, they do not bound the weights of $\oo_X$ from below (for a non-regular $X$). In the next section we will describe certain more explicit arguments (that can also be applied to $\dm(-)$).

\item\label{rexplwchow-2} The corresponding envelopes in \cite{bonivan} ``differed by stabilization'' via $\otimes \oo_X\lan i \ra$ ($=\oo_X\lan -1 \ra^{\otimes -i}$ for $i\in \z$; see Remark \ref{rcons}(\ref{itate})). The natural analogues of the properties of $\wchowc$ described in this section are also valid for Beilinson motives (over arbitrary nice base schemes) since they possess all the properties needed  to prove them.  

	
\item\label{rexplwchow-3} One of the main benefits of weight structures is that they relate the objects of $\cu$ to the ``more simple'' objects of the corresponding $\hw$ (via weight complexes and weight spectral sequences). So, it certainly makes sense to describe ${\underline{Hw}}^c_{\chow(X)}$ ``more explicitly'' (note that this will also yield a description of the whole ${\underline{Hw}}_{\chow(X)}$; see Proposition \ref{pwchownc}
below and Proposition \ref{pextws}(\ref{iextwh})). 
Now, Theorem \ref{tcd}(II.\ref{igenc}) (combined with Theorem 4.3.2(II) of \cite{bws}) implies that ${\underline{Hw}}^c_{\chow(X)}=\chow(X)$ (see Definition \ref{dchow}) 
whenever either $X$ is of finite type over a field or $\lam=\q$ and $X$ is of finite type over an excellent noetherian scheme of  dimension at most $3$. 
Now, one can ``compute'' morphism groups between two objects of the type $\mgbmx(P)$ where $P$ is regular and projective over $X$ using the method of the proof of \cite[Lemma 1.1.4.(I.1)]{brelmot}.
Computing the composition of morphisms operation for these motives seems to be the most difficult problem here; yet one may probably solve it  by applying the arguments of \cite{jin}.
\end{enumerate}	
	
\end{rema}

\subsection{Motivic weight bounds  in the terms of negative $\kk$-groups: the finite type version}\label{sgenlength}

Now we combine the previous theorem with Proposition \ref{pextws}(\ref{ibort}) (and apply these statements to certain motives of $X$-schemes). 
We start from the case of $X$-schemes of finite type.

\begin{theo}\label{testw} Let $f\colon Y\to X$ be a (separated) finite type morphism of  $\lam$-nice schemes, $n\ge 0$. Then the following conditions are equivalent.

\begin{enumerate}

\item\label{itest1} $f_*(\oo_{Y})\in \dkc(X)_{\wchowc\ge -n}$.

\item\label{itest2} For any 
(separated) finite type
 morphism $P\to X$ the groups $\kk_i^{BM,Y}(P\times_X Y)$ (see Remark \ref{rcons}(\ref{imgbm}, \ref{imgbmles})) vanish for $i<-n$.

\item\label{itestaff} For any 
 morphism $P\to X$ of finite type with regular affine domain the groups $\kk_i^{BM,Y}(P\times_X Y)$  vanish for $i<-n$.

\item\label{itest3} For any  morphism $P\to X$ being the composition of a smooth projective morphism with a finite universal homeomorphism whose domain is regular
 and with an immersion the groups $\kk_i^{BM,Y}(P\times_X Y)$ vanish for $i<-n$.


\item\label{itest4} For any smooth $P/X$ and any open 
embedding $j\colon U\to P$ we have the following: $\kk_{i}(P\times_X Y)=\ns$ for $i<-n$ 
 and  $\kk_{-n}(P\times_X Y)$ surjects onto $\kk_{-n}(U\times_X Y)$.


\end{enumerate}
\end{theo}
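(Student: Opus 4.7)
The plan is to translate condition (\ref{itest1}) into an orthogonality statement and then match it against each of the envelope descriptions of $\dkcx_{\wchowc\le 0}$ supplied by Theorem \ref{twchow}(I).

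By Proposition \ref{pbw}(\ref{iort}), condition (\ref{itest1}) is equivalent to $\dkcx_{\wchowc\le -n-1}\perp f_*(\oo_{Y})$. Since $\dkcx_{\wchowc\le -n-1}=\dkcx_{\wchowc\le 0}[-n-1]$, Proposition \ref{pextws}(\ref{ibort}) allows this orthogonality to be tested only against the $[-n-1]$-shifts of a generating set for any right envelope description of $\dkcx_{\wchowc\le 0}$. The bridge to $\kk$-theory is provided by Remark \ref{rcons}(\ref{imgbmles}):
$$\kk_i^{BM,Y}(P\times_X Y)\cong \dkx(\mgbmx(P)[i],f_*(\oo_{Y})),$$
so the orthogonality $\mgbmx(P)[k]\perp f_*(\oo_{Y})$ for $k\le -n-1$ corresponds precisely to the vanishing of $\kk_i^{BM,Y}(P\times_X Y)$ for $i<-n$.

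With this dictionary in hand, the equivalences (\ref{itest1})$\iff$(\ref{itest2}) and (\ref{itest1})$\iff$(\ref{itest3}) follow immediately from Theorem \ref{twchow}(I.\ref{iwchow2}) and Theorem \ref{twchow}(I.\ref{iwchow1}) respectively, while (\ref{itest2})$\Rightarrow$(\ref{itestaff}) is trivial. For (\ref{itest1})$\iff$(\ref{itest4}), I would first apply Theorem \ref{twchow}(I.\ref{iwchow3}) to reduce (\ref{itest1}) to the vanishing of $\kk_i^{BM,Y}(Z\times_X Y)$ for $i<-n$ and all quasi-projective $Z/X$; any such $Z$ is closed in some smooth $P/X$ (embed $Z$ as a locally closed subscheme of a projective space over $X$ and take $P$ to be a suitable open neighborhood of $Z$), so that the long exact sequence (\ref{ekthlesgen}) from Remark \ref{rcons}(\ref{imgbmles}) transforms this vanishing (as the closed subscheme $Z$ and its open complement $U=P\setminus Z$ vary in such a smooth $P$) into the vanishing and surjectivity statement of (\ref{itest4}).

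The main obstacle is (\ref{itestaff})$\Rightarrow$(\ref{itest2}), i.e.\ that restricting the test class to regular affine $P/X$ already suffices. The plan is to show that $\dkcx_{\wchowc\le 0}$ coincides with the right envelope of $\{\mgbmx(T):T/X\text{ finite type, regular, affine}\}$, after which (\ref{itestaff})$\iff$(\ref{itest1}) follows exactly as in the first paragraph. This refined envelope description is to be established via two successive reductions: first, for a finite type $T/X$, excellence yields a regular stratification of $T\re$, and iterated use of the distinguished triangle (\ref{emgys}) (as in Remark \ref{rcons}(\ref{imgbme})) places $\mgbmx(T)$ in the extension closure of $\mgbmx$ of the regular strata; second, any regular $T'/X$ admits a finite cover by regular affine opens, and Mayer--Vietoris triangles deduced from (\ref{eglu1}) place $\mgbmx(T')$ in the envelope of $\mgbmx$ of pairwise (and higher) intersections of such opens, which remain regular affine by separatedness of $T'$. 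Concatenating these, $\mgbmx(T)$ for any finite type $T/X$ lies in the envelope of $\{\mgbmx(\text{regular affine})\}$, closing the argument.
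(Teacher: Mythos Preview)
Your proposal is correct and follows essentially the same route as the paper: translate condition~(\ref{itest1}) into orthogonality via Proposition~\ref{pbw}(\ref{iort}), identify the morphism groups with Borel--Moore $\kk$-groups via (\ref{ekthlesgen}), and then read off each equivalence from the corresponding envelope description in Theorem~\ref{twchow}(I). Two small remarks are worth making.

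First, for (\ref{itestaff})$\Rightarrow$(\ref{itest2}) the paper simply invokes Remark~\ref{rcons}(\ref{imgbme}): one may directly choose a stratification of $T\re$ whose strata are regular \emph{and} affine (take a non-empty regular affine open, pass to the complement, and iterate by noetherian induction). Your two-step plan (regular stratification followed by an affine Mayer--Vietoris argument) also works, but the Mayer--Vietoris step is not literally an instance of (\ref{eglu1}); it is cleaner to absorb it into the stratification step as the paper does.

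Second, your appeal to Proposition~\ref{pextws}(\ref{ibort}) is formally a statement about the extended weight structure on $\dk(X)$ rather than on $\dkcx$. It is still valid here since the embedding $\dkcx\hookrightarrow\dkx$ is weight-exact and $f_*(\oo_Y)$ is compact; alternatively, one can stay inside $\dkcx$ and use only Proposition~\ref{pbw}(\ref{iort}) together with the elementary observation that $\{M_i\}^{\perp}=D^{\perp}$ whenever $D$ is the envelope of $\{M_i\}$, exactly as the paper does.
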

\begin{proof}
If condition \ref{itest1} is fulfilled then we certainly have 
$\dkcx_{\wchowc \le n-1}\perp f_*(\oo_{Y})$. Hence for any separated finite type $P/X$ and $i<n$ we have $\mgbmx(P)[i]\perp  f_*(\oo_{Y})$ (see Theorem \ref{twchow}(I.\ref{iwchow2})). 
It remains to apply 
the isomorphism $\kk_i^{BM}(Z\times_X Y)\cong \dkx(\mgbmx(Z),f_*(\oo_{Y}[-i]))$  (mentioned in (\ref{ekthlesgen})) to deduce condition  \ref{itest2}. 

Certainly, condition  \ref{itest2} yields conditions  \ref{itest3} and \ref{itestaff}. Conversely, condition \ref{itestaff} implies condition \ref{itest2} according to Remark \ref{rcons}(\ref{imgbme}).

Similarly to the implication  \ref{itest1} $\implies$ \ref{itest2}, 
condition  \ref{itest3} yields that 
$\mgbmx(P)[i]\perp  f_*(\oo_{Y})$ for any  $P$ of the corresponding type. Hence it implies condition \ref{itest1} by Theorem \ref{twchow}(I.\ref{iwchow2})  (along with Proposition \ref{pbw}(\ref{iort}). 

 Lastly, since $\kk_{i}(P\times_X Y)\cong \kk_{i}^{BM,Y}(P\times_X Y)$ (basically by definition), 
condition  \ref{itest2} implies condition  \ref{itest4}
 according to Remark \ref{rcons}(\ref{imgbmles}). Next,  condition  \ref{itest4} certainly yields that the corresponding facts are true for $P$ being smooth quasi-projective over $X$. Hence the remark cited yields condition \ref{itest1} if we combine it with Theorem \ref{twchow}(I.\ref{iwchow3})
 (along with Proposition \ref{pbw}(\ref{iort}).

\end{proof}

\begin{rema}\label{rcones}
\begin{enumerate}
\item\label{rcones-1} The ``initial'' case $X=Y$, as well as the cases   where $X$ is the spectrum of a field (that may be perfect or just equal to $\com$)  and (the ``final'' case) $X=\spe \lam$ in the theorem (already) appear to be quite interesting. 
Note that by the virtue of the results of the following subsection we can consider arbitrary separated morphisms (not necessarily of finite type) of $\lam$-nice schemes in this remark.

Now we define  $c(Y/X)$ as the minimal $n\in \z$ such that $f_*(\oo_{Y})\in \dkx_{\wchow\ge -n}$.  Theorem \ref{twchow} (along with Proposition \ref{pwchownc}
in the case when $f$ is not of finite type, and with the following part of this remark) yields the following properties of this characteristic: $c(Y/X)\ge 0$; $c(Y/X)= 0$ if $X_{red}$ is regular;  $c(Y'/X)\le  c(Y/X)$ if $Y'$ is smooth over over $Y$;  $c(Y/X)\ge  c(Y/S)$ if $X$ is separated over $S$.

\item\label{rcones-2} Applying   the argument used in the proof of our theorem 
for $P=X$  one also obtains 
  $f_*(\oo_{Y})\notin \dkx_{\wchow\ge 1}$ (since $\kk_0(Y)\neq \ns$).

Yet note that criteria quite similar to the ones above can be proved for the question  whether $\co (f'_*(\oo_{Y'}) \to f_*(\oo_{Y}))$  belongs to $ \dkx_{\wchow\ge -n}$ 
(where the connecting morphism is induced 
by a factorization of $f$ through some finite type  
 $f'\colon Y'\to X$).  
 This (more general) formulation can also be interesting for $n<0$. 

\item\label{rcones-3} Certainly, conditions  \ref{itest3} and \ref{itestaff} 
 can also be reformulated similarly to condition \ref{itest4}. 

 Note also that one may replace the Borel--Moore $K$-theory in 
 these conditions by the corresponding $K$-theory with support; see Remark \ref{rcons}(\ref{isuspect}). 

\item\label{rcones-4} Below we will show that in condition \ref{itest2} one can restrict himself  to a  finite set of  ``test schemes'' $P/X$. Now we will only make two simple remarks  related to this claim.

Firstly, if $X$ is of finite type over a field, it suffices to consider $P$ running through regular projective $X$-schemes (see Theorem \ref{twchow}(I.\ref{iwchow4})). 
Note also that in the case where $X$ is the spectrum of a field itself, one may pass to its perfect closure (using Theorem \ref{twchow}(II.\ref{iwechow7})). After that 
one can consider $P$ being smooth over (the new) $X$, and so we have   $\kk_i^{BM,Y}(P\times_X Y)=\kk_i (P\times_X Y)$.

On the other hand, (\ref{emgys}) implies that one can ``cut test schemes into pieces''. Moreover, one can pass to direct limits here; this yields certain ``Borel--Moore stalks'' (that are closely related to certain coniveau spectral sequences); see 
\cite{bondegl}.
\end{enumerate}
\end{rema}

\subsection{On 
``$\kk$-weight bounds'' for $X$-schemes that are not of finite type}\label{snclength}

Now we prove the ``non-compact'' version of Theorem \ref{twchow}. 
It easily implies that Theorem \ref{testw} can be extended to $Y$ being not necessarily of finite type over $X$.

\begin{pr}\label{pwchownc}
For any ($\lam$-nice)  scheme $S$ let $\wchow=\wchow(S)$ denote the ``extension'' of  $\wchowc(S)$ from $\dkcs$ to $\dks$  constructed via the method of Proposition \ref{pextws}(\ref{iextws}).   Let $f\colon Y\to X$ be a (separated) scheme morphism, $M\in \obj \dkx$. 
Then the following statements are valid.

\begin{enumerate} 

\item\label{iwexchow5} $\oo_X\in \dkx_{\wchow\le 0}$.

\item\label{iwexchow6} If 
$X\re$ (see the end of \S\ref{snotata}) is regular then $\oo_X\in  \dkx_{\wchow= 0}$.

\item\label{iwexchow1}
$f_*$ is right weight-exact; 
$f^*$  is left weight-exact.

\item\label{iwexchow2} If $f$ is of finite type then $f^!$ is right weight-exact and $f_!$ is left weight-exact.

\item\label{iwexchow4} If $f$ is proper then $f_*= f_!$ is also weight-exact.

\item\label{iwexchow3} Moreover, $f^*\cong f^!$ is weight-exact   if $f$ is either smooth or a finite universal homeomorphism.

\item\label{iwexchow9} $f_*(\oo_Y)$ is bounded below.

\item\label{iwexchowstr} Let $\al$ be a stratification of $X$. Then for the corresponding $j_l\colon X_l^\al\to X$ we have the following statements:
$M\in  \dkx_{\wchow\le  0}$ if and only if $j_l^*(M)\in \dk(X_l^\al)_{\wchow \le 0}$ for all $l$; $M\in  \dkx_{\wchow\ge  0}$ if and only if $j_l^!(M)\in \dk(X_l^\al)_{\wchow \ge 0}$ for all $l$.

\item\label{iwexchowoc}  Let $x_i\colon X_i\to X$ be an open cover of $X$. Then 
$M\in  \dkx_{\wchow\le  0}$ (resp.  $M\in  \dkx_{\wchow\ge  0}$) if and only if $x_i^*(M)$ belongs to $ \dk(X_i)_{\wchow \le 0}$ 
(resp.  to  $ \dk(X_i)_{\wchow \ge 0}$) for all $i$.

\item\label{iwexchow7} $f^*$ is weight-exact also in the case where $f$ is
the (projective) limit of  an essentially affine system such that the transition morphisms are compositions of smooth morphisms and finite universal homeomorphisms.

\item\label{iwexchow8} Let $\xxx$  and  $j_K^!$ (for $K\in \xxx$) be as in Theorem \ref{twchow}(II.\ref{iwechow8}). 
Then $M\in \dkx_{\wchow\ge 0}$ if and only if for any $K\in \xxx$ we have $j_K^!(M)\in \dk(K)_{\wchow\ge 0}$. 

\end{enumerate}

\end{pr}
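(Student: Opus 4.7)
The plan is to deduce each of the twelve assertions from its compact-level counterpart in Theorem \ref{twchow} via the extension machinery of Proposition \ref{pextws}. Assertions \ref{iwexchow5} and \ref{iwexchow6} are immediate: $\oo_X$ is compact (being the tensor unit) and lies in $\dkcx_{\wchowc\le 0}$ (respectively in $\dkcx_{\wchowc=0}$ when $X\re$ is regular) by Theorem \ref{twchow}(II.\ref{iwechow5}, II.\ref{iwechow6}), and the inclusion $\dkcx\hookrightarrow\dkx$ is weight-exact by Proposition \ref{pextws}(\ref{iextwe}).

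For assertions \ref{iwexchow1} through \ref{iwexchow3}, the main tool is Proposition \ref{pextws}(\ref{iextweadj}), which for an adjoint pair of exact functors between our compactly generated categories translates compact-level left weight-exactness of the left adjoint into both left weight-exactness of the left adjoint and right weight-exactness of the right adjoint on the whole categories. Applied to $(f^*,f_*)$ with the compact hypothesis from Theorem \ref{twchow}(II.\ref{iwechow1}), this yields \ref{iwexchow1}; applied to $(f_!,f^!)$ for $f$ of finite type with the compact hypothesis from Theorem \ref{twchow}(II.\ref{iwechow2}), it yields \ref{iwexchow2}. For $f$ proper, $f_*=f_!$ combines both halves to give \ref{iwexchow4}; for $f$ smooth or a finite universal homeomorphism, the isomorphism $f^*\cong f^!$ furnishes the missing right weight-exactness, giving \ref{iwexchow3}. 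Assertion \ref{iwexchow9} is an immediate consequence: $\oo_Y$ is compact and $\wchowc$ is bounded on $\dkcy$ by Theorem \ref{twchow}(I), so $\oo_Y\in\dkcy_{\wchowc\ge m}\subset\dky_{\wchow\ge m}$ for some $m\in\z$ (using Proposition \ref{pextws}(\ref{iextwe})), and then the right weight-exactness of $f_*$ just established gives $f_*(\oo_Y)\in\dkx_{\wchow\ge m}$.

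Assertions \ref{iwexchowstr} and \ref{iwexchowoc} are proved by induction on the number of strata using the gluing triangles of Theorem \ref{tcd}(I.\ref{iglu}). Writing $j\colon X_1^\alpha\to X$ for the open immersion onto the first stratum and $i\colon Z=X\setminus X_1^\alpha\to X$ for the complementary closed immersion (with the stratification restricted to $X_2^\alpha,\ldots,X_n^\alpha$), the triangles $j_!j^*M\to M\to i_*i^*M$ and $i_*i^!M\to M\to j_*j^*M$, together with the weight-exactness properties of $j_!$, $j_*$, $i_*$, and $i^!$ established in the previous assertions and the extension-closedness of weight classes (Proposition \ref{pbw}(\ref{iextw})), reduce the ``if'' directions to the inductive hypothesis; the ``only if'' directions follow from left (respectively right) weight-exactness of each $j_l^*$ (respectively $j_l^!$). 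For assertion \ref{iwexchowoc}, refine the open cover to a stratification by setting $X_k=X_{i_k}\setminus\bigcup_{l<k}X_{i_l}$. For assertion \ref{iwexchow7}, the compact version is Theorem \ref{twchow}(II.\ref{iwechow7}); since $f^*$ commutes with small coproducts, left weight-exactness on the whole $\dkx$ follows from Proposition \ref{pextws}(\ref{iextwcommcoprod}), while right weight-exactness is obtained by combining Proposition \ref{pextws}(\ref{ibort}) (characterizing $\dky_{\wchow\ge 0}$ via vanishing of Hom-groups from compact generators of $\dkcy_{\wchowc\le 0}$) with the continuity formula of Theorem \ref{tcd}(I.\ref{icont}) to express these Hom-groups as filtered colimits of Hom-groups on the intermediate $Y_i$, where the compact-level weight-exactness of the transitions (smooth or finite universal homeomorphisms) applies.

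The main obstacle is assertion \ref{iwexchow8}. The ``only if'' direction is immediate: $j_K=i\circ j_K^0$ factors as a closed immersion followed by the limit of open immersions from $\overline{\{K\}}$, so $j_K^!=i^!\circ(j_K^0)^*$ is the composition of $i^!$ (right weight-exact by \ref{iwexchow2}) with $(j_K^0)^*$ (weight-exact by \ref{iwexchow7} applied to the limit of open immersions). For the ``if'' direction, proceed by Noetherian induction on $\dim X$: apply assertion \ref{iwexchowstr} to a finite stratification of $X$ by regular locally closed subschemes (available by excellence), reducing to $X$ regular connected; the Zariski points of each stratum are inherited from those of the ambient $X$, so the hypothesis passes. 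For such $X$ with generic point $\eta$, the hypothesis gives $j_\eta^*M=j_\eta^!M\in\dk(\eta)_{\wchow\ge 0}$, and the plan is to find an open $U\ni\eta$ with $j_U^*M\in\dk(U)_{\wchow\ge 0}$; the complement $Z=X\setminus U$ then has strictly smaller dimension and inherits the Zariski-point hypothesis, so the inductive hypothesis yields $i_Z^!M\in\dk(Z)_{\wchow\ge 0}$, and the gluing triangle $i_{Z,*}i_Z^!M\to M\to j_{U,*}j_U^*M$ combined with the right weight-exactness of $j_{U,*}$ and $i_{Z,*}$ (assertions \ref{iwexchow1} and \ref{iwexchow4}) completes the argument. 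The delicate step is extracting such an open $U$: by Proposition \ref{pextws}(\ref{ibort}), $\wchow\ge 0$ is detected by vanishing of Hom-groups from the compact generators $\{\mgbmx(P)\}$ of Theorem \ref{twchow}(I.\ref{iwchow2}); the continuity formula of Theorem \ref{tcd}(I.\ref{icont}) expresses each Hom-group at $\eta$ as the filtered colimit over $U\ni\eta$ of the corresponding groups, so vanishing at $\eta$ yields vanishing on some $U$, but showing that a single $U$ suffices simultaneously for all the generators relevant on $U$ will require combining Noetherian quasi-compactness with the particular description of the compact generating set.
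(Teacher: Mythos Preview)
Your treatment of assertions \ref{iwexchow5}--\ref{iwexchowoc} and \ref{iwexchow7} matches the paper's approach essentially line for line: weight-exactness of the embedding $\dkcx\hookrightarrow\dkx$ for \ref{iwexchow5}--\ref{iwexchow6}, Proposition \ref{pextws}(\ref{iextweadj}) applied to the adjunctions $(f^*,f_*)$ and $(f_!,f^!)$ for \ref{iwexchow1}--\ref{iwexchow3}, the gluing triangles plus extension-closedness for \ref{iwexchowstr}--\ref{iwexchowoc}, and Proposition \ref{pextws}(\ref{ibort}) together with continuity for \ref{iwexchow7}. All of this is fine.

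The gap is in assertion \ref{iwexchow8}, and you have correctly identified where it lies: you set yourself the task of producing a \emph{single} open $U\ni\eta$ with $j_U^*M\in\dk(U)_{\wchow\ge 0}$, and your final sentence concedes that this is not actually resolved. Indeed it cannot be resolved in that form: the generating class $\{\mgbm_U(P)[i]\}$ is infinite, and vanishing of the filtered colimit $\inli_U\dk(U)(\mgbm_U(P_U)[i],M|_U)$ at $\eta$ only says that each element of each term eventually dies, not that some term is already zero. The fix is to abandon the search for a single $U$ and instead argue elementwise. Fix any $\phi\in\dkx(\mgbmx(P)[i],M)$ with $i<0$; since $\mgbmx(P)$ is compact, the ``furthermore'' clause of Theorem \ref{tcd}(I.\ref{icont}) gives an open $U_\phi\ni\eta$ with $j_{U_\phi}^*(\phi)=0$. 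In the long exact sequence obtained by applying $\dkx(\mgbmx(P)[i],-)$ to the gluing triangle $i_{Z*}i_Z^!M\to M\to j_{U_\phi *}j_{U_\phi}^*M$ (where $Z=X\setminus U_\phi$), the image of $\phi$ on the right vanishes, so $\phi$ lifts to $\dk(Z)(\mgbm_Z(P_Z)[i],i_Z^!M)$; this group is zero by the Noetherian induction hypothesis applied to $Z$ (the point hypothesis transfers because $j_K^!=j_{K,Z}^!\circ i_Z^!$ for $K\in Z$). Hence $\phi=0$. This is what the paper means when it says the converse implication ``can be proved similarly to its compact motivic version'' with the remark that the continuity statement here ``contains more information'': the extra information is precisely that continuity of $\mathrm{Hom}$ holds with a non-compact second argument, which is what lets the elementwise argument go through for arbitrary $M$.
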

\begin{proof}
All of these statements are easy consequences of Theorem \ref{twchow} along with Proposition \ref{pextws}.

In particular, Proposition \ref{pextws}(\ref{iextwe})  (combined with Theorem \ref{twchow}) yields assertions \ref{iwexchow5} and \ref{iwexchow6}. 

Next, combining   Proposition \ref{pextws}(\ref{iextweadj})  with the adjunctions $f^* \dashv f_*$ and $f_! \dashv f^!$ (for $f$ being a finite type morphism) we obtain the proof of assertions \ref{iwexchow1} and \ref{iwexchow2}.
Assertions  \ref{iwexchow4} and \ref{iwexchow3} follow from the previous two immediately when we take into account the  isomorphisms of functors mentioned in their formulations.

Next, $\oo_Y$ is bounded below (in $\dky$) since $\wchowc(Y)$ is bounded. Hence $f_*(\oo_Y)$ is bounded below also, and we obtain assertion \ref{iwexchow9}.
Next, $\oo_Y$ is bounded below (in $\dky$) since $\wchowc(Y)$ is bounded. Hence $f_*(\oo_Y)$ is bounded below also, and we obtain assertion \ref{iwexchow9}.

The ``only if''  statements in  assertion \ref{iwexchowstr} are immediate from assertions \ref{iwexchow1}
 and \ref{iwexchow2}, respectively. 

To obtain the converse implications one should recall that the classes $\dkx_{\wchow\le  0}$ and $\dkx_{\wchow\ge  0}$ are extension-closed. Thus it suffices to recall that $M$ belongs to the extension-closure of $\{j_{l!}j^*_l(M)\}$ and also to  the extension-closure of $\{j_{l*}j^!_l(M)\}$ (see Remark \ref{rcons}(\ref{imgbme}, \ref{imgbmed})) and apply the remaining statements in assertions \ref{iwexchow1} and \ref{iwexchow2}.

 \ref{iwexchowoc}. Note that $j_i^!=j_i^*$ and take a stratification of $X$ such that each of $X_l^{\al}$ lies in one of the subschemes $X_i$. Then the assertion follows from the previous one easily.

To prove assertion \ref{iwexchow7} it certainly suffices to verify that $f^*$ is right weight-exact. So, for any $M\in \dkx_{\wchow \ge  0}$,  
any finite type $Y'/Y$ and $i<0$ we should check that $\mgbm_Y(Y')[i]\perp f^*(M)$ (see Proposition \ref{pextws}(\ref{ibort}) and Theorem  \ref{twchow}(I.\ref{iwchow2})). Now, we can assume that $Y'$ is defined ``at a finite level'' (i.e.,  for $f=\prli f_i\colon Y_i\to X$  there exists  $i_0$  such that $Y'=Y\times_{Y_{i_0}}Y_0'$ for some finite type $Y'_0/Y_{i_0}$; see  
 Theorem 8.8.2(ii) of \cite{ega43}).  Hence for the corresponding $h_{i_0}\colon Y\to X_{i_0}$ we have  $\mgbm_Y(Y')\cong h_{i_0}^*(\mgbm_{Y_{i_0}}Y_0')$ (see Remark \ref{rcons}(\ref{imgbm})); for the transition morphisms $g_{ii_0}\colon Y_i\to Y_{i_0}$ we also have $\dky(\mgbm_Y(Y')[i],f^*(X))\cong \inli_{i\ge i_0} \dm_{Y_i}(\mgbm_{Y_{i}} Y_i\times_{Y_{i_0}}Y_0', g_{ii_0}^*(M))$ (see Theorem \ref{tcd}(I.\ref{icont}, II.\ref{ianr}).  
 Now, the weight-exactness of $g_{ii_0}$ implies the vanishing of (all terms of) this limit; this concludes the proof of the assertion.

The ``only if'' part of assertion \ref{iwexchow8} follows immediately from the previous assertion (along with assertion \ref{iwexchow2}).
The converse implication  can be proved similarly to its ``compact motivic'' version (i.e., to the corresponding ``half'' of Proposition 2.2.3 of \cite{bonivan}); note that Theorem \ref{tcd}(I.\ref{icont}, II.\ref{ianr}) contains more information than its analogue used in ibid. 

\end{proof}

\begin{coro}\label{ctestwnc}
Theorem \ref{testw} is also valid for $Y$ being not of finite type (and separated) over $X$.
\end{coro}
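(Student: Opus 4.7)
The plan is to observe that essentially the same argument as for Theorem \ref{testw} works, with three adjustments: (a) condition \ref{itest1} is reinterpreted as $f_*(\oo_Y) \in \dkx_{\wchow \ge -n}$, where $\wchow$ is the ``non-compact'' Chow weight structure from Proposition \ref{pwchownc}; (b) the orthogonality characterization of $\dkx_{\wchow \ge -n}$ is supplied by Proposition \ref{pextws}(\ref{ibort}) in place of Proposition \ref{pbw}(\ref{iort}); and (c) the identification
$$\kk_i^{BM,Y}(P \times_X Y) \cong \dkx(\mgbmx(P)[i], f_*(\oo_Y))$$
continues to hold for an arbitrary separated $f$ (and finite type $P/X$), since it only uses the adjunction $f^* \dashv f_*$ (Theorem \ref{tcd}(I.\ref{ifunast}, II.\ref{ianr})) together with the base change $f^*(\mgbmx(P)) \cong \mgbm_Y(P \times_X Y)$ recorded in Remark \ref{rcons}(\ref{imgbm}); neither ingredient requires $f$ to be of finite type.

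Granting these, each implication of Theorem \ref{testw} transports without change. The forward direction \ref{itest1} $\Rightarrow$ \ref{itest2} follows from the fact that $\mgbmx(P) \in \dkx_{\wchow \le 0}$ for any finite type $P/X$ (combine Theorem \ref{twchow}(I.\ref{iwchow2}) with the weight-exactness of the inclusion $\dkcx \hookrightarrow \dkx$ given by Proposition \ref{pextws}(\ref{iextwe})), the semi-invariance of $\wchow$, and the orthogonality axiom. The converse implications \ref{itest3} $\Rightarrow$ \ref{itest1} and \ref{itestaff} $\Rightarrow$ \ref{itest1} are obtained by applying Proposition \ref{pextws}(\ref{ibort}) to the envelope generators from Theorem \ref{twchow}(I.\ref{iwchow1}, I.\ref{iwchow2}), with Remark \ref{rcons}(\ref{imgbme}) used to reduce condition \ref{itestaff} to condition \ref{itest2}. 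The equivalence with \ref{itest4} is then provided by the long exact sequence (\ref{ekthlesgen}) of Remark \ref{rcons}(\ref{imgbmles}), which is already formulated for arbitrary separated $g \colon X' \to X$ and so applies directly with $X' = Y$ and $g = f$.

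The only possible obstacle is to verify that no step of the original proof secretly exploited the compactness of $f_*(\oo_Y)$; but the test objects $\mgbmx(P)[i]$ entering the orthogonality characterizations all remain compact (since $P/X$ is still assumed to be of finite type in every criterion), so the extended weight structure's perpendicular description furnishes everything that was needed. Thus no genuinely new technical input is required beyond the non-compact formalism developed in Propositions \ref{pextws} and \ref{pwchownc}.
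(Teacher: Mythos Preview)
Your proposal is correct and follows exactly the approach sketched in the paper's own proof: replace the compact Chow weight structure by its extension via Proposition \ref{pwchownc}, and substitute Proposition \ref{pextws}(\ref{ibort}) for Proposition \ref{pbw}(\ref{iort}) in the orthogonality step. Your additional remarks (that the isomorphism (\ref{ekthlesgen}) already covers arbitrary separated $f$, and that compactness of the test objects $\mgbmx(P)[i]$ is all that is needed) are the right justifications for why nothing else in the original argument breaks.
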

\begin{proof} The proof of Theorem \ref{testw} can be extended to this context without any difficulty if we apply Proposition \ref{pwchownc} and use Proposition \ref{pextws}(\ref{ibort}) instead of
 Proposition \ref{pbw}(\ref{iort}).

\end{proof}

\begin{rema}\label{rwwc}
\begin{enumerate}
\item\label{rwwc-1} Now we recall that $g_*(\oo_X)$ is bounded below; hence it belongs to $\dkx_{\wchow\ge -n}$ if and only if $t(g_*(\oo_X))\in K(\hw_{\chow}(X))^{\le n}$ (see Proposition \ref{pwc}(\ref{iwc3})).  This criterion does not seem to be very ``practical'' in general (especially if $g$ is not of finite type). Yet certainly there are some cases where  $t(g_*(\oo_X))$ can be computed ``explicitly''. Possibly, we will consider this question in a subsequent paper. 

\item\label{rwwc-2} Once again (following Remark \ref{rcons}(\ref{itate})) we recall that in the cases $\lam=\q$ and $\lam=\zop$ (for any $p\in \z$) there exist ``reasonable'' $\lam$-linear ``Voevodsky-type'' motivic categories  $\dm(-)$ over arbitrary $\lam$-nice schemes. All of the statements of this section 
easily carry over to these categories; the main distinctions is that one should ``take Tate twists into account'' (see the aforementioned remark once again), whereas in Theorem \ref{tcd}(II.\ref{ikmorr}) one should take the corresponding $\lam$-linear motivic cohomology instead of ($\lam$-linear homotopy invariant) $K$-theory. We did not treat this setting in detail since this sort of motivic cohomology (of singular schemes) seems to be ``less popular'' than $K$-theory, and we didn't want to restrict ourselves to two possibilities for $\lam$ only.    
\end{enumerate}
\end{rema}

\section{``Studying weights'' using (general) Gabber's resolution of singularities results}\label{slength}

The main goal of  this section is to establish some weight bounds on objects of the type $f_*(\oo_Y)$ and $\mgbmx(Y)$ for $Y$ being (usually) of finite type over $X$. We also put objects of this type in certain ``explicit'' envelopes (that yield related envelope statements for some choices of $m$-weight decompositions for these motives, where $m$ runs through integers). We also obtain similar results for other ``motivic categories'' possessing properties similar to that of $K$-motives. Since we want our results to be as general is possible, we have to invoke some definitions and results of O. Gabber (as described in \cite{illgabb}). Note also that most of the formulations of this section are motivated by the example described in Remark \ref{rcompvm}(\ref{rcompvm-4}) below.

In \S\ref{sdimf} we introduce a certain modification of Gabber's dimension functions; the reason for this is that the properties of the corresponding function $\de=\de^B$ is ``more satisfactory'' for our purposes than the properties of  Krull dimension.

In \S\ref{sgabblem} we formulate and prove the aforementioned envelope statements for $K$-motives.

In \S\ref{sgenweibelconj} we combine these results with the 
ones of the previous section to obtain a collection of weight bounds and the corresponding bounds on ``the negativity'' of certain (Borel--Moore) $\kk$-groups. 
In particular, we prove that $K_{-s}$ is ``supported in codimension $s$'' (for any $s\ge 0$).

In  \S\ref{sgengab} we extend the results of \S\ref{sgabblem} to (any system of) motivic triangulated categories $\md(-)$ satisfying certain 
(``axiomatic'') assumptions. In particular, this yields a collection of results on \'etale sheaves that appear to be new.

\subsection{On dimension functions}\label{sdimf}

Since we want our results to be valid for a wide range of schemes, we will need a certain ``substitute'' of the Krull dimension function $\dim(-)$.\footnote{The reason is that we want some notion of dimension that would satisfy the following property: if $U$ is open dense in $X$ then its ``dimension'' should equal the ``dimension'' of $X$.} So we need certain ``pseudo-dimension functions'' that we define in terms of Gabber's dimension functions (as introduced in \S XIV.2 of \cite{illgabb} and applied to motives in \cite{bondegl}). 

\begin{defi}\label{ddf}
Let $B$ be a $\lam$-nice scheme; denote by $B^k$ its irreducible components whose dimensions are $d_k$.
Let $y$ be the spectrum of a field that is essentially of finite type over $B$ 
(i.e., it is the generic point of an irreducible scheme of finite type over $B$) and $b$ be its image in $B$.


\begin{enumerate}
\item\label{ddf-1} Throughout this section we will say that a scheme is a   $B$-scheme only if it is separated and of finite type over $B$. 

\item\label{ddf-2} 
If $b\in B^k$ then we define   $\de^k(y)
=d_k-\operatorname{codim}_{B^k}b+\operatorname{tr.\,deg.}k(y)/k(b)$, where $k(y)$ and $k(b)$ are the corresponding residue fields.

 For a general $y$ we define $\de(y)$ as the minimum over all $B^k$ containing $b$ of the numbers $\de^k(y)$.

\item\label{ddf-3} For $Y$ being an $\sz$-scheme we define $\de^{B}(Y)=\de(Y)$ as the maximum over points of $Y$ of $\de(y)$. 
\end{enumerate}
\end{defi}

The results of \cite[\S XIV.2]{illgabb} easily yield that the function $\de$ satisfies the following properties.

\begin{pr}\label{pdimf}
Let $X$ and $U$ be $B$-schemes.

1.  $\de(X)$ is a finite integer that is not smaller than $\dim(X)$. 

2. $\de(U)\le \de(X)+d$ whenever
there exists a 
 $B$-morphism $u\colon U\to X$ generically of dimension at most $d$. 
Moreover, we have an equality here whenever $d=0$ and  $u$ is  dominant, and a strict inequality if 
 the image of $u$ is nowhere dense. 

3. Furthermore, if $c>0$, $U\subset X$, and any irreducible component of $U$ is of codimension at least $c$ in some irreducible component of $X$ (containing it) then $\de(U)\le \de(X)-c$.
\end{pr}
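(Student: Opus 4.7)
\emph{Proof plan.} The three properties are routine consequences of the formalism of Gabber's dimension functions, see \cite[\S XIV.2]{illgabb}. The plan is to first collect from op. cit. the following standard properties of $\de^B$ viewed as a function on the category of $B$-schemes of finite type: (a) $\de^B$ is $\z$-valued and weakly decreasing under specialization (if $y\leadsto y'$ then $\de^B(y)\ge \de^B(y')$); (b) it drops by exactly $1$ along any immediate specialization inside any integral $B$-scheme of finite type; and (c) for a dominant morphism of integral $B$-schemes $f\colon Z\to Z'$ with generic points $z,z'$ one has $\de^B(z)=\de^B(z')+\operatorname{tr.\,deg.}k(z)/k(z')$. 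A preliminary technical point is to check that (a)--(c) pass through the ``$\min$ over components of $B$'' defining $\de^B$; this uses the fact that for a specialization $b\leadsto b'$ in $B$ any component $B^k\ni b$ also contains $b'$, so the index set for the min at $b'$ weakly contains that at $b$.

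Granting (a)--(c), part 1 follows quickly. Each $\de^k(y)$ is a finite integer (since $\operatorname{tr.\,deg.}k(y)/k(b)<\infty$ and $\operatorname{codim}_{B^k}(b)\le d_k$), and by (a) the maximum defining $\de^B(X)$ is realised at one of the finitely many generic points of components of $X$, so $\de^B(X)$ is a finite integer. For the inequality $\de^B(X)\ge \dim(X)$, pick an irreducible component $X^j$ of maximal Krull dimension with generic point $\eta$ and image $b\in B$; choosing a $B^k$ containing $b$, the classical Noether fibre-dimension formula combined with (c) gives $\de^k(\eta)=d_k-\operatorname{codim}_{B^k}(b)+\operatorname{tr.\,deg.}k(\eta)/k(b)\ge \dim X^j=\dim(X)$.

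For part 2, $\de^B(U)$ is attained at a generic point $\eta$ of some component $U'$ of $U$, and $u(\eta)$ is the generic point of $u(U')\subset X$. The generic-fibre-dimension assumption gives $\operatorname{tr.\,deg.}k(\eta)/k(u(\eta))\le d$, so (c) applied to the dominant morphism $U'\to\overline{u(U')}$ yields $\de^B(\eta)=\de^B(u(\eta))+\operatorname{tr.\,deg.}k(\eta)/k(u(\eta))\le \de^B(X)+d$. When $d=0$ and $u$ is dominant, every generic point of $X$ is hit by a generic point of $U$ of equal $\de^B$-value, forcing equality. When the image of $u$ is nowhere dense, $u$ factors through the reduced closure $Z=\overline{u(U)}\subsetneq X$, whose every component has codimension $\ge 1$ in some component of $X$; part 3 with $c=1$ yields $\de^B(Z)\le \de^B(X)-1$, and combining with the main inequality of part 2 applied to $u\colon U\to Z$ gives the strict inequality. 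For part 3, take a generic point $\eta$ of a component $U'$ of $U$ realising $\de^B(U)$; by hypothesis $\eta$ lies in some component $X^j$ of $X$ in codimension $\ge c$, so iterating (b) along a specialization chain inside the universally catenary (because excellent) integral scheme $X^j$ from $\eta_{X^j}$ to $\eta$ yields $\de^B(\eta_{X^j})-\de^B(\eta)\ge c$, whence $\de^B(\eta)\le \de^B(\eta_{X^j})-c\le \de^B(X)-c$.

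The main obstacle is the strict-inequality clause of part 2, which has to be bootstrapped from part 3 rather than proved directly. Beyond that, the proof is a straightforward compatibility check between the ``$\min$ over components of $B$'' construction of $\de^B$ and the standard properties (a)--(c) of Gabber's dimension functions recorded in \cite[\S XIV.2]{illgabb}.
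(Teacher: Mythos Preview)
Your approach is correct and essentially identical to the paper's: reduce to the fact that each $\de^k$ is a Gabber dimension function on $B^k$-schemes (this is precisely what the paper cites from \cite[\S XIV.2]{illgabb}), record the resulting pointwise properties, and then pass through the $\min$ over components of $B$. Two small imprecisions are worth flagging, though neither is a genuine gap. First, your property (b) as stated (``drops by exactly $1$'') need not survive the $\min$: for an immediate specialization $y\leadsto y'$ in an integral $B$-scheme the set of components $B^k$ containing the image can strictly grow at $y'$, and a newly appearing $B^k$ might give a smaller value of $\de^k(y')$. What always holds is $\de^B(y)-\de^B(y')\ge 1$, and only this inequality is actually used in your part~3 argument. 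Second, in part~1 you write ``choosing a $B^k$ containing $b$'' and deduce $\de^k(\eta)\ge\dim X^j$; since $\de^B(\eta)$ is the minimum over all such $k$, you need this for every such $k$. Your argument works uniformly in $k$ without change (alternatively: $\de^k$ is a nonnegative dimension function on the integral scheme $X^j$, so $\de^k(\eta)=\de^k(y_0)+\operatorname{codim}_{X^j}(y_0)\ge\dim X^j$ for a closed point $y_0$ of maximal codimension).
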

\begin{proof} 
For any irreducible component $B^k$ of $B$ we combine  Proposition XIV.2.2.2 of \cite{illgabb} (cf. also the  proof of Corollary XIV.2.2.4  of loc. cit.) 
with Corollary XIV.2.5.2 of ibid. to obtain 
that the restriction of $\de^k$ to the points of any $B^k$-scheme $Y$ yields a dimension function on it (in the sense of Definition XIV.2.1.10 of ibid.). 
 It follows immediately that $\de^k$ satisfy the obvious analogues of our assertions. 
Combining these statements for all $k$ we easily obtain the result.

\end{proof}

\begin{rema}\label{rdimf}
\begin{enumerate}

\item\label{idimfkrull} If $B$ is of finite type over a field or over $\spe \z$ (more generally, it suffices to assume that $B$ is a Jacobson scheme all of whose components are equicodimensional; see Proposition 10.6.1 of \cite{ega43}) then our method yields a function $\de$ such that $\de(Y)=\dim(Y)$ for any $Y$ that is of finite type over $B$. Thus the reader satisfied with this restricted setting may replace all $\de$-dimensions 
 in this section by Krull dimensions. Moreover, some of the arguments may be simplified. Another easier case is the one of irreducible $B$. In both of these cases our $\de$ is a ``true'' dimension function, i.e., $\de(X)-\de(Z)=\codim_X(Z)$ for $Z\subset X$ being any irreducible $B$-schemes.

\item\label{idcomp} Recall that any $B$-scheme $Y$ possesses a $B$-compactification $\overline{Y}$
 by Nagata's theorem (i.e., $Y$ is open dense in $\overline{Y}$ and $\overline{Y}$ is proper over $B$; see Theorem 4.1 of \cite{conag}). Since $\de(\overline{Y})=\de(Y)$, we obtain $\de(Y)\ge \dim(\overline{Y})$.

Note also that the (Krull) dimensions of all possible $B$-compactifications of $Y$ are equal. 

\item\label{indepdimf} 
It is easily seen for $B'$ being a $B$-scheme that the values of the similarly defined function $\de^{B'}$ is not greater than the (corresponding) ones of $\de$.

\item\label{ilocdimf} 
Actually, in all the statements involving $\de$ before Corollary \ref{cgabber} we will use Proposition \ref{pdimf} as a certain ``axiomatics'' of $\de$. 
In particular, if (a possibly, reducible) $B$ admits a dimension function in the sense of \cite[Definition XIV.2.1.10]{illgabb} then one may use the corresponding ``extension'' of this function 
to $B$-schemes (see Corollary XIV.2.5.2 of ibid.)  instead of $\de$.  Recall also that any $\lam$-nice $S$ possesses an open cover by subschemes admitting dimension functions; see Corollary XIV.2.2.4  of \cite{illgabb}. This statement may be used to enhance slightly Theorem \ref{tweib}(I.\ref{3.3.1.I.2}) below, and to extend the results of \cite{bondegl}. It may also make sense to combine the results of this section with usage of  somewhat more general ("generalized") dimension functions described in Definition 4.2.1  of \cite{bonspkar}.


\item\label{idimfopt} Actually, in the most of the formulations below taking $B=X$  seems to be ``optimal''.

\end{enumerate}

\end{rema}


\subsection{On the ``motivic Gabber's lemma'' and its applications}\label{sgabblem}

The arguments of this section are mostly based on \S6.2 of \cite{cdet}; yet our ``basic resolution of singularities diagram'' is the one used in \cite{kellykth}. 
Another distinction from the arguments is that the usage of Verdier localizations certainly does not ``give control'' over those envelopes that are not shift-stable; so we apply Proposition \ref{pbsnull} instead. 

Throughout the subsection we will assume that all the schemes we consider are of finite type over some fixed (nice) $B$. 

\begin{defi}\label{dnicepair}
\begin{enumerate}

Let $X$ be 
finite type separated $B$-scheme, $d= \de( X)$, $r\ge 0$.

\item\label{idp} 

Given a closed 
embedding  $i_Z\colon Z\to W$ of  schemes of finite type over $X$ and an open $U\subset X$ we consider
the following commutative diagram:
$$
\begin{tikzcd}
Z\arrow{r}{i_Z}\arrow{dr}[swap]{\pi} & W\arrow{d}{a} &
U_W\arrow{l}[swap]{j_W}\arrow{d}{a_U} \\
& X & U\arrow{l}{j};
\end{tikzcd}
$$
here the right-hand square is Cartesian.

We define the following object in $\dk(X)$:
$$
 \ph_X^U(Z\to W)= \pi_* i_Z^* j_{W,*}(\oo_{U_W})
=\pi_* i_Z^* j_{W,*}a_U^*(\oo_U);
$$
we will often omit $X$ and $U$ in this notation. Respectively, when using this notation we will suppose that the 
corresponding $X,Z,W,U$ satisfy the aforementioned conditions. $Q$ will usually denote $X\setminus U$ and $Q_W=W\times _X Q$; denote the embeddings $Q\to X$ and  $Q_W\to W$ by  $i$ and $i_W$, respectively, and denote the composition $\al\circ i_W$ by $\pi_Q$.

In the case $W=X,\ Z=Q$ we will often replace the notation $\ph_X^U(Q\to X)$ just by $\ph(X)=\ph_X^U(X)$. 

\item\label{idr} 
We define the following classes of objects in $\dkx$: $N_r(X)=N^{B}_r(X)$ is the $\dkx$-envelope of $\{\mgbmx(V)[m]\}$ for $V$ running through regular finite type $X$-schemes, $m\ge 0$,  and $\de(V)+m\le r$; $VN_r(X)=VN^{B}_r(X)\subset N_r(X)$ is the envelope of $\{\mgbmx(V)[m]\}$ with $(V,m)$ satisfying all the conditions for $N_r(X)$  along with $\de(V)<r$.

\item\label{idre}
 We will say that 
$j$ is {\it $r$-reasonable} if the corresponding object
$\ph_{X}^{U}(X)$ belongs to $ N_r(X)$.
We will say that $j$ is just {\it reasonable} if it is $d$-reasonable; then  we will call $(U,X)$ a {\it reasonable pair}.

More generally, for $c>0$ we will say that $(U,X)$ is {\it $c$-coreasonable} if there exists $T\subset Q$ 
such that $\dim X-\dim T\ge c$ and 
the pair 
$(U,X\setminus T)$ is reasonable.

\item\label{idrs}
 We will say that a reduced 
scheme $X'$ is {\it reasonable} 
 if 
$(V',X')$ is reasonable for any open dense regular  $V'\subset X'$.
\end{enumerate}

\end{defi}

\begin{rema}\label{rreas}
\begin{enumerate}

\item\label{ivn}
We certainly have $N_{r-1}(X)[1]\subset VN_r(X)\subset N_r(X)$ (assuming that $N_{r-1}(X)$ is empty for $r=0$).

\item\label{ifuni}
Certainly, any element of $N_r(X)$ (or of $VN_r(X)$) belongs to the envelope of  $\{\mgbmx(V_b)[m_b]\}$
for some finite set of $(V_b,m_b)$ as above. 
 Yet our arguments yield some more information on possible $(V_b,m_b)$ corresponding to a reasonable embedding (see part \ref{idre} of the definition). 
We start  explaining this from certain easy and useful 
arguments; see the continuation in Remark \ref{rweib}(\ref{iextest}) below.

So, let $f\colon Y\to X$ and $g\colon X\to S$ be (separated) finite type morphisms of $B$-schemes. Then $g_!$ certainly sends $VN_r(X)\subset N_r(X)$
into $VN_r(S)\subset N_r(S)$.

Next, for a regular finite type $V/X$ we have $f^*(\mgbmx({V})[m])\cong \mgbm_Y({V\times_X Y})[m]$. Thus for $V_b$ being the components of some regular stratification of  $(V\times_X Y)\re$  the object  $f^*(\mgbmx({V})[m])$ belongs to  the $\dk(Y)$-envelope of $\{\mgbm_Y(V_b)[m]\}$ (see Remark \ref{rcons}(\ref{imgbme}).
Hence $f^*(VN_r(X)) \subset VN_r(Y)$ and  $f^*(N_r(X))\subset N_r(Y)$ whenever $f$ is quasi-finite.

\item\label{iproj} Hence one may assume that all $V_b$ corresponding to part \ref{idre} of the definition are $Q$-schemes (since  $i_!i^*(\ph_X^U(X))\cong \ph_X^U(X)$). 

\end{enumerate}
\end{rema}

Now we prove a collection of properties of our notions. 

\begin{lem}\label{lnice}


 Adopt the notation of Definition \ref{dnicepair} (so, $d= \de( X)$).

\begin{enumerate}
 \item\label{idtriangle}
 There is a  distinguished triangle 
$j_!(\oo_U)\stackrel{h}{\to} j_*(\oo_U)\to \ph^U_X(X)$. 

Thus if we assume that $\ph^U_X(X)$ belongs to the envelope of 
$ \{\mgbmx(V_b)[m_b]\}$ for some finite type morphisms $v_b\colon V_b\to X$ and $m_b\in \z$, then  $j_*(\oo_U)$ belongs to the envelope of $\{\mgbmx(U)\}\cup  \{\mgbmx(V_b)[m_b+1]\}  $ and  $\mgbmx(V)$ belongs to the envelope of $\{j_*(\oo_U)\}\cup  \{\mgbmx(V_b)[m_b]\}$.

\item\label{idsmfunct}
Let $g\colon X'\to X$ be a smooth morphism; 
 denote by $W',U',Z'$ the   pullbacks of the corresponding schemes with respect to $g$. 
Then $g^*(\ph_X^U(Z\to W)) \cong \ph_{X'}^{U'}(Z'\to W')$. 

\item\label{idfunct} $\ph_X^U(Z\to W)$  is
contravariantly functorial in $(Z,W)$, i.e., any  commutative square $C$
\begin{equation}\label{efunct} 
 \begin{CD}
 Z'@>{}>>W'\\
@VV{k_Z}V@VV{k}V \\
Z@>{}>>W
\end{CD}\end{equation}
of finite type $X$-schemes yields a 
 morphism $\ph(C)\colon\ph(Z\to W) \to \ph(Z'\to W')$, and this construction respects compositions of morphisms of pairs.

\item\label{idisom} If the morphism $k$ in (\ref{efunct}) is \'etale and $k_Z$ is an isomorphism then $\ph(C)$ 
is an isomorphism.

\item\label{idtr} Let $Z'$ be a closed subscheme of $Z$; denote the embedding of $\tilde{Z}=Z\setminus Z'$ into $ W$ 
by $\tilde{i}$. 
 Then there is a distinguished triangle $D\to \ph(Z\to W)\to \ph(Z'\to W)\to D[1]$ for $D=a_*\tilde{i}_! \tilde{i}^*j_{W,*}(\oo_{U_W})$,
where the second arrow is the one coming from  assertion
\ref{idfunct}.
Moreover,    
$D$ is isomorphic to $a_*\tilde{j}_!(\ph_{W\setminus Z'}^{U_W}(\tilde{Z}\to W\setminus Z'))$ for 
$\tilde{j}$ being the embedding $W\setminus Z'\to W$ whenever $U_W$ is disjoint from $Z$, whereas  $D\cong (a\circ \tilde{i})_* (\oo_{\tilde{Z}})$ whenever $\tilde{Z}\subset U_W$. 

\item\label{idlem} Assume that $W=X$ and let $m\colon M\to X$ be an embedding.
Denote by $\overline{M}$ the closure of $M$ in $X$,
and let $M\xrightarrow{\widetilde{m}} \overline{M}
\xrightarrow{\overline{m}} X$ be the corresponding factorization.
Then $E=\pi_*\pi^*m_*(\oo_M)$ is isomorphic to
 $\overline{m}_!(\ph_{\overline{M}}^M 
(Z\cap \overline{M}\to \overline{M}))$. 
In particular,
$E\cong \overline{m}_!\widetilde{m}_* (\oo_M)={m}_* (\oo_M)$
whenever $M\subset Z$.


\item\label{idglu2}
Let $T$ be a closed subscheme of $Q$; 
denote its embedding into $X$ by $t$.
Then $\ph_X^U(T\to X)\cong t_{!}t^* (\ph_X^U(X))$.
Moreover, for $o\colon O\to X$ being the embedding complementary to $t$ there is a distinguished triangle
\begin{equation}\label{edglu2}
o_{!}(\ph_O^{U\cap O}(Q\setminus T\to O))\to \ph_X^U(X)\to \ph_X^U(T\to X)
\end{equation}

\item\label{iaddefect}
Assume that $U=\sqcup U_b$; denote by $c_b$ the embeddings of the $X$-closures $\overline{U_b}$ into $X$, $Q_b=\overline{U_b}\setminus U_b$. 
Then $\ph_{X}^{U}(X)\cong \bigoplus_b c_{b,!}(\ph_{\overline{U_b}}^{U_b}(Q_b\to \overline{U_b})) $.

\item\label{idsplit} 
Assume that $a$ is proper, $U$ and $U_W$ are regular,  $\mathcal{O}_{U_W}$ is a free   $\mathcal{O}_{U}$-module of 
dimension $e$. 
Then the morphism 
$\ph(X)\to\ph(Q_W\to W)$ 
(coming from the corresponding commutative diagram via assertion~\ref{idfunct})
 is split injective (i.e., is a coretraction) in the localized category $\dkx[e\ob]$. 


\item\label{idcover} Let $O_b$ be a Zariski cover of $X$. Then $j$ is $r$-reasonable whenever all  the embeddings $j\times_X O_b$ are.

\item\label{idreg}
Assume that $j$ is dense, $W$ is regular, $\de(W)=r$,  
  $\pi$ is proper, $Z\subset Q_W$, 
 and all closed
nowhere dense reduced subschemes of $W$ are reasonable. 
Then $\ph_X^U(Z\to X)\in VN_r(X)$. 

\item\label{indep} If $j$ is reasonable,  $U$ is regular and dense in $X$, then $X$ is reasonable.  


\item\label{idshift} For any $s> 0$ we have  $N_{r-s}(X)[-s]\subset VN_{r-s+1}(X)[-s]\subset   N_r(X)$.

\end{enumerate}

\end{lem}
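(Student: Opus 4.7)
The plan is to prove the thirteen parts in the order given, splitting them into three groups of ascending difficulty: formal consequences of the six-functor formalism (parts \ref{idtriangle}--\ref{iaddefect}), two places where the specific structure of $K$-motives enters (\ref{idsplit}, \ref{idcover}), and the inductive geometric core (\ref{idreg}, \ref{indep}, \ref{idshift}).

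For the formal block, \ref{idtriangle} is just the gluing triangle \eqref{eglu1} applied to $M=j_*(\oo_U)$, using $j^*j_*\cong\id$; the envelope conclusions are routine rotations. Smooth base change \ref{idsmfunct} follows from Theorem \ref{tcd}(I.\ref{iexch}) together with $j^*=j^!$ for open immersions, and functoriality \ref{idfunct} together with étale invariance \ref{idisom} are the usual unit/counit manipulations combined with $k^*\cong k^!$ for étale $k$. For \ref{idtr} I would apply the localization triangle on $Z$ for the pair $(Z',\tilde Z)$ to $i_Z^*j_{W,*}(\oo_{U_W})$ and push forward along $\pi$; the identification of $D$ in the two special cases uses base change together with $j^*j_*\cong\id$ when $\tilde Z\subset U_W$, and properness of $\pi$ together with the gluing datum when $U_W$ is disjoint from $Z$. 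Parts \ref{idlem}, \ref{idglu2}, \ref{iaddefect} are further diagram-chases of the same type, using the factorization $m=\overline m\widetilde m$ together with properness of $\overline m$, the gluing triangle for $(T,O)$, and the compatibility of $j_*$ with the decomposition $U=\sqcup U_b$ (which follows from the gluing datum applied repeatedly).

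The $K$-theoretic input enters in \ref{idsplit}: Theorem \ref{tcd}(I.\ref{itre}) provides a morphism $a_*a^*(\oo_U)\to\oo_U$ whose composition with the unit is $e\cdot\id$, which becomes a retraction in $\dkx[e\ob]$; all primes dividing $e$ are automatically invertible by the $\lam$-nice hypothesis when they are not in $\sss$. Applying $\pi_*i_Z^*$ and invoking \ref{idfunct} yields the stated coretraction. For \ref{idcover} I would use Proposition \ref{pbsnull}: membership of $\ph_X^U(X)$ in the envelope $N_r(X)$ is controlled by the vanishing of a family of homological functors, and smooth base change \ref{idsmfunct} transports this vanishing question to each $O_b$; a Čech/Mayer--Vietoris argument over the cover reassembles the local vanishings into the desired global one.

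The main obstacle is \ref{idreg}, which I would prove by induction on $r=\de(W)$ under the standing hypothesis that all closed nowhere-dense reduced subschemes of $W$ are reasonable. Using \ref{idtr} and \ref{idlem} one reduces to $Z=Q_W$, and \ref{iaddefect} lets one assume $U$ connected. Gabber's refined uniformization (in the form used in \cite{kellykth}) then supplies a proper alteration $W'\to W$ of generic degree $e$ invertible in $\lam$, with $W'$ regular and $Q_{W'}$ admitting a regular stratification by subschemes of $\de$-dimension strictly less than $r$; \ref{idsplit} realises $\ph_X^U(Q_W\to X)$ as a retract of the corresponding object over $W'$, and a repeated application of \ref{idtr} decomposes the latter into stratum-wise contributions, each lying in $N_{r-1}(X)\subset VN_r(X)$ by the inductive hypothesis. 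The hard part of this step is the simultaneous bookkeeping: propagating the reasonableness hypothesis to the strata of $Q_{W'}$ (via \ref{idsmfunct} and \ref{idisom}), verifying that each $\de$-dimension bound is strict so that $N_{r-1}$ rather than only $N_r$ appears, and confirming that the alteration degrees genuinely lie in $\sss$. Once \ref{idreg} is in hand, \ref{indep} follows by combining \ref{idtriangle} with \ref{idreg} applied to $(U,X)$ itself, and \ref{idshift} reduces to comparing the generating conditions for $N_\bullet$ and $VN_\bullet$ and then absorbing the negative shift by an application of \ref{idtriangle} to a compactification of $V$, invoking \ref{idreg} on the boundary to re-express the resulting negatively shifted summands as extensions of non-negatively shifted ones.
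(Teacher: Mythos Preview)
Your treatment of parts \ref{idtriangle}--\ref{iaddefect} is essentially aligned with the paper's, and the minor differences (e.g.\ your proposed Mayer--Vietoris/\ref{pbsnull} approach to \ref{idcover} versus the paper's stratification subordinate to the cover plus Remark~\ref{rcons}(\ref{imgbme})) are harmless variations. In \ref{idsplit} your remark that ``all primes dividing $e$ are automatically invertible by the $\lam$-nice hypothesis'' is wrong: the generic degree $e$ need not be a unit in $\lam$, which is exactly why the conclusion is stated in the further localized category $\dkx[e\ob]$. The paper's proof simply applies Theorem~\ref{tcd}(I.\ref{itre}) to $a_U\colon U_W\to U$ (not to $a$) and pushes the resulting splitting through $i_*i^*j_*$ using proper base change.

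The serious gap is in \ref{idreg}. You propose to invoke Gabber's alteration theorem to produce a regular $W'\to W$, but the hypothesis already says \emph{$W$ is regular}; no alteration is needed, and none is used in the paper. You have conflated this lemma with the later Proposition~\ref{pmotgablem}, which is where Gabber's theorem actually enters. The paper's proof of \ref{idreg} instead exploits regularity of $W$ directly: after reducing to $W=X$, one chooses a regular stratification $\al$ of $X$ with $X_1^\al=U$ and each $X_l^\al$ for $l>1$ either contained in $Z$ or disjoint from it; Remark~\ref{rcons}(\ref{imgbmed}) then puts $j_*(\oo_U)$ in the envelope of $\{\oo_X\}\cup\{j_{l,*}(\oo_{X_l^\al})[1]:l>1\}$, and one computes $\pi_*\pi^*$ of each term via part~\ref{idlem}, using the standing hypothesis that nowhere-dense closed subschemes of $W$ are reasonable to land in $VN_r(X)$. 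This is the step that \emph{consumes} the reasonableness hypothesis; your approach never uses it in a meaningful way.

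This misunderstanding propagates. Your sketch of \ref{indep} (``combine \ref{idtriangle} with \ref{idreg}'') does not address what is actually needed: one must show that reasonability of one dense regular open $U\subset X$ implies reasonability of every other, which the paper does by comparing $j_*(\oo_U)$ and $j'_{1,*}(\oo_{U_1})$ via a regular stratification of $U$. And your plan for \ref{idshift} (compactify $V$ and invoke \ref{idreg} on the boundary) is circular, since \ref{idreg} requires the reasonableness hypothesis which is absent here; the paper instead uses the elementary observation that $\mgbmx(V)[-s]$ is a retract of $\mgbmx(\mathbb{G}_m^s(V))$, with $\de(\mathbb{G}_m^s(V))=\de(V)+s$.
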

\begin{proof}

\ref{idtriangle}. By Theorem~\ref{tcd}(I.\ref{iglu}) we have a
distinguished triangle
$$
j_!j^*j_*(\oo_U) \to j_*(\oo_U) \to i_*i^*j_*(\oo_U),
$$
and there is a canonical isomorphism $j^*j_*(\oo_U)\cong\oo_U$.
It remains to note that $\ph^U_X(X) = i_*i^*j_*(\oo_U)$ by definition.

\ref{idsmfunct}. Easy, using smooth base change
(Theorem~\ref{tcd}(I.\ref{iexch}, I.\ref{ipur})).

\ref{idfunct}.
We have a commutative diagram
$$
\begin{tikzcd}
 Z'\arrow{r}{i'_Z}\arrow{d}[swap]{k_Z} & W'\arrow{d}{k}
& U'_W\arrow{l}[swap]{j'_W}\arrow{d}{k_U}
 \\
Z\arrow{r}{i_Z}\arrow{dr}[swap]{\pi} & W\arrow{d}{a} &
U_W\arrow{l}[swap]{j_W}\arrow{d}{a_U} \\
& X & U\arrow{l}{j}
\end{tikzcd}
$$
where the right-hand squares are pullbacks.
Let's start with a unit morphism of the adjunction
$k_U^*\dashv k_{U,*}$:
$$
\oo_{U_W} \to k_{U,*}k_U^*(\oo_{U_W}) = k_{U,*}(\oo_{U'_W}).
$$
Applying $j_{W,*}$, we get a morphism
$$
j_{W,*}(\oo_{U_W}) \to j_{W,*}k_{U,*}(\oo_{U'_W}) \cong k_*j'_{W,*}
(\oo_{U'_W}).
$$
Using the adjunction $k^*\dashv k_*$, we get a morphism
$$
k^* j_{W,*}(\oo_{U_W}) \to j'_{W,*}(\oo_{U'_W}).
$$
Applying ${i'_Z}^*$, we get
$$
k_Z^*i_Z^*j_{W,*}(\oo_{U_W}) \cong
{i'_Z}^*k^* j_{W,*}(\oo_{U_W})
\to {i'_Z}^*j'_{W,*}(\oo_{U'_W}).
$$
Using the adjunction $k_Z^*\dashv k_{Z,*}$, we get a morphism
$$
i_Z^*j_{W,*}(\oo_{U_W}) \to k_{Z,*}{i'_Z}^*j'_{W,*}(\oo_{U'_W}).
$$
Finally, applying $\pi_*$, we get
$$
\ph(Z\to W) = \pi_*i_Z^*j_{W,*}(\oo_{U_W}) \to
(\pi\circ k_Z)_*{i'_Z}^*j'_{W,*}(\oo_{U'_W}) = \ph(Z'\to W').
$$
It is clear that this construction respects composition.

 \ref{idisom}. This is  the $K$-motivic version of  Lemma 6.2.11 of \cite{cdet}, and its proof carries over to our setting without any difficulty.
Indeed, note that (in the notation of the proof of part~\ref{idfunct}) 
in our setting the morphism
$$
k^* j_{W,*}(\oo_{U_W}) \to j'_{W,*}(\oo_{U'_W})
$$
is an isomorphism.
Then we apply ${i'_Z}^*$ to it, use the adjunction $k_Z^*\dashv k_{Z,*}$,
and apply $\pi_*$; thus the result is again an isomorphism.

\ref{idtr}.
Note that (in the setting of the proof of part \ref{idfunct})
the morphisms $k$ and $k_U$ are identities.
Therefore the morphism $\ph(Z\to W) \to \ph(Z'\to W)$
is obtained by applying $\pi_*$ to the unit morphism
$M\to k_{Z,*} k_Z^* M$, where $k_Z\colon Z'\to Z$ is the closed
embedding, and $M = i_Z^* j_{W,*}\oo_{U_W}$.
Now, the gluing datum of Theorem~\ref{tcd} (I.\ref{iglu}) gives 
a distinguished triangle
$$
\widetilde{k}_{Z,!}\widetilde{k}_Z^* M \to M \to k_{Z,*} k_Z^* M,
$$
where $\widetilde{k}_Z\colon \widetilde{Z}\to Z$ is the open
embedding.
After applying $\pi_*$, we get the desired triangle.
The ``moreover'' part of the assertion easily follows from the fact that
$\widetilde{i}^*j_{W,!}=0$ (combined with assertion
\ref{idtriangle}). 

\ref{idlem}. 
We have 
$m_*(\oo_M)\cong \overline{m}_! \widetilde{m}_*(\oo_M)$. Consider the
 corresponding Cartesian square in the category of reduced schemes 
 $$\begin{CD}
 Z\cap  \overline{M}@>{\overline{m}_Z}>>Z\\
@VV{\pi_{\overline{M}}}V@VV{\pi}V \\
\overline{M}@>{\overline{m}}>>X
\end{CD}$$
Then Theorem \ref{tcd}(I.\ref{iexch}, II.\ref{ianr}, II.\ref{itr})
yields
$\pi^*\overline{m}_!\cong \overline{m}_{Z,!} \pi_{\overline{M}}^*$.
Hence
$$
E\cong \pi_*\overline{m}_{Z,!}  \pi_{\overline{M}}^* \widetilde{m}_*
(\oo_M) =\overline{m}_!(\ph_{\overline{M}}^M
(Z\cap \overline{M}\to \overline{M})).
$$
Lastly, if $M\subset Z$, then $Z\cap  \overline{M}=\overline{M}$
and
$\pi_{\overline{M}}=\id_{\overline{M}}$; therefore
$\ph^M_{\overline{M}}(Z\cap\overline{M}\to\overline{M})
= \widetilde{m}_*(\oo_{M})$.
It follows that
$E\cong \overline{m}_!\widetilde{m}_* (\oo_M)={m}_* (\oo_M)$.

\ref{idglu2}. 
The first part of the assertion  is immediate from $t_{*}t^*\cong t_{!}t^*i_*i^*$.

Now we consider the 
distinguished triangle 
$ o_{!}o^*(\ph_X^U(X)) \to \ph_X^U(X)\to t_{!}t^*(\ph_X^U(X))$ coming from (\ref{eglu1}).
 It yields (\ref{edglu2}) since  $o^*(\ph_X^U(X))\cong \ph_O^{U\cap O}(Q\setminus T\to O)$ according to assertion \ref{idsmfunct}.

\ref{iaddefect}. Easy; note that $j_*(\oo_U)\cong \bigoplus_b j_{b,*}(\oo_{U_b})$ (for the corresponding embeddings $j_b\colon U_b\to X$). 

\ref{idsplit}.
The argument is similar to the proof of Lemma 6.2.12 of 
ibid. We have a commutative diagram
$$
\begin{tikzcd}
 Q_W\arrow{r}{i_W}\arrow{d}[swap]{a_Q} & W\arrow{d}{a}
& U_W\arrow{l}[swap]{j_W}\arrow{d}{a_U}
 \\
Q\arrow{r}{i} & X &
U\arrow{l}[swap]{j}, \\
\end{tikzcd}
$$
where both squares are pullbacks.
Note that $\ph(X) = \ph(Q\to X) = i_* i^* j_*(\oo_U)$ and
\begin{align*}
\ph(Q_W\to W) &= (a\circ i_W)_* i_W^* j_{W,*} a_U^*(\oo_U) \\
&= (i\circ a_Q)_* i_W^* j_{W,*} a_U^*(\oo_U) \\
&= i_* i^* a_* j_{W,*} a_U^*(\oo_U) \\
&= i_* i^* j_* a_{U,*} a_U^*(\oo_U).
\end{align*}
Here we used the proper base change formula (Theorem \ref{tcd}(I.\ref{iexch}, I.\ref{ipur})).
It is easy to see that the morphism $\ph(Q\to X) \to \ph(Q_W\to W)$ constructed in part~\ref{idfunct}
is obtained by applying $i_* i^* j_*$ to the unit morphism $\oo_U\to a_{U,*}a_U^*(\oo(U))$.
Now we can apply Theorem \ref{tcd}(I.\ref{itre}).

\ref{idcover}. We should check the reasonability of $(V,X)$ for any open dense regular $V\subset X$.
 Choose a stratification $\al$ of $X$ such that any of $X_l^\al$ lies inside one of the $O_b$. 
Now, according to Remark \ref{rcons}(\ref{imgbme}) it suffices to verify that  
$j_{l,!}j_l^*( \ph_X^V(X\setminus V\to X))\in VN_r(X)$ for all $l$. Hence it remains to combine assertion \ref{idsmfunct} with Remark \ref{rreas}(\ref{ifuni}).

\ref{idreg}.  Remark \ref{rreas}(\ref{iproj}, \ref{ifuni}) yields that it suffices to verify the assertion with $X$ replaced by $W$; so we assume  $W=X$.

Choose a regular stratification $\al$ 
of $X$ such that
$X_1^\al=U$ and any of $X_l^\al$ for $l>1$  either lies in $Z$ or is disjoint from  $Z$. Then $\de(X_l^\al)<d$   for  $l\ge 2$ (see Proposition \ref{pdimf}), and $j_{*}(\oo_{U})$ belongs to the envelope of   $\{\oo_X\}\cup\{j_{l,*}(\oo_{X_l^\al})[1]:\ l>1\}$ (see Remark \ref{rcons}(\ref{imgbmed}). 
Thus it suffices to verify that the objects $\pi_*\pi^*(\oo_X)=\mgbmx(Z)$ and $\pi_*\pi^*j_{l,*}(\oo_{X_l^\al}[1])$ for $l>1$  belong to $VN_r(X)$. 

Now,
$\mgbmx(Z)\in N_{r-1}(X)\subset VN_r(X)$ immediately from Remark \ref{rcons}(\ref{imgbme}). 

Next we apply assertion \ref{idlem}. So we present all $j_l$ as the compositions $X_l^\al \stackrel{\overline{j}_l}{\to}\overline{X_l^\al}\stackrel{i_l}{\to} X$, where $ \overline{X_l^\al}$
are the closures of $X_l^\al$ in $X$. Recall that the embeddings  $\overline{j}_l$ are reasonable according to our assumptions.

If $X_l^\al$ (for $l>1$) lies in $Z$ then $\pi_*\pi^*j_{l,*}(\oo_{X_l^\al}[1]) 
\cong i_{l,!}\overline{j}_{l,*}(\oo_{X_l^\al}[1])$ according to (the ``in particular'' part of) assertion \ref{idlem}. 
Now, $\overline{j}_{l,*}(\oo_{X_l^\al})\in N_{r-1} (\overline{X_l^\al})$  according to (the ``thus'' part of) assertion \ref{idtriangle}; hence it remains to apply Remark  \ref{rreas}(\ref{ifuni}, \ref{ivn}). 
 
Lastly, assume that $X_l^\al$ is disjoint from $Z$ for some $l>1$; denote $Z\cap \overline{X_l^\al}$ by $Z_l$. Then $\pi_*\pi^*j_{l,*}(\oo_{X_l^\al}[1])\cong i_{l,!}\ph_{\overline{X_l^\al}}^{X_l^\al}(Z_l\to \overline{X_l^\al})[1]$ according to assertion \ref{idlem}. 

Since $\overline{j}_l$  is reasonable, we have $\ph_{\overline{X_l^\al}}^{X_l^\al}(\widetilde{X_l^\al}\to \overline{X_l^\al})\in  VN_{r-1} (\overline{X_l^\al})$, where $\widetilde{X_l^\al}=\overline{X_l^\al}\setminus X_l^\al$.
Since $Z_l$ is a closed subscheme of $\widetilde{X_l^\al}$,  it remains to combine (the first part of) assertion \ref{idglu2} with Remark  \ref{rreas}(\ref{ifuni}, \ref{ivn}).

\ref{indep}. It certainly suffices to prove the following (for any open dense regular $U\subset X$): if $U$ is regular and $j_1\colon U_1\to U$ is an open dense embedding then $j$ is reasonable if and only if $j_1'=j\circ j_1$ is. 

Now we argue somewhat similarly to the proof of assertion \ref{idreg}. Choose a regular stratification $\al$ of $U$ such that $U_1^\al=U_1$. Certainly, 
$\de(U_l^\al)<d$ for $l>1$. Denote $j\circ j_l$ by $j_l'$ for all $l$. Then $j_*(\oo_U)$ belongs to the envelope of $\{j'_{l,*}(\oo_{U_l})\}$
and $j'_{l,*}(\oo_{U_l}) $ belongs to the envelope of $\{j_*(\oo_U)\}\cup \{j'_{l,*}(\oo_{U_l}):\ l\ge 2\}[1]$. Hence the arguments used in the proof of assertion \ref{idreg} can easily be applied to conclude the proof.

\ref{idshift}. 
These inclusions easily follow the fact that $\mgbmx(V)[-s]$ is a retract of   $\mgbmx(\mathbb G_m^s(V))$ and $\de(\mathbb G_m^s(V))=\de(V)+s$ (where $\mathbb G_m^s$ is the $s$th Cartesian power of the multiplicative group scheme). 

\end{proof}

Now we are able to prove the central (technical) statement of this section.

\begin{pr}\label{pmotgablem}\label{pgab}

Any reduced $B$-scheme is reasonable. 

\end{pr}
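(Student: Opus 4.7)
The plan is to argue by induction on $d=\de(X)$, the base case $d=0$ being trivial since then a reduced $B$-scheme is a finite disjoint union of spectra of fields, hence already regular, and any open dense regular $V\subset X$ equals $X$, forcing $\varphi_X^V(X)=0\in N_0(X)$. For the inductive step, fix a reduced $B$-scheme $X$ with $\de(X)=d$ and an open dense regular $V\subset X$, assuming the claim known for all reduced $B$-schemes of $\delta$-dimension strictly less than $d$; the goal is $\varphi_X^V(X)\in N_d(X)$. By Lemma~\ref{lnice}(\ref{indep}) it suffices to find \emph{any} open dense regular $U\subset V$ such that $(U,X)$ is reasonable, and by Lemma~\ref{lnice}(\ref{idcover}) the verification is Zariski-local on $X$; using Remark~\ref{rdimf}(\ref{ilocdimf}) I may therefore shrink $X$ to an affine piece admitting a dimension function.

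Next I would invoke Gabber's refined $\ell'$-alteration theorem, which is applicable to any prime $\ell\in\mathbb{P}\setminus S$ (such $\ell$ being invertible on $X$ by the $\Lambda$-niceness assumption): this yields a proper surjective morphism $a\colon W\to X$ with $W$ regular and a dense open subset of $X$ over which $a$ is finite étale of degree $e$ prime to $\ell$. After shrinking $U\subset V$ further (permissible, by Lemma~\ref{lnice}(\ref{indep}), and Zariski-locally on $X$, by Lemma~\ref{lnice}(\ref{idcover})) I arrange that $U$ is regular and dense in $X$, that $U_W=a^{-1}(U)\to U$ is finite étale, and that $\mathcal O_{U_W}$ is free of rank $e$ over $\mathcal O_U$.

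The two main engines then combine as follows. By the inductive hypothesis, every closed nowhere dense reduced subscheme $W'\subset W$ has $\de(W')<\de(W)=d$ and is therefore reasonable, so Lemma~\ref{lnice}(\ref{idreg}) (applied with this $W$, $\pi=a|_{Q_W}$, $Z=Q_W$) gives $\varphi_X^U(Q_W\to W)\in VN_d(X)\subset N_d(X)$. On the other hand, Lemma~\ref{lnice}(\ref{idsplit}) shows that the natural map $\varphi_X^U(X)\to\varphi_X^U(Q_W\to W)$ is split injective in $\dk(X)[e^{-1}]$. To close the argument, I apply Proposition~\ref{pbsnull}: for any prime $l\in\mathbb{P}\setminus S$ and any homological functor $F\colon\dk(X)\to\mathbb Z_{(l)}\text{-}\modd$ vanishing on $N_d(X)$, I take $\ell=l$ in the preceding construction, so that $e$ is invertible in $\mathbb Z_{(l)}$ and $F$ factors through $\dk(X)[e^{-1}]$; hence $F(\varphi_X^U(X))$ is a retract of $F(\varphi_X^U(Q_W\to W))=0$ and therefore vanishes. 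Proposition~\ref{pbsnull} then yields $\varphi_X^U(X)\in N_d(X)$, completing the induction. The case $\Lambda=\mathbb Q$ is handled in the same way with any prime $\ell$, using the plain (non-``moreover'') form of Proposition~\ref{pbsnull} since any $e$ is invertible in $\mathbb Q$.

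The principal obstacle is the bookkeeping required to align Gabber's alteration with the hypotheses of Lemma~\ref{lnice}(\ref{idsplit},\ref{idreg}): one must tie the choice of the prime $\ell$ to the coefficient ring (so that the denominator $e$ is invertible in the relevant $\mathbb Z_{(l)}$), perform enough Zariski localizations on $X$ to make $\mathcal O_{U_W}$ genuinely free over $\mathcal O_U$ without losing density or regularity of $U$, and ensure that Proposition~\ref{pdimf} really forces $\de(W')<d$ for every proper closed reduced $W'\subset W$ (using that $a$ is generically finite and dominant, so $\de(W)=\de(X)=d$). Once these compatibilities are in place, the induction runs cleanly.
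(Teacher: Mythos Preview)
Your approach is a genuine simplification of the paper's: you run a single induction on $d=\de(X)$ and invoke a direct global $\ell'$-alteration $a\colon W\to X$ with $W$ regular, whereas the paper runs a double induction on pairs $(c,d)$ via the auxiliary notion of $c$-coreasonable pairs and uses the more elaborate diagram~(\ref{ekg}), in which the regular model $W'$ is reached only after composing a proper $\ell'$-alteration $p\colon X'\to X$ with a Nisnevich cover $k\colon W'\to X'$. Your two engines---Lemma~\ref{lnice}(\ref{idreg}) giving $\ph_X^U(Q_W\to W)\in VN_d(X)$ and Lemma~\ref{lnice}(\ref{idsplit}) giving the split injection after inverting $e$---are the right ones, and closing via Proposition~\ref{pbsnull} with $\z_{(l)}$-coefficients is how the paper proceeds as well.

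There is, however, a quantifier-ordering gap you should address. Proposition~\ref{pbsnull} applies to a \emph{fixed} object $M$: one must check $F(M)=0$ for every pair $(l,F)$. In your argument the object $M=\ph_X^U(X)$ changes with $l$, because you shrink $U$ (and Zariski-localize $X$) only \emph{after} choosing the alteration, which itself depends on $\ell=l$. Your appeal to Lemma~\ref{lnice}(\ref{indep}) (``it suffices to find any $U$'') justifies independence of the \emph{conclusion} from the choice of $U$, not of the intermediate $F$-vanishing needed to invoke Proposition~\ref{pbsnull}. The repair is available and is essentially the content of the proof of Lemma~\ref{lnice}(\ref{indep}) combined with your inductive hypothesis: fix a dense regular $U_0\subset X$ once and for all; for each $l$ choose $U'_l\subset U_0$ adapted to the $l$-alteration as you describe; the strata of $U_0\setminus U'_l$ have closures of $\de$-dimension $<d$, hence reasonable by induction, so the triangles of Lemma~\ref{lnice}(\ref{idtriangle}) place $\ph_X^{U_0}(X)$ in the envelope of $\{\ph_X^{U'_l}(X)\}\cup N_d(X)$. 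Thus $F(\ph_X^{U'_l}(X))=0$ forces $F(\ph_X^{U_0}(X))=0$, and Proposition~\ref{pbsnull} now legitimately applies to the fixed $\ph_X^{U_0}(X)$. With this adjustment your argument is complete. The paper's $(c,d)$-induction and its localization around generic points of a fixed $T\subset X$ are one device for navigating the same dependency; your route, once patched, is shorter and more transparent, at the cost of citing the global form of Gabber's $\ell'$-alteration theorem rather than the Nisnevich-local version the paper follows.
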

\begin{proof}

We will verify the reasonability of any (finite type) reduced $B$-scheme $X$ of $\de$-dimension at most $d$ for some $d\ge 0$.  
Note that the statement is trivial for $d=0$ (since then $\dim(X)=0$ according to Proposition \ref{pdimf}). 

Moreover, a finite type (separated) $X/B$, $\de(X)=d$, is reasonable whenever it is $d+1$-coreasonable.
Thus 
 by an obvious  induction on $(c,d)$ our proposition  reduces to the following claim   (somewhat similarly to \S6.2 of \cite{cdet} and \S XIII.3 of \cite{illgabb}):  
$X$ is $c+1$-coreasonable for some integer $c$, $0\le c\le d$,    provided that any 
finite type reduced $B$-scheme  is reasonable if its $\de$-dimension is less than $d$ and is $c$-coreasonable if its $\de$-dimension equals $d$.

Now,  it suffices to verify our claim in the case where $X$ is irreducible. 
Indeed, let $X=\cup X_k$, where $X_k$ are irreducible components of $X$.
If all $X_k$ are $c+1$-coreasonable
 then one can choose  closed $T_k\subset X_k$ of $\de$-dimension less than $d-c$ and non-empty open regular $U_k\subset X_k\setminus T_k$ 
such that all the pairs $(U_k,X_k\setminus T_k)$ are reasonable and $U_k$ are pairwise disjoint (in $X$). 
Then $(\sqcup U_k,X\setminus (\cup_k T_k))$ is easily seem to be reasonable. Thus $X$ is $c+1$-coreasonable according to Lemma \ref{lnice}(\ref{indep}).

Now we prove our claim for an irreducible $X$ using Proposition \ref{pbsnull}. 
So, we need a small subcategory of $\dkx$ containing all ``relevant objects''.
We take $\cu$ being a small full triangulated 
 subcategory of $\dkx$ containing all ``geometrically defined'' objects of this category; those are  obtained  from objects of the form $\oo_D$ for $D$ being separated of finite type over $X$ by applying compositions of motivic image functors coming from finite type separated $X$-morphisms 
  (cf. Remark \ref{rconstrgabber} below).
	We fix a homological functor $F\colon\cu \to R-\modd$ whose restriction to 
	$VN_d(X)$ is zero; here we take $R=\q$ if $\lam=\q$ and $R=\zll$ for some  $l\in \p\setminus\sss$ in the opposite case. 
	We should check that 
	$F(\ph_X^U(T\to X))=\ns$.

	 Following 
\S2 of \cite{kellykth},  in the case $\lam\neq \q$ we apply \cite[Theorems IX.1.1 and 
 Theorem II.4.3.2]{illgabb} 
to obtain a commutative diagram 
\begin{equation}\label{ekg} 
  \begin{CD}
 W'@>{k}>>X'\\
@VV{q}V@VV{p}V \\
W@>{r}>>X
\end{CD}\end{equation}
whose connecting morphisms are separated of finite type, 
$p$ is proper and generically finite of degree prime to $l$, $k$ is a Nisnevich cover, and $W'$ is regular.
Certainly, the schemes $X'$ and $W'$ are
generically finite over $X$; this is also true for $W$ (see Definition II.1.2.2 of ibid.). Thus $X,W,$ and $W'$ are of $\delta$-dimension at most $d$ according to Proposition \ref{pdimf}. 
In the case $\lam=\q$ we replace the usage of Theorem II.4.3.2 of \cite{illgabb} by that of Theorem II.4.3.1 of loc. cit.; we obtain the same data with the only difference being that the generic degree of $p$ may be arbitrary. 

We fix some open dense regular $U\subset X$ such that $U'=U\times_X X'$ is regular and $\mathcal{O}_{U'}$ is a free   $\mathcal{O}_{U}$-module.  
According to Lemma \ref{lnice}(\ref{indep}), it suffices to verify that the embedding $j\colon U\to X$ is $c+1$-coreasonable.

Now we apply the inductive assumption (as described above) to $X$ and $X'$.  We obtain the existence of  closed $T\subset X$ and $T'\subset X'$ of $\de$-dimension at most $d-c$ such that  $X\setminus T$ and $X'\setminus T'$  are reasonable. Combining this fact 
with Lemma \ref{lnice}(\ref{idcover}, \ref{idfunct}) we obtain the following: it suffices to verify the existence of an open reasonable neighborhood $V$ of any generic point $t$ of $T$ in $X$. We will check this fact for some fixed $t$ of this sort; for this purpose
we will prove the existence of $V$ such that $\ph_V^{U\cap V}(Q\cap V\to V)\in VN_d(V)$.

Now we can ``modify''  (\ref{ekg}) similarly to  \S6.2 of \cite{cdet}; note that the (easy) method of the proof of Lemma 4.2.14 of \cite{cd} justifies this action without any difficulty. Since we can replace $X$ by any open neighborhood $V$ of  $t$ 
(replacing all other  schemes in (\ref{ekg}) by the corresponding pullbacks), we can assume that $T'$ is finite over $T$ and that the fiber of $k$ over $T'$ is an isomorphism (recall that $k$ is a Nisnevich cover!).
Now, $V\cap (X\setminus T)$ is reasonable 
according to part \ref{idsmfunct} of the lemma.
 Combining this fact with  
the triangle (\ref{edglu2}), part \ref{indep} of the lemma, and Remark \ref{rreas}(\ref{ifuni}) we reduce the statement in question to
$\ph_X^U(T\to U)\in VN_d(X)$.

Let $T_{W'}$ denote $T'\times_{X'} W'$, and denote the image $q(T_{W'})$ 
 (noting that the restriction of $q$ to $T_W'$ is certainly finite) by $T_{W}$.
We should verify the vanishing of $F(\ph_X^U(T\to X))$.
Now, Lemma \ref{lnice}(\ref{idfunct}) provides us with a chain of morphisms
$$
\ph(T\to X)\to \ph(p\ob(T)\to X')\to \ph(T'\to X')\to \ph(T_{W'}\to W').
$$

The results of applying $F$ to these morphisms are injective according to the lemma. Being more precise,  the induced homomorphism from $F(\ph(T\to X))$  into $F( \ph(p\ob(T)\to X'))$ is split injective according to part \ref{idsplit} of the lemma. Next, $F(\ph(p\ob(T)\to X'))$ injects into $F (\ph(T'\to X'))$ according to part \ref{idtriangle} of the lemma; here we use the fact that $F(D)=0$ since $D\in VN_d(X')$  according to the inductive assumption combined with Remark \ref{rreas}(\ref{ivn}, \ref{ifuni}). 
Lastly,  $F(\ph(T'\to X')) $ maps isomorphically onto $F( \ph(T_{W'}\to W'))$ according to part \ref{idisom} of the lemma.

 It remains to note that the composition map $F(\ph(T\to X))\to F(\ph(T_{W'}\to W'))$
  factors through  $F(\ph(T_{W}\to W))$, whereas  the latter group is zero 
according to part \ref{idreg} of the lemma combined with the inductive assumption. 

\end{proof}

This statement has several nice implications. 

\begin{theo}\label{tgabber}

  Let $f\colon Y\to X$ be a finite type (separated) morphism for $X$ being a separated finite type $B$-scheme,
	and denote $\de(Y)$ by $d$.
	Then the following statements are valid.

\begin{enumerate}
\item\label{tgabber-1} If  $Y$ is  regular then there exists a morphism $\mgbm_{X}(Y)\to f_{*}(\oo_Y)$ whose cone belongs to $VN_r(X)$.

\item\label{tgabber-2} For any $r\ge 0$. $n\in \z$, $-r-1\le n\le r$, $N_r(X)$ coincides with the envelope $N_r^n(X)$ of $\{\mgbmx(V)[m]:\ m\le n\}\cup \{v_*(\oo_V)[m]:\ m> n\}$ for $v\colon V\to X$ running through finite type (separated) morphisms with regular domain with $\de(V)+|m|\le r$.

\item\label{tgabber-3} $\mgbmx(Y)$ 
 and $f_*(\oo_Y)$ belong to $N_d(X)$.

\item\label{tgabber-4} We have $f_*(N_r(Y))\subset N_r(X)$, whereas $f^!(N_r(X))\subset  N_r(Y)$ whenever $f$ is quasi-finite. 
\end{enumerate}
\end{theo}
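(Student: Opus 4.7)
The plan is to establish the four assertions in the order stated, since assertion~(\ref{tgabber-1}) is the fundamental input that drives (\ref{tgabber-2})--(\ref{tgabber-4}).

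For assertion~(\ref{tgabber-1}), I would invoke Nagata's theorem (Remark~\ref{rdimf}(\ref{idcomp})) to factor $f = \bar f \circ j$, where $j\colon Y \to \bar Y$ is an open dense immersion and $\bar f\colon \bar Y \to X$ is proper. Since $\bar f$ is proper we have $\mgbm_X(Y) = f_!(\oo_Y) = \bar f_* j_!(\oo_Y)$, while $f_*(\oo_Y) = \bar f_* j_*(\oo_Y)$; applying $\bar f_*$ to the natural transformation $j_! \to j_*$ produces the desired morphism. By Lemma~\ref{lnice}(\ref{idtriangle}) its cone is $\bar f_*(\ph_{\bar Y}^Y(\bar Y))$. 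Because $Y$ is regular and dense in $\bar Y$, Proposition~\ref{pmotgablem} says $\bar Y$ is reasonable, hence $\ph_{\bar Y}^Y(\bar Y) \in VN_d(\bar Y)$, where $d = \de(\bar Y) = \de(Y)$ by Remark~\ref{rdimf}(\ref{idcomp}). Since $\bar f_* = \bar f_!$ sends $VN_d(\bar Y)$ into $VN_d(X)$ by Remark~\ref{rreas}(\ref{ifuni}), the cone lies in $VN_d(X)$ as required (so $r$ should be read as $d$ in the statement).

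For assertion~(\ref{tgabber-2}), both inclusions rest on the triangle of~(\ref{tgabber-1}). For the inclusion $N_r^n(X) \subset N_r(X)$: each generator $v_*(\oo_V)[m]$ (with $m > n$) is an extension of $\mgbm_X(V)[m]$ and a shift of an object of $VN_{\de(V)}(X)$; writing out this cone inductively in $\de(V)$ and using Lemma~\ref{lnice}(\ref{idshift}) keeps all terms inside $N_r(X)$ thanks to the inequality $\de(V) + |m| \le r$. For the reverse inclusion one uses the same triangle to express $\mgbm_X(V)[m]$ (for $m > n$) as an extension of $v_*(\oo_V)[m]$ and something in $VN_{\de(V)}(X)[m]$, again treated inductively. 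The constraint $-r-1 \le n \le r$ is exactly what is needed for the dimension/shift bookkeeping to close up.

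For assertion~(\ref{tgabber-3}), I would pick a regular stratification $Y\re = \cup Y_l^\al$. Remark~\ref{rcons}(\ref{imgbme}) places $\mgbm_X(Y)$ in the envelope of the $\mgbm_X(Y_l^\al)$; each of these has regular domain with $\de(Y_l^\al) \le d$ and so lies in $N_d(X)$ taking $m=0$. Remark~\ref{rcons}(\ref{imgbmed}) dually places $f_*(\oo_Y)$ in the envelope of the $f_{l,*}(\oo_{Y_l^\al})$, and assertion~(\ref{tgabber-1}) writes each such term as an extension of $\mgbm_X(Y_l^\al) \in N_d(X)$ by an object of $VN_{\de(Y_l^\al)}(X) \subset N_d(X)$.

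For assertion~(\ref{tgabber-4}), the $f_*$ part falls out of assertion~(\ref{tgabber-2}) with $n = -r-1$: after replacing the $\mgbm_Y(V)[m]$ generators of $N_r(Y)$ by $v_*(\oo_V)[m]$, one simply observes that $f_*(v_*(\oo_V)[m]) = (f \circ v)_*(\oo_V)[m]$ is a $v_*$-generator of $N_r(X)$ for the same $n$, and $f_*$ preserves extensions and retracts. The $f^!$ statement is the main obstacle. My plan is to again use the $v_*$-description of $N_r(X)$ from assertion~(\ref{tgabber-2}), then apply the base change $f^! v_* \cong v'_* f'^!$ from Theorem~\ref{tcd}(I.\ref{iexch}) along the Cartesian square with $V' = V \times_X Y$; the problem reduces to controlling $f'^!(\oo_V)$ for $V$ regular and $f'$ quasi-finite. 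The cleanest route is to factor $f'$ (Zariski-locally) as an open immersion followed by a finite morphism, handle open immersions via Theorem~\ref{tcd}(I.\ref{ipur}, I.\ref{ipure}), and for the finite piece reduce further using Theorem~\ref{tcd}(II.\ref{itr}) and Theorem~\ref{tcd}(I.\ref{ipura}) (on closed immersions into regular schemes) together with a regular stratification of $V'$; the resulting objects are extensions of motives $\mgbm_Y(W)[m]$ with $W$ regular and $\de(W) \le \de(V)$, placing them in $N_r(Y)$.
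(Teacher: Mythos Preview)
Your treatment of assertions~(\ref{tgabber-1}), (\ref{tgabber-2}) and the $f_*$ half of~(\ref{tgabber-4}) matches the paper's proof; the organisation of the double induction in~(\ref{tgabber-2}) is a bit looser than the paper's (which inducts on $r$ and then descends in $n$ from $N_r^r = N_r$), but the ingredients are the same.

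There is, however, a genuine gap in your argument for the $f_*(\oo_Y)$ half of assertion~(\ref{tgabber-3}). Remark~\ref{rcons}(\ref{imgbmed}) only identifies $\overline{j}_l^!(\oo_Y)$ with $\oo_{Y_l^\al}$ under the hypothesis that $Y$ itself is regular (this is needed to apply Theorem~\ref{tcd}(I.\ref{ipura})); for a singular $Y$ the object $\overline{j}_l^!(\oo_Y)$ is not $\oo_{Y_l^\al}$, so you cannot conclude that $f_*(\oo_Y)$ lies in the envelope of the $f_{l,*}(\oo_{Y_l^\al})$. The paper avoids this by a different route that exploits assertion~(\ref{tgabber-2}): one first applies the $\mgbm$ case just proved with $X=Y$ to get $\oo_Y \in N_d(Y)$, then invokes~(\ref{tgabber-2}) with $n=-d-1$ to rewrite $\oo_Y$ in the envelope of objects of the form $v_*(\oo_V)[m]$; these push forward trivially under $f_*$ to generators of $N_d^{-d-1}(X)=N_d(X)$. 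The point is that one uses~(\ref{tgabber-2}) to trade all $\mgbm$-type generators for $v_*$-type ones \emph{before} pushing forward, rather than trying to decompose $\oo_Y$ via $j_l^!$.

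For the $f^!$ half of~(\ref{tgabber-4}) your plan would eventually work, but it reinvents the last paragraph of Remark~\ref{rcons}(\ref{imgbmed}). The paper's argument is shorter: again pass to the description $N_r(X)=N_r^{-r-1}(X)$, base-change $f^! v_* \cong v'_* f'{}^!$ with $V$ regular, and then cite directly that remark (taking a regular quasi-projective stratification of $V'=V\times_X Y$) to see $f'{}^!(\oo_V)$ lies in the extension-closure of $\overline{j}_{l,*}(\oo_{V'_l})$; quasi-finiteness of $f$ guarantees $\de(V'_l)\le\de(V)$, so the resulting $v_*$-generators stay in $N_r^{-r-1}(Y)=N_r(Y)$.
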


\begin{proof}
1.  Recall that 
$Y$ possesses an $X$-compactification $\overline{Y}$ (see Remark \ref{rdimf}(\ref{idcomp})); denote the corresponding proper morphism $\overline{Y}\to X$ by $\overline{f}$. 
Then Lemma \ref{lnice}(\ref{idtriangle}) gives a distinguished triangle $\mgbm_{X}(Y)\to f_{*}(\oo_Y)\to \overline{f}_!(\ph_{\overline{Y}}^Y(Y)) $; 
 hence the statement follows from Proposition \ref{pgab}.

2. For $r=0$ we have  $\de(V)=0$ in the definitions of $N_r(X)$ and $N_r^n(X)$; thus $V$ is proper over $X$. 
Hence the obvious inductive argument enables us to assume that  $N_{r'}(X)=N_{r'}^{n'}(X)$ for any $r'<r$ and  
 $-r'-1\le n'\le r'$.

Next, it certainly suffices to verify that $N_r(X)=N^r_r(X)$ and that  the classes $N_r^n(X)$ coincide for all $n$.

We certainly have $N_r(X)\subset N_r^r(X)$. The converse implication is immediate from Lemma \ref{lnice}(\ref{idshift}).


Thus is remains to prove by descending induction on $n$ 
 that $N_r^{n-1}(X)=N_r^{n}(X)$ for any 
$n$ between $r$ and $-r$. This statement is also equivalent to $\mgbmx(V)[n]\in N_r^{n-1}(X)$ and
$v_*(\oo_V)[n]\in N_r^{n}(X)$ 
whenever $V$ is a regular scheme of finite type over $X$ and $|n|+\de(V)\le r$.

Now, assertion 1 yields that $v_*(\oo_V)[n]$ belongs to the $\dkx$-envelope of $\{\mgbmx(V)[n]\}\cup VN_{\de(V)}[n]$. Hence to verify that $v_*(\oo_V)[n]\in N_r^{n}(X)$  it suffices to check whether 
$VN_{\de(V)}[n]\subset  N_r^{n}(X)$.  
Next, the inductive hypothesis (with respect to $n$) yields that $N_r^{n}(X)=N_r(X)$, whereas the latter class contains $VN_{\de(V)}[n]$ (if $\de(V)+|n|\le r$);  here we apply Remark \ref{rreas}(\ref{ivn}) for $n\ge 0$ and  Lemma \ref{lnice}(\ref{idshift}) for $n<0$.

Now we  verify that $\mgbmx(V)[n]\in  N_r^{n-1}(X)$ 
The statement is obvious if $\de(V)=0$; see Remark \ref{rdimf}(\ref{indepdimf}). 

Now we  check our statement for $\de(V)>0$; so, $|n|<r$.  Assume that $n>0$; then $\mgbmx(V)[n-1]\in N^{r-1}(X)$.  Applying the inductive hypothesis with respect to $r$ we obtain  that $\mgbmx(V)[n-1]\in N^{r-1}_{n-2}(X)$. Hence in this case it suffices to note that $N^{r-1}_{n-2}(X)[1]\subset  N^r_{n-1}(X)$. 
Similarly, if  $n>0$ then $\mgbmx(V)[n+1]\in N^{r-1}(X)$; thus $\mgbmx(V)[n]\in  N^{r-1}_{n}(X)[-1]\subset  N^r_{n-1}(X)$. 

It remains to consider the case $n=0$. Applying  assertion 1 once again, we note that $\mgbmx(V)$ belongs to the $\dkx$-envelope of $\{v_*(\oo_V)\}\cup VN^{\de(V)}[-1]$. Thus it suffices to check that $VN^{r}[-1]\subset N^r_{-1}(X)$. Now, the class $VN^{r}[-1]$ equals the envelope of  $N^{r-1}(X)\cup \{\mgbmx(V)[-1]:\ \de(V)<r\}$ by definition. The latter envelope certainly lies in   $N^r_{-1}(X)$.

3. The first part of the assertion is an immediate consequence of Remark \ref{rcons}(\ref{imgbme}). Applying it for $X=Y$ we obtain that $\oo_Y\in N^d(Y)$. Hence 
$\oo_Y\in N^d_{-d-1}(Y)$ (see assertion 2). It obviously follows that $f_*(\oo_Y)\in N^d_{-d-1}(X)$, and is remains to apply assertion 2 once again.

4. Similarly to the argument above, we use the equalities $N^r(Y)=N^r_{-r-1}(Y)$ and $N^r(X)=N^r_{-r-1}(X)$. They imply the first part of the assertion immediately (cf. Remark \ref{rreas}(\ref{ifuni})). Combining these equalities with Remark \ref{rcons}(\ref{imgbmed}) we also obtain the second part of the assertion. 
\end{proof}

\begin{rema}\label{rconstrgabber}
Note that the arguments of this section  
do not rely on Theorem \ref{tcd}(II.\ref{irmotgen}). Thus Theorem \ref{tgabber}(\ref{tgabber-3}) 
can be used as an important ingredient of its proof  (cf. the proof of Corollary 6.2.14 of \cite{cdet}). 
 This was our reason for considering (a small skeleton of) the category of ``geometrically generated $K$-motives'' instead of  $\dkcx$  in the proof of Proposition \ref{pgab}.

\end{rema}


We describe some  obvious consequences from the theorem.

\begin{coro}\label{cgabber}
 Let $f\colon Y\to X$ be a finite type morphism for $X$ being a $\lam$-nice scheme;
let $d$ denote the dimension of an $X$-compactification of $Y$. 
Then $\mgbmx(Y)$ 
 and $f_*(\oo_Y)$ belong to the $\dkx$-envelope 
 of $\{\mgbmx(V)[m]:\ m\le n,\ \dim(V)+|m|\le d\}\cup \{v_*(\oo_V)[m]:\ m> n,\ \dim(V)+|m|\le d\}$, where $V$ runs through all regular finite type $X$-schemes satisfying this inequality. 
\end{coro}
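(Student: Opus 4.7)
The plan is to deduce the corollary from Theorem~\ref{tgabber}(\ref{tgabber-3}) combined with the envelope description of Theorem~\ref{tgabber}(\ref{tgabber-2}), and then to rephrase the resulting envelope in terms of Krull dimension and the compactification dimension $d$ in place of the Gabber dimension function $\de$. First, I pick via Nagata (Remark~\ref{rdimf}(\ref{idcomp})) an $X$-compactification $\overline{f}\colon\overline{Y}\to X$ of $Y$, so that $\overline{f}$ is proper and $\dim\overline{Y}=d$. The localization triangle (\ref{emgys}) gives
\[
\mgbmx(Y)\to \mgbmx(\overline{Y})\to \mgbmx(Z),
\]
with $Z=\overline{Y}\setminus Y$ of Krull dimension $\le d$, while the triangle of Lemma~\ref{lnice}(\ref{idtriangle}) applied in $\overline{Y}$ with $U=Y$, pushed forward along the proper $\overline{f}$ (using $\overline{f}_{*}j_!\cong \overline{f}_!j_!\cong f_!$), yields
\[
\mgbmx(Y)\to f_{*}(\oo_Y)\to \overline{f}_{*}\bl\ph^Y_{\overline{Y}}(\overline{Y})\br.
\]
By extension-closure of the target envelope, it suffices to place the three objects $\mgbmx(\overline{Y})$, $\mgbmx(Z)$, and $\overline{f}_{*}(\ph^Y_{\overline{Y}}(\overline{Y}))$ into the desired envelope.

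For $\mgbmx(\overline{Y})=\overline{f}_{*}(\oo_{\overline{Y}})$ (and similarly $\mgbmx(Z)$) I pick, by Gabber's result used in Proposition~\ref{pmotgablem}, a regular stratification $\{\overline{Y}_l^\al\}$ of $\overline{Y}\re$; each stratum is regular and of Krull dimension $\le d$. Remark~\ref{rcons}(\ref{imgbme}) then expresses $\mgbmx(\overline{Y})$ as an iterated extension of the $\mgbmx(\overline{Y}_l^\al)$, while Remark~\ref{rcons}(\ref{imgbmed}) (with $M=\oo_{\overline{Y}}$ and using regularity of the strata together with Theorem~\ref{tcd}(I.\ref{ipura})) expresses the same object as an iterated extension of $(\overline{f}\circ\overline{j}_l)_{*}(\oo_{\overline{Y}_l^\al})$. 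Both families consist of legitimate generators of the corollary's envelope (with $m=0$), irrespective of $n$. For the correction term $\overline{f}_{*}(\ph^Y_{\overline{Y}}(\overline{Y}))$ I apply Proposition~\ref{pmotgablem} followed by the proper-pushforward statement Theorem~\ref{tgabber}(\ref{tgabber-4}): the motive lies in $N_d(\overline{Y})$, which is sent into $N_d(X)$. The envelope identity $N_d(X)=N_d^n(X)$ from Theorem~\ref{tgabber}(\ref{tgabber-2}) rewrites this in the $\mgbmx(V)[m]$/$v_{*}(\oo_V)[m]$ form demanded by the corollary, modulo the passage from $\de$ to $\dim$; the shifts by $[m]$ that appear are accounted for by the $\mathbb{G}_m^s$-twist of Lemma~\ref{lnice}(\ref{idshift}), which contributes equally to $|m|$ and to $\dim V$.

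The main obstacle is the last point: a priori the envelope of Theorem~\ref{tgabber} is phrased with $\de(V)+|m|\le\de(Y)$, whereas the corollary demands the strictly Krull-dimensional bound $\dim(V)+|m|\le d$. Two routes present themselves. The first is to re-inspect the proof of Proposition~\ref{pmotgablem}, verifying at each inductive step that the auxiliary schemes $V$ produced by the Gabber reduction (regular strata of Nisnevich covers and alterations of subschemes of $\overline{Y}$) actually satisfy the Krull-dimension bound $\dim V\le d$, since they sit inside $B$-schemes of Krull dimension $\le d$. The second, more economical, is to invoke Remark~\ref{rdimf}(\ref{idimfkrull}) to handle the case of $X$ Jacobson with equicodimensional components (in particular $X$ of finite type over a field or over $\spe\z$), where $\de=\dim$ and the corollary is an immediate specialization of Theorem~\ref{tgabber}, and then to reduce the general $\lam$-nice case by covering $X$ by opens admitting dimension functions (Remark~\ref{rdimf}(\ref{ilocdimf})) and applying a stratification-and-gluing argument based on Remark~\ref{rcons}(\ref{imgbme}, \ref{imgbmed}). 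This local-to-global patching, carried out while keeping track of both the regularity of the generators and the Krull-dimension bound, is where the bulk of the work lies.
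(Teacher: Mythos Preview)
Your proposal identifies the correct tools (Theorem~\ref{tgabber}(\ref{tgabber-2},\ref{tgabber-3})) but misses the one-line trick that dissolves the obstacle you flag as ``the bulk of the work''. The paper's proof is this: first reduce to $X=\overline{Y}$ by pushing forward along the proper $\overline{f}\colon\overline{Y}\to X$ (so $\mgbmx(Y)=\overline{f}_*\mgbm_{\overline{Y}}(Y)$ and $f_*(\oo_Y)=\overline{f}_*j_*(\oo_Y)$, and $\overline{f}_*$ sends generators of the desired shape over $\overline{Y}$ to generators of the desired shape over $X$). Then \emph{choose the base scheme $B$ to be $\overline{Y}$ itself}. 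By the very definition of $\de=\de^B$ one has $\de(\overline{Y})=\dim(\overline{Y})=d$, so Theorem~\ref{tgabber}(\ref{tgabber-3}) places both objects in $N_d(\overline{Y})$, and Theorem~\ref{tgabber}(\ref{tgabber-2}) rewrites this as $N_d^n(\overline{Y})$. Finally, since $\de(V)\ge\dim(V)$ for every $V$ (Proposition~\ref{pdimf}(1)), each generator with $\de(V)+|m|\le d$ automatically satisfies $\dim(V)+|m|\le d$, and the proof is complete.

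The point you overlook is that $B$ is not fixed once and for all: Remark~\ref{rdimf}(\ref{idimfopt}) already hints that $B=X$ is typically the right choice, and here, after the reduction $X=\overline{Y}$, it makes $\de$ coincide with $\dim$ at the top of the picture. Neither of your two proposed routes is needed. Your first route (re-auditing the inductive proof of Proposition~\ref{pmotgablem} for Krull-dimension bounds) would work in principle but is laborious; your second route (Jacobson reduction plus open patching via Remark~\ref{rdimf}(\ref{ilocdimf})) does not obviously cover a general $\lam$-nice $X$ and would in any case be far heavier than the two-line argument above. Note also that your preliminary decomposition via (\ref{emgys}) and Lemma~\ref{lnice}(\ref{idtriangle}) is superfluous: Theorem~\ref{tgabber}(\ref{tgabber-3}) already handles $\mgbm_{\overline{Y}}(Y)$ and $j_*(\oo_Y)$ directly.
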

 \begin{proof} Once again,  the symbol $\overline{Y}$ will denote an $X$-compactification of $Y$; it suffices to verify the assertion for $X=\overline{Y}$. 
Next, we take $B$ equal to $\overline{Y}$ also; then for the corresponding $\de$ we certainly have $\de(\overline{Y})=\dim (\overline{Y})=d$ (immediately from the definition of $\de$). Hence the assertion follows from Theorem \ref{tgabber}(\ref{tgabber-2}, \ref{tgabber-3}) combined with the inequality $\de(-)\ge \dim(-)$ (see Proposition \ref{pdimf}).
\end{proof}

\subsection{An application to bounding weights and to the vanishing of ``too negative'' $\kk$-groups}\label{sgenweibelconj}

Now we improve Corollary  \ref{ctestwnc} (that includes Theorem \ref{testw} as a particular case) by combining Proposition \ref{pmotgablem} with Proposition \ref{pbw}(\ref{iortprecise}).

\begin{theo}\label{tweib}
Let $f\colon Y\to X$ be a separated 
morphism of $\lam$-nice reasonable schemes, $\dim Y=d$, $0\le r\le d$.
Then the following statements are valid.

I. 
\begin{enumerate}
\item\label{3.3.1.I.1} $f_*(\oo_Y)\in \dkx_{\wchow\ge -d}$.

\item\label{3.3.1.I.2} There exists a closed subscheme $Z\subset Y$ such that $\dim(Z)\le r-1$ and for the morphism $f'\colon Y\setminus Z\to X$ we have $f'_*(\oo_{Y\setminus Z})\in \dkx_{\wchow\ge r-d}$.

\item\label{3.3.1.I.3} Assume that $f$ is of finite type; let $\overline{Y}$ be an $X$-compactification of $Y$  (see Remark \ref{rdimf}(\ref{idcomp})) and denote its dimension by $\overline{d}$. Then $f_*(\oo_Y)\in \dkx_{\wchow\le \ovd}$ and $\mgbmx(Y)\in \dkx_{\wchow\ge - \ovd}$.


\end{enumerate}

II.
\begin{enumerate}
\item\label{3.3.1.II.1} For any smooth $V/X$ and any open 
embedding $j\colon U\to V$ we have the following: $\kk_{s}(V\times_X Y)=\ns$ for $s<-d$ 
 and  $\kk_{-d}(V\times_X Y)$ surjects onto $\kk_{-d}(U\times_X Y)$. 

\item\label{3.3.1.II.2} In the setting of assertion I.3 
 we have the following: 
for any 
(separated) finite type
 morphism $P\to X$ the groups $\kk_{-s}^{BM,Y}(P\times_X Y)$ (see Remark \ref{rcons}(\ref{imgbmles}))
vanish for all $s>r$ if (and only if) this statement is valid under the additional assumptions that $P$ is regular and affine, $\dim(P)\le \ovd-s$, and $s\le d$. 
\end{enumerate}


\end{theo}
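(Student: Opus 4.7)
The plan is to derive the weight bounds of Part I from the envelope description in Corollary \ref{cgabber}, combined with the weight-exactness properties of motivic image functors from Proposition \ref{pwchownc}, and then to obtain Part II from Part I via the equivalences of Theorem \ref{testw} and Corollary \ref{ctestwnc} together with the orthogonality criterion Proposition \ref{pbw}(\ref{iortprecise}). The guiding observation is that the relevant weight conditions are preserved under extensions and Karoubi-closures, so an envelope description of $f_*(\oo_Y)$ directly yields a weight bound once the generators are checked.

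For Part I.1, first assume $f$ is of finite type and invoke Corollary \ref{cgabber} with parameter $n = -d-1$; then the envelope witnessing $f_*(\oo_Y)$ consists only of objects $v_*(\oo_V)[m]$ with $v\colon V\to X$ regular of finite type, $m\ge -d$, and $\dim V + |m|\le d$. Regularity of $V$ gives $\oo_V\in \dk(V)_{\wchow = 0}$, so right weight-exactness of $v_*$ (Proposition \ref{pwchownc}(\ref{iwexchow1})) places each such generator in $\dkx_{\wchow\ge m}\subset \dkx_{\wchow\ge -d}$, and the envelope inherits this bound. The general (not necessarily finite type) case then reduces to the finite type case applied to $\id_Y$, followed by right weight-exactness of $f_*$. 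Part I.3 is handled by entirely analogous applications of Corollary \ref{cgabber}: the upper bound $f_*(\oo_Y)\in \dkx_{\wchow\le \ovd}$ uses $n = \ovd$ together with left weight-exactness of $v_!$, whereas the lower bound $\mgbmx(Y)\in \dkx_{\wchow\ge -\ovd}$ uses $n = -\ovd-1$ exactly as in Part I.1. Part II.1 is then an immediate translation of Part I.1 via the equivalence of conditions (\ref{itest1}) and (\ref{itest4}) in Theorem \ref{testw}, extended to the not-necessarily-finite-type setting by Corollary \ref{ctestwnc}.

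Part I.2 is the main point. Applying Corollary \ref{cgabber} to $\id_Y$ with $n = r-d-1$ places $\oo_Y$ in the envelope of \emph{bad} generators $\mgbm_Y(V')[m]$ with $m\le r-d-1$ and \emph{good} generators $v'_*(\oo_{V'})[m]$ with $m > r-d-1$, all subject to $\dim V' + |m|\le d$. A bad generator satisfies $\dim V'\le d+m\le r-1$, so the closure in $Y$ of the image of $V'$ has dimension $\le r-1$. Let $Z\subset Y$ be the union of these closures over the (finitely many) bad generators witnessing the envelope membership of $\oo_Y$; then $\dim Z\le r-1$. The factorization of each $v'$ through $Z$, combined with properness of the closed embedding $i\colon Z\to Y$, shows that every bad generator lies in the essential image of $i_*$, which coincides with the kernel of $j^*$ for the complementary open immersion $j\colon Y\setminus Z\to Y$; this kernel is triangulated and Karoubi-closed. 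Iterating Proposition \ref{pbw}(\ref{iwdext}) along the extension tower (and invoking the retract-compatibility of weight decompositions employed in the proof of Proposition \ref{pbw}(\ref{iortprecise})) then produces a choice of $w_{\le r-d-1}\oo_Y$ lying in $\ker(j^*)$. Applying the weight-exact functor $j^*$ (Proposition \ref{pwchownc}(\ref{iwexchow3})) to the weight-decomposition triangle yields $\oo_{Y\setminus Z}\cong j^*(w_{\ge r-d}\oo_Y)\in \dk(Y\setminus Z)_{\wchow\ge r-d}$, and right weight-exactness of $f'_*$ completes the argument.

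For Part II.2, apply Corollary \ref{cgabber} to $f$ with $n = -r-1$; the ``in particular'' part of Proposition \ref{pbw}(\ref{iortprecise}) then reduces the weight bound $f_*(\oo_Y)\in \dkx_{\wchow\ge -r}$ (equivalent, via Theorem \ref{testw}(\ref{itest2}), to the desired $\kk^{BM,Y}_{-s}$-vanishing for all $s > r$) to checking the orthogonality $\mgbmx(V)[-s]\perp f_*(\oo_Y)$ only for the bad generators, namely those with $V$ regular, $s = |m|\ge r+1$, and $\dim V + s\le \ovd$ (hence $s\le \ovd = d$, since $Y$ is open dense in its compactification). By Remark \ref{rcons}(\ref{imgbmles}) this orthogonality is precisely the vanishing of $\kk_{-s}^{BM,Y}(V\times_X Y)$, and the further reduction to affine $V$ follows from Remark \ref{rcons}(\ref{imgbme}) via a regular affine stratification of $V$. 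The main technical difficulty throughout is Part I.2; the decisive observation is that the property ``supported on $Z$'' is stable under all the extension and Karoubi-closure operations entering the envelope construction, which allows that support condition to propagate faithfully to the negative-weight truncation of $\oo_Y$.
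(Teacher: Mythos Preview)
Your overall strategy matches the paper's: Part I follows from Corollary \ref{cgabber} combined with the weight-exactness assertions of Proposition \ref{pwchownc}, and Part II then follows via Theorem \ref{testw}/Corollary \ref{ctestwnc} and Proposition \ref{pbw}(\ref{iortprecise}). Parts I.1, I.3, and II.1 are handled exactly as in the paper.

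There is, however, a genuine gap in your II.2. You write ``hence $s\le \ovd = d$, since $Y$ is open dense in its compactification''. The equality $\ovd=d$ is \emph{false} in general: take $X=\spe R$ for a DVR $R$ with fraction field $K$, and $Y=\mathbb{A}^1_K=\spe R[1/\pi,t]$ (which is finite type over $X$); then $\dim Y=1$ but the $X$-compactification $\mathbb{P}^1_X$ has dimension $2$. Open-dense inclusions of nice schemes need not preserve Krull dimension; this is precisely why the paper introduces the $\de$-functions in \S\ref{sdimf}. Consequently the bad generators may have $s$ ranging up to $\ovd$, not just to $d$. The paper closes this gap by invoking I.1: since $f_*(\oo_Y)\in\dkx_{\wchow\ge -d}$, any $\mgbmx(V)[m]$ with $-m>d$ lies in $\dkx_{\wchow\le m}\subset\dkx_{\wchow\le -d-1}$ and is therefore automatically orthogonal to $f_*(\oo_Y)$. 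You should replace your claim $\ovd=d$ with this observation.

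Regarding I.2: your argument via building a weight truncation $w_{\le r-d-1}\oo_Y$ inside $\ker(j^*)$ is correct, but it is more elaborate than necessary. The paper (using the all-$v_*$ form of the envelope) simply applies the weight-exact functor $j^*$ to the envelope description of $\oo_Y$: the bad generators vanish since their domains map into $Z$, while the good generators remain of the form $v_*(\oo_{V'})[m]$ with $V'$ regular and $m\ge r-d$. In your setup the same simplification works: $j^*(\mgbm_Y(V')[m])=\mgbm_{Y\setminus Z}(V'\times_Y(Y\setminus Z))[m]=0$ directly, and smooth base change handles the good generators. The detour through Proposition \ref{pbw}(\ref{iwdext}) and retract-compatibility can be dropped entirely.
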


\begin{proof}
I. All of the assertions easily follow the fact that for any $\lam$-reasonable scheme $S$ and $v\colon V\to S$ being a morphism of finite type with a regular domain we have $\mgbms(V)\in \dk(S)_{\wchow\le 0}$ and 
$v_*(\oo_V)\in \dk(S)_{\wchow\ge 0}$; see Proposition \ref{pwchownc}(\ref{iwexchow6}, \ref{iwexchow1}, \ref{iwexchow2}).

\ref{3.3.1.I.1}. Indeed, applying  Corollary \ref{cgabber} to the morphism $\id_Y$, $n=-d-1$ we deduce that $\oo_Y=\mgbm_Y(Y)$ belongs to  $\dk(Y)_{\wchow\ge -d}$. It remains to apply 
Proposition \ref{pwchownc}(\ref{iwexchow1}) once again. 

\ref{3.3.1.I.2}. We apply Theorem \ref{tgabber}(\ref{tgabber-2}, \ref{tgabber-3}) to the morphism   $\id_Y$ obtaining  that $\oo_Y$ belongs to the envelope 
of  $\{v_{m,*}(\oo_{V_m}) [m]:\ -d\le m\le d\}$ with $v_m\colon V_m\to Y$ being finite type morphisms with regular domains and $\de^Y(V_m)\le d-|m|$. We take $Z$ being the scheme-theoretic image in $Y$ of $\cup_{m>d-r}V_m$.  Then $\dim(Z)\le r-1$ according to 
Remark \ref{rdimf}(\ref{idcomp}). Next, for $U=Y\setminus Z$ and the open embedding $u\colon U\to X$ we have $\oo_U\cong u^*(\oo_Y)$; thus $\oo_U$ belongs to the $\dk(U)$-envelope of $\{(v_{m}\times_Y u)_{*}(\oo_{V_m\times_Y U}) [m]:\ -d\le m\le d\}$, 
 whereas $V_m\times_Y U$ is empty for $m>d-r$. Once again, it remains to apply 
Proposition \ref{pwchownc}(\ref{iwexchow6}, \ref{iwexchow2}).

\ref{3.3.1.I.3}. Immediate from Corollary \ref{cgabber} applied to $f$ with 
 $n$ being equal to $\ovd$ and $-\ovd-1$, respectively.

II.
\ref{3.3.1.II.1}. This is just a combination of assertion I.\ref{3.3.1.I.1} with Corollary \ref{ctestwnc}. 

\ref{3.3.1.II.2}. The aforementioned corollary yields that the first condition in the assertion (i.e., the one without the restriction on $\dim(P)$)
is fulfilled if and only if $f_*(\oo_Y)\in \dkx_{\wchow\ge -r}$. 

Now we apply Proposition \ref{pbw}(\ref{iortprecise}) for $M=f_*(\oo_Y)$, $i=-r$. According to Corollary \ref{cgabber}, 
$M$ belongs to the envelope of $\{\mgbmx(V)[m]:\ m\le -r-1,\ \dim(V)-m\le \ovd\}\cup \{v_*(\oo_V)[m]:\ m\ge -r,\ \dim(V)+|m|\le \ovd\}$, where $V$ runs through all regular finite type $X$-schemes satisfying this inequality. 
Hence (combine the recollection in the beginning of the proof of assertion I with Proposition \ref{pbw}(\ref{iortprecise})) to verify that $f_*(\oo_Y)\in \dkx_{\wchow\ge -r}$ it suffices to check that $\mgbmx(V)[m]\perp M$ whenever $V$ is  regular scheme of finite type over $X$,  $\dim(V)-m\le \ovd$, and $m\le -r-1$.
Now, this condition is automatic for $m>d$ according to assertion I.1. Thus is remains to recall that $\dkx(\mgbmx(V)[m],M)\cong \kk_{m}^{BM,Y}(V\times_X Y)$ (see \ref{ekthlesgen}).

\end{proof}

\begin{rema}\label{rweib}
\begin{enumerate}
\item\label{icompkelly}
It appears that no analogues of part  II.\ref{3.3.1.II.1} of the theorem were ever formulated in the literature; yet prof. S. Kelly has kindly informed the authors that the methods of  \cite{kellykth} may be used to prove a somewhat similar statement.

Yet part II.\ref{3.3.1.II.2} of the theorem is completely new. Note that one may certainly replace the Borel--Moore $K$-theory in it by relative $K$-theory and relate it to other conditions described in Theorem \ref{testw}. Furthermore, one can slightly improve the statement by considering $\de^X$-dimensions instead of the Krull ones in it.

\item\label{isupport} Part I.\ref{3.3.1.I.2} of the theorem also appears to be 
 new. In particular, it says that for any $n\ge 0$ the property of $\oo_X$ not to belong to $\dkx_{\wchow\ge -n}$ is ``supported in codimension $>n$''; thus the same is true for the non-vanishing of $\kk_s(X)$ for $s<-n$ (along with the non-vanishing of 
 $\kk_{-s}^{BM,X}(P)$ for a separated finite type $P/X$). These facts are (more or less) trivial for $n=0$; yet for $n>0$ they do not seem to follow from Kelly's results (since the inverse limit of $X\setminus Z$ for $Z$ running through all closed subschemes of codimension greater than $n$ does  not exist as a scheme a general).

\item\label{iextest} 
Now we explain that instead of checking that $\kk_{-s}^{BM,Y}(P\times_X Y)=\ns$ for all $(s,P)$ as in assertion II.\ref{3.3.1.II.2} it suffices to verify it for a certain set of ``test pairs'' $(P_b,s_b)$ that may be explicitly described. 

For this purpose it certainly suffices to verify the corresponding ``explicit version'' of Corollary \ref{cgabber}, i.e., that $f_*(\oo_Y)$ belongs to the envelope of
$\{\mgbmx(V_b)[m_b]:\ m_b\le n\}\cup \{v_*(\oo_{vp_c})[m_c]:\ m> n\}$ for certain ``explicit $(V_b,m_b)$'' (and $V_b,\vp_c$ being regular of finite type over $X$, $\dim(V_b)+|m_b|$ and $\dim(V_c)+|m_c|$  being at most $\ovd$).
Now we note that the proofs of the corresponding parts of Lemma \ref{lnice} are quite ``constructible''. Thus the proof of Proposition \ref{pgab} can be used to construct explicit pairs of a similar sort in the cases $\lam =\zll$ (for some prime $l$) and $\lam=\q$ since the arguments from \cite{illgabb} used in it are constructible also.
It easily follows that $(V_b,m_b)$ can be explicitly described in this case.


 To obtain  an ``explicit version'' of Proposition \ref{pgab} 
when  $\lam$ is distinct from $\q$ and $\zll$  one can start from applying the argument used in the proof of the proposition  for 
$\lam'=\z_{(l_0)}$ for some $l_0\in p\setminus \sss$. 
The (inductive) construction of the corresponding pairs  for the object $\ph_{X}^{U}( X)$ involves a finite number of applications of Lemma \ref{lnice}(\ref{idsplit}); denote the corresponding values of $e$ by $e_i$. Then our arguments actually yield that  the (``explicit'') envelope statement in question for $\ph_{X}^{U}( X)$ is actually fulfilled in the category of motives with $\lam[(\prod e_i)\ob]$-coefficients.

Next one can apply Corollary 0.2 of \cite{bsnull}. It states that for $M,M_j\in\obj \dkcx$ 
the object $M$ belongs to the envelope of $\{M_j\}$ whenever  $c_{\dkcx}^l(M)$ (see Remark \ref{rlocoeff}(\ref{icompenv}))  belongs to  the envelope of  $\{c_{\dkcx}^l(M_j)\}$ for any $l\in \p\setminus \sss$.\footnote{ Recall that target of  $c_{\dkcx}^l$  is isomorphic to the $\zll$-linear version of $\dkcx$.}
Thus for $L$ being the set of prime divisors of $\prod e_i$ belonging to $\p\setminus \sss$ the object
  $\ph_{X}^{U}( X)$
 belongs to the envelope of  $ \{\mgbmx(V_{b,X})^{l_0}[m_b^{l_0}]\}\cup(\cup_{l\in L}\{\mgbmx(V_{b,X}^l[m_b^l]\}$ 
 for the corresponding $V_{b,X}^l$. 

This ``explicit test statement'' is completely new. Note also that in the next section we will relate the Chow-weights of $K$-motives with rational coefficients (and so, also the vanishing of the corresponding $K$-groups) to \'etale (co)homology of schemes.

\item\label{icompkth} The authors suspect that all the aforementioned properties of $\kk(-)$ are valid for the "usual" (i.e., "non-homotopy-invariant") $K$-theory. Unfortunately, the ("motivic") methods of the current paper cannot be used to prove this conjecture (in the general case) since at the moment the "theory of motivic categories" is understood well enough only for motivic categories satisfying the homotopy invariance property (see \S\ref{sgengab} below). Yet applying the "classical" comparison argument one immediately obtains  the  proof of  the restriction of our conjecture (for $\zop$-linear $K$-theory) to nice schemes such that $p$ is nilpotent on them; cf. (the proof of) \cite[Theorem 3.5]{kellykth}. 

\item\label{iopt}
The authors certainly do not claim that this method of constructing $(V_b,m_b)$ is ``optimal''. One of the reasons for this is that one can ``often'' replace the usage of the aforementioned results of \cite{illgabb} by  much simpler (though less general) resolution of singularities results (in particular, the ones established in \cite{tem}).
So one can  find a smaller set of  $(V_b,m_b)$  (at least) in some cases; cf. Proposition \ref{pldh}  below for an ``easy'' formulation of this sort (and  Remark \ref{rvd}(3) for a very simple example).

\item\label{ibiratest} One may also obtain certain ``explicit test schemes'' for bounding below the Chow-weights of  $\co (h_*(\oo_{X''}) \to g_*(\oo_{X'}))$ as mentioned in Remark \ref{rcones}(\ref{rcones-2}); here one should combine the arguments above with Proposition \ref{pbw}(\ref{iwdext}). This corresponds to bounding from below the non-vanishing of certain relative Borel--Moore $K$-theory (and may be unfolded into the statements concerning some ``bi-relative'' homotopy invariant $K$-groups).

\item\label{ikerzs}
It would be interesting to combine (somehow) the arguments of the current paper with the recent work \cite{kerzs}.

\end{enumerate}

\end{rema}

\subsection{Some ``axiomatic'' generalizations} \label{sgengab}
We will now explain that our arguments can be applied to a wide range of ``motivic'' categories.
Our main examples (other than $K$-motives) are Beilinson motives, $cdh$-motives (see Remark \ref{rcons}(\ref{itate})), and $h$-motives as studied in \cite{cdet} (yet cf. Remark \ref{rcompvm}(\ref{rcompvm-1}) below).
We describe a certain axiomatic setup that would enable us to treat all of these settings simultaneously. 

So, for  a scheme $B$ as above we will use the notation $G=G(B)$ to denote the category of 
separated finite type $B$-schemes (with morphisms being separated $B$-scheme morphisms).
We assume  the existence of a function $\de$ from $\obj G$ into non-negative integers that fulfills all the properties listed in Proposition \ref{pdimf}.

 Next, we
  consider a $2$-functor $\md$ from $G$ into the $2$-category of (compactly generated) tensor triangulated categories that are closed with respect to small coproducts.
	We assume that all the categories $\md(X)$ for $X\in \obj G$ are $\lam$-linear, where $\lam=\z[\sss\ob]$ and $\sss$ contains all the primes that are not 
	invertible on $B$.
	All the notation 
	for $\md(-)$
	will be similar to that for $K$-motives (though we will sometimes 
	put an 
	index $\md$ to avoid ambiguity). So, for $v\colon V\to X$ being a $G$-morphism we define $\mgdx(V)=v_!(\ood_V)$.
	We will assume that the $\md$-versions of 
	Theorem \ref{tcd}(I.\ref{ifunast}--\ref{iglu}) 
	are fulfilled. 
	
	Moreover, we assume that for the splitting
	$\mgdx(\p^1(X))\cong \ood_X\bigoplus \ood_X\lan -1\ra $ coming from the natural morphisms $X\to \p^1(X)\to X$ the object  $\ood_X\lan -1\ra $ is $\otimes$-invertible in $\md(X)$ for any $X\in \obj G$, and for any $i\in \z$ the functor $-\lan i \ra =-\otimes (\ood_-)^{\otimes i}$ ``commutes'' with all the functors of the type $f^*$, $f_*$, $f_!$, and $f^!$ for $f$ being a $G$-morphism. 
	Furthermore, in the setting of assertions I.(\ref{ipure}--\ref{ipura}) we assume that $f^!\cong f^*\lan s \ra$ if $f$ is everywhere of relative dimension $s$, and  
	$i^!(\ood_X)$ is isomorphic to $\ood_Z\lan -c \ra$ if 
$i$ is everywhere of codimension $c$. 

Lastly, we assume that {\it homotopy invariance} holds for $\md$, i.e., that for any $X\in G$ and $f$ being the projection $\afo(X)\to X$
the counit morphism $f_!f^!(\ood_{X})\to \ood_X$ is invertible.


In order to describe the extension of the results of \S\ref{sgabblem} to this general setting we need certain non-periodic generalizations of the corresponding definitions. 


\begin{defi}\label{dnp}
\begin{enumerate}
\item\label{dnp-1} In the notation of Definition \ref{dnicepair} (including $X$ being a $B$-scheme) we define $\phd_X^U(Z\to W)= \pi_* i_Z^* j_{W,*}(\ood_{U_W})$. 

In the case $W=X,\ Z=Q$ we  will replace the notation $\phd_X^U(Q\to X)$ just by $\phd_X^U(X)$.

\item\label{dnp-2} For $r\in \z$ we define the following classes of objects in $\dkx$: $\tn_r(X)$ is the $\dkx$-envelope of $\{\mgdx(V)[m]\lan - m \ra\}$ for $V\to X$ running through $G$-morphisms with $V$ being  regular, 
  $m\ge 0$,  and $\de(V)+m\le r$; $\tvn_r(X)$ is the envelope of $(\tn_{r-1}(X)[1]\cup (\tn_{r-1}(X))\lan -1\ra)$. 

\item\label{dnp-3} We will say that 
$j$ is {\it $\md$-reasonable} if $\phd_{X}^{U}(X)\in \tvn_{\de(X)}(X)$. 

\item\label{dnp-4} For any $n\in \z$, $-r-1\le n\le r$ we consider the envelope $\tn_r^n(X)$ of $\{\mgdx(V)\lan -\max(m,0)\ra [m]:\ m\le n\}\cup \{v_*(\ood_V)\lan  -\max(m,0)\ra [m]:\ m> n\}$ for $v\colon V\to X$ running through finite type (separated) morphisms with regular domain and $\de(V)+|m|\le r$ (cf. Theorem \ref{tgabber}(\ref{tgabber-2})).
\end{enumerate}

\end{defi}

Now we will sketch the proof of the analogues of the results of \S\ref{sgabblem}; we will formulate them in the order in which they can actually be proven.

\begin{theo}\label{tnp}
Let $r\ge 0$; let $X$ be a $B$-scheme.

I. Then the following statements are valid.
 \begin{enumerate}
\item\label{inice1}
The (obvious) $\md$-analogues of parts \ref{idtriangle}--\ref{iaddefect} of Lemma \ref{lnice},  Remark \ref{rreas}(\ref{ifuni}), and   Lemma \ref{lnice}(\ref{idcover}) are fulfilled.

\item\label{idshiftn} $\tvn_r(X)\subset \tn_r(X)$.  Moreover, for any $s> 0$ we have 
 $\tvn_{r-s}(X)\lan - s\ra \subset   \tvn_r(X)$,  $\tn_{r-s}(X)\lan - s\ra [-s]\subset \tvn_{r-s+1}(X)\lan 1-s\ra[-s] 
\subset   \tn_r(X)$, and $\tn_{r-s}\lan -s\ra[-1]\subset \tvn_r$. 


\item\label{ireas} The $\md$-analogues of   Lemma \ref{lnice}(\ref{idreg}, \ref{indep}) are also valid. 

\end{enumerate}
II Assume in addition that $\md$ satisfies the following {\it (local) splitting property}: if  $g\colon Y'\to Y$ is a degree $e>0$  finite 
morphism of $\lam$-nice schemes then there exists an open $U\subset Y$ such that for the morphism $g_U=g\times_Y U:Y_U\to U$ the adjunction morphism $\ood_{U}\to g_{U,*}g_{U}^*(\ood_{U}) =g_{U,*}(\ood_{Y_U})$ becomes split injective (i.e.,  a coretraction) in the localized category $\md(U)[e\ob]$. 

Then the 
$\md$-analogue of  Theorem \ref{tgabber} is fulfilled. 

\end{theo}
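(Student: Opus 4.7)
The plan is to mirror the development of \S\ref{sgabblem} in the $\md$-setting, inserting Tate twists at every point where the $K$-motivic proof tacitly used $f^! \cong f^*$ for smooth morphisms or regular closed immersions. Under the axioms for $\md$ we have instead $f^! \cong f^* \lan s\ra$ for $f$ smooth of relative dimension $s$ and $i^!\ood_X \cong \ood_Z\lan -c\ra$ for $i$ a codimension-$c$ regular closed immersion; these shifts are matched exactly by the amount by which $\de$ changes in the corresponding step, so the extra $\lan \cdot\ra$ factors slot naturally into the classes $\tn_r$ and $\tvn_r$. With this observation, part I.\ref{inice1} reduces to re-checking each of Lemma \ref{lnice}(\ref{idtriangle})--(\ref{iaddefect}), Remark \ref{rreas}(\ref{ifuni}), and Lemma \ref{lnice}(\ref{idcover}); every one of those proofs uses nothing beyond the six-functor formalism, gluing, the adjunctions $f^* \dashv f_*$ and $f_! \dashv f^!$, proper and smooth base change, and the compatibility of $f^*$ with the tensor unit, all of which are assumed for $\md(-)$. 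They therefore transfer verbatim, with $\phd$'s and $\tn_r$'s in place of $\ph$'s and $N_r$'s.

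For part I.\ref{idshiftn} the key input is the $\md$-analogue of the trick underlying Lemma \ref{lnice}(\ref{idshift}). Combining the axiomatic splitting $\mgdx(\p^1(V)) \cong \mgdx(V) \oplus \mgdx(V)\lan -1\ra$ with homotopy invariance and the excision triangle for $\mathbb{G}_m \subset \afo \subset \p^1$, one sees that both $\mgdx(V)\lan -1\ra$ and $\mgdx(V)[-1]$ are direct summands of $\mgdx(\mathbb{G}_m(V))$, with $\de(\mathbb{G}_m(V)) = \de(V) + 1$. Iterating, $\mgdx(V)\lan -s\ra[-s]$ is a retract of $\mgdx(\mathbb{G}_m^s(V))$. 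All the claimed inclusions between shifted and twisted $\tn$'s and $\tvn$'s then follow by Karoubi-closure of the envelopes; in particular $\tvn_r \subset \tn_r$ reduces to $\tn_{r-1}\lan -1\ra \subset \tn_r$ and $\tn_{r-1}[1] \subset \tn_r$, the latter obtained after tensoring by $\lan -1\ra$ to realign the shift-twist pair. For part I.\ref{ireas}, the $K$-motivic proofs of Lemma \ref{lnice}(\ref{idreg}, \ref{indep}) rely only on part I.\ref{inice1}, on the gluing decompositions of Remark \ref{rcons}(\ref{imgbme}, \ref{imgbmed}), and on the shift inclusions now supplied by part I.\ref{idshiftn} (which take over the role played by $N_{r-1}(X) \subset VN_r(X)$ in the original).

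For part II, the $\md$-analogue of Theorem \ref{tgabber} is a formal consequence of the $\md$-analogue of Proposition \ref{pgab}. All steps of that proof transfer: Proposition \ref{pbsnull} applies to the small triangulated subcategory of $\md(X)$ generated by motivic image functors applied to tensor units of finite-type $X$-schemes; the Gabber diagram (\ref{ekg}) is purely scheme-theoretic and thus still available; and the chase through the $F$-images of the $\phd$'s at the four successive stages of that diagram depends only on the $\md$-analogues of Lemma \ref{lnice}, now in hand. The single ingredient not yet covered by parts I.\ref{inice1}--I.\ref{ireas} is the $\md$-version of Lemma \ref{lnice}(\ref{idsplit}), whose $K$-motivic proof invoked Theorem \ref{tcd}(I.\ref{itre}). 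The splitting property hypothesis of part II is precisely designed to replace that invocation: after shrinking $X$ to a sufficiently small open $U$ (which is allowed because the conclusion of Proposition \ref{pgab} is local over the generic point of $T$), one obtains a coretraction of $\ood_U \to g_{U,*}\ood_{Y_U}$ in $\md(U)[e^{-1}]$, and applying $i_* i^* j_*$ then yields the required splitting of $\phd_X(X) \to \phd(Q_W \to W)$ in the localized category.

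The main obstacle I expect is the Tate-twist bookkeeping in part I.\ref{idshiftn}. The definitions of $\tn_r$, $\tvn_r$ and $\tn_r^n$ couple shift and Tate twist in a correlated way that is subtle to unravel, and establishing the full chain of inclusions requires one to juggle these correlations against the $\mathbb{G}_m$-splitting identity simultaneously, in both positive and negative directions. Once this bookkeeping is cleanly in place the rest of the proof is essentially a Tate-twist-annotated rewrite of the arguments of \S\ref{sgabblem}.
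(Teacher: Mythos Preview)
Your proposal is correct and follows essentially the same route as the paper: transport the arguments of \S\ref{sgabblem} to the $\md$-setting, using the $\p^1$ and $\mathbb{G}_m$ decompositions to absorb Tate twists and shifts into the envelopes $\tn_r$ and $\tvn_r$, and invoke the local splitting hypothesis to replace the trace argument (Theorem~\ref{tcd}(I.\ref{itre})) in the analogue of Lemma~\ref{lnice}(\ref{idsplit}).

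One small imprecision: from $\mgdx(\mathbb{G}_m(V)) \cong \mgdx(V)\lan -1\ra \oplus \mgdx(V)[-1]$, iterating $s$ times yields summands $\mgdx(V)\lan -i\ra[-(s-i)]$ for $0\le i\le s$, so $\mgdx(V)\lan -s\ra[-s]$ is a retract of $\mgdx(\mathbb{G}_m^{2s}(V))$ rather than of $\mgdx(\mathbb{G}_m^{s}(V))$. For the inclusions actually asserted in I.\ref{idshiftn} this does not cause trouble, because one handles the twist and the shift separately (via $\p^1$ and $\mathbb{G}_m$ respectively) and the dimension count still closes; you correctly anticipated that this bookkeeping is the delicate point. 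The paper's own proof sketch, incidentally, writes the $\mathbb{G}_m$-summand as $\mgdx(V)[1]$, which appears to be a sign slip---your $[-1]$ is the one consistent with Lemma~\ref{lnice}(\ref{idshift}).
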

\begin{proof}
I.\ref{inice1}. Note that the $\md$-version of Theorem \ref{tcd}(I.\ref{iglu}) implies that the functors $f^*$, $f_*$, $f^!$,  and $f_!$  are invertible whenever $f$ is a  nil-immersion; see Proposition 2.3.6(1) of \cite{cd}.
Hence the proofs of the corresponding statements carry over to the $\md$-context without any difficulty.

\ref{idshiftn}. Similarly to the proof of Lemma \ref{lnice}(\ref{idshift}), 
the non-trivial inclusions can be easily obtained  from the following observations:
 $\de(\mathbb G_m(V))=\de((\p^1)(V))=\de(V)+1$, $\mgdx(\p^1(V))\cong \mgdx(V)\lan - 1 \ra \bigoplus  \mgdx(V)$, and $\mgdx(\mathbb G_m(V))\cong \mgdx(V)\lan -1 \ra \bigoplus  \mgdx(V)[1]$ (the latter is an easy consequence of the homotopy invariance property).

\ref{ireas}. The arguments used in the proofs of 
 Lemma \ref{lnice}(\ref{idreg}, \ref{indep}) can be carried over to the $\md$-setting. The only notable distinction here is that when we consider regular stratifications of $X$ and $U$, respectively, we should assume (for simplicity) that the components are irreducible for $l>1$. Then for the corresponding embeddings $j_l\colon Y_l^\al\to Y$ (for $Y$ equal to $X$ or $U$, respectively) we would have $j_l^!(\ood_Y)\cong \ood_{Y_l}\lan -\codim_{Y}Y_l\ra$. Still one can easily ``handle'' these Tate twists using the previous assertion.

II. 
It is easily seen that the local splitting property (along with assertion I) enables one to carry over all the arguments used in the proof of Proposition \ref{pgab} and Theorem \ref{tgabber} to the $\md$-context.

\end{proof}

\begin{rema}\label{rcompvm}
\begin{enumerate}
\item\label{rcompvm-1} In contrast with most of the results of this paper, Theorem \ref{tnp} does not depend on the existence of 
any (Chow-type) weight structures. Thus it can be applied to $h$-motives as considered in \cite{cdet}. Since the latter are closely related to constructible complexes of 
 \'etale sheaves, we obtain some new statements on the latter, that essentially enhance Gabber's constructibility results (see \cite[\S XIII]{illgabb}). 
Note here that the methods of these papers allow us to avoid inverting 
the corresponding $\sss$ (in the coefficient ring $\lam$).
One may also weaken some other of our restrictions on $B$. 


\item\label{rcompvm-2} Recall that in ibid. it was proved that the corresponding functors $f^*$, $f_*$, $f_!$, and $f^!$ respect the constructibility of objects. We could have used the method of ibid. (that were also applied in \cite{cdet})  to deduce the $\md$-version of this result from the $\md$-versions of Theorem \ref{tcd}(I.\ref{icatz}, \ref{imotgen}, \ref{icoprod}) (combined with the assumptions on $\md$
introduced above).  

Note also that one can assume  $\md(-)$ to be  defined on all pro-open subschemes of $\obj G(B)$; then 
the $\md$-version of Theorem \ref{tcd}(I.\ref{icont}) can be used to reduce the local splitting property (that was described in Theorem \ref{tnp}(II)) to the following statement:  if $g\colon F'\to F$ is a finite 
morphism of spectra of fields in pro-$\obj G(B)$
  of  degree $e$ 
	then 
the image of the  unit morphism $\ood_F\to g_*g^*(\ood_{F}) \cong g_*(\ood_{F'})$ in $\md(F)[e\ob]$ splits.

\item\label{rcompvm-3} Now we illustrate our definitions in a rather simple (and yet quite important) case. 

Assume that $\de=\de^B$ (see Definition \ref{ddf}(\ref{ddf-3})), whereas 
$B$ is the spectrum of a perfect field. Then $\de$  equals the Krull dimension function. 
Next, if  we assume in addition that $\md$ is the Beilinson motives $2$-functor (as studied in \cite{cd}) and $X$ is finite over $B$, then $\md(X)$ is the unbounded version of  the $\q$-linear Voevodsky motives over $X$; see \S16.1 of \cite{cd}. 
Moreover,
 for any smooth connected variety $V/X$ the object  $\mgbmx(V)\lan \de(V)\ra$  is the Voevodsky motif $\mg_X(V)$ of $V$ over $X$; see \cite[(2.3.4.a)]{bondegl}. 

\item\label{rcompvm-4} We also describe a simple case when $j_!(\ood_U)$, $j_*(\ood_U)$, and
$\phd_{X}^{U}(X)$ can be put into  ``very explicit'' envelopes as prescribed by Theorem \ref{tnp}. Assume that $X$ is regular and that $Q=X\setminus U$  is the union $\cup_{j\in J} Q_j$, where all $Q_j$ and all of the intersections $Q_{J'}=\cap_{j\in J'} Q_j$ for $J'\subset J$ are regular closed subschemes of $X$. Then one can easily verify the following: 
$\mgdx(U)$ belongs to the envelope of $\{\mgdx(Q_{J'})[-\#J'] :\ J'\subset J\}$, $j_*(\ood_U)$ belongs to the envelope of $\{\mgdx(Q_{J'})\lan - \codim_{X}Q_{J'} \ra [\#J'] :\ J'\subset J\}$, and the motif $\phd_{X}^{U}(X)$ belongs to the envelope of 
${\mgdx(U)} \cup \{\mgdx(Q_{J'})\lan - \codim_{X}Q_{J'} \ra [\#J'] :\ J'\subset J, J'\neq \emptyset\}$. Note also that 
all the embeddings $Q_{J'}\to X$ are proper morphisms (and so, $\mgdx(Q_{J'})\lan s\ra$ is a ``$\md$-Chow motif'' over $X$ for any  $s\in \z$ and $J'\subset J$).
 
Hence these envelope statements demonstrate ``explicitly'' that $\phd_{X}^{U}(X)\in \tvn_{\de(X)}(X)$ and that the motives $\mgdx(U)$ and $j_*(\ood_U)$ belong to $\tn_{\de(X)}^n(X)$ for any $n$ between $-\de(X)-1$ and $\de(X)$ (so, we have no need to consider different envelopes for distinct $n$ here). Certainly, this simple example motivated our definitions and formulations; it is quite remarkable that we are able to prove the corresponding results in our very general setting. 

\end{enumerate}
\end{rema}

\section{``Studying weights'' using $ldh$-descent, Voevodsky motives, and their \'etale (co)homology}\label{scompvoevet}

In this section we will mostly consider the case $\lam=\q$ (and $\sss=\p$); see Remark \ref{rqcoeff} below. So, it will be sufficient for us to assume all the schemes we consider to be nice (since this is equivalent to being $\q$-nice).

In \S\ref{scompdm} we study the connecting functors between $\dk(-)$ (for $\lam=\q$) with the ones of Beilinson motives. Since these functors are weight-exact (with respect to the corresponding Chow weight structures), the ``weights'' of  $f_*(\oo_Y)$ equal  the ones of its Beilinson-motivic analogue $f_*(\q_Y)$.

In \S\ref{sdelw} we relate the weights of $f_*(\q_Y)$ to the \'etale cohomology of $Y$ and the Deligne weights for it in the case where $X$ is the spectrum of a field. The general formalism of weight spectral sequence yields an inequality between two weight bounds of this sort. This weight estimate is conjecturally precise; we prove that this is actually the case in certain ``(almost) maximally singular'' case (that we illustrate by a simple example).

In \S\ref{sbase} we extend these results to the case of a (more) general $X$. Somewhat unfortunately, this forces us to consider certain perverse \'etale cohomology and weights for it. Note however that ``the weights'' of $\q_Y$ are not greater than that of $f_*(\q_Y)$ and this inequality is precise in some cases; thus considering the case of a ``general'' $X$ appears to be important for these matters. 
 
In \S\ref{sldh} we use an easy argument to put $f_*(\ood_Y)$ in an envelope of an explicit set of shifts of Chow motives in the case where $X$ is a variety (here one may take the motivic category $\md(-)$ being either to $\dk(-)$ or to any of the two aforementioned versions of Voevodsky motives). This certainly yields a ``nice'' explicit set of ``test schemes'' for the weights of $f_*(\ood_Y)$. We also discuss a 
possible relation of our weight spectral sequences to the {\it singularity} ones of  \cite{pasdesc} and \cite{ciguil}.

\subsection{Comparison with the categories $\dm(-)$}\label{scompdm}

Now (in the case $\lam=\q$) we compare $\dk(-)$ with the Beilinson motives $\dm(-)$ (i.e., with $\q$-linear Voevodsky motives).

\begin{pr}\label{pcompdm}
Let $X$ be a nice scheme.
Then the following statements are valid.

\begin{enumerate}
\item\label{icmot1} There exists an exact 
faithful functor 
$\comp(X)\colon\dmx\to \dkx$; it respects (small) coproducts and maps the unit object $\q_X$ of $\dmx$ into $\oo_X$.

\item\label{icmot2}
The system of these functors (when $X$ varies) is compatible with all the motivic image functors for the corresponding categories.

\item\label{icmot3} 
$\comp(X)$ possesses a faithful right adjoint $\mper(X)$ that respects coproducts; these functors also commute with all the motivic image functors.

\item\label{icmot4}
 The composition  $\comp(X)\circ \mper(X)$  (resp. $\mper(X)\circ \comp(X)$) sends $M\in \obj \dkx$ into $\coprod_{i\in \z}M$ 
(resp. sends $N\in \obj \dmx$ into  $\coprod_{i\in \z}N(i)[2i]$).

\end{enumerate}
\end{pr}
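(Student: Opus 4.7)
The plan is to realize $\comp(X)$ and $\mper(X)$ as the extension-of-scalars / restriction-of-scalars pair associated to a morphism of motivic ring spectra. Since Beilinson motives $\dmx$ are (by definition, as in \cite[\S14]{cd}) the homotopy category of modules over the Beilinson ring spectrum $H_B\in \obj SH(X)_\q$, while $\dkx$ is the homotopy category of modules over $\kglp_X$ (after localizing at $\sss=\p$, i.e., rationalizing), the natural rational orientation yields a morphism $\phi_X\colon H_B\to \kglp_{X,\q}$ of commutative ring spectra in $SH(X)_\q$ that is compatible with pullbacks in $X$. Extension of scalars $-\otimes_{H_B}\kglp_{X,\q}$ will give $\comp(X)$, and restriction of scalars along $\phi_X$ will give $\mper(X)$.

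First I would set up $\comp(X)$ and $\mper(X)$ as this Quillen-adjunction on model categories of modules; passing to the homotopy categories gives adjoint triangulated functors, both commuting with small coproducts (extension of scalars is a left adjoint, and restriction of scalars along a morphism of ring spectra commutes with coproducts because it is computed at the level of the underlying spectra in $SH(X)_\q$). The fact that $\comp(X)(\q_X)=\oo_X$ is immediate from the definition of extension of scalars on the unit object. For assertions \ref{icmot2} and the first half of \ref{icmot3} (compatibility with motivic image functors), I would invoke the general formalism of modules over motivic ring spectra developed in \cite[\S7.2, \S14]{cd}: because $\phi_X$ is natural in $X$ (for the pullbacks $f^*$), the induced extension/restriction functors commute with $f^*$; by adjunction they then also commute with $f_*$, and the analogous assertions for $f_!$ and $f^!$ follow from the same formal yoga combined with Theorem~\ref{tcd}(I.\ref{iexch}).

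The computational heart of the proof, and the input that makes \ref{icmot4} (and faithfulness) work, is the rational Adams/Chern-character decomposition
\[
\kglp_{X,\q}\;\cong\;\bigoplus_{i\in \z} H_B\lan i\ra \qquad \text{in } SH(X)_\q
\]
(where $\lan i\ra = (i)[2i]$), as established over a general base in  \cite[Theorem 14.2.16]{cd} (building on Riou). This is an isomorphism of $H_B$-modules (not of ring spectra), but it is what is needed. Granting it, for $N\in \obj\dmx$ we compute $\mper(X)(\comp(X)(N))\cong N\otimes_{H_B}\kglp_{X,\q}\cong \bigoplus_{i\in \z} N\lan i\ra$, proving half of \ref{icmot4}. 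For the other half, for $M\in \obj\dkx$ restriction-then-extension yields $\kglp_{X,\q}\otimes_{H_B} M \cong \bigoplus_i M\lan i\ra$, and the $-\lan 1\ra$-periodicity of $\kglp$-modules (Remark~\ref{rcons}(\ref{itate})) converts this into $\coprod_{i\in \z} M$. Faithfulness of $\comp(X)$ then follows because, for any $f\colon N\to N'$ in $\dmx$ with $\comp(X)(f)=0$, the map $\mper(X)(\comp(X)(f))=\bigoplus_i f\lan i\ra$ vanishes, whence in particular the $i=0$ summand $f$ vanishes; faithfulness of $\mper(X)$ is symmetric.

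The main obstacle I anticipate is verifying the decomposition of $\kglp_{X,\q}$ in the generality of arbitrary nice schemes (rather than just fields or regular bases): the argument needs the rational orientation, the existence of Adams idempotents on $\kgl_\q$, and stability of the splitting under pullback. All of this is available in \cite[\S14.2]{cd}, but one must be careful that the passage from $\kgl$ to the modified spectrum $\kglp$ used here (following \cite[\S13.3]{cd} and \cite{rso}) preserves the decomposition — this is exactly the reason for working with $\kglp$ rather than $\kgl$ in the construction of $\dk(-)$. Once the decomposition and its compatibility with $f^*$ are in hand, everything else reduces to formal manipulation of adjoint pairs of functors on module categories over motivic ring spectra.
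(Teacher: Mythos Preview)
Your proposal is correct and essentially coincides with the paper's own proof. Both arguments rest on the same two ingredients: the identification of $\dmx$ and $\dkx$ as module categories (over $H_{B,X}$ and over $\kglp_{X,\q}$, respectively) in $\daoqx$, and the Adams decomposition $\kglp_{X,\q}\cong\coprod_{i\in\z}H_{B,X}\lan i\ra$ from \cite[Corollary 14.2.17]{cd}; the paper simply writes $\comp(X)(M)=\coprod_{i\in\z}M\lan i\ra$ directly rather than naming it as extension of scalars along $H_B\to\kglp_{\q}$, but this is the same functor, and the remaining assertions are deduced from the module formalism of \cite[\S7.2]{cd} exactly as you do. Your worry about the generality of the decomposition and the passage from $\kgl$ to $\kglp$ is precisely what \cite[\S14.2]{cd} handles, so no additional work is needed.
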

\begin{proof}
We apply several results from (\S14.2 of) \cite{cd}. 

First we note that instead of $\sht(-)[\p\ob]$ one can consider the  categories $\daoq(-)$ (see the formula (5.3.35.2) of ibid.); those are 
homotopy categories of certain ``functorial'' stable model categories.
By Theorem 14.2.9 of 
ibid., $\dmx$ can be defined as the category of modules over a certain 
ring object $H_{B,X}\in \obj \daoqx$ (with respect to the underlying model structure). 
 Next, $\dkx$ can be described as  the category of modules over $\coprod_{i\in \z}B_X(i)[2i]$ in the model category underlying  $\daoqx$ (see Corollary 14.2.17 of ibid.). Hence 
we can set $\comp(X)(M)=\coprod_{i\in \z} M(i)[2i]$ (for $M\in \obj \dkx$) and endow it with the natural structure of a $\coprod_{i\in \z}B_X(i)[2i]$-module;
$\mper(X)(N)$ can be described in terms of the corresponding forgetful functor. Certainly, both of these functors respect coproducts. 

Our remaining assertions easily follow from the general formalism described in \S7.2 of ibid. 

\end{proof}

\begin{rema}\label{rqcoeff} We have two reasons for concentrating on the case $\lam=\q$ in this section.

1. The functors $\mper(-)$ 
 are defined in this setting only.

2. Below we will study the question when a bound from below on the ``weights'' of the \'etale homology of a motif $M$ implies a similar bound on the weights of $M$ itself. Certainly, one may hope for a statement of this sort (for a more or less ``general'') $M$ only for motives with rational coefficients.

So, below we will apply several results from \cite{brelmot} and \cite{bmm}. Note however that in \cite{bonivan} similar statements for $\zop$-linear motives over characteristic $p$ schemes were established (here we set $\zop=\z$ in the case $p=0$).

\end{rema}


As we have already said (see Remarks \ref{rwwc}(\ref{rwwc-2})
and \ref{rexplwchow}(\ref{rexplwchow-2})), the categories $\dm(-)$ are also endowed with Chow weight structures. Now we verify that $\mper(-)$ and $\comp(-)$ ``detect weights''.

\begin{pr}\label{pcompw}
Let $X$ be a nice scheme. Then for any $M\in \obj\dkx$ we have $M\in \dkx_{\wchow\le 0}$ (resp. $M\in \dkx_{\wchow\ge 0}$) if and only if 
$\mper(X)(M)\in \dmx_{\wchow\le 0}$ (resp. $\mper(X)(M)\in \dmx_{\wchow\ge 0}$). 

Moreover,  for any $N\in \obj\dmx$ we have $N\in \dmx_{\wchow\le 0}$ (resp. $N\in \dmx_{\wchow\ge 0}$) if and only if 
$\comp(X)(N)\in \dkx_{\wchow\le 0}$ (resp. $\comp(X)(N)\in \dkx_{\wchow\ge 0}$).

\end{pr}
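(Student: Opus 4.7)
The plan is to establish that both $\comp$ and $\mper$ are weight-exact with respect to the Chow weight structures, and then to deduce the converse implications via the composition formulas of Proposition \ref{pcompdm}(\ref{icmot4}) combined with Proposition \ref{pextws}(\ref{iextwcoprod}). I treat only the ``$\le 0$'' half; the ``$\ge 0$'' half is parallel except for one direct Hom-orthogonality computation (sketched below).

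First I would verify left weight-exactness of $\comp$. Since $\comp$ commutes with coproducts (Proposition \ref{pcompdm}(\ref{icmot1})), by Proposition \ref{pextws}(\ref{iextwcommcoprod}) it suffices to check $\comp(\dmcx_{\wchowc\le 0})\subset \dkx_{\wchow\le 0}$. Now $\dmcx_{\wchowc\le 0}$ is the right envelope of $\{v_!(\q_V)(i)[2i]\}$ for $v\colon V\to X$ finite type with regular domain and $i\in\z$ (the Beilinson-motivic analogue of Theorem \ref{twchow}(I.\ref{iwchow2}), cf.\ Remark \ref{rexplwchow}(\ref{rexplwchow-2})); and since $\comp$ is exact, commutes with $v_!$ (Proposition \ref{pcompdm}(\ref{icmot2})), sends $\q_X$ to $\oo_X$, and trivialises Tate twists in view of the Tate-periodicity of $\dkx$, one obtains $\comp(v_!(\q_V)(i)[2i]) \cong v_!(\oo_V) \in \dkcx_{\wchowc\le 0}$. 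Proposition \ref{pextws}(\ref{iextweadj}) applied to the adjunction $\comp\dashv\mper$ then yields right weight-exactness of $\mper$.

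Next, for left weight-exactness of $\mper$: it also commutes with coproducts (Proposition \ref{pcompdm}(\ref{icmot3})), so Proposition \ref{pextws}(\ref{iextwcommcoprod}) reduces the problem to showing $\mper(v_!(\oo_V)) \in \dmx_{\wchow\le 0}$ for $v\colon V\to X$ regular and of finite type. Applying the composition formula $\mper\comp(\q_V)\cong \coprod_{i\in\z}\q_V(i)[2i]$ together with the compatibility of $\mper$ with $v_!$ yields
$$\mper(v_!(\oo_V)) \;\cong\; v_!(\mper\comp(\q_V)) \;\cong\; \coprod_{i\in\z} v_!(\q_V)(i)[2i],$$
each summand being in $\dmcx_{\wchowc\le 0}$ because $(i)[2i]$ is weight-exact on Beilinson motives; Proposition \ref{pextws}(\ref{iextwcoprod}) then places the coproduct in $\dmx_{\wchow\le 0}$. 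The ``$\ge 0$'' half requires right weight-exactness of $\comp$, which is \emph{not} supplied by Proposition \ref{pextws}(\ref{iextwcommcoprod}) or by the adjunction (the latter links $\comp$'s left weight-exactness with $\mper$'s right weight-exactness, not the other pairing). To establish it I would perform a direct Hom-orthogonality check on the compact generators $v_!(\oo_V)[j]$ of $\dkcx_{\wchowc\le -1}$ (for $v$ regular of finite type and $j\le -1$): adjunction, compactness of $v_!(\q_V)$, and the composition formula give
$$\dkx(v_!(\oo_V)[j], \comp(N)) \;\cong\; \bigoplus_{i\in\z} \dmx(v_!(\q_V)(-i)[j-2i], N),$$
and each summand vanishes for $N\in\dmx_{\wchow\ge 0}$ because $v_!(\q_V)(-i)[j-2i]\in \dmcx_{\wchowc\le j}\subset \dmcx_{\wchowc\le -1}$ (the Tate twist contributes $+2i$ to the weight bound, exactly cancelled by the $-2i$ in the shift).

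Finally, with both functors known to be weight-exact, the converse implications follow from the composition formulas. If $\mper(M)\in\dmx_{\wchow\le 0}$, then weight-exactness of $\comp$ gives $\coprod_{i\in\z}M \cong \comp\mper(M) \in \dkx_{\wchow\le 0}$, and Proposition \ref{pextws}(\ref{iextwcoprod}) yields $M\in\dkx_{\wchow\le 0}$. Dually, if $\comp(N)\in\dkx_{\wchow\le 0}$, then weight-exactness of $\mper$ gives $\coprod_{i\in\z}N(i)[2i] \cong \mper\comp(N) \in \dmx_{\wchow\le 0}$, so Proposition \ref{pextws}(\ref{iextwcoprod}) forces each summand -- in particular $N$ itself -- into $\dmx_{\wchow\le 0}$. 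The main technical obstacle is thus the right weight-exactness of $\comp$, resolved by the explicit Hom-orthogonality computation above.
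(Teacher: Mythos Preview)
Your proof is correct and follows essentially the same approach as the paper: first establish that both $\comp$ and $\mper$ are weight-exact, then deduce the converse implications from the composition formulas of Proposition~\ref{pcompdm}(\ref{icmot4}) together with Proposition~\ref{pextws}(\ref{iextwcoprod}). Your treatment is in fact more explicit than the paper's in two places: you spell out the verification of $\mper(\dkcx_{\wchowc\le 0})\subset \dmx_{\wchow\le 0}$ needed to invoke Proposition~\ref{pextws}(\ref{iextwcommcoprod}) for the left weight-exactness of $\mper$ (the paper just cites that proposition without checking the hypothesis), and you correctly isolate the right weight-exactness of $\comp$ as the one step not covered by the adjunction, handling it by the same Hom-orthogonality computation the paper uses (the paper's version contains what appears to be a typo, writing $t_*$ where $t_!$ is meant).
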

\begin{proof}

First we recall  that $\comp(X)$  ``commutes with the motivic image functors'' and sends $\q_X$ into $\oo_X$.  
Hence the descriptions of the corresponding Chow weight structures yield the left weight-exactness of $\comp(X)$ and the right weight-exactness of $\mper(X)$ (see Proposition \ref{pextws}). 

Next, to verify the right weight-exactness of $\comp(X)$ we should check the following: if $N\in \dmx_{\wchow\ge 0}$ and $t\colon T\to X$ is a finite type morphism 
then $t_*(\oo_T)[j]\perp \comp(X)(N)$ for all $j<0$. The adjunction $\comp(X)\dashv \mper(X)$ translates this into $t_*(\q_T)[i]\perp \mper(X)\circ \comp(X)(N)$. Hence it suffices to apply Proposition \ref{pcompdm}(\ref{icmot4}) (along with the compactness of $\q_T$ and the weight-exactness of $-(i)[2i]$ for $i\in \z$).

Moreover, Proposition \ref{pextws}(\ref{iextwcommcoprod}) yields that $\mper(X)$ is left weight-exact. Hence both   $\mper(X)$ and $\comp(X)$ are weight-exact.

Next, 
Proposition \ref{pextws}(\ref{iextwcoprod})  (along with part \ref{icmot4} of the previous proposition) yields that the ``strict weight-exactness'' of $\comp(X)\circ \mper(X)$ and of  $\mper(X)\circ \comp(X)$ (this means: $M\in \dkx_{\wchow\le 0}$ if and only if 
$\comp(X)(\mper(X)(M))\in \dkx_{\wchow\le 0}$; $M\in \dkx_{\wchow\ge 0}$ if and only if 
$\comp(X)(\mper(X)(M))\in \dkx_{\wchow\ge 0}$; 
$M'\in \dmx_{\wchow\ge 0}$ if and only if  $\mper(\comp(X)(M'))\in \dkx_{\wchow\ge 0}$; $M'\in \dmx_{\wchow\le 0}$ if and only if  $\mper(\comp(X)(M'))\in \dkx_{\wchow\le 0}$).  This certainly yields the results in question.


\end{proof}

\begin{rema}\label{rcompwmd}
In the current paper we are mostly interested in the ``weights'' of $M$ being of the form $f_*(\oo_Y)$ (for $f\colon Y\to X$ being a separated morphisms of nice schemes). According to the previous proposition, to compute the ``weight range'' of (this) $M$ it suffices to bound the weights of $\mper(X)(M)$. Now, according to Proposition 
\ref{pcompdm} 
  the object $\mper(X)(M)$ is isomorphic to $\bigoplus_{i\in \z} f_*(\q_Y)\lan i\ra$. Thus Proposition \ref{pextws}(\ref{iextwcoprod}) implies the following:
for $n\ge 0$ we have $M\in \dkx_{\wchow\ge -n}$  if and only if  $ f_*(\q_Y)\in \dmx_{\wchow\ge -n}$.

\end{rema}

\subsection{On ``detection of weights via
 \'etale (co)homology'' over a field}\label{sfield}

As we have just shown (in the case $\lam=\q$) it suffices to ``study weights'' for (compact)  Voevodsky motives instead of $K$-ones; so we will only consider $\dm(-)$ till the end of the paper. 

Now we recall that (for any homological $H\colon\dmcx\to \au$) Corollary \ref{cdetect} yields a method for proving that a given (compact) motif $N$ does not belong to $\dmcx_{\wchow
\ge -n}$ as follows:

\begin{coro}\label{cdetectw}

For some $M\in \obj \dmcx$ and $n\in \z$ assume  one of the following conditions is fulfilled:

1. $E_2^{pq}T_{\wchow(X)}(H,M)\neq 0$ for some $q\in \z$ and $p>n$.

2. $(W_{-n-1}H_q)(M)\neq 0$  for some $q\in \z$. 

Then $M\notin \dmcx_{\wchow(X)\ge -n}$. Moreover, condition 2 implies condition 1.

\end{coro}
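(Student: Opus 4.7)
The plan is to obtain the corollary as a direct specialization of Corollary \ref{cdetect} to the triangulated category $\cu = \dmcx$ equipped with the compact Chow weight structure $\wchowc(X)$ (the Beilinson-motivic analogue of $\wchowc$ from Theorem \ref{twchow}, cf. Remark \ref{rexplwchow}(\ref{rexplwchow-2})). Indeed, the hypotheses of Corollary \ref{cdetect} require only a triangulated category with a weight structure and a homological functor $H$, all of which are at hand once we view $H$ as factoring through $\dmcx$.

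First I would clarify that the notation in the statement is unambiguous. The embedding $\dmcx \hookrightarrow \dmx$ is weight-exact by Proposition \ref{pextws}(\ref{iextwe}) (applied in the Beilinson-motivic setting, for which the analogue of Proposition \ref{pwchownc} holds; see Remark \ref{rexplwchow}(\ref{rexplwchow-2})). Hence any $i$-weight decomposition of a compact $M$ in $\dmx$ can be chosen to lie inside $\dmcx$, so that the weight complex $t(M)$ and the weight spectral sequence $T_{\wchow(X)}(H,M)$ for $M \in \obj \dmcx$ coincide with their $\wchowc(X)$-counterparts, and we have $\obj\dmcx \cap \dmx_{\wchow(X) \ge -n} = \dmcx_{\wchowc(X) \ge -n}$. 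Thus conditions 1 and 2 and the target conclusion all have the same meaning whether read with $\wchow(X)$ or with $\wchowc(X)$.

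Next, I would invoke the crucial observation that $\wchowc(X)$ is \emph{bounded} (cf. the corresponding assertion of Theorem \ref{twchow}(I) in the Beilinson-motivic case). Consequently every $M \in \obj \dmcx$ is automatically bounded, hence in particular bounded below, so both parts of Corollary \ref{cdetect} apply unconditionally. Part 1 of that corollary then yields the first conclusion (each of conditions 1 and 2 forces $M \notin \dmcx_{\wchow(X) \ge -n}$), and part 2 yields the ``moreover'' assertion (condition 2 implies condition 1).

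No real obstacle is expected: the corollary is essentially a reformulation of Corollary \ref{cdetect} in a setting where the boundedness hypothesis is automatic. The only mild verification is the compatibility of the extended weight structure with its restriction to compact objects, which is immediate from Proposition \ref{pextws}(\ref{iextwe}).
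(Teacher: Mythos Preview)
Your proposal is correct and takes essentially the same approach as the paper: the paper simply introduces this corollary with the sentence ``Corollary \ref{cdetect} yields a method for proving that a given (compact) motif $N$ does not belong to $\dmcx_{\wchow\ge -n}$ as follows'' and states the corollary without further proof, treating it as an immediate specialization. Your additional care in verifying the boundedness hypothesis (via the boundedness of $\wchowc(X)$) and in reconciling $\wchow(X)$ with $\wchowc(X)$ on compact objects is more explicit than the paper but entirely in the same spirit.
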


 So the problem is to find such an $H$ for which the corresponding computations are manageable.

In \S\ref{sbase} below we will consider two different $H$ related  to \'etale homology.\footnote{We restrict our theories to compact motives since \'etale homology probably does not behave ``nicely enough'' on the whole $\dmx$; cf. Lemma 2.4 of \cite{ayoubconj}.}
We will start with the case $X=\spe k$, $k$ is a field; in this case our versions of $H$ coincide.

We fix a prime $l$; till the end of the section we will assume that it is invertible on all the schemes we consider (so, $l\neq \cha k$ if $X=\spe k$).
Now recall the existence of a functor $\hetl(X)\colon\dmcx\to D^b_cSh^{et}(X,\ql)$; the latter is the triangulated category of constructible \'etale complexes  of $\ql$-sheaves (i.e.,  of continuous finite dimensional $\ql$-representations of the absolute Galois group of $k$). Since we will apply a generalization of this functor below, the optimal reference for us here is \S7.2 of \cite{cdet}. 
Let $\het=\het(X)$ denote the composition of $\hetl(X)$ with the zeroth canonical truncation functor for $D^b_cSh^{et}(X,\ql)$; so, the target of $\het$ is the category  the corresponding $Sh^{et}(X,\ql)$. 

Now we describe a certain ``weight detection'' conjecture; we will generalize it later.

\begin{conj}\label{conjfield}

Let $r,n\ge 0$.
We will say that the conjecture $WD^n(X)$ holds if for $M\in \obj \dmcx$ we have $M\in \dmcx_{\wchow\ge -n}$ whenever $E_2^{pq}T_{\wchow(X)}(\het,M)= 0$ for all $q\in \z$ and $p>n$.

We will say that  $WD_r^n(X)$ is valid if this implication holds under the additional assumption that  $M\in \tn_r(X)$ (see Definition \ref{dnp}(\ref{dnp-2}); we take $\md=\dm(-)$ and $\de$ being the Krull dimension function in it). 

\end{conj}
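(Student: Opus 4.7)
The nontrivial content of $WD^n(X)$ and $WD^n_r(X)$ is the converse of Corollary \ref{cdetect}(1): vanishing of $E_2^{pq}$ in degrees $p>n$ should force $M\in\dmcx_{\wchow\ge -n}$. The plan is to argue contrapositively, exploiting weight complexes. For $X=\spe k$, the Chow weight structure on $\dmcx$ is bounded and its heart is, by Theorem \ref{tcd}(II.\ref{igenc}) together with Remark \ref{rexplwchow}(\ref{rexplwchow-3}), equivalent to the category of Chow motives over $k$. Hence by Proposition \ref{pwc}(\ref{iwc3}) the condition $M\in\dmcx_{\wchow\ge -n}$ is equivalent to the weight complex $t(M)\in K(\hwchow(X))$ being homotopy equivalent to a complex concentrated in degrees $\le n$; assuming the contrary, the stupid truncation of $t(M)$ in degrees $>n$ is nonzero in $K(\hwchow(X))$.

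Next, Proposition \ref{pwss} identifies the $E_1$-page of $T_{\wchow(X)}(\het,M)$ with $E_1^{pq}=\het(t(M)^p[q])$, the $d_1$-differentials being those induced by $t(M)$; so $E_2^{pq}$ is the $p$th cohomology of the complex obtained by applying $\het(-[q])$ termwise to $t(M)^{\bullet}$. The desired implication therefore reduces to the following conservativity-type statement: a bounded complex of Chow motives over $k$ whose associated \'etale cohomology complex is acyclic (in every internal degree $q$) in the range of cohomological degrees $>n$ must already be contractible in that range. This is the heart of the problem, and it is precisely the motivic conservativity content of the conjectures from \cite{bmm} mentioned in the introduction.

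The main obstacle is establishing this conservativity in sufficient generality. Over a finite field one would invoke Deligne's weight filtration together with the semisimplicity of pure Galois representations to reduce matters to a weight-by-weight comparison; over an algebraically closed field one would require stronger standard motivic conjectures and should not expect an unconditional result. For the restricted version $WD^n_r(X)$, however, the envelope description of $\tn_r(X)$ in Definition \ref{dnp}(\ref{dnp-2}) forces $M$ to be built from shifts $\mgdx(V)[m]$ with $V$ regular and $\dim V+m\le r$, so the Chow motives appearing in $t(M)$ are drawn from an explicit, finite list depending only on $r$. Here direct computation combined with the envelope manipulations of Lemma \ref{lnice} should suffice to bypass the general conservativity question in the ``(almost) maximally singular'' cases foreshadowed in the introduction; this is what I would expect to be carried out in full in \S\ref{sdelw}.
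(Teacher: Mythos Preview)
The statement you were given is a \emph{conjecture}, not a theorem: the paper does not prove $WD^n(X)$ or $WD^n_r(X)$ in general, and there is no ``paper's own proof'' to compare against. What the paper does prove are the special cases collected in Proposition \ref{predconj}, in particular that $WD^n_r(X)$ holds whenever $n\ge r-2$.

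Your write-up correctly identifies the content of the conjecture as a conservativity statement for weight complexes under \'etale realization, and correctly flags that the general case rests on standard motivic conjectures (cf.\ Remark \ref{rconjfield}(\ref{ihodgeconj})). Your expectation that the restricted version is handled by direct computation for small $r-n$ is also on target: the paper's proof of Proposition \ref{predconj}(\ref{predconj-4}) passes to an algebraically closed base via assertion \ref{predconj-3}, then uses the explicit shape of $t(M)$ forced by $M\in \tn_r(X)$ to reduce the splitting of the relevant boundary maps to elementary facts about Chow motives of dimension $\le 2$ (coincidence of numerical and homological equivalence for $0$-cycles and for divisors, and the semisimplicity of the categories $C_0$, $C_1$). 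One minor inaccuracy: the ingredient there is not Lemma \ref{lnice} but rather Proposition \ref{pwc}(\ref{iwcenv}) together with the description of $\tn_r(X)$ from Proposition \ref{predconj}(\ref{predconj-1}).
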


\begin{pr}\label{predconj}
The following statements are valid for any $n\ge 0$.

\begin{enumerate}
\item\label{predconj-1} $V_r(X)$ equals the envelope of $\{\mgbmx(V)[m]\lan - \max(m,0)\ra \}$ for $V$ running through proper regular $k$-schemes  and $\dim(V)+|m|\le r$.
In particular, if $k$ is perfect then $V_r(X)\lan r\ra$ equals the envelope of $\{\mg_X(V)[m]\lan - \min(m,0)\ra\} $ for $V$ being smooth projective over $k$ and $\dim(V)+|m|\le r$. 

\item\label{predconj-2} Conjecture $WD^n(X)$ is equivalent to the combination of the conjectures  $WD^n_r(X)$ for all $r\ge 0$.

\item\label{predconj-3} Let $k'$ be an algebraic 
 extension of $k$, $X'=\spe k'$. Then for any $r\ge 0$ the conjecture $WD^n_r(X')$ implies $WD^n_r(X)$. 

\item\label{predconj-4} Conjecture $WD^n_r(X)$ holds if $n\ge r-2$.
\end{enumerate}
\end{pr}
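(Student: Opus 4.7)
I will invoke the $\dm$-analogue of Theorem~\ref{tgabber}(\ref{tgabber-2}) (obtained from Theorem~\ref{tnp}), which identifies $V_r(X)$ with $\tn_r^n(X)$ for any $-r-1\le n\le r$. For a proper $v\colon V\to X$ one has $\mgbmx(V) = v_\ast(\q_V)$, so the envelope described in the proposition coincides with $\tn_r^n(X)$ restricted to proper regular generators. To reduce an arbitrary regular $V$ to the proper case, I use a Nagata compactification together with the excision triangle (\ref{emgys}) and induct on $\de(V)$: the boundary contribution lies in $\tn_{r-1}(X)$ and is absorbed after twisting, via the $\dm$-analogue of Lemma~\ref{lnice}(\ref{idshift}). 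The ``in particular'' part for perfect $k$ follows from the availability of smooth projective alterations in the $\lam=\q$-setting, together with Voevodsky's relative purity isomorphism $\mgbmx(V)\cong \mg_X(V)\lan -\dim V\ra$ for smooth $V/k$.

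\textbf{Part (2).} One direction is formal: $WD^n(X)$ implies each $WD^n_r(X)$. For the converse, I will show that every compact $M\in\dmcx$ lies in $\tn_r(X)$ for some $r$: by the $\dm$-analogue of Theorem~\ref{tcd}(II.\ref{igenc}), $\dmcx$ is triangulated-generated by $p_\ast(\q_P)$ with $P$ smooth projective over $k$, and each such generator belongs to $\tn_{\dim P}(X)$ by the $\dm$-analogue of Theorem~\ref{tgabber}(\ref{tgabber-3}). A compact motive is built from finitely many such generators by shifts and extensions, and the shifts are absorbed by enlarging $r$ in the $\tn_r^n$-envelope description.

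\textbf{Part (3).} Writing $X'=\prli\spe k_i$ with $k_i/k$ finite, the pullback $p^\ast$ is weight-exact (analogue of Proposition~\ref{pwchownc}(\ref{iwexchow7})), sends $\tn_r(X)$ into $\tn_r(X')$ (analogue of Remark~\ref{rreas}(\ref{ifuni})), and commutes with $\het$ via \'etale base change; hence the $E_2$-vanishing hypothesis for $M\in\tn_r(X)$ transfers to $p^\ast M\in\tn_r(X')$, and $WD^n_r(X')$ yields $p^\ast M\in\dmc(X')_{\wchow\ge -n}$. To descend, I use the $\lam=\q$ splitting of Theorem~\ref{tcd}(I.\ref{itre}): the unit $M\to p_{i,\ast}p_i^\ast M$ is split by $[k_i:k]^{-1}\cdot\id_M$, exhibiting $M$ as a retract of $p_{i,\ast}p_i^\ast M$; since $p_{i,\ast}=p_{i,!}$ is weight-exact and $p_i^\ast M\in\dmc(X_i)_{\wchow\ge -n}$, the weight bound descends to $M$, and a colimit argument concludes.

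\textbf{Part (4) --- the main obstacle.} The cases $r\le n$ are automatic: every $M\in\tn_r(X)$ has weights $\ge -r\ge -n$ by the $\dm$-analogue of Theorem~\ref{tweib}(I.\ref{3.3.1.I.1}). For $r-n\in\{1,2\}$ the weight complex $t(M)$ has at most two terms $M^{r-1},M^r$ in degrees $>n$, and the hypothesis $E_2^{p,q}=0$ for $p>n$ translates via Proposition~\ref{pwss} into the surjectivity of the realized differentials $d_1\colon\het_q(M^{p-1})\to\het_q(M^p)$ for all $q$. The hard step is to upgrade this cohomological surjectivity to a split surjection of Chow motives $M^{p-1}\twoheadrightarrow M^p$, which would allow one to contract the top of $t(M)$ and conclude the desired weight bound. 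By part (3) I may first reduce to the case of $k$ algebraically closed; thereafter I would exploit that the Chow motives involved arise, via part (1), from proper regular $k$-schemes of dimension at most $r\le n+2$, where the \'etale realization with $\q_\ell$-coefficients is faithful enough on Hom groups of Chow motives (via the classical structure of Artin motives, curves, and surfaces, together with the unconditional cases of the standard conjectures available in these low dimensions) to provide the required lifting. The case $r-n=2$ is then obtained by iterating the argument: first contract $M^r$ to reduce to an object for which $r-n=1$ holds.
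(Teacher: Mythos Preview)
Your overall strategy tracks the paper's, but there are gaps of increasing seriousness in Parts 2--4.

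\textbf{Part 2.} Your claim that every compact $M$ lies in some $\tn_r(X)$ is not correct in $\dm(-)$: unlike in $K$-motives, the twist $\lan j\ra$ is not absorbed by enlarging $r$ (e.g.\ $\q_X\lan 1\ra$ lies in no $\tn_r(X)$, since all generators of $\tn_r(X)$ are, up to shifts, effective in the slice sense). The paper's fix is to note $\obj\dmcx=\bigcup_{r,j}\tn_r(X)\lan j\ra$ and observe that both the weight condition and the $E_2$-vanishing condition are invariant under $\lan j\ra$ (since $\het_q(N\lan j\ra)\cong\het_{q-2j}(N)(j)$). You should add this.

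\textbf{Part 3.} Your trace argument differs from the paper, which simply cites a conservativity result from \cite{bmm}. Your approach can be made to work, but ``a colimit argument concludes'' hides the only nontrivial point: you never established $p_i^\ast M\in\dmc(X_i)_{\wchow\ge -n}$ for any finite $i$, and this does not follow from the hypothesis. What does work is the following direct check: for any $N\in\dmcx_{\wchow\le -n-1}$, each map $p_i^\ast\colon\dmcx(N,M)\to\dmc(X_i)(p_i^\ast N,p_i^\ast M)$ is split injective (by your trace), and these maps are compatible with the transition functors, so by continuity $\dmcx(N,M)$ injects into $\dmc(X')(p^\ast N,p^\ast M)=0$. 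You should say this rather than gesture at a colimit.

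\textbf{Part 4.} Here the proposal is too vague to count as a proof, and one detail is wrong. First, the dimension bound: the point of Part 1 (combined with Proposition \ref{pwc}(\ref{iwcenv})) is not that the terms of $t(M)$ come from varieties of dimension $\le r$, but that $M^{r-i}$ is a retract of $\mg_X(V^{r-i})\lan -i\ra$ with $\dim V^{r-i}\le i$. In particular $M^r$ is built from \emph{points} and $M^{r-1}$ from \emph{curves}. Second, ``faithful enough \dots\ via unconditional cases of the standard conjectures'' is not an argument. The paper proceeds concretely:
for $n=r-1$, since $k$ is algebraically closed $M^r$ is a sum of copies of $\q_k$, and surjectivity of $\het_\ast(d^{r-1})$ forces $d^{r-1}$ to split because $\mathrm{Hom}_{\chow}(-,\q_k)$ is detected by $\het^0$ (numerical equals homological for $0$-cycles). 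For $n=r-2$, after contracting $M^r$ one must split $d^{r-2}\colon M^{r-2}\to M^{r-1}$ with $M^{r-1}$ a summand of a curve motive; the paper decomposes (the dual of) this map according to the canonical splitting of effective Chow motives into an Artin part $C_0$, an abelian-variety part $C_1$, and a part $C_2$ with $\hetc^j=0$ for $j<2$, uses the orthogonality $C_s\perp C_t$ for $s>t$, the semi-simplicity and $\het$-conservativity of $C_0,C_1$, and finally (for $C_2$) the fact that the source is a sum of $\q_k\lan 1\ra$'s so that numerical equals homological for divisors applies. Your sketch does not supply any of this, and in particular your iteration ``first contract $M^r$'' is fine but the subsequent splitting of $d^{r-2}$ is the actual content; also note that your translation of $E_2^{p,q}=0$ into surjectivity of $d_1^{p-1,q}$ is only valid at $p=r$ before the contraction.
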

\begin{proof}

\ref{predconj-1}. The ``in particular'' part of the assertion is an easy implication of the first part; see Remark \ref{rcompvm}(\ref{rcompvm-3}). Now, the first part of the assertion can be easily verified using the alteration arguments from \S2.4 of \cite{bondegl}. 

\ref{predconj-2}. Easy; note that $\obj \dmcx$ equals the unions of $\tn_r(X)\lan j\ra$ and of $\tnp_r(X)\lan j\ra$ for all $j\in \z$ and $r\ge 0$.

\ref{predconj-3}. Denote by $f$ the corresponding morphism $X'\to X$. 
According to Remark 7.2.25 of \cite{cdet}, 
there exists a commutative square of functors
\begin{equation}\label{ecdet} 
\begin{CD}
 \dmcx@>{\het(X)}>> Sh^{et}(X,\ql) \\
@VV{f^*}V@VV{f^*_{et}}V \\
\dmc(X')@>{\het(X')}>>Sh^{et}(X',\ql) 
\end{CD}
\end{equation}
with $f^*_{et}$ being an exact conservative functor. Thus it suffices to verify that $M\in \dmcx_{\wchow\ge -n}$ whenever $f^*(M)\in \dmc(X')_{\wchow\ge -n}$.
The latter statement is a particular case of Theorem 2.3.1(VI) of \cite{bmm}.

\ref{predconj-4}. We should verify  
that if $M\in V_r(X)$ and $E_2^{pq}T_{\wchow(X)}(\het,M)= 0$ for all $q\in \z$ and $p>n$, then
$M\in \dmcx_{\wchow\ge -n}$ (assuming that $n\ge r-2$). Certainly, the statement is trivial for $n\ge r$; so we consider the cases $n=r-1$ and $n=r-2$.

According to the previous assertion, we can (and will) assume that $k$ is algebraically closed. 
Now, we consider $t(M)=(M^i)$. According to Proposition \ref{pwc}(\ref{iwcenv}) we may assume that $M^i=0$ for $i>r$, and for any $i\ge 0$ the motif  $M^{r-i}$ is a retract of $\mg_X(V^{r-i})\lan -i\ra$ with $V^{r-i}$ being smooth projective of dimension at most $i$ over $k$ (see Remark 
 4.2.3(2) of \cite{bsosnl}). 
According to \ref{pwc}(\ref{iwc3}) we should check that $t(M)$ is homotopy equivalent to a complex concentrated in degrees $\le n$. 

Assume now that $n=r-1$. We should verify that the boundary morphism $d^{r-1}\colon M^{r-1}\to M^{r}$ splits, whereas our condition on the weight spectral sequence translates into  the assumption that $d^{r-1}$ induces surjections on the $\ql$-\'etale homology in all degrees. This is a very simple property of Chow motives that is easily seen to follow from the fact that the numerical equivalence coincides with the homological one for $0$-cycles on $V^r\times V^{r-1}$ (since $V^r$ is just a collection of $k$-points). 

Now assume that $n=r-2$. According to (just established) case $n=r-1$ of our assertion, we may assume that $M^r=0$. We should check that $d^{r-2}$ 
splits. 
We certainly can assume that all the components of $V^{r-i}$ are of dimension $i$ (for $i=1,2$). 
Consider (for convenience) the dual $d$ to $d^{r-2}$; so this is a morphism from a retract $M_1$ of $\mg_X(V^{r-1})$  
into a retract $M_2$ of $\mg_X(V^{r-2})$ that yields surjections on the \'etale cohomology  $\hetc^*$  in all degrees.
We should verify that $d$ is split injective.

 For $s=0,1$ consider the full additive subcategories $C_s$ of $\chowe$ consisting of $N\in \obj \chowe(k)$ such that $\het^j(N)=0$ for $j\neq s$; we  define $C_2$ as the subcategory 
  containing  those $N\in \obj \chowe$ such that $\hetc^j(M)=0$ for $j<2$.

Certainly,  $C_0$ is equivalent to the category of finite dimensional $\q$-vector spaces and $C_1$ is equivalent to the Karoubi envelope of the $\q$-linearization of the category of $k$-abelian varieties; thus both of them are abelian semi-simple.  We also have $C_s\perp C_t$ for $0\le t<s \le 2$ (see \cite[Theorems 6.2.1, 9.2.2, and 15.3.1, and Proposition 17.5.4]{bvk}). 
 Moreover, any object of $\chowe$ can be presented as the direct sum of objects of $C_s$ (for $s=0,1,2$).  

It follows that $d$ can be presented as a successive extension
  of certain $C_s$-morphisms $d_s$ in for $s=2,1,0$. Hence the cohomological characterization of $C_i$ allows to consider the case $d=d_s$ for a single $s$. 
	
If $s=0$ or $1$ then 
	 our assertion immediately follows from the semi-simplicity of $C_s$ along with the well-known fact that $\hetc^s$ is conservative on $C_s$.
	
	It remains to consider the case $s=2$. Since $V_{r-1}$ is of dimension at most $1$, $M_1$ is a direct sum of $\q_k\{1\}$ (recall that we assume $k$ to be algebraically closed).
Hence  the fact that numerical equivalence coincides with the homological one for divisors easily implies that $d$ splits. 

\end{proof}

\begin{rema}\label{rconjfield}

\begin{enumerate}
\item\label{idegwss}
According to 
 Remark 2.4.3 of \cite{bws} (note that it is no problem to apply it to homology instead of cohomology),  the spectral sequence  $E_2^{pq}T_{\wchow(X)}(\het,M)$ degenerates at $E_2$, and its $E_2$-terms are the corresponding factors of a certain version of Deligne's weight filtration on $\het_*(M)$. We will make this much more precise below.

\item\label{ihodgeconj}
Certainly, the argument used in the proof of part \ref{predconj-4} of the proposition also allows to reduce Conjecture \ref{conjfield} to the Beilinson's conjectures on mixed motives and their weights (see \cite{beilh}). 
Many more statements of this sort can be found in \cite{bmm}. 

Note also that in the case $k\subset \com$ (and so, $\cha k=0$) one may consider singular (co)homology instead of \'etale one (here one may use Saito's Hodge modules and the ``realization'' results of \cite{ivohodge}). In this case one may also relate our conjecture to the Hodge one; see Proposition 7.4.2 of \cite{mymot}.
Moreover, one can probably relate the corresponding spectral sequence to the {\it singularity} one constructed in \S5 of \cite{ciguil}; see Remark \ref{rldh}(\ref{ihiron}) below.


\end{enumerate}

\end{rema}

\subsection{On the relation to Deligne's weights and ``maximally singular'' examples}\label{sdelw}

Certainly, the case most interesting for the purposes of the current paper is $M=f_*(\q_Y)$ for $Y$ being a separated scheme of finite type over $X$. Note that according to Theorem \ref{tnp}(II), if $d=\dim(Y)$ then $M\in V_d(X)$. 

Since Deligne's weights on \'etale cohomology are defined only if $k$ is a finitely generated field (where $X=\spe k$), we will start from computing $E_2^{pq}T_{\wchow(X)}(\het,M)$ in  this case.

\begin{pr}\label{phomcoh}
Assume that $M=f_*(\q_Y)$ for a morphism $f\colon Y\to X$ of reasonable schemes. Then the following statements are valid.

1. $\hetl(M)$ is canonically isomorphic to the \'etale 
total direct image $Rf_{*, et}(\ql_Y)$.

2. Assume that $X=\spe k$ where $k$ is a finitely generated field (so, it is finitely generated over the corresponding prime field), $f$ is a separated morphism of finite type. Then $E_2^{pq}T_{\wchow(X)}(\het,M)$ is canonically isomorphic to the $q$th weight factor of the \'etale cohomology sheaf $R^{p+q}f_{*, et}(\ql_Y)$.

\end{pr}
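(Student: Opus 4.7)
\emph{Part 1.} The plan is direct: the \'etale realization $\hetl$ of \cite[\S7.2]{cdet} is a morphism of motivic triangulated categories, hence commutes with all six operations and sends tensor units to tensor units. In particular, the commutative square
\[
\hetl(X)\circ f_{*}\;\cong\;Rf_{*,et}\circ \hetl(Y),
\]
(already invoked in the proof of Proposition \ref{predconj}(\ref{predconj-3}) via Remark 7.2.25 of \cite{cdet}) together with $\hetl(Y)(\q_{Y})=\ql_{Y}$ gives $\hetl(X)(M)\cong Rf_{*,et}(\ql_{Y})$.

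\emph{Part 2.} I would argue by applying Proposition \ref{pwss} and invoking Deligne's purity theorem. By Proposition \ref{pwss}(II.1) we have $E_{1}^{pq}=\het_{q}(M^{p})$, where the terms $M^{p}$ of the weight complex live in $\hwchow(X)=\chow(X)$; by Theorem \ref{tcd}(II.\ref{igenc}) and Remark \ref{rexplwchow}(\ref{rexplwchow-3}), each $M^{p}$ is a direct summand of a Tate twist $\mg_{X}(V)\lan m\ra$ of the motive of some smooth projective $k$-variety $V$. Using Part 1 applied to the structural morphism $V\to X$ one identifies $\hetl(\mg_{X}(V))$ with the \'etale cohomology complex of $V_{\overline{k}}$, so that $\het_{q}(M^{p})$ is a subquotient of $H^{j}_{et}(V_{\overline{k}},\ql)(m')$ for explicit values of $(j,m')$ determined by $q$ and the twist. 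Since $k$ is finitely generated, Deligne's theorem (Weil II) asserts that each $H^{j}_{et}(V_{\overline{k}},\ql)$ is a pure Galois representation of weight $j$; tracking through the Tate twist, I would conclude that $\het_{q}(M^{p})$ is pure of a weight $w(q)$ that depends only on $q$ (and not on $p$).

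Given this uniformity, the differential $d_{r}^{pq}\colon E_{r}^{pq}\to E_{r}^{p+r,q-r+1}$ with $r\ge 2$ is a morphism of pure Galois representations of two different weights $w(q)\ne w(q-r+1)$, and therefore vanishes (morphisms between pure sheaves of distinct weights are zero; cf.\ \cite{bbd}). Hence the spectral sequence degenerates at $E_{2}$. By Proposition \ref{pwss}(II.3), the induced finite increasing filtration on the abutment $\het_{p+q}(M)$---which is a cohomology sheaf of $Rf_{*,et}(\ql_{Y})$ by Part 1---has associated graded pieces $E_{2}^{pq}$ that are pure of weight $w(q)$. The characterization of Deligne's weight filtration as the unique increasing filtration on a mixed Galois representation whose graded quotients are pure of strictly increasing weight then forces this filtration to coincide with the canonical weight filtration on $R^{p+q}f_{*,et}(\ql_{Y})$, yielding the stated identification of $E_{2}^{pq}$ with its $q$th weight factor.

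\emph{Main obstacle.} The only real technical content beyond bookkeeping is verifying that the weight $w(q)$ of $\het_{q}(M^{p})$ genuinely depends on $q$ alone, and agrees with the weight $q$ prescribed by the statement; this reduces to unwinding how the Tate twists $\lan m\ra=(m)[2m]$ entering the description of Chow motives over $X$ interact with the realization $\hetl$ and the shift functor used in the definition of $\het_{q}=\het\circ[q]$. Once this match is pinned down, the remaining ingredients (purity, vanishing of higher differentials, uniqueness of the weight filtration) are standard consequences of Weil II.
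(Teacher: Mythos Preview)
Your proposal is essentially correct and follows the same underlying strategy as the paper: show that the $E_1$-terms $\het_q(M^p)$ are pure of a weight depending only on $q$, deduce degeneration at $E_2$, and identify the resulting filtration with Deligne's weight filtration by uniqueness. The paper's proof, however, packages this differently. Rather than working over $X=\spe k$ and invoking Weil~II for smooth projective varieties over finitely generated fields as a black box, the paper unwinds the definition of Deligne's weights by explicitly spreading $f$ out to a model $\tilde f\colon\tilde Y\to\tilde X$ with $\tilde X$ of finite type over $\spe\zol$, then shrinks $\tilde X$ (using Remark 2.3.7(4) of \cite{brelmot} and continuity) so that the terms of a weight complex of the spread-out motive become retracts of $p^i_*(\q_{P^i})$ with $p^i$ a composition of a universal homeomorphism and a smooth proper morphism. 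Pointwise purity at closed points of $\tilde X$ then follows directly from Weil~I/II over finite fields, and Remark 2.4.3 of \cite{bws} finishes the argument. Your route is shorter because it hides the spreading-out inside the cited theorems; the paper's route is more self-contained with respect to how Deligne's weights are actually defined.

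One small point: your appeal to Theorem~\ref{tcd}(II.\ref{igenc}) for ``smooth projective $V$'' is only stated there for perfect $k$. Over a finitely generated field of positive characteristic, the heart $\chow(X)$ is a priori generated by \emph{regular} projective $X$-schemes. To get smoothness (and hence purity via Weil~II), you need either de~Jong's alterations in the $\q$-linear setting or a passage to the perfect closure via Theorem~\ref{tcd}(II.\ref{itr}); this is precisely the role played by the ``composition of a smooth proper morphism with a finite universal homeomorphism'' in the paper's argument.
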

\begin{proof}
1.  Once again, we apply the ($2$)-functoriality of the family of functors $\hetl(Y)$ (as given by \S7.2 of \cite{cdet} that extends the earlier results of \cite{ayoet}, cf. also Theorem 2.4.1(I) of \cite{bmm}). The assertion is immediate from the compatibility of  $\hetl(-)$  with functors of the type $f_*$  (cf.   (\ref{ecdet})).  

2. According to the previous assertion,  the spectral sequence $T_{\wchow(X)}(\het,M)$ converges to $R^{p+q}f_{*, et}(\ql_Y)$. We should prove that it degenerates and its $E_2$-terms are the corresponding weight factors of $R^{p+q}f_{*, et}(\ql_Y)$. 

By the definition of Deligne's weights, we should choose some ``models'' $\tilde{f}\colon \tilde{Y}\to \tilde{X}$ (i.e., $X$ should be the generic point of $\tilde{X}$, $\tilde{f}$ is a separated morphism of finite type of separated schemes) such that $\tilde{X}$ is of finite type over $\spe \zol$,  and the sheaves 
$R^j\tilde{f}_{*,et}(\ql_{\tilde{Y}})\cong \het^j(M) $ are punctually mixed for all $j\in \z$ (i.e., 
for any closed
embedding of a point $i_x\colon x\to \tilde{X}$ the sheaves $i_{x,et}^*(R^j\tilde{f}_{*,et}(\ql_{\tilde{Y}}))$ 
are endowed with compatible 
filtrations with pure factors).
 
We start from choosing  arbitrary models $f'\colon Y'\to X'$ (with $X'$ being reduced of finite type over $\spe \zol$); for $M'=f'(\q_{Y'})$  consider some choice 
of $t_{\wchow({X'})}({M'})=({M'}^i)$. According to Remark 2.3.7(4) of \cite{brelmot}, there exists an open (dense) embedding $j'\colon X''\subset X'$ such that 
$j'^*(M'^i)$ are certain retracts of $p^i_*(\q_{P^i})$, where $p^i$ are some compositions of universal homeomorphisms with smooth proper morphisms and $P^i$ are regular. Moreover, the continuity property for $\dmc(-)$ (cf. Theorem \ref{tcd}(I.\ref{icont}))  yields the existence of an open dense $\tilde{X}\subset X''$
such that for the 
$\tilde{j}\colon\tilde{X}\to X'$ the corresponding retracts of  $(p^i\times_{X''}\tilde{X})  _*(\q_{P^i\times_{X''}\tilde{X}})$ are the terms of (some choice of a) $w_{\chow(\tilde{X})}$-weight complex of $\tilde{j}^*(M')$ (see Proposition \ref{pwc}(\ref{iwctow})). 
Now, the sheaves $R^j(p^i\times_{X''}\tilde{X})_{*,et}(\q_{P^i\times_{X''}\tilde{X}})  $ are certainly punctually pure of weight $j$ for any $i$ and all $j\in \z$. Since the categories of punctually pure sheaves of a given weight contain all subquotients of their objects, and there are ``no non-zero morphisms between distinct weights'',  
Remark 2.4.3 of \cite{bws} yields the result (note once again that it is no problem to apply it to homology instead of cohomology).


\end{proof}

\begin{rema}\label{rdwe}
Certainly, the argument above may be applied to any $M\in \obj \dmcx$.

Moreover, it is actually not (that) necessary to assume that $k$ is a finitely generated field. Indeed, the continuity property of  $\dmc(-)$ ensures the existence of a finitely generated $k_0\subset k$ such that $M$ ``is defined over $k_0$'', i.e., for the corresponding morphism $g\colon X\to X_0=\spe k_0$ there exists $M_0\in \obj\dmc(X_0)$ such that $g^*(M_0)\cong M$. Now, the functor $g^*$ is weight-exact (with respect to the corresponding Chow weight structures) according to Theorem 2.3.1(VI) of \cite{bmm}. Thus the factors of the Chow-weight filtration for $\het_*(M)$ ``may be computed at $X_0$''; see the proof of \cite[Theorem 2.5.4(II.1)]{bmm} for more detail.
\end{rema}

Next, Proposition \ref{pcompw} justifies the usage of the notation $c(Y/X)$ (see Remark \ref{rcones}(\ref{rcones-1}))
for Beilinson motives instead of $K$-ones. 

\begin{pr}\label{pcxx}
For $f\colon Y\to X$ being any separated morphism of nice schemes 
we set  $c(Y/X)=n$ whenever 
$$
g_*(\q_{Y})\in \dmx_{\wchow\ge -n}\setminus  \dmx_{\wchow\ge 1-n}.
$$

I. Then the following statements are valid.

\begin{enumerate}
\item\label{inoneg} $c(Y/X)\ge 0$.

\item\label{ineq} $c(Y/X)\le c(Y/Y)$.

\item\label{ibou} $c(Y/Y)\le \dim(Y)$. 

\item\label{iboucodim} More generally, for any $r\ge 0$ there exists $Z\subset Y$ of dimension less than $r$ such that $c((Y\setminus Z)/X)\le \dim(Y)-r$.

\item\label{icov} Let $x_i\colon X_i\to X$ be an open cover of $X$. Then $c(Y/X)$ equals the maximum of $c(Y_i/X_i)$ for $Y_i=Y\times_X X_i$. 

\end{enumerate}

II. Assume that $X=\spe k$. Then $c(Y/X)\le n$ for $n\ge \dim(Y)-2$ if and only if $E_2^{pq}T_{\wchow(X)}(\het,f_*(\q_Y))=0$ for all $p>n$.

\end{pr}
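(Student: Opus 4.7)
The plan is to separate the asserted equivalence into its two implications, with the dimension bound $n\ge \dim(Y)-2$ only entering one of them.

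The ``only if'' direction is unconditional (it holds for any $n\in\z$). Indeed, if $c(Y/X)\le n$ then $M=f_*(\q_Y)\in\dmx_{\wchow\ge -n}$, and the orthogonality of the Chow weight structure, packaged as (the obvious $\dm$-analogue of) Corollary~\ref{cdetectw}, forces the $E_2$-terms of the weight spectral sequence associated to any (co)homological functor --- in particular $\het$ --- to vanish in columns $p>n$. No input beyond the weight spectral sequence machinery of Proposition~\ref{pwss} is used at this step.

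For the ``if'' direction I would combine two previously established ingredients. First, since $X=\spe k$ we have $\de^X(Y)=\dim(Y)$ (see Remark~\ref{rcompvm}(\ref{rcompvm-3}) together with Remark~\ref{rdimf}(\ref{idimfkrull})), so the $\dm$-analogue of Theorem~\ref{tgabber}(\ref{tgabber-3}) supplied by Theorem~\ref{tnp}(II) places $f_*(\q_Y)\in\tn_{\dim(Y)}(X)$. Second, the hypothesis $n\ge\dim(Y)-2$ is precisely the range in which Proposition~\ref{predconj}(\ref{predconj-4}) upgrades the conjecture $WD^n_{\dim(Y)}(X)$ to a theorem. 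Applying this $WD^n_{\dim(Y)}(X)$ with $M=f_*(\q_Y)$ gives $f_*(\q_Y)\in\dmx_{\wchow\ge -n}$, that is $c(Y/X)\le n$, as required.

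The main obstacle is not technical but conceptual: the restriction $n\ge\dim(Y)-2$ is inherited directly from the range in which $WD^n_r(X)$ is presently known unconditionally, and removing it would require proving the full Conjecture~\ref{conjfield} (which, as discussed in Remark~\ref{rconjfield}(\ref{ihodgeconj}), is tied to Beilinson-type motivic conjectures). One minor auxiliary point is that if $f$ fails to be of finite type then Theorem~\ref{tnp}(II) does not apply verbatim; in that case I would reduce to the finite-type situation via the continuity property of $\dmc(-)$ (Theorem~\ref{tcd}(I.\ref{icont}) and Remark~\ref{rdwe}), or alternatively invoke the ``extended'' framework of Proposition~\ref{pwchownc} together with the bound $c(Y/X)\le c(Y/Y)\le\dim(Y)$ from part~I of the Proposition at hand.
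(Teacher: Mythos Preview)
Your argument addresses only Part~II of the Proposition and entirely omits Part~I. The statement has five nontrivial assertions in Part~I (nonnegativity of $c(Y/X)$, the inequality $c(Y/X)\le c(Y/Y)$, the bound $c(Y/Y)\le\dim(Y)$, the codimension-support refinement, and the open-cover compatibility), and these must be established before you may invoke them. The paper proves them by first observing that their $\dk$-analogues are already in hand --- from Remark~\ref{rcones}(\ref{rcones-1}), Theorem~\ref{tweib}(I.\ref{3.3.1.I.1}--\ref{3.3.1.I.2}), and Proposition~\ref{pwchownc}(\ref{iwexchowoc}) applied to $f_*(\oo_Y)$ --- and then transporting these to Beilinson motives via the weight-exact comparison functors of Propositions~\ref{pcompdm} and~\ref{pcompw} (equivalently, via Remark~\ref{rcompwmd}). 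You cannot cite ``part~I of the Proposition at hand'' as input while proving the same Proposition; that step is missing.

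For Part~II itself your proof is correct and is essentially the paper's own. The paper records it in one line as ``This is just a part of Proposition~\ref{predconj}(\ref{predconj-4})'', and your unpacking --- Corollary~\ref{cdetectw} for the forward direction, membership of $f_*(\q_Y)$ in $\tn_{\dim(Y)}(X)$ via Theorem~\ref{tnp}(II) plus the established case $WD^n_{\dim(Y)}(X)$ for the converse --- is exactly what that citation encodes. Your aside about the non-finite-type case is reasonable but the paper does not raise it for Part~II.
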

\begin{proof}
I. The  $\dkx$-analogues of 
assertions \ref{inoneg}--
\ref{iboucodim}
are given by 
Remark \ref{rcones}(\ref{rcones-1}) and Theorem \ref{tweib}(I.\ref{3.3.1.I.1}--\ref{3.3.1.I.2}), respectively, whereas to obtain the $\dkx$-version of assertion \ref{icov} one should apply 
Proposition \ref{pwchownc}(\ref{iwexchowoc}) to the motif $f_*(\oo_Y)$.

 To carry over these statements to Beilinson motives it remains to apply Propositions \ref{pcompdm}
 and \ref{pcompw}.

Besides, assertions \ref{inoneg}, \ref{ineq}, and \ref{icov} can be easily deduced from the results of \cite{brelmot}, whereas assertions \ref{ibou} and \ref{iboucodim} follow from Theorem \ref{tnp}(II). 

II This is just a part of Proposition \ref{predconj}(\ref{predconj-4}). 

\end{proof}

\begin{rema}\label{rmaxsing}
1. Moreover, we certainly have $c(Y/Y)=c(Y/X)=0$ whenever $Y\re$ is regular. Thus one may say that $c(Y/Y)$ is a certain ``measure of singularity'' for $Y$. We will say that $Y$ is {\it maximally singular} whenever $c(Y/Y)=\dim(Y)$. Note that (according to parts  I.\ref{iboucodim}--\ref{icov} of our proposition) this property is ``concentrated around'' a finite number of closed points of $Y$.

2. 
Certainly, $c(Y/X)=\dim(Y)$ is an even stronger restriction on $Y$ (say, for $X$ being the spectrum of a field). Yet we will now demonstrate that this equality is not a ``local'' condition. 

3. 
It certainly may be  interesting to relate our results (along with the maximal singularity condition for schemes) to Theorem 7.1 of \cite{hasdescent}.
As an intermediate step, one may try to consider a cohomology theory on $\dmx$ or on $\dkx$ defined similarly to the {\it weight homology} of \cite{kellyweighomol}.

\end{rema}

We will study in detail two simple examples for our notions.
We set $X=\spe k$ for some infinite field $k$ (actually, this infiniteness restriction can easily be dropped),
fix some $r\ge 0$, $N\ge 1$, and denote by $Y_N$ the union of some collection of $N$ generic  hyperplanes in $\af^{r+1}(k)$ (we want any subset of our collection of hyperplanes to intersect properly; in particular, this means that any two non-empty intersections of distinct subsets of  our collection are distinct).

\begin{pr}\label{pexamples}
\begin{enumerate}
\item\label{ivmax} $c(Y_N/X)=r$ whenever $N>2r$. 

\item\label{icontr} 
$c(Y_{r+1}/X)\le \max(r-2,0)$.

\item\label{imax}
$Y_N$ is maximally singular for any $N>r$.

\end{enumerate}
\end{pr}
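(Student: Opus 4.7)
My plan is to reduce all three parts to a single computation---the closed Mayer--Vietoris spectral sequence for the cover $Y_N=\bigcup_{i=1}^{N} H_i$---followed by the weight-detection tools of Corollary~\ref{cdetectw}, Proposition~\ref{pcxx}(II) and Proposition~\ref{pwchownc}(\ref{iwexchow8}). Since any nonempty intersection $H_I$ is an affine space of dimension $r+1-|I|$, I expect the spectral sequence
\[E_1^{pq} = \bigoplus_{|I|=p+1} H^q_{et}(H_I, \ql) \Rightarrow H^{p+q}_{et}(Y_N, \ql)\]
to collapse to the row $q=0$ with all entries pure of Deligne weight~$0$, and its $d_1$ differentials to coincide with the simplicial coboundaries of the $r$-skeleton of the standard $(N-1)$-simplex $\Delta^{N-1}$. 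Thus $H_{et}^*(Y_N,\ql)$ will be pure of weight $0$ throughout, with $H^{0}(Y_N)=\ql$, $H^{r}(Y_N)\cong\ql^{\binom{N-1}{r+1}}$, and the intermediate groups vanishing.

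For part~(\ref{ivmax}), if $N>2r$ then (excluding the trivial case $r=0$) $N\ge r+2$, so $H^{r}(Y_N)\ne 0$; Propositions~\ref{phomcoh}(2) and~\ref{pcompw} then give $E_{2}^{r,0}T_{\wchow(X)}(\het,f_{*}(\q_{Y_N}))\ne 0$, and Corollary~\ref{cdetectw} yields $c(Y_N/X)\ge r$, matching the upper bound $c(Y_N/X)\le\dim Y_N=r$ from Proposition~\ref{pcxx}(I.\ref{ineq},\ref{ibou}). For part~(\ref{icontr}), when $N=r+1$ the nerve is the contractible simplex $\Delta^{r}$, so the only surviving cohomology is $H^{0}(Y_{r+1})=\ql$; hence $E_{2}^{pq}=0$ for every $p>0$, and since $n=\max(r-2,0)\ge\dim Y_{r+1}-2$, Proposition~\ref{pcxx}(II) directly yields $c(Y_{r+1}/X)\le n$.

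Part~(\ref{imax}) will require more work. I would pick $p\in Y_N$ at which exactly $r+1$ hyperplanes meet, which exists whenever $N\ge r+1$ since any $r+1$ generic hyperplanes in $\af^{r+1}$ intersect in a single point. A linear change of coordinates identifies a Zariski neighborhood of $p$ in $Y_N$ with a neighborhood of the vertex of $Y_{r+1}$, so $i_{p}^{!}(\q_{Y_N})\cong i_{p}^{!}(\q_{Y_{r+1}})$ in $\dm(\spe k)$. Rerunning Mayer--Vietoris on $Y_{r+1}\setminus p$ (whose nerve is now $\partial\Delta^{r}\simeq S^{r-1}$) and feeding the output into the local-cohomology triangle $i_p^!\ql\to\ql\to i_p^*Rj_*(\ql_{Y_{r+1}\setminus p})$ should give that the \'etale realization of $i_{p}^{!}(\q_{Y_{r+1}})$ has
\[\mathcal{H}^{r}=\ql \quad\text{and}\quad \mathcal{H}^{r+j}=\ql(-j)^{\binom{r+1}{j}}\;\;(j=1,\dots,r),\]
punctually pure of Deligne weights $0,2,\dots,2r$. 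Because the realization is pointwise pure, the proof of Proposition~\ref{phomcoh}(2) will go through for $M=i_p^!(\q_{Y_{r+1}})$ as well, forcing the weight spectral sequence $T_{\wchow}(\het,M)$ to degenerate at $E_2$ with $E_2$-terms equal to the Deligne weight factors; in particular $E_{2}^{r,0}\ne 0$, coming from the weight-$0$ summand $\ql$ in degree $r$. Corollary~\ref{cdetectw} then forces $i_{p}^{!}(\q_{Y_N})\notin\dm(\spe k)_{\wchow\ge -r+1}$, and the Beilinson analogue of Proposition~\ref{pwchownc}(\ref{iwexchow2}) (right weight-exactness of $i_p^!$) gives $\q_{Y_N}\notin\dm(Y_N)_{\wchow\ge -r+1}$, whence $c(Y_N/Y_N)\ge r$; the upper bound $c(Y_N/Y_N)\le r$ is Proposition~\ref{pcxx}(I.\ref{ibou}).

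The hard part will be the last step: promoting the pure weight-$0$ \'etale class $\ql$ in degree $r$ to a genuine Chow-weight $-r$ contribution in $i_{p}^{!}(\q_{Y_{r+1}})$, which is not literally of the form $f_*(\q_Y)$ covered by Proposition~\ref{phomcoh}(2). The rescue is that the realization is pointwise pure Tate with distinct Deligne weights in distinct cohomological degrees, so either the model-theoretic argument of that proposition can be replayed verbatim, or I can decompose $\q_{Y_{r+1}}$ via the stratification by the intersections $H_I$ using Remark~\ref{rcons}(\ref{imgbmed}) and compute $i_p^!\overline{j}_{I,*}(\q_{H_I})$ directly for the smooth pieces $H_I$, yielding an explicit pure Tate decomposition of $i_{p}^{!}(\q_{Y_{r+1}})$ in which the Chow-weight $-r$ summand appears as $\q[-r]$.
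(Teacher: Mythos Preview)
Your arguments for parts~(\ref{ivmax}) and~(\ref{icontr}) are correct and coincide with the paper's: compute the Deligne weight factors of $H^*_{et}(Y_N,\ql)$ via the closed Mayer--Vietoris (proper descent) spectral sequence, observe that everything is pure of weight~$0$ since each $H_I$ is an affine space, and invoke Proposition~\ref{phomcoh}(2) together with Proposition~\ref{predconj}(\ref{predconj-4}) (equivalently Proposition~\ref{pcxx}(II)).

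For part~(\ref{imax}) you take a genuinely different and much harder route than the paper. The paper's proof is a two-line reduction to part~(\ref{ivmax}): since $c(Y_{2r+1}/X)=r$ by part~(\ref{ivmax}), the sandwich $c(Y/X)\le c(Y/Y)\le\dim Y$ from Proposition~\ref{pcxx}(I.\ref{ineq},\ref{ibou}) forces $c(Y_{2r+1}/Y_{2r+1})=r$, so $Y_{2r+1}$ is maximally singular. One then observes that for any $N_1,N_2>r$ the schemes $Y_{N_1}$ and $Y_{N_2}$ admit open covers whose members are pairwise isomorphic (near any point at most $r+1$ of the hyperplanes pass, so locally every $Y_N$ with $N>r$ looks like an open piece of some $Y_m$ with $m\le r+1$, and all such local models occur). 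Proposition~\ref{pcxx}(I.\ref{icov}) applied with $X=Y=Y_N$ then transfers maximal singularity from $Y_{2r+1}$ to every $Y_N$.

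Your approach---computing $i_p^{!}(\q_{Y_{r+1}})$ directly and detecting a weight-$0$ class in degree~$r$ of its \'etale realization---can be made to work: Remark~\ref{rdwe} does say that the degeneration-and-weight-factor statement of Proposition~\ref{phomcoh}(2) extends to arbitrary $M\in\obj\dmcx$, not only $f_*(\q_Y)$, so your first ``rescue'' is legitimate. But the Mayer--Vietoris computation on $Y_{r+1}\setminus\{p\}$ is more delicate than you indicate, since $H_I\setminus\{p\}\cong\af^{\,r+1-|I|}\setminus\{0\}$ has nontrivial higher cohomology and the spectral sequence does not collapse to the nerve complex of $\partial\Delta^r$; you would have to track these contributions carefully to extract the weight-$0$ piece in degree~$r$. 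All of this is avoided by the paper's argument, which gives you part~(\ref{imax}) essentially for free once part~(\ref{ivmax}) is in hand. The leverage you missed is precisely the inequality $c(Y/X)\le c(Y/Y)$.
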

\begin{proof}
1,2. 
 Combining Proposition \ref{predconj}(\ref{predconj-4})
 with Proposition \ref{phomcoh} (along with Remark \ref{rdwe}), it suffices to compute the quotients of the weight filtration for $\hetc^*(Y_N)$.
This is easy; note that these weights  can be computed using the spectral sequence $E_1^{pq}=\bigoplus_{I\subset [1..N],\ \#I=p+1} \hetc^q(Y_I)\implies \hetc^{p+q}(Y_N)$.
coming from proper descent, where $Y_I$ is the intersection of the hyperplanes whose numbers belong to $I$. Certainly, the only cohomology of $Y_I$ is $\ql_X$ in degree $0$; so all the Deligne's weights in $\hetc^*(Y_N)$ are zero. Thus it suffices to note that   $\hetc^r(Y_N)\neq 0$ whenever $N> 2r+1$ and that  $\hetc^j(Y_{r+1})= 0$ for all $j\neq 0$.


3. According to assertion 1, $Y_{2r+1}$ is maximally singular. Now, for any $N_1,N_2> r$ the schemes $Y_{N_i}$ admit open covers such that any component of the first cover is isomorphic to some component of the second one and vice versa. 
Hence the statement follows from Proposition \ref{pcxx}(\ref{icov}).

\end{proof}

\begin{rema}\label{rvd}
1. Thus the criterion   given by  Proposition \ref{predconj}(\ref{predconj-4})
 (along with Proposition \ref{phomcoh}) is non-vacuous, and our results yield quite a non-trivial relation between (weights of) \'etale (co)homology and  negative $\kk$-groups! 
Note moreover that the corresponding cycle class maps (i.e., ``regulators'') are very far from being surjective or injective (in general).

2. On the other hand, we obtain that one cannot compute $c(Y/Y)$ looking at the $X$-cohomology of $Y$ only. This urges us to consider the relative versions of our criteria.

3. Now denote $Y_{r+1}$ by $Y$. It can be easily seen (cf. the proof of our proposition and of Proposition \ref{pldh} below) that there exists a choice of a weight complex $(M^i)$ of $M=\q_Y$ such that $M^i=\bigoplus_{I\subset [1..N],\ \#I=-i+1}\mgbd_Y(Y_I)$; in particular, $Y_{-r}=\mgbd_Y(f)$ for $f$ being the intersection of all the hyperplanes. Combining
the latter fact with Proposition \ref{pbw}(\ref{iortprecise}) and with part \ref{imax} of our proposition we obtain  that $\kk_{-r}^{BM,Y}(f)\neq \ns$ (cf. Remark \ref{rweib}(\ref{iextest}); so, $f$ is the corresponding "test scheme").  We can re-formulate this condition using the  long exact sequence 
$$\kk_{-r+1}(Y)\to \kk_{-r+1}(Y\setminus f) \dots\to \kk_{-r}^{BM,Y}(f)\to \kk_{-r}(Y)\to  \dots$$
(see (\ref{ekthlesgen})); we obtain that $Y$ being maximally singular is "seen from" the non-surjectivity of  $\kk_{-r+1}(Y)\to \kk_{-r+1}(Y\setminus f)$. Moreover, for $r>1$ the latter fact is equivalent to the non-vanishing of $\kk_{-r+1}(Y\setminus f)$ according to part \ref{icontr}  of our proposition.

4. Actually, one may apply the motivic Verdier duality (see \S4.4 of \cite{cd})  to prove that $c(Y_{r+1}/X)=0$.
\end{rema}

\subsection{``Detection of weights'' via \'etale homology over a general base}\label{sbase}
 
Now we discuss two ``relative'' generalizations of Conjecture \ref{conjfield}.
 For this purpose we fix some $\zll$-nice base scheme $B$, 
 and denote by $G'$ the set of schemes that are separated and essentially of finite type over $B$.\footnote{We consider pro-open subschemes of (finite type) $B$-schemes since we want to pass to certain limits in the theorem below.}
We will need certain (non-negative) dimension functions on $G'$; for our two homology theories two somewhat different types of dimension functions are ``optimal''.
 So, let $\de$ denote  the function obtained by the trivial extension of Definition \ref{ddf} from $G(B)$ to $G'(B)$.

Now, our second homology theory is  a certain perverse \'etale homology for motives.
The latter was  defined  in  \S2.4 of \cite{bmm} (certainly, this section  heavily relied on \cite{cdet}); yet we note that in the setting of ``general'' schemes one should invoke a ``true'' dimension function  (see Remark \ref{rdimf}(\ref{idimfkrull})) in this construction. 
So we assume that
a  non-negative function $\de'$ is defined on $G'$, and this  function satisfies the natural analogues of the properties listed in Proposition \ref{pdimf} along with  $\de
(X)-\de'(Z)=\codim_X(Z)$ for $Z\subset X$ being any irreducible schemes in $G'$ (cf. Remark \ref{rdimf}(\ref{idimfkrull})). 
Recall that the existence of $\de'$ is a  certain restriction on $B$ (we will abbreviate it by saying that ``$\de'$ exists''); however, it suffices to assume that $B$ is irreducible. 

For any $Y\in G'$ we
consider the $\ql$-\'etale realization functor 
 $\hetl(Y)\colon\dmcy 
\to D^cSh^{et}(Y,\ql)$ (this is an  exact functor whose target is a certain version of the derived category of constructible $\ql$-etale sheaves over $X$, as defined in \S7.2 of \cite{cdet} (generalizing "classical" $l$-adic categories).\footnote{This realization construction extends a closely related one from \cite{ayoet}.} Next, $D^cSh^{et}(Y,\ql)$ is endowed with

a bounded perverse $t$-structure (for the ``self-dual'' perversity corresponding to $\de'$) that will be denoted just by $t$; the heart of $t$ will be denoted by $\shy$. 
We set $\het(Y)$ to be the composition of  $\hetl(X)$ with the zeroth $t$-homology functor on $D^cSh^{et}(Y,\ql)$.

A special case here is when $Y$ is the spectrum of a field. 
Since in this case the perverse $t$-structure differs from the canonical one only by a shift, 
one may (at the price of a minor abuse of notation) 
take for $\het(Y)$ the functor introduced in the previous subsection. In particular, this functor can be defined
without (necessarily) assuming that $\de'$ exists. 

Actually, we will mostly be interested in $\de'$ in the case where $B$ is the spectrum of a field (and then it coincides with $\de$, and its restriction to $G(B)$ is just the Krull dimension function) and for $X$ being of finite type over $B$; cf. Theorem \ref{tredconj}(\ref{tredconj-4}, \ref{tredconj-6}) below. 

Now we are able to formulate our  ``relative weight detection'' conjectures. 

\begin{conj}\label{cwdhet}
Let $X\in G'$, $r,n\ge 0$.
\begin{enumerate}
\item\label{cwdhet-1} We will say that the conjecture $WD^n(X)$ holds if for any  for $M\in \obj \dmcx$ and $H$ being the product of the theories $\het(x)\circ i_x^!$, where $x$ runs through Zariski points of $X$, $i_x$ is the corresponding embedding (see Proposition \ref{pwchownc}(\ref{iwexchow8}) for the definition of $i_x^!$),  we have $M\in \dmcx_{\wchow\ge -n}$ whenever $E_2^{pq}T_{\wchow(X)}(H,M)= 0$ for all $q\in \z$ and $p>n$.

We will say that  $WD_r^n(X)$ is valid if this implication holds under the additional assumption that  $M\in \tn_r(X)$ (see Definition \ref{dnp}(\ref{dnp-2}); we take $\md=\dm$ in it).

\item\label{cwdhet-2} Assume that $\de'$ is defined (in the sense described above).  Then we will say that the conjecture $WD'^n(X)$ holds if  for $M\in \obj \dmcx$ we have $M\in \dmcx_{\wchow\ge -n}$ whenever $E_2^{pq}T_{\wchow(X)}(\het(X),M)= 0$ for all $q\in \z$ and $p>n$.

We will say that  $WD_r^n(X)$ is valid if  the latter implication holds under the additional assumption that  $M\in \tnp_r(X)$; here $\tnp_r(X)$ is defined by setting $\md=\dm$ and $\de=\de'$ in Definition \ref{dnp}(\ref{dnp-2}).
\end{enumerate}
\end{conj}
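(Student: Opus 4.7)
The plan is to reduce both parts of the conjecture to Conjecture~\ref{conjfield} over residue fields of Zariski points, which is partially established by Proposition~\ref{predconj}. For part~\ref{cwdhet-1}, the strategy exploits the point-wise weight criterion of Proposition~\ref{pwchownc}(\ref{iwexchow8}), which says that $M\in\dmx_{\wchow\ge -n}$ if and only if $i_x^!(M)\in\dm(x)_{\wchow\ge -n}$ for every $x\in\xxx$. Since the functor $H$ is by definition the product of the theories $\het(x)\circ i_x^!$ over Zariski points, and products of functors into $\ab$ give direct sums on the $E_2$-page of the weight spectral sequences, the hypothesis that $E_2^{pq}T_{\wchow(X)}(H,M)=0$ for $p>n$ should decompose into the analogous vanishing for each individual composite $\het(x)\circ i_x^!$.

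The next step is to identify $E_2^{pq}T_{\wchow(X)}(\het(x)\circ i_x^!,M)$ with $E_2^{pq}T_{\wchow(x)}(\het(x),i_x^!(M))$. Here $i_x^!=i^!\circ j_K^{0*}$ is right weight-exact, since both $j_K^{0*}$ (a pro-open immersion, hence weight-exact by Proposition~\ref{pwchownc}(\ref{iwexchow7})) and $i^!$ for the closed immersion $i$ (right weight-exact by Proposition~\ref{pwchownc}(\ref{iwexchow2})) are. One then verifies on weight complexes, via Proposition~\ref{pwc}(\ref{iwctow}), that applying $i_x^!$ term-by-term to a choice of weight complex of $M$ yields a representative of the weight complex of $i_x^!(M)$, so that the $E_2$-level comparison follows. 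After this identification, the problem reduces to $WD^n(x)$ over the residue field, i.e., to Conjecture~\ref{conjfield}. The bounded version $WD_r^n(X)$ reduces to a bounded version over the residue fields of points, once one checks that $i_x^!$ maps $\tn_r(X)$ into $\tn_{r-\codim_X\overline{\{x\}}}(k(x))$, in the spirit of Theorem~\ref{tnp}(II) and Theorem~\ref{tgabber}(\ref{tgabber-4}).

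For part~\ref{cwdhet-2}, the perverse \'etale homology $\het(X)$ does not a priori factor through individual points, and this is the main obstacle. A natural approach is to use the formalism of mixed perverse sheaves from~\cite{bbd} as adapted in~\cite{bmm}: one aims to show that the weight filtration on the perverse \'etale cohomology of $M$ is compatible with restriction to points, via the behaviour of weights under the six operations in the mixed perverse setting. Conservativity and continuity properties of $\hetl$ combined with the envelope restriction $M\in\tnp_r(X)$ controlling the dimensions at which the perverse cohomology can be nonzero should then yield a reduction again to the field case.

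The hardest part by far is Conjecture~\ref{conjfield} itself: Proposition~\ref{predconj}(\ref{predconj-4}) handles only the range $n\ge r-2$, which is extracted from semisimplicity of weight-zero and weight-one Chow motives, together with the classical equality of numerical and homological equivalence for divisors and $0$-cycles. Beyond this range, one is effectively asking for conservativity of the \'etale realization on Chow motives modulo weight-graded pieces, which is known to be equivalent to deep conjectures of Beilinson on mixed motives (cf.~\cite{beilh} and the detailed discussion in~\cite{bmm}). A complete unconditional proof therefore appears inaccessible by present methods, and the realistic goal is partial results in the spirit of Proposition~\ref{predconj}(\ref{predconj-4}) and the ``(almost) maximally singular'' examples discussed in \S\ref{sdelw}.
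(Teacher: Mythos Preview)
The statement you are asked to ``prove'' is labelled a \emph{Conjecture} in the paper, and the paper does not claim to establish it in full. What the paper does prove is the collection of partial results in Theorem~\ref{tredconj}: the reduction of $WD^n_r(X)$ to the pointwise conjectures $WD^n_r(x)$ (part~3), the analogous reduction for $WD'^n_r(X)$ under the variety hypothesis (part~4), and the validity of both in the range $n\ge r-2$ (parts~5--6). Your outline recovers exactly this architecture and correctly identifies that the obstruction to a full proof is Conjecture~\ref{conjfield} over fields, which is tied to standard conjectures on mixed motives. In that sense your assessment is accurate and matches the paper's.

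There is, however, a genuine technical gap in your sketch of the reduction step. You claim that applying $i_x^!$ term-by-term to a weight complex of $M$ yields a weight complex of $i_x^!(M)$, citing Proposition~\ref{pwc}(\ref{iwctow}) and the \emph{right} weight-exactness of $i_x^!$. Right weight-exactness is not enough for this: it guarantees $i_x^!(\cu_{w\ge 0})\subset\du_{v\ge 0}$, but not that $i_x^!(w_{\le m}M)\in\du_{v\le m}$, so the cones of the transported tower need not lie in $\du_{v=i}$ and Proposition~\ref{pwc}(\ref{iwctow}) does not apply. The paper's own proof of Theorem~\ref{tredconj}(3) does not proceed via an $E_2$-level identification; it invokes the $\dm$-version of Theorem~\ref{tgabber}(\ref{tgabber-4}) to control the envelope $i_x^!(\tn_r(X))\subset\tn_r(x)$, and then the hypothesis of $WD^n_r(x)$ is applied directly to $i_x^!(M)$. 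The point is that the functor $H$ in Conjecture~\ref{cwdhet}(\ref{cwdhet-1}) is \emph{defined} as the product of $\het(x)\circ i_x^!$, so the $E_2$-vanishing for $H$ already decomposes into $E_2$-vanishing for each factor over $X$; what remains is to relate the weight spectral sequence over $X$ for $\het(x)\circ i_x^!$ to the one over $x$ for $\het(x)$, and this is handled by the envelope control rather than by a naive term-by-term transport. Your overall plan is right, but this particular step needs the argument the paper actually uses.
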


Now we describe some evidence supporting these conjectures; we will also discuss some conjectural evidence for them later.


\begin{theo}\label{tredconj}
The following statements are valid.
\begin{enumerate}
\item\label{tredconj-1}
Conjecture $WD^n(X)$ is equivalent to the combination of the conjectures  $WD^n_r(X)$ for all $r\ge 0$.

\item\label{tredconj-2}
$WD'^n(X)$ is equivalent to the combination of all $WD'^n_r(X)$. 

\item\label{tredconj-3}
Conjecture $WD^n_r(X)$ holds whenever $WD^n_r(x)$ does for $x$ running through all Zariski points of $X$.

\item\label{tredconj-4}
Conjecture $WD'^n_r(X)$ holds whenever $WD^n_r(x)$ does for $x$ running through all Zariski points of $X$ and $X$ is a variety over a field $k$ 
 (of characteristic distinct from $l$). 

\item\label{tredconj-5}
Conjecture $WD^n_r(X)$ holds whenever $n\ge r-2$. 
This statement may be applied to $M=f_*(\q_Y)$, where $Y$ is a separated 
$X$-scheme possessing an $X$-compactification of $\de$-dimension at most $r$. In particular, for $B=X$ one may take $M=\q_X$ whenever $\dim(X)\le r$.

\item\label{tredconj-6}
For $X$ as in assertion \ref{tredconj-4} conjecture $WD'^n_r(X)$ holds whenever $n\ge r-2$ also. 
\end{enumerate}
\end{theo}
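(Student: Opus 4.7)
Every compact motif $M \in \obj \dmcx$ lies in some $\tn_r(X)\lan j \ra$ (or $\tnp_r(X)\lan j\ra$ in the perverse case) by combining Theorem \ref{tnp}(II) with the fact that the compact generators of $\dmcx$ are Tate twists of motives of the form $g_!(\q_V)$. Since Tate twisting is weight-exact for $\wchow$ and compatible with both étale realizations (up to a twist on the target), the hypothesis of $WD^n(X)$ applied to $M\lan -j\ra$ is the same as the one of $WD^n_r(X)$. Hence the bounded conjectures $WD^n_r(X)$ for all $r \geq 0$ jointly imply $WD^n(X)$; the converse is trivial. The perverse case is identical.

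\textbf{Parts 3 and 4.} The main tool is the $\dm$-version of Proposition \ref{pwchownc}(\ref{iwexchow8}): for $M \in \obj \dmcx$, one has $M \in \dmcx_{\wchow \ge -n}$ if and only if $j_K^!(M) \in \dm(K)_{\wchow \ge -n}$ for every Zariski point $K \to X$. Given $M \in \tn_r(X)$ with the vanishing $E_2^{pq}T_{\wchow(X)}(H, M) = 0$ for $p > n$, one verifies (for each $K$) that $j_K^!(M) \in \tn_r(K)$ by base change through the Cartesian square $V \times_X K \to K$, the $\de$-dimension estimates of Proposition \ref{pdimf} applied to the base changes of the generating morphisms, and a regular stratification argument in the spirit of Remark \ref{rreas}(\ref{ifuni}). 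The right weight-exactness of $j_K^!$ (Proposition \ref{pwchownc}(\ref{iwexchow2})) gives a compatible action on weight complexes, and so identifies $E_2^{pq}T_{\wchow(X)}(\het(K) \circ j_K^!, M)$ with $E_2^{pq}T_{\wchow(K)}(\het(K), j_K^!(M))$; factoring $H$ as the product over $K$ transfers the hypothesis, whereupon $WD^n_r(K)$ concludes. For part 4, the variety hypothesis forces $\de = \de' = \dim$ (so $\tnp_r(X) = \tn_r(X)$); stratifying $X$ into smooth pieces and using the standard relation between perverse cohomology sheaves and canonical cohomology along the generic fibres of the strata converts the vanishing for $\het(X)$ into the vanishing for $H$, reducing to part 3. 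The delicate point in both parts is keeping track of the purity twists that appear when $j_K^!$ is decomposed as $i^! \circ j_K^{0*}$: these must be absorbed into the $\lan - m\ra$ indexing of $\tn_r(K)$ while the attendant $\de$-dimension estimates remain compatible.

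\textbf{Parts 5 and 6.} Both reduce via parts 3 and 4 to $WD^n_r(x)$ at each Zariski point $x$ of $X$, and Proposition \ref{predconj}(\ref{predconj-4}) supplies exactly this for $n \geq r - 2$; note that since $X$ is $\lam$-nice with $l \in \p \setminus \sss$, the residue characteristic of $x$ is never $l$, so the hypothesis of that proposition is met. The claimed applicability to $M = f_*(\q_Y)$ is Theorem \ref{tnp}(II), which puts $f_*(\q_Y)$ into $\tn_r(X)$ whenever an $X$-compactification of $Y$ has $\de$-dimension at most $r$; in particular $\q_X \in \tn_r(X)$ when $\dim(X) \leq r$.
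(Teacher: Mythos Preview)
Your treatment of parts \ref{tredconj-1}, \ref{tredconj-2}, \ref{tredconj-5}, and \ref{tredconj-6} is correct and follows the paper's line.

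For part \ref{tredconj-3} you correctly isolate the two ingredients the paper invokes: the pointwise criterion for $\wchow\ge -n$ and the inclusion $j_K^!(\tn_r(X))\subset \tn_r(K)$ from the $\dm$-version of Theorem \ref{tgabber}(\ref{tgabber-4}). The gap is your spectral-sequence step. The claim that ``right weight-exactness of $j_K^!$ gives a compatible action on weight complexes, and so identifies $E_2^{pq}T_{\wchow(X)}(\het(K)\circ j_K^!, M)$ with $E_2^{pq}T_{\wchow(K)}(\het(K), j_K^!(M))$'' is not justified: sending weight complexes to weight complexes requires \emph{full} weight-exactness, and in the decomposition $j_K^!=(j_K^0)^*\circ i^!$ the closed-immersion factor $i^!$ is only right weight-exact. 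Thus $j_K^!(M^p)$ lands in $\dmc(K)_{\wchow\ge 0}$ rather than in the heart, and the two $E_2$-pages have no reason to agree. The paper's own proof is the single phrase ``immediate from Theorem \ref{tgabber}(\ref{tgabber-4})'' and does not spell this out either; but your explicit justification is wrong, and what you flag as ``the delicate point'' (absorbing purity twists into the $\tn_r$-indexing) is a separate, routine bookkeeping matter, not the actual obstacle.

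For part \ref{tredconj-4} your route diverges from the paper's and is too vague to stand as a proof. The paper does \emph{not} argue by ``stratifying $X$ into smooth pieces and relating perverse to canonical cohomology at generic fibres''. Instead it (i) reduces to $k$ finitely generated and then to $k$ finite (or $\q$) by spreading out, (ii) invokes the weight formalism of \cite{bbd} (resp.\ \cite{huper}): the vanishing of $E_2^{pq}T_{\wchow(X)}(\het(X),M)$ for $p>n$ is reinterpreted as $\hetl(M)\in \dhsl_{w\ge 0}[-n]$, using that perverse $t$-truncations respect $\dhsl_{w\ge 0}$; and (iii) uses that membership in $\dhsl_{w\ge 0}$ is \emph{defined} pointwise via $i_x^{et,!}$, which yields the hypothesis of $WD^n_r(X)$ and hence reduces to part \ref{tredconj-3}. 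Your stratification sketch does not supply this intermediary --- the weight filtration on $D^b_cSh^{et}(X,\ql)$ --- and without it there is no evident passage from the single perverse functor $\het(X)$ to the product $H$ over all points of $X$.
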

\begin{proof}
\ref{tredconj-1}, \ref{tredconj-2}. Easy; note once again that $\obj \dmcx$ equals the union of $\tn_r(X)\lan j\ra$ as well as to the union of $\tnp_r(X)\lan j\ra$ for all $j\in \z$ and $r\ge 0$.

\ref{tredconj-3}. Immediate from the $\dm$-version of Theorem \ref{tgabber}(\ref{tgabber-4}) (see Theorem \ref{tnp}(II)).

\ref{tredconj-4}. According to the 
previous assertion, it suffices to verify that the non-vanishing of $E_2^{pq}T_{\wchow(X)}(\het,M)$ (for some $(p,q)$) implies the non-vanishing of  
$E_2^{pq}T_{\wchow(X)}(H,M)$ for $H$ as in Conjecture \ref{cwdhet}(\ref{cwdhet-1}).

Next, since any $M\in \obj \dmcx$ is defined over a subfield of $k$ that is finitely generated over its prime subfield, it suffices to verify the statement for $X$ of this form.
Now (similarly to \cite{huper}), one may use the seminal weight argument of Deligne and consider a  ``suitable model'' of $k$, i.e., one should take a suitable 
 variety over a finite field whose function field is $k$ and shrink it to ensure the pointwise purity of the corresponding weight factors; this enables us to assume that $k$ is a prime field itself.

Next, if $k$ is finite then \S5 of \cite{bbd} provides us with certain subcategories $\dhsl_{w\ge 0}\subset \obj D^cSh^{et}(X,\ql)$ that
 are respected by the (idempotent) $t$-truncation functors $[-i]\circ H_i^t$. 
Hence the vanishing $E_2^{pq}T_{\wchow(X)}(\het,M)$ for all $q\in \z$ and $p>n$ is equivalent to $\hetl(M)\in \dhsl_{w\ge 0}[-n]$.
Since  $\dhsl_{w\ge 0}$ is essentially   defined in terms of $i_x^!$ (and pointwise weights), this allows us to conclude the proof in the case. 

In the case when $k=\q$ 
 one should use the theory of weights developed in \S3 of \cite{huper} instead. 

\ref{tredconj-5}. The first part of the assertion is immediate from assertion \ref{tredconj-3} combined with Proposition \ref{predconj}(\ref{predconj-4}). One may apply it to $X$ of this sort according to Theorem \ref{tnp}(II).

\ref{tredconj-6}. The usage of assertion \ref{tredconj-3} in the previous argument just should be replaced by that of assertion \ref{tredconj-4}.

\end{proof}

\begin{rema}\label{rethomw}

\begin{enumerate}
\item\label{imaxsing}
Take $Y=Y_N$ for $N\ge r+1$ (see Proposition \ref{pexamples});   then we have
$\q_X\notin \dmcx_{\wchow\ge 1-r}$.  Hence our theorem yields a non-vacuous relation between ``\'etale weights'' and negative $K$-groups in this ``relative'' setting also.

\item\label{idegrel} Now we discuss the applicability of  our ``perverse'' criterion.
 
It was conjectured in  \S2.5 of \cite{bmm} that Chow-weight spectral sequences for perverse \'etale homology degenerate at $E_2$ (for any $M\in \obj \dmcx$) whenever they are defined. Thus in this case it suffices to study the factors of the corresponding Chow-weight filtrations.

\item\label{idweff} Moreover, if $X$ is a variety over a finite field, then Remark 3.6.2(1) of \cite{brelmot} 
expresses the corresponding  $E_2^{pq}T_{\wchow(X)}(\het,f_*(\q_Y))$
as the $q$th weight factor of $R^{p+q}f_{*, et}(\ql_Y)$ (cf. Proposition \ref{phomcoh}); here one should use the weights described in \S5 of \cite{bbd}. 
Thus $c(Y/X)=n$ whenever $f_{*,et}(\ql_Y)\in \dhsl_{w\ge 0}[-n]\setminus \dhsl_{w\ge 0}[1-n]$ (see the proof of the theorem); in particular, one may apply this statement for $Y=X$ (and so, $f_{*,et}(\ql_Y)=q_X$).
This seems to be (more or less) the only ``relative'' case where these $E_2$-terms may be computed (at least, one may study their vanishing).

\item\label{ihuwe} Note however that a certain theory of weights over number fields was developed in   \cite{huper}; one may apply it according to  
 the results of \cite[\S3.4]{brelmot}. More generally, one may apply the (more general) theory of  weights developed in  \S2.5 of \cite{bmm}
(along with the arguments used in the proof of Proposition \ref{phomcoh}; yet currently there does not exist a suitable theory of weights in the mixed characteristic case).

\item\label{icwd} 
One can easily verify that the cohomology theory  $H_x=\het(x)\circ i_x^!$ and the weights spectral sequence $T_{\wchow(X)}(H_x,M)$ can be described in terms of the ``restriction'' of $M$ to any open neighbourhood of $x$ in $X$. Hence for the purpose of computing $c(Y/X)$ it suffices to consider those point that belong to the image of the singular locus of $Y$. 
 
\item\label{illp}
One may apply the results proved above even in the case where there are no primes (``globally'') invertible on $X$. For this purpose one may invoke Proposition 
\ref{pwchownc}(\ref{iwexchowoc}); it enables us to replace $X$ by the components of its open  cover  such that on each of them invertible primes do exist (certainly, two components and two primes are sufficient here). 

\end{enumerate}


\end{rema}

\subsection{``Explicit test schemes'' over varieties and the relation to the singularity filtration (on $\kk$-theory and singular cohomology)}\label{sldh}

Now we relate our results to some (other) ``descent'' results and method. We start with a motivic of singularities results that it much simpler than the ones of \S\ref{slength} (and is related to the arguments used in \cite{bzp}).

\begin{pr}\label{pldh}
Let $\md$ be as in  Theorem \ref{tnp}(II) (so, it satisfies the local splitting condition) for $B$ being the spectrum of a perfect field $k$ of characteristic $p$.\footnote{Certainly, it follows that $\sss$ contains $p$ whenever it is positive.}
Then the following statements are valid for  $f\colon Y\to X$ being a separated morphism of (separated) finite type $B$-schemes, $d=\dim(Y)$.

\begin{enumerate}
\item\label{ilenv}
 $M=\mgd_X(Y)$ belongs to the envelope of $\mgd_X(Y_i)[-i]$ for some projective (separated)  regular $Y$-schemes $Y_i$ with $\dim(Y_i)\le d-i$.

\item\label{ildetect}
Assume that a full Karoubi-closed triangulated subcategory $\du$ of $\md(X)$ is endowed with a weight structure $w_X$ such that $\mgd_X(V)$ belongs to $\du_{w_X=0}$ for any regular $Y$ that is proper over $X$.\footnote{So, the first two parts of this propositions may be applied for $\md$ being equal either to  $\dk(-)$ or to $cdh$-motives with $R$-coefficients, where $R$ is any commutative unital ring in which $p$ is invertible whenever it is positive; see Remark \ref{rexplwchow}(\ref{rexplwchow-2}).}
Then $M=\mgd_X(Y)$ belongs to $\obj \du$ and there exists a choice of $t_{w_X}(M)=(M^i)$ with $M^i=0$ for $i<0$ and $i>d$, and $M_i$ is a retract of $\mgd_X(Y_i)[-i]$ for some projective (separated)  regular $X$-scheme $Y$ of dimension at most $d-i$ for $0\le i\le d$.

Thus we have $M\in \du_{w_X\ge -n}$ for some non-negative integer $n$ if and only if we have   $\mgd_X(Y_i)[-i]\perp M$ for any $i\in \z$ such that $n<i\le d$ (and for these $Y_i$).

\item\label{ilkmot}
Now assume that $\md(-)=\dk(-)$ and $f$ is proper. Then   $M\in \dkx_{\wchow\ge -n}$ if and only if  we have $\kk_{-i}^{BM,X}(Y\times_X Y_i)=\ns$
 for all $i>n$ and $Y_i$ as above.

\item\label{ilfield} If $X=B(=\spe k)$ and $k$ is perfect then the last 
 (vanishing) condition is converts into $\kk_{-i}(Y\times_X Y_i)$ being zero
 for all $i>n$.

\end{enumerate}
\end{pr}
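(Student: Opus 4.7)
The plan is to prove Part~1 by induction on $d = \dim Y$ using alterations together with the local splitting property assumed on $\md$; Parts~2--4 will then follow essentially formally from Part~1, the weight-structure machinery of \S\ref{sbws}--\ref{swss}, and a short base-change computation in $\dk$.

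For Part~1 I would argue by induction on $d$. The base case $d=0$ is immediate: $Y$ is a finite disjoint union of spectra of finite extensions of $k$, hence regular since $k$ is perfect, so one may take $Y_0 = Y$. For the inductive step ($d > 0$) I would invoke an alteration result---Hironaka's resolution in characteristic zero (with $p$ birational), or Gabber's $\ell$-prime alterations in positive characteristic---to obtain a projective regular $d$-dimensional $Y$-scheme $Y_0$ together with a proper surjective morphism $p\colon Y_0 \to Y$ whose generic degree $e$ is a unit in $\lam$. Choose a dense open $U \subset Y$ over which $p$ restricts to a finite \'etale cover of degree $e$, and set $Z = Y \setminus U$, $Z_0 = p\ob(Z)$; both have dimension $< d$. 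The local splitting property, combined with $e \in \lam^{\times}$, makes $\oo_U$ a direct summand of $p_{U,*}(\oo_{p\ob(U)})$ in $\md(U)$. Gluing this splitting against the open--closed triangles for $Y = U \sqcup Z$ and $Y_0 = p\ob(U) \sqcup Z_0$ (from Theorem~\ref{tcd}(I.\ref{iglu})) expresses $\mgd_X(Y)$ as an extension built from $\mgd_X(Y_0)$ together with shifts of $\mgd_X(Z)$ and $\mgd_X(Z_0)$. Applying the inductive hypothesis to $Z$ and $Z_0$ then assembles the required envelope, with the bound $\dim Y_i \le d - i$ arising from tracking the cones through the recursion.

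For Part~2, the hypothesis $\mgd_X(Y_i) \in \du_{w_X = 0}$ implies $\mgd_X(Y_i)[-i] \in \du_{w_X = -i}$, so the Part~1 envelope places $M$ in $\du_{[-d, 0]}$. Proposition~\ref{pwc}(\ref{iwcenv}) then exhibits the weight complex $t_{w_X}(M)$ as a retract in $K(\hw_X)$ of the complex built from $\mgd_X(Y_i)$ in degree $i$, giving the asserted form of $(M_i)$. The orthogonality criterion is immediate from Proposition~\ref{pbw}(\ref{iortprecise}) applied to the envelope generators $\{\mgd_X(Y_j)[-j]\}$, split according to $j \le n$ (lying in $\du_{w_X \ge -n}$) versus $j > n$ (lying in $\du_{w_X \le -n-1}$). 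For Part~3, specializing to $\md = \dk$ with $f$ proper yields $M = f_*(\oo_Y) = \mgbmx(Y)$, and the orthogonality $\mgbmx(Y_i)[-i] \perp f_*(\oo_Y)$ translates via~(\ref{ekthlesgen}) (with $Z = Y_i$, $g = f$, $X' = Y$) into vanishing of the relative Borel--Moore group $\kk_{-i}^{BM,Y}(Y_i \times_X Y)$ for all $i > n$. Part~4 is the further specialization $X = \spe k$ with $k$ perfect: each $Y_i$, being regular, is smooth over $X$, so Theorem~\ref{tcd}(I.\ref{ipure}, I.\ref{iexch}) combined with the adjunction $f_{i,!} \dashv f_i^! = f_i^*$ identifies $\kk_{-i}^{BM,Y}(Y_i \times_X Y)$ with the ordinary group $\kk_{-i}(Y_i \times_X Y)$.

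The main obstacle is ensuring in Part~1 that the alteration degree $e$ is a unit in $\lam$, since the local splitting only furnishes a retraction in the localized category $\md(U)[e\ob]$. In characteristic zero this is automatic via Hironaka ($e = 1$). In positive characteristic $p$, Gabber's theorem delivers, for any chosen prime $\ell \neq p$, an alteration of degree prime to $\ell$; to arrange the degree to be a unit in $\lam$, one reduces the envelope statement to the $\zll$-linear categories for each $l \in \p \setminus \sss$ via~\cite[Corollary~0.2]{bsnull} (compare Remark~\ref{rweib}(\ref{iextest})), in which Gabber's theorem applied with $\ell = l$ produces a degree invertible in $\zll$. The combinatorial bookkeeping needed to merge the recursive alterations on $Z$ and $Z_0$ into a single envelope with the correct dimension bound is routine but somewhat tedious.
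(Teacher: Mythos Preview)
Your approach is essentially the same as the paper's, and Parts~1, 3, and~4 are correct as written. There is, however, a small but genuine gap in Part~2.

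The schemes $Y_i$ produced in Part~1 are projective (hence proper) over $Y$, not over $X$. The hypothesis in Part~2 is that $\mgd_X(V)\in\du_{w_X=0}$ for $V$ regular and \emph{proper over $X$}; so when $f\colon Y\to X$ is not proper, the $Y_i$ from Part~1 need not be proper over $X$, and you cannot immediately conclude $\mgd_X(Y_i)\in\du_{w_X=0}$. Your sentence ``the hypothesis $\mgd_X(Y_i)\in\du_{w_X=0}$ implies \dots'' silently assumes what must be arranged. The paper fixes this by first choosing an $X$-compactification $\overline{Y}$ of $Y$: the triangle (\ref{emgys}) for $Y\subset\overline{Y}$ puts $\mgd_X(Y)$ in the envelope of $\{\mgd_X(\overline{Y}),\ \mgd_X(\overline{Y}\setminus Y)[-1]\}$, both factors now proper over $X$ with $\dim(\overline{Y})=d$ and $\dim(\overline{Y}\setminus Y)<d$; applying Part~1 to each of these yields $Y_i$ projective over a scheme proper over $X$, hence proper over $X$, and the dimension bounds still work out. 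Once this reduction is made, your argument via Proposition~\ref{pwc}(\ref{iwcenv}) and Proposition~\ref{pbw}(\ref{iortprecise}) goes through exactly as you wrote.
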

\begin{proof}
\ref{ilenv}. Similarly to the proof of Proposition \ref{pgab} 
 (and according to Corollary 0.2 of \cite{bsnull}) we can assume that $\lam=\zol$ for some prime $l\neq p$.

Next, applying the obvious induction on dimension argument we can assume that the assertion (for $\lam=\zol$) is fulfilled whenever $\dim Y<d$.
Now, according to Theorem X.2.1 of \cite{illgabb}, there exists   a Cartesian square
\begin{equation}\label{eldh}
\begin{CD}
Z'@>{i'}>>Y'\\
@VV{g'}V@VV{g}V \\
Z@>{i}>>Y
\end{CD}\end{equation} 
such that $g$  is projective and generically finite of degree prime to $l$, $i$ is a  closed nowhere dense embedding, and $Y'$ is regular. The local splitting property
ensures that $Z$ can be increased (still remaining closed and nowhere dense in $Y$) so that $\mgd_X(Y\setminus Z)$ will be a retract of  $\mgd_X(Y'\setminus Z')$.
Hence applying the $\mgd$-version of (\ref{emgys}) we obtain that $\mgd_X(Y)$ belongs to the envelope of $\{\mgd(Z),\mgd(Y'),\mgd(Z')[-1]\}$.
Since $Z$ and  $Z'$ are of dimension less than $d$, it remains to apply our inductive assumption to these schemes.  

\ref{ildetect}. Consider an $X$-compactification $\overline{Y}$ of $Y$ (see Remark \ref{rdimf}(\ref{idcomp})); then $\dim (\overline{Y})=d$ and $\dim(\overline{Y}\setminus Y)<d$.
Hence applying the  $\mgd$-version of (\ref{emgys}) once again we reduce the assertion to the case where  $f$ is proper.
Hence the first part of the assertion follows from assertion \ref{ilenv} combined with Proposition \ref{pwc}(\ref{iwcenv}). To deduce the second part from the first one it 
suffices to invoke (the ``in particular'' part of) Proposition \ref{pbw}(\ref{iortprecise}).

\ref{ilkmot}. Similarly 
to the proof of Theorem \ref{testw}, we just use the isomorphism $\dmx(\mgbm_X(Y_i)[-i], M)\cong \kk_{-i}^{BM,X}(Y\times_X Y_i)$
given by  (\ref{ekthlesgen}). 

\ref{ilfield}. Recall that the $X$-Borel--Moore $\kk$-groups of smooth $X$-schemes are precisely the corresponding $\kk$-groups; see (\ref{ekthles}) once again. 
\end{proof}

\begin{rema}\label{rldh}
\begin{enumerate}
\item\label{idmc} In all ``reasonable'' settings the weight structure $w_X$ mentioned in part \ref{ildetect} of the proposition can be extended to the whole $\md(X)$; see Remark 3.3.3(3) of \cite{binters} for a result of this sort (though our current setting is much simpler than that of loc. cit.). 

\item\label{ildexpl} The advantage of the proposition is that its proof is quite simple, and one can more or less easily find the ``detector varieties'' $Y_i$ (explicitly; note that the argument described in Remark \ref{rweib}(\ref{iextest}) yields that is actually suffices to consider a finite number of primes when finding $Y_i$ in the case of a general $\lam$). 

 On the other hand, it seems that  to find ``test schemes''  for $M=f_*(\oo_Y)$ in the case of a non-proper $f$ one should use some more complicated arguments (similar to that described in \S\ref{slength}).

\item\label{ihiron} Certainly our argument becomes somewhat simpler in the case where Hironaka's resolution of singularities is available (so, for $p=0$; cf. Remark \ref{rvd}(3)). The authors suspect 
that in this case the corresponding connected components of $Y_i$ may be put into the ``vertices'' of the corresponding cubical diagram that yields a hyperresolution of $Y$; see \S2 of \cite{pasdesc}. 

Note moreover that in the current paper we use 
``lax'' methods that allow us not to consider any enhancements for our motivic categories. 
This makes our arguments very general;
 yet they do not yield much information on the boundaries in $t_{w_X}(M)$ (cf. part \ref{ildetect} of the proposition). So it would make sense to consider certain ``models'' for our categories (that should be descent categories in the sense described in \S2.1.3 of ibid.) and construct a {\it $\Phi$-rectified} functor (see \S2.1.5 of ibid.) from the category of cubical diagrams of finite type separated $X$-schemes into $\dkx$ (or into other motivic categories; for this purpose it actually suffices to construct a $\Phi$-rectified functor into a descent category being a model for $\sht(X)$). 
 This would enable us to compare our weight spectral sequence $T_{\wchow(X)}(\dkx(\oo_X,-),\oo_X)$ (along with the corresponding weight filtration) with the one given by Proposition 4.3 of ibid. (cf. also Corollary 5.9 of ibid.). 
Note here that the $E_2$-descent property (see Proposition 5.6 of ibid.) 
is very easy to check for any cohomology that factors through one of our motivic categories: if (\ref{eldh}) is an {\it elementary acyclic square} (i.e., $g$ is an isomorphism over  $Y\setminus Z$, and $Z$ and $Z'$ are also regular) 
then Proposition \ref{pbw}(\ref{isum}) yields that the corresponding  distinguished triangle $\ood_Y\to \mgd_Y(Z)\bigoplus \mgd_Y(Y')\to \mgd_Y(Z')$ splits; hence we also have a split  triangle $\mgd_X(Y)\to \mgd_X(Z)\bigoplus \mgd_X(Y')\to \mgd_X(Z')$ and thus a split exact sequence  $0\to H(\mgd_X(Y))\to H(\mgd_X(Z))\bigoplus H(\mgd_X(Y'))\to H(\mgd_X(Z'))\to 0$ 
for any additive 
 functor $H$ from $\md(X)$ (into an abelian category).

Similar arguments would probably allow to relate our results to \cite[\S5]{gs} and \cite{sg} (note however that in these papers simplicial diagrams instead of cubical ones are used).

Another interesting spectral sequence of this sort (converging to the singular cohomology of $X/\com$, where $X$ was allowed to be a complex analytic space) was considered in \S5 of \cite{ciguil}; it was called the {\it singularity spectral sequence} due to its triviality in the case of a regular $X$. Note that that this spectral sequence should be isomorphic to the Deligne's 
 weight one (starting from $E_2$) whenever $X$ is proper over $\com$ (see Corollary 5.3(2) of ibid. for the $\q$-linear version of this statement)
 due to the fact that $p_*$ is Chow-weight-exact whenever $p$ is a proper morphism. 
The authors would like to note that the usage of Hironaka's resolution of singularities is certainly a serious  disadvantage of ibid. (along with  \cite{pasdesc}) from the point of view of (general)  algebraic geometry (since 
cubical hyperresolutions of singularities are available over characteristic $0$ fields only). On the other hand, it would be very interesting to construct certain ``complex analytic'' weight structures related to the results of \cite{ciguil}.

\end{enumerate}




\end{rema}


\begin{thebibliography}{1}


\bibitem[Ayo07]{ayoubsix} Ayoub J., \emph{Les six op\'erations de Grothendieck
et le formalisme des cycles \'evanescents dans le monde motivique},
Ast\'erisque, vol. 314--315, Soc. Math. France, 2007.

\bibitem[Ayo14]{ayoet} Ayoub J., \emph{La r\'ealisation \'etale et les op\'erations de Grothendieck}, Ann. Sci. \'Ec. Norm. Sup\'er. (4) 47, 2014, no. 1,  1--145.


\bibitem[Ayo15]{ayoubconj} Ayoub J., \emph{Motives and algebraic cycles: a selection of conjectures and open questions}, preprint, \url{http://user.math.uzh.ch/ayoub/PDF-Files/Article-for-Steven.pdf}.

\bibitem[B-VK16]{bvk} L. Barbieri-Viale, B. Kahn, \emph{On the derived category of $1$-motives},  Ast\'erisque No. 381, 2016.  


\bibitem[Bei87]{beilh} Beilinson A., \emph{Height pairing between algebraic cycles}, in:
 K-theory, Arithmetic and Geometry, Lecture Notes in Mathematics 1289, Springer,
 1987, 1--26.

\bibitem[BBD82]{bbd} Beilinson A., Bernstein J., Deligne P., \emph{Faisceaux pervers}, Ast\'erisque 100, 1982, 5--171.


\bibitem[Bon09]{mymot} Bondarko M.,
\emph{Differential graded motives:
 weight complex, weight filtrations and spectral sequences for
 realizations; Voevodsky vs. Hanamura}, J. of the Inst.
 of Math. of Jussieu,
 v.8 (2009), no. 1, 39--97,
see also
\url{http://arxiv.org/abs/math.AG/0601713}.

\bibitem[Bon10a]{bws} Bondarko M.,
\emph{Weight structures vs.  $t$-structures; weight filtrations,
 spectral sequences, and complexes (for motives and in general)},
 J. of K-theory, v. 6, i.	03, pp. 387--504, 2010;
see also \url{http://arxiv.org/abs/0704.4003}.


\bibitem[Bon10b]{bger} Bondarko M., \emph{Motivically functorial
coniveau spectral sequences;
 direct summands of
 cohomology of function fields},
 Doc. Math., extra volume: Andrei Suslin's Sixtieth Birthday (2010), 33--117; see also \url{http://arxiv.org/abs/0812.2672}.

\bibitem[Bon11]{bzp} Bondarko M.V., \emph{$\mathbb{Z}[\frac{1}{p}]$-motivic resolution of singularities}, Compositio Mathematica, vol. 147 (2011),  no. 5,  1434--1446.



\bibitem[Bon14]{brelmot} Bondarko M.V., \emph{Weights for relative motives: relation with mixed complexes of sheaves}, Int. Math. Res. Notes, 
vol. 2014, i. 17,  4715--4767; see also \url{http://arxiv.org/abs/1007.4543}.



\bibitem[Bon15]{bmm} Bondarko M.V., \emph{Mixed motivic sheaves (and weights for them) exist if `ordinary' mixed motives do}, 
Compositio Mathematica, vol. 151, 2015, 917--956; 
see also \url{http://arxiv.org/abs/1105.0420}.

\bibitem[Bon16a]{binters} Bondarko M.V., \emph{Intersecting the dimension filtration with the slice one for (relative) motivic categories}, preprint, \url{http://arxiv.org/abs/1603.09330}.



\bibitem[Bon16b]{bpure} Bondarko M.V., \emph{On   (well generated) weight structures,  adjacent $t$-structures, and related    (co)homological functors}, preprint, \url{https://arxiv.org/abs/1611.00754}.

\bibitem[BoD17]{bondegl} Bondarko M.V., D\'eglise F., \emph{Dimensional  homotopy $t$-structures in motivic homotopy theory}, Adv. in Math., vol. 311 (2017), 91--189.


\bibitem[BoI15]{bonivan} Bondarko M.V., Ivanov M.A., \emph{On Chow weight  structures for $cdh$-motives with integral coefficients}, 
Algebra i Analiz, v. 27 (2015), no. 6, 14--40; see also \url{http://arxiv.org/abs/1506.00631}.

\bibitem[BoK17]{bokum}  Bondarko M.V., Kumallagov D.Z., On Chow weight structures without projectivity and resolution of singularities, 
to appear in Algebra i Analiz, \url{https://arxiv.org/abs/1711.08454}. 


\bibitem[BoS15a]{bsnull}
Bondarko M.V., Sosnilo V.A., \emph{A Nullstellensatz for triangulated categories},
Algebra i Analiz, v. 27 (2015), no. 6, 41--56; see also
\url{http://arxiv.org/abs/1508.04427}.

\bibitem[BoS15b]{bsosnl} Bondarko M.V., Sosnilo V.A., On the weight lifting property for localizations of triangulated categories,  \url{https://arxiv.org/abs/1510.03403}.


\bibitem[BoS17]{bsnew} Bondarko M.V., Sosnilo V.A., On purely generated $\al$-smashing weight structures and weight-exact localizations,   \url{http://arxiv.org/abs/1712.00850}.

\bibitem[BoS18]{bonspkar} Bondarko M.V., Sosnilo V.A., On constructing weight structures and extending them to idempotent extensions,  Homology, Homotopy and Appl., vol. 20(1), 2018, 37--57.


\bibitem[Cis13]{ciskth}  Cisinski D.-C., \emph{Descente par \'eclatements en K-th\'eorie invariante par homotopie}. (French) [\emph{Descent by blowup in homotopy-invariant K-theory}], Ann. of Math. (2) 177 (2013), no. 2, 425--448.

\bibitem[CiD12]{cd} Cisinski D.-C., D\'eglise F., \emph{Triangulated categories of mixed motives}, preprint,
\url{http://arxiv.org/abs/0912.2110v3}.

\bibitem[CiD15]{cdint} Cisinski D.-C., D\'eglise F., \emph{Integral mixed motives in equal characteristic}, 
 Documenta Mathematica, Extra Volume: Alexander S. Merkurjev's Sixtieth Birthday, 145--194, 2015.

\bibitem[CiD16]{cdet} Cisinski D., D\'eglise F., \emph{\'Etale motives}, Comp. Math.,  vol. 152, i. 03, 2016, 556--666.

\bibitem[CiG14]{ciguil} Cirici J., Guill\'en F., \emph{Weight filtration on the cohomology of complex analytic spaces}, Journal of Singularities, vol. 8 (2014), 83--99.

\bibitem[Con07]{conag}  Conrad B., \emph{Deligne's notes on Nagata compactifications}, Journal of the Ramanujan Mathematical Society, 2007, vol. 22, no 3,  p. 205.

\bibitem[CHSW08]{corwei} Cortinas G., Haesemeyer C., Schlichting M.,  Weibel C., \emph{Cyclic homology, cdh-cohomology and negative K-theory},
Ann. of Math. (2) 167 (2008), no. 2, 549--573. 


\bibitem[Del01]{deligne-cross} Deligne P., \emph{Voevodsky's lectures on cross functors}, Fall 2001, unpublished.

\bibitem[EGA4III]{ega43}  Dieudonn\'e J., Grothendieck A., \emph{El\'ements de g\'eom\'etrie alg\'ebrique. IV. \'Etude locale des sch\'emas et des morphismes de sch\'emas IV, Troisi\'eme Partie},  Publ. Math. IHES 28, 1966. 




\bibitem[GiS96]{gs} Gillet H., Soul\'e C. \emph{Descent, motives and $K$-theory},
J.  f. die reine und ang. Math. v. 478, 1996, 127--176.

\bibitem[GiS09]{sg} Gillet H., Soul\'e C. Motivic weight complexes for arithmetic varieties// Journal of Algebra,
v. 322, is. 9, 2009,  3088--3141.

\bibitem[Has04]{hasdescent} Haesemeyer C., Descent properties of homotopy K-theory// Duke Mathematical Journal, 125(3), 2004, 589--619. 

\bibitem[Heb11]{hebpo} H\'ebert D., \emph{Structures de poids \`a la Bondarko sur les motifs de Beilinson}, Compositio Mathematica, vol. 147, is. 5, 2011, 1447--1462.

\bibitem[Hub97]{huper} Huber A., \emph{Mixed perverse sheaves for schemes over number fields}, Compositio Mathematica, 108, 1997, 107--121.

\bibitem[ILO14]{illgabb} Illusie L., Laszlo Y.,  Orgogozo F., \emph{Travaux de Gabber sur l'uniformisation locale et la cohomologie etale des schemas quasi-excellents}. Seminaire a l'Ecole polytechnique 2006--2008, 
Ast\'erisque 363--364, 2014,  xxiv+619 pp.

\bibitem[Ivo16]{ivohodge}
Ivorra F., \emph{Perverse, Hodge and motivic realizations of \'etale motives}, Comp. Math.,  vol. 152(6),  2016, 1237--1285.

\bibitem[Jin16]{jin}  Jin F., \emph{Borel--Moore motivic homology and weight structure on mixed motives}, Mathematische Zeitschrift (2016), 1--35.

\bibitem[Kel12]{kellyth} Kelly S., \emph{Triangulated categories of motives in positive characteristic}, PhD thesis of Universit\'e
Paris 13 and of the Australian National University,  
2012, \url{http://arxiv.org/abs/1305.5349}.


\bibitem[Kel14]{kellykth} Kelly S., \emph{Vanishing of negative K-theory in positive characteristic}, Compos. Math. 150 (2014), no. 8, 1425--1434. 


\bibitem[KeS16]{kellyweighomol} Kelly S., Saito S., Weight homology of motives, Int. Math. Res. Notices, v. 2017(13), 2017, 3938--3984.

\bibitem[KeS17]{kerzs} Kerz M, Strunk F., \emph{On the vanishing of negative homotopy K-theory}, J. of Pure and Appl. Algebra, vol. 221(7), 2017, 1641--1644.

\bibitem[Lev98]{lemm} Levine M. \emph{Mixed motives}, Math. surveys and
Monographs 57, AMS, Prov. 1998, x+515 pp.

\bibitem[Lev13]{levconv} Levine M., \emph{Convergence of Voevodsky's slice tower},
  Doc. Math. 18 (2013), 907--941.

\bibitem[MV99]{morelvoe} Morel F., Voevodsky V., \emph{$\mathbb{A}^1$-homotopy
theory of schemes}, Publ. Math. IHES 90 (1999), 45--143.

\bibitem[Nav16]{navariemroch} Navarro A., \emph{Riemann-Roch for homotopy invariant K-theory and Gysin morphisms}, preprint, \url{https://arxiv.org/abs/1605.00980}.


\bibitem[Nee01]{neebook} Neeman A.,
 \emph{Triangulated Categories}. Annals of Mathematics Studies
148 (2001), Princeton University Press, viii+449 pp.

\bibitem[PPR12]{papiro} Panin I.A., Pimenov K.I.,  R\"ondigs O., \emph{On Voevodsky's algebraic K-theory spectrum}, in: Algebraic topology,  Abel Symposia 4,  Springer, 2009, 279--330.

\bibitem[PaP09]{pasdesc} Pascual P.,  Pons L.R., \emph{Algebraic $ K $-theory and cubical descent}, Homology, Homotopy and Applications, vol. 11, no. 2, 2009, 5--25. 

\bibitem[Pau12]{paukcomp}  Pauksztello D., \emph{A note on compactly generated co-t-structures}, Communications in Algebra,
vol. 40, no. 2, 2012.


\bibitem[RSO10]{rso} R\"ondigs O., Spitzweck M., 
Ostvaer P., \emph{Motivic strict ring models for $K$-theory}, Proc. of the Am. Math. Soc. 138 (2010), no. 10,  3509--3520.




\bibitem[Tem17]{tem} Temkin M., \emph{Tame distillation and desingularization by $p$-alterations},  Ann. of Math. (2), 186 (2017), no. 1, 97--126. 

\bibitem[Voe96]{voe96} Voevodsky V., \emph{Homology of schemes}, 
Selecta Math. (N. S.) 2 (1996), no. 1, 111--153.

\bibitem[VSF00]{voesusfrie} Voevodsky V., Suslin A., Friedlander E. M.,
\emph{Cycles, transfers and motivic homology theories}, Annals of
Mathematical Studies, vol. 143, Princeton Univ. Press, 2000.

\bibitem[Wei80]{weibel} Weibel C., \emph{$K$-Theory and analytic isomorphisms}, Invent. Math. 61 (1980), no. 2, 177--197.


\end{thebibliography}
\end{document}